\documentclass[11pt]{article}
\usepackage{amsmath,amssymb,amsthm,mathtools}
\usepackage[margin=1in]{geometry}
\usepackage{microtype,booktabs,longtable,array,enumitem,graphicx,xcolor}
\usepackage{tikz,listings}
\usepackage[colorlinks=true,linkcolor=blue!45!black,citecolor=blue!45!black,urlcolor=blue!45!black]{hyperref}
\usepackage[capitalise,noabbrev]{cleveref}
\usepackage{seqsplit}
\usepackage{etoolbox}
\newtheorem{theorem}{Theorem}[section]
\newtheorem{lemma}[theorem]{Lemma}
\newtheorem{proposition}[theorem]{Proposition}
\newtheorem{corollary}[theorem]{Corollary}
\theoremstyle{definition}\newtheorem{definition}[theorem]{Definition}
\newtheorem{remark}[theorem]{Remark}

\newcommand{\ind}{\mathbf 1}

\setlist{itemsep=3pt,topsep=5pt}
\AddToHook{env/theorem/begin}{\crefalias{section}{theorem}}
\AddToHook{env/lemma/begin}{\crefalias{section}{lemma}}
\AddToHook{env/proposition/begin}{\crefalias{section}{proposition}}
\AddToHook{env/corollary/begin}{\crefalias{section}{corollary}}
\AddToHook{env/definition/begin}{\crefalias{section}{definition}}
\AddToHook{env/remark/begin}{\crefalias{section}{remark}}
\hypersetup{pdftitle={Minimum orders of clique oddomorphism counterexamples},
 pdfauthor={Michal Čertík},pdfsubject={Main article and technical companion},bookmarksopen=true,bookmarksnumbered=true}
\makeatletter
\let\savedtitle\title
\let\savedauthor\author
\let\saveddate\date
\let\savedmaketitle\maketitle
\let\savedinternalmaketitle\@maketitle
\newcommand{\restoremaketitle}{\let\title\savedtitle\let\author\savedauthor\let\date\saveddate\let\maketitle\savedmaketitle\let\@maketitle\savedinternalmaketitle}
\makeatother
\title{Minimum orders of clique oddomorphism counterexamples}
\author{Michal \v{C}ert\'{\i}k\\ The World bank Group\\ Washington, DC, USA\\ \href{mailto:michal.certik@gmail.com}{\texttt{michal.certik@gmail.com}}}
\date{Research manuscript draft\quad 17 October 2026}
\begin{document}
\maketitle
\begin{abstract}
An oddomorphism to $K_n$ is a proper colouring with $n$ colours in which every vertex is designated odd or even, every vertex has in each other colour class a number of neighbours of its own parity, and every colour class contains an odd number of odd vertices. We prove that an oddomorphism witness to $K_n$ on at most $n+6$ vertices contains a $K_n$ minor with every branch meeting the witness's specified odd set. The conclusion holds inside any graph containing the witness and therefore applies to weak oddomorphisms. The bound is sharp: for every $n\ge8$ the minimum order of a graph admitting a weak oddomorphism to $K_n$ but having no $K_n$ minor is exactly $n+7$. The extremal example is a fifteen-vertex graph with an oddomorphism to $K_8$, obtained from the doubled theta graph of Kriesell and Mohr by adding one vertex adjacent to the seven roots; we give a structural proof that it has no $K_8$ minor, including their counting argument for the rooted case. A separate technical companion records further results on related root-apex graphs, an attachment theorem and a bounded nine-colour statement, together with their computational provenance; none of it is used here. The results do not resolve the unrestricted clique-target problem.
\end{abstract}
\begingroup\small

\par\endgroup
\clearpage
\setcounter{tocdepth}{1}
\begingroup\small\tableofcontents\endgroup
\clearpage
\phantomsection\addcontentsline{toc}{part}{Main article}
\section{Results and their scope}\label{sec:intro}
The relation between parity-constrained graph homomorphisms and clique minors has two different quantitative aspects. One asks whether a sufficiently large complete target forces a prescribed clique minor without restricting the source. Another fixes the target and asks how small a counterexample can be. This paper studies the latter question through the excess $|V(W)|-n$ of a witness to $K_n$.

Roberson introduced oddomorphisms and their weak variant and related them to homomorphism counts of parity constructions \cite{Roberson2022}. The general implication from an oddomorphism to containment of its target as a minor is false: Kristj\'ansson gives counterexamples, including complete targets \cite{Kristjansson2026}. His examples for the target $K_n$ come from random $3$-lifts of $K_n$, using a result of Drier and Linial on minors in random lifts \cite{DrierLinial2006}; they have $3n$ vertices and exist for all sufficiently large $n$ \cite[Section~3.2]{Kristjansson2026}. We determine the least order of such a counterexample exactly: it is $n+7$ for every $n\ge8$.

In subsequent work, Kristj\'ansson refuted the Strong Roberson Conjecture itself and its immersion analogue \cite[Theorems~1 and~14]{KristjanssonCounterexamples2026}. Those counterexamples concern non-complete targets and do not determine the minimum order considered here.

\begin{theorem}[Rooted excess six]\label{thm:main}
Let $n\ge2$. Suppose a finite simple graph $F$ contains a subgraph $W$ with an oddomorphism to $K_n$ and specified odd set $P$. If $|V(W)|\le n+6$, then $W$ has a $K_n$ minor with every branch set meeting $P$. In particular, the same model exists in $F$.
\end{theorem}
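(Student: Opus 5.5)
We write the colour classes of the oddomorphism $\varphi\colon W\to K_n$ as $V_1,\dots,V_n$ and set $P_i=P\cap V_i$; each $|P_i|$ is odd. The plan is to build the branch sets one colour at a time, so that each branch set meets $P$. The first step records the local structure that the definition forces. Take an odd vertex $v\in P_i$ and any $j\neq i$. Then $v$ has an odd number of neighbours in $V_j$, and in particular at least one. Now take an even vertex $u\in V_i\setminus P$. It has an even number of neighbours in each $V_j$, possibly zero. Summing degrees over the bipartite graph between $V_i$ and $V_j$, the count from the $V_i$ side is congruent to $|P_i|$, hence odd, modulo $2$. The count from the $V_j$ side is congruent to $|P_j|$. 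So these parity counts are consistent, and each pair of colour classes has at least one edge between them, indeed an odd number of edges incident to $P$.

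If every $|V_i|=1$, then every vertex is odd and is adjacent to every other, so $W=K_n$ and we are done. Otherwise we work with the excess $e=\sum_i(|V_i|-1)\le 6$. The pigeonhole principle gives that at least $n-6$ classes are singletons. Each singleton $\{v\}$ has $v\in P$ and is adjacent to every other class. The idea is to take singletons as their own branch sets. For the non-singleton classes we aim for the following.

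\emph{Reduction.} We want to show that the subgraph $W'$ induced by the $k\le 6$ non-singleton classes (which contain at most $k+6\le 12$ vertices) has a rooted $K_k$ model $B_1,\dots,B_k$ with two properties. First, each $B_i$ meets $P$. Second, each $B_i$ is adjacent to every singleton vertex. For the second property it suffices that $B_i\supseteq P_i$ up to connectivity, because a vertex of $P_i$ sees every singleton class. So we try to take $B_i$ to be $V_i$ together with possibly some borrowed vertices, provided $V_i$ becomes connected. Connectivity of a colour class is the real issue, since $V_i$ is independent. So $B_i$ must be built by combining an odd vertex of $V_i$ with vertices of other classes, and those classes then lose vertices. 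The statement therefore becomes a finite problem. It concerns colour classes of sizes $(s_1,\dots,s_k)$ with $s_i\ge 2$ and $\sum (s_i-1)\le 6$, with parity constraints toward a universal set of singleton vertices. Each singleton vertex $x$ has every neighbour parity forced: $x$ is adjacent to an odd number of vertices in each $V_i$. We then have to decide whether a rooted clique model exists.

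The main obstacle is proving this finite statement structurally rather than by exhaustive search. The approach I would try first is induction on $e$, removing a colour class with $|V_i|=3$ and $|P_i|=1$ or $3$ via a local modification. One candidate modification is to contract an edge from an even vertex to a suitable neighbour and recolour, so as to get an oddomorphism of excess $e-1$ or $e-2$ to $K_n$. If that fails, we can instead delete the class and get an oddomorphism to $K_{n-1}$ of the same excess, then re-add one branch set, using that $P_i$ dominates parity-wise. For the deletion route we must check that removing $V_i$ preserves the parity conditions for the remaining vertices. This can break, because even vertices elsewhere may have odd degree into $V_i$'s complement. We repair this by absorbing the bad vertices into branch sets. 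Sharpness at $n+7$ indicates that any such reduction must use the bound $6$ precisely. I would therefore expect a case split on the multiset $(|V_i|)$, for example $(7)$, $(4,4)$, $(3,3,3)$, \dots, $(2^6)$. Each case would be handled by a counting argument in the spirit of Kriesell--Mohr, showing that enough disjoint $P$-rooted connectors exist. The final sentence of the theorem is immediate, since a minor model in $W$ is one in $F$.
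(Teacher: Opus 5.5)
\textbf{There is a genuine gap: the proposal leaves the theorem's content unproved.} Your reduction to the core (append the singleton classes as branches) is right. After that you only call the remaining task ``a finite problem'' and list strategies you would try. No core case is settled, so this is a plan rather than a proof.

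The target you set for the core is also misformulated. By \cref{lem:attachment}, a singleton vertex is adjacent to \emph{exactly} the odd vertices outside its class, so a core branch only has to meet $P$. It need not contain $P_i$, need not be built around $V_i$, and need not correspond to any colour; the paper's models mix classes freely, e.g.\ $\{b,v\},\{c,u\},\{t\}$ for the pattern $(5,2,2)$. Your stronger target is sometimes impossible. If the core is a single class of three or seven odd vertices, that class is independent and nothing else is available, so no connected branch contains $P_1$; yet a single vertex is a valid rooted branch. So ``connectivity of a colour class'' comes from your formulation, not from the problem.

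The tools you propose for the finite problem do not work as stated.
\begin{itemize}
\item Deleting a whole class never breaks the parity equations, since they are imposed separately for each pair of classes. The real obstacle to the deletion route is that the deleted class is independent and cannot be re-added as a branch.
\item A single edge contraction followed by recolouring changes the degree parities of outside vertices in general. An induction on excess helps only if the operation is a minor operation that carries rootedness back.
\end{itemize}

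The idea you are missing is the root-preserving normalization of \cref{lem:normalization}:
\begin{itemize}
\item contract the whole star of an even vertex that sees only one other class;
\item keep an edge from the new vertex to an outside $z$ only when $|N(z)\cap S|$ is odd;
\item track connected supports $C_x$ with $p_H(x)\equiv|C_x\cap P|\pmod 2$.
\end{itemize}
Afterwards every even vertex sees two other classes. This forces at least three nonsingleton classes and cuts excess six down to the patterns $(2^6)$, $(3,2^4)$, $(3,3,2,2)$, $(4,2,2,2)$, $(3,3,3)$, $(4,3,2)$, $(5,2,2)$, plus the all-odd ones. In particular your $(4,4)$ cannot occur.

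Normalization reduces no further. Each of these patterns, and each pattern of excess at most five (\cref{thm:excess-five}), still needs its own construction. The paper uses the two-interface lemma and the Kriesell--Mohr routing theorem for all-double cores, parity counts such as the equal-bit odd vertex in $(5,2,2)$, twin deletion down to excess four in $(4,2,2,2)$, and Dirac's $K_4$-subdivision theorem in $(3,3,2,2)$.

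The ``counting argument in the spirit of Kriesell--Mohr'' you anticipate points the wrong way. That argument is an obstruction: it shows the doubled theta graph has no rooted $K_7$ (\cref{lem:known-theta}). It is not a source of models.
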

This formulation is slightly stronger than a host-order bound: vertices outside $W$ do not count. In particular, a graph of order at most $n+6$ with a weak oddomorphism to $K_n$ satisfies the conclusion. A branch is only required to contain a specified odd vertex; it is not required to contain an odd number of such vertices, remain within a fibre, or have a prescribed colour.

The word \emph{odd} refers to the parity conditions of \cref{sec:normalization}: on the number of neighbours of a vertex in each other colour class, and on the number of odd vertices in each colour class. It is unrelated to odd minors, which occur in the odd-minor variant of Hadwiger's conjecture. There a graph contains an odd $K_n$ minor if it has $n$ vertex-disjoint trees, every two joined by an edge, whose vertices can be two-coloured so that the edges of the trees are bichromatic and the joining edges are monochromatic \cite{GeelenEtAl2009}. All minors in this paper are ordinary minors.

\begin{theorem}[Sharpness]\label{thm:sharp}
For $n\ge8$, let $m(n)$ be the minimum order of a graph admitting a weak oddomorphism to $K_n$ but having no $K_n$ minor. Then $m(n)=n+7$. An extremal example can be chosen to admit an oddomorphism. In particular, $m(8)=15$.
\end{theorem}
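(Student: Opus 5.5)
The proof has two halves: a lower bound $m(n)\ge n+7$ from \cref{thm:main}, and an upper bound from an explicit construction that is then padded to all $n\ge 8$.

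\textbf{Lower bound.} Suppose $G$ admits a weak oddomorphism to $K_n$ and $|V(G)|\le n+6$. By definition, a weak oddomorphism is realised by a subgraph $W\subseteq G$ carrying an oddomorphism to $K_n$. Then $|V(W)|\le|V(G)|\le n+6$. \cref{thm:main}, applied with $F=G$, gives a $K_n$ minor in $G$. Hence $m(n)\ge n+7$.

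\textbf{Base case $n=8$.} Let $D$ be the doubled theta graph of Kriesell and Mohr, on fourteen vertices with seven roots. Let $H$ be $D$ together with one new vertex $z$ adjacent to the seven roots, so $|V(H)|=15$. Two things must be shown.
\begin{enumerate}
\item \emph{$H$ has an oddomorphism to $K_8$.} I would give an explicit colouring and parity assignment: put $z$ in its own colour class as an odd vertex, and give the remaining classes a pattern lifted from the Kriesell--Mohr structure. Then check the neighbourhood-parity conditions class by class, which is a finite computation.
\item \emph{$H$ has no $K_8$ minor.} This is the main obstacle. Suppose a $K_8$ model exists. Consider the branch set $B_z$ containing $z$, or note that $z$ may be unused. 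If $z$ is unused, or $B_z=\{z\}$ with $z$ adjacent only to roots, we get a $K_8$ or rooted $K_7$ model in $D$ essentially meeting the roots. A $K_8$ model in $D$ itself needs a separate count, which I would handle with the edge bound $|E(D)|$ against $\binom82$ plus contraction counting. A rooted $K_7$ model, with every branch meeting the root set, is excluded by the Kriesell--Mohr counting argument, which I would reproduce. If $B_z$ contains further vertices, contract $B_z$ onto a root. The remaining seven branch sets then form a $K_7$ model in $D$ minus part of $B_z$, with each branch adjacent to $B_z$ and hence to a root. I would push each branch to actually contain a root by the symmetry of the theta structure, and then invoke the rooted argument again.
\end{enumerate}
I expect the case analysis in item 2, with $B_z$ large, to be the delicate step. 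There I would first try counting vertices and edges: seven branch sets in fewer than fourteen vertices force many singleton branches, whose degrees in $D$ are too small to be adjacent to all six other branches.

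\textbf{General $n\ge 8$.} Take $H_n=H\vee K_{n-8}$, the join of $H$ with a clique on $n-8$ new vertices. It has $n+7$ vertices.
\begin{itemize}
\item \emph{Oddomorphism.} Extend the oddomorphism by making each new vertex a singleton odd colour class. The parity checks are as follows. Every vertex of $H$ has exactly one neighbour in each new class, and that neighbour is odd. This matches parity for odd vertices of $H$ but fails for even ones. To repair this, I would replace the join with a construction in which each new vertex is adjacent exactly to the odd vertices of $H$, plus all other new vertices. Each original class has an odd number of odd vertices, so a new vertex sees an odd number of neighbours in every old class, consistent with it being odd. Even vertices of $H$ then see zero neighbours in each new class, which is correct.
\item \emph{No $K_n$ minor.} I must show this modified apex extension has no $K_n$ minor. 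Deleting the $n-8$ new vertices drops the Hadwiger number by at most $n-8$, which is the wrong direction. Instead I would argue as follows: each new vertex can be assumed to be its own branch set, or else be absorbed. Removing it from the model then yields a $K_{n-1}$ model in the smaller graph, by induction down to $H$. The needed lemma is that adding a vertex whose neighbourhood is the odd set raises the Hadwiger number by at most one, which is standard: delete that vertex's branch set.

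Hence $m(n)\le n+7$, and combined with the lower bound, $m(n)=n+7$.
\end{itemize}
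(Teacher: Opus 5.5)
Your lower bound and your extension to $n>8$ agree with the paper. The paper also adds $n-8$ pairwise adjacent odd singleton colours joined exactly to the odd vertices of $H$, and finishes with $h(G)\le h(G-X)+|X|$. That inequality is exactly the direction you need, since it gives $h\le 7+(n-8)=n-1$. So your remark that it is ``the wrong direction'' is mistaken, although the one-vertex lemma you fall back on is the same fact. For the base case, make sure $D$ contains the odd--odd edges $o_io_j$ for the nonedges of $\Theta$ (the root completion). Without them the parity equations fail.

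The genuine gap is the proof that $H$ has no $K_8$ minor, which is the substance of the theorem.
\begin{itemize}
\item The Kriesell--Mohr rooted obstruction only excludes models in which $\{z\}$ is a branch.
\item Your plan for $|B_z|\ge2$ cannot work. $N(z)$ is exactly the seven roots and $B_z$ is connected, so $B_z$ already contains a root $o_j$. That leaves only six roots for the other seven branches, and no symmetry can push a root into each of them. Also, adjacency to $B_z$ may run through non-root vertices of $B_z$.
\item Your fallback degree count fails too. Any singleton branch other than $\{z\}$ is some $o_i$, since only $z$ and the $o_i$ have degree at least seven. But $o_i$ has degree exactly seven, which is precisely enough, so no contradiction arises.
\item Likewise $|E(D)|=37\ge 28$ does not exclude a $K_8$ model inside $D$. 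What does exclude it is that a $K_8$ model on at most $15$ vertices needs a singleton of degree at least seven, while $\Delta(D)=6$.
\end{itemize}

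The missing idea is the paper's saturation argument.
\begin{itemize}
\item Extend the model to a spanning partition. By the singleton pivot lemma, the exact singleton set $S$ is a nonempty subset of $\{z,o_0,\dots,o_6\}$.
\item Every other branch then contains \emph{exactly one} neighbour of each singleton, and $\sum_B(|B|-2)=|S|-1$ over the nonsingleton branches.
\item If $z\in S$, the rooted obstruction applies.
\item If $z\notin S$, then $B_z$ contains some $o_j$. Saturation forbids $o_io_j$ for $o_i\in S$, so the index set of $S$ lies in $N_\Theta(j)$ and $|S|\le 3$.
\item Up to the automorphisms of $\Theta$ this leaves eight cases. Each is refuted by forcing pair branches through their few eligible partners until two forced branches are anticomplete or the budget is exceeded.
\end{itemize}
Without an exhaustive treatment of singletons other than $z$, your argument shows only that $H$ has no $K_8$ model with branch $\{z\}$.
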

The construction uses the theta-based two-vertex-fibre graph of Kriesell and Mohr \cite[Section 3, pp.~8--9, immediately before Theorem 2]{KriesellMohr2022}. Its rooted obstruction is theirs (their Theorem~2 states it as the failure of a rooted routing property); it already shows that a uniform rooted excess-seven theorem fails at $n=7$, and we include their counting argument in \cref{lem:known-theta}. The assertion needed for \cref{thm:sharp} is stronger and different: adjoining a vertex adjacent to the seven designated roots produces no \emph{ordinary} $K_8$ minor. The rooted obstruction excludes only the models in which the new vertex forms a singleton branch; an ordinary model may have a different singleton branch or put the new vertex into a larger branch. We exclude every model structurally in \cref{prop:noK8} using a spanning-model reduction, singleton saturation, and eight elementary cases.

\paragraph{Relation to previous work.}
Three ingredients come from Kriesell and Mohr \cite{KriesellMohr2022}: the doubled graphs $Z(B)$; their routing theorem for graphs $B$ on at most six vertices \cite[Theorem~6, Section~5, p.~12]{KriesellMohr2022}; and the doubled theta graph with completed root pairs, together with its rooted obstruction. We use the routing theorem only when the core of a witness consists of at most six double fibres. It is a statement about the graphs $Z(B)$, not about arbitrary six-colour Kempe routing, and the companion gives direct proofs of these cases. Twin deletion and parity-preserving merging also occur in recent work \cite{NeuenSeppelt2026,JimenezMooreQuirozYoo2026}; we make no priority claim for those general ideas. For the target $K_5$, Neuen and Seppelt prove that the class of $K_5$-minor-free graphs is closed under weak oddomorphisms \cite[Theorem~5.22 in arXiv version~2]{NeuenSeppelt2026}. Hence a weak oddomorphism to $K_5$ forces a $K_5$ minor whatever the order of the source, but this does not give the rooted conclusion of \cref{thm:main}. Our contribution is the mixed-fibre excess-six theorem with its rooted conclusion, the exclusion of an ordinary $K_8$ minor in the fifteen-vertex graph (\cref{prop:noK8}), and the resulting exact value of $m(n)$. A comparison with the primary sources cited here found no earlier statement of the threshold $m(n)=n+7$. That comparison was limited in scope and is not a certification of novelty.

Roberson's Question~7 asks whether, for every $t$, some complete target $K_N$ forces a $K_t$ minor under a weak oddomorphism \cite{Roberson2022}. This broader problem remains open, as discussed by Neuen and Seppelt \cite{NeuenSeppelt2026} and reaffirmed by Kristj\'ansson \cite[Section~7]{KristjanssonCounterexamples2026}. A bound on the source order does not settle it. Nor do we determine the analogous minimum orders for $n\le7$; for $n=5$ the closure theorem of Neuen and Seppelt mentioned above rules out every counterexample.

\paragraph{Organisation.} \Cref{sec:normalization} sets up witnesses and a root-preserving normalization. \Cref{sec:excess5,sec:excess-six} prove \cref{thm:main}: the excess-five bound and the seven fibre patterns of excess six, including mixed fibres of sizes up to five. \Cref{sec:sharpness} constructs the fifteen-vertex graph, proves that it has no $K_8$ minor and deduces \cref{thm:sharp}.

\paragraph{The technical companion.} A separate companion document, whose section and statement numbers continue those of this paper, contains material that is not needed for \cref{thm:main,thm:sharp}: three extremal families found within a specified thirty-one-member family of root-apex graphs; a criterion for an even vertex attached to eight double fibres to force a $K_8$ minor; a classification of when the apex of the three hard root-apex graphs can remain a singleton branch after one extra edge is added; and a bounded computer-assisted theorem for weak oddomorphism witnesses to $K_9$ of order at most nineteen. For each statement it says whether the evidence is structural, certificate-backed or inherited.

\section{Parity witnesses and reductions}\label{sec:normalization}
All graphs are finite and simple. A $K_t$ model is a family of $t$ nonempty pairwise disjoint connected vertex sets with an edge between every pair. For a specified set $P$, a model is \emph{$P$-rooted} if each branch meets $P$.

\begin{definition}
An \emph{oddomorphism witness to $K_n$} consists of a graph $W$, a partition $V_1,\ldots,V_n$ into independent sets, and $p:V(W)\to\{0,1\}$ satisfying
\begin{align}
 |N_W(v)\cap V_j|&\equiv p(v)\pmod2 &&(v\in V_i,\ j\ne i),\label{eq:parity}\\
 \sum_{v\in V_i}p(v)&\equiv1\pmod2 &&(1\le i\le n).\label{eq:charge}
\end{align}
Its odd set is $P=p^{-1}(1)$. The fibres are nonempty by \eqref{eq:charge}. A vertex supports a fibre if it has a neighbour in that fibre.
\end{definition}
The fibre map is an oddomorphism in the usual sense. In the standard definition, a \emph{weak oddomorphism} from $F$ to $K_n$ is a homomorphism on all of $F$ whose restriction to a possibly noninduced subgraph $W$ is an oddomorphism \cite[Definition 3.9]{Roberson2022}. Our witness-containment assertions do not require extending the fibre map to the host. Every parity calculation concerns $W$, not any extra host edges.

\begin{lemma}[Root-preserving normalization]\label[lemma]{lem:normalization}
Every specified witness $W$ has a minor witness $H$ to the same target such that every even vertex supports at least two other fibres. There exist disjoint connected sets $C_x\subseteq V(W)$ for $x\in V(H)$ such that
\[
 p_H(x)=|C_x\cap P|\pmod2,
 \qquad xy\in E(H)\Longrightarrow E_W(C_x,C_y)\ne\varnothing.
\]
Consequently any model in $H$ rooted in $p_H^{-1}(1)$ lifts to a $P$-rooted model in $W$.
\end{lemma}
\begin{proof}
Initially use singleton supports. Delete any isolated even vertex. If an even vertex $v$ of colour $r$ has neighbours only in colour $i$, put $S=N_H(v)$. Its size is positive and even. Contract the star on $\{v\}\cup S$ to a vertex $w$ of colour $i$. For each outside vertex $z$, retain $wz$ precisely when $|N_H(z)\cap S|$ is odd, deleting any other resulting edge. Every retained edge is available after contraction. No outside vertex is adjacent to $v$, and no outside vertex of colour $i$ is adjacent to $S$, so the colouring stays proper. Set $p_H(w)=\sum_{u\in S}p_H(u)\pmod2$.

Each outside vertex has the same degree parity into the new colour-$i$ fibre. For $j\ne i$ the parity at the new vertex is
\[
 d_{V'_j}(w)\equiv\sum_{u\in S}d_{V_j}(u)-\ind_{j=r}|S|
 \equiv\sum_{u\in S}p_H(u)=p_H(w)\pmod2.
\]
The subtraction removes the star edges to $v$ and is even. The colour-$i$ charge is unchanged and colour $r$ loses only an even vertex, so both retain odd charge. Set $C_w=C_v\cup\bigcup_{u\in S}C_u$. This is connected, preserves the asserted parity, and remains disjoint from all other supports. Every retained edge has a witnessing original edge. Each step strictly decreases order, so the procedure terminates.

For a connected branch $A\subseteq V(H)$, the union $\bigcup_{x\in A}C_x$ is connected in $W$. These unions preserve disjointness and crossings. If $A$ contains an odd $x$, then $|C_x\cap P|$ is odd and hence positive. This proves the lift.
\end{proof}
A witness satisfying the support condition of \cref{lem:normalization} is called \emph{normalized}. This is a reduction property, not an assumption about arbitrary restrictions to fewer fibres.

\begin{lemma}[Restriction and attachment]\label[lemma]{lem:attachment}
Deleting whole fibres leaves a witness on the remaining fibres with the same restricted indicator. A singleton fibre $\{s\}$ has $p(s)=1$ and, for every $v\in V(W)\setminus\{s\}$, satisfies $sv\in E(W)$ if and only if $p(v)=1$. If a connected set $A$ contains a whole fibre $V_i$, then $A$ is adjacent to every disjoint branch containing an odd vertex outside $V_i$.
\end{lemma}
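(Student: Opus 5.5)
The lemma has three assertions, and I would check each one directly from the definition of a witness, in the order stated.

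\emph{Restriction.} Take a set $I$ of fibres to delete and keep $W'=W-\bigcup_{i\in I}V_i$ with $p$ restricted. The remaining fibres are still independent and still partition $V(W')$. Condition \eqref{eq:parity} for a vertex $v\in V_i$ and a remaining fibre $V_j$ depends only on $N_W(v)\cap V_j$. Deleting other fibres does not change this set, because edges between two remaining vertices are untouched. Condition \eqref{eq:charge} for a remaining fibre depends only on that fibre. So $W'$ is a witness to $K_{n-|I|}$, and its odd set is $P\setminus\bigcup_{i\in I}V_i$.

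\emph{Singleton fibres.} Let $V_i=\{s\}$. Then \eqref{eq:charge} gives $p(s)\equiv1$. Now take $v\ne s$, say $v\in V_j$ with $j\ne i$. Applying \eqref{eq:parity} to $v$ and the fibre $V_i$ gives $|N_W(v)\cap\{s\}|\equiv p(v)\pmod 2$. The left side is $1$ exactly when $sv\in E(W)$, and it is either $0$ or $1$. Hence $sv\in E(W)$ if and only if $p(v)=1$. For $v\in V_i$ there is no case to check, since $V_i=\{s\}$.

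\emph{Attachment.} Let $A$ be connected with $V_i\subseteq A$. Let $B$ be a branch disjoint from $A$, and let $u\in B\cap P$ with $u\notin V_i$, so $u\in V_j$ for some $j\ne i$. By \eqref{eq:parity}, $|N_W(u)\cap V_i|\equiv p(u)=1$. So $u$ has an odd, and in particular positive, number of neighbours in $V_i\subseteq A$, and this gives an edge between $A$ and $B$. Connectivity of $A$ is not actually used here; it only matters that $A$ is a branch. The adjacency holds in $W$, and therefore in any host containing $W$.

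I do not expect a real obstacle. The only point needing care is the restriction step: one has to note that \eqref{eq:parity} is imposed fibre by fibre, so deleting a fibre removes a constraint but never changes the parity count for any surviving pair of vertex and fibre. This is also why the normalization property of \cref{lem:normalization} need not survive restriction, which matches the remark after that lemma.
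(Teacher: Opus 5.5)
Your proof is correct and follows the same route as the paper's. Whole-fibre deletion leaves the retained parity and charge equations unchanged. The singleton claim follows from the charge condition together with the fact that degrees into $\{s\}$ are $0$ or $1$. Attachment follows because an odd vertex outside $V_i$ has positive odd degree into $V_i$. Your side remarks, that connectivity of $A$ is not used and that normalization can fail under restriction, are accurate.
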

\begin{proof}
Whole-fibre deletion changes none of the retained equations \eqref{eq:parity}--\eqref{eq:charge}. The singleton assertion follows from \eqref{eq:charge} and, for $v\ne s$ (necessarily in another fibre), from \eqref{eq:parity}, because degrees into $\{s\}$ are zero or one. An odd vertex outside $V_i$ has positive odd degree into $V_i$, proving the final assertion.
\end{proof}
Thus a rooted clique model on the nonsingleton fibres extends by all singleton fibres. Removing singletons also preserves normalization, since an even vertex has no singleton neighbour. We refer to the restriction to nonsingleton fibres as the \emph{core}. Whole-fibre restriction is a special case of \cite[Lemma 5.2]{Roberson2022}; the support bookkeeping above is recorded to fix the original roots through contractions.

\begin{lemma}[Double interfaces]\label[lemma]{lem:double}
In a double fibre write $D_i=\{o_i,e_i\}$ with $p(o_i)=1$ and $p(e_i)=0$. Between two doubles exactly two interfaces are possible: the edge $o_io_j$, or the three edges $o_ie_j,e_ie_j,e_io_j$.
\end{lemma}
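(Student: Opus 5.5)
The plan is a direct case analysis on the edges between $D_i=\{o_i,e_i\}$ and $D_j=\{o_j,e_j\}$, using only the parity condition \eqref{eq:parity}. There are four possible cross edges: $o_io_j$, $o_ie_j$, $e_io_j$ and $e_ie_j$. I encode the interface by the indicators $a=\ind[o_io_j\in E]$, $b=\ind[o_ie_j\in E]$, $c=\ind[e_io_j\in E]$ and $d=\ind[e_ie_j\in E]$. Because the fibres are independent sets, these are the only edges between the two fibres, and each vertex's degree into the other fibre is a sum of two indicators.

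Next I write the four parity equations, one for each vertex, working mod $2$:
\begin{align*}
 o_i:&\ a+b\equiv1, & e_i:&\ c+d\equiv0,\\
 o_j:&\ a+c\equiv1, & e_j:&\ b+d\equiv0.
\end{align*}
The first and third equations give $b\equiv c$. The second and fourth then give $d\equiv c\equiv b$, and the first gives $a\equiv1+b$. So the solution space is one-dimensional, parametrised by $b\in\{0,1\}$.

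If $b=0$, then $a=1$ and $c=d=0$, so the interface is the single edge $o_io_j$. If $b=1$, then $a=0$ and $b=c=d=1$, so the interface is the three edges $o_ie_j$, $e_ie_j$ and $e_io_j$. Since the indicators are genuinely $0/1$, each solution mod $2$ is an actual edge set. I will also check the reverse direction: both configurations satisfy all four equations, so both are possible. This gives exactly two interfaces.

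There is no real obstacle here. The only point needing care is that every cross edge is counted from both endpoints, which the four-equation system already does.
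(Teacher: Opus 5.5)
Your proof is correct and is essentially the paper's argument: both solve the four local parity equations and obtain a one-parameter family of solutions. The only difference is immaterial: the paper parametrises by $[e_ie_j\in E]$ and uses the two even equations and one odd equation, while you parametrise by $[o_ie_j\in E]$ and also confirm that both resulting configurations satisfy all four equations.
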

\begin{proof}
Writing $b=[e_ie_j\in E]$, the two even parity equations give $[o_ie_j]=[e_io_j]=b$, and the odd equation gives $[o_io_j]=1-b$. These exhaust all four entries.
\end{proof}
The \emph{active graph} $B$ has an edge $ij$ for the three-edge interface. For an all-double core, normalization is equivalent to $\delta(B)\ge2$.

\section{The excess five bound}\label{sec:excess5}
\begin{theorem}\label{thm:excess-five}
A specified witness to $K_n$ on at most $n+5$ vertices has a $P$-rooted $K_n$ model.
\end{theorem}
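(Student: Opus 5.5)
The plan is to reduce \cref{thm:excess-five} to a bounded problem on the core and then handle that core by a short case analysis. First apply \cref{lem:normalization} to replace $W$ by a normalized minor witness $H$ to the same target $K_n$. Contraction and deletion only lower the order while keeping all $n$ fibres, so $H$ still has at most $n+5$ vertices. By the lifting clause of \cref{lem:normalization}, a $p_H^{-1}(1)$-rooted $K_n$ model in $H$ gives a $P$-rooted model in $W$. So we may assume $W$ is normalized.

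Next pass to the core. Let the nonsingleton fibres be $V_1,\dots,V_k$. Each has at least two vertices, so the excess $\sum_i(|V_i|-1)\le5$ forces $k\le5$, and the core has at most $k+5\le10$ vertices. Suppose we find a rooted $K_k$ model in the core. We then add each singleton fibre $\{s\}$ as its own branch. By \cref{lem:attachment}, $s$ is odd and adjacent to every odd vertex, and every core branch contains an odd vertex, so each singleton branch is adjacent to all other branches. Two singletons are adjacent since both are odd. Hence everything reduces to the claim that \emph{a normalized core with $k\le5$ fibres and excess at most $5$ has a rooted $K_k$ model}. If $k=1$, normalization has deleted all even vertices of the lone fibre, because they have no neighbours. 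That fibre is then a set of odd vertices of odd size, which is a singleton unless the fibre has at least three odd vertices. In the latter case a single odd vertex already serves as a rooted $K_1$ model.

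For the core claim I would split by the fibre-size profile. When all fibres are doubles $\{o_i,e_i\}$, \cref{lem:double} shows the core is exactly $Z(B)$ for the active graph $B$ on $k\le5$ vertices, and normalization gives $\delta(B)\ge2$. Here I would invoke the Kriesell--Mohr routing theorem for $|V(B)|\le6$ to get a $K_k$ model whose branches contain the $o_i$. When some fibre has three or more vertices, the excess bound forces $k\le4$, and the profiles are few: $(3,3,2,2)$, $(3,2,2,2)$, $(4,2,2,2)$, and the profiles with $k\le3$.

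For $k\le3$, a rooted $K_3$ should follow by hand. Every vertex of the core is either odd, so has odd degree into each other fibre, or is even with neighbours in two other fibres. I would use this to show that the odd vertices of different fibres are linked by short paths through even vertices, and then assemble the three branches from those paths.

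The main obstacle is the mixed $k=4$ cases. Here fibres of sizes $3$ or $4$ may contain three odd vertices, or one odd vertex and several even ones. I would first try to merge pairs of parity-equivalent vertices within a fibre, or to contract an even vertex into an odd neighbour in the style of \cref{lem:normalization}, so as to reach an all-double or smaller configuration. If that fails, I would enumerate the parity-admissible interface patterns between a triple fibre and the doubles, as \cref{lem:double} does for pairs of doubles. In each pattern I would exhibit a rooted $K_4$ directly, using the fact that a branch containing a whole fibre is adjacent to every other rooted branch by \cref{lem:attachment}.
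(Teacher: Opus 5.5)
Your outer reduction (normalize by \cref{lem:normalization}, strip singleton fibres, quote Kriesell--Mohr for all-double cores) is the paper's. But the substantive part of the theorem is only promised.

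First, your case list is off: $(3,3,2,2)$ and $(4,2,2,2)$ have excess six. A normalized even vertex forces at least three nonsingleton fibres: its own, and two colours into which it has positive even degree. Using this, the cores with an even vertex and excess at most five are the all-double ones and exactly $(3,2,2)$, $(4,2,2)$, $(3,3,2)$ and $(3,2,2,2)$. The all-odd cores are $(3)$, $(5)$ and $(3,3)$.

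Second, the shortcut you propose for the mixed cores is not available. Two vertices of one fibre are nonadjacent, so a minor can delete them (as twins, which need not exist) but cannot merge them. The star contraction of \cref{lem:normalization} preserves the parity equations only because the even vertex sees a single colour, and that is exactly what normalization has ruled out. You specify no other parity-preserving operation. What remains is your fallback, ``enumerate the interface patterns and exhibit a rooted model in each''. That fallback is the whole proof, and it is not carried out.

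Nor are the $k\le3$ cases routine path bookkeeping; each needs a specific idea. Take one larger fibre $T$ with doubles $\{b,u\},\{c,v\}$, where $b,c$ are odd. Normalization makes the double interface active and makes every even vertex of $T$ see $b,u,c,v$. Then $\{b,v\},\{c,u\},\{x\}$ is a rooted triangle for any odd $x\in T$ adjacent to both $u,v$ or to both $b,c$. Such an $x$ exists by parity: the number of odd $x\in T$ with $[xu]\ne[xv]$ is congruent to $d_T(u)+d_T(v)\equiv0\pmod2$, while $T$ has an odd number of odd vertices. For $(3,3,2)$ the key point is that the vertex of each triple missed by the even double vertex is odd. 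The case $(3,2,2,2)$ needs a split by the charge of the triple. For charge three it needs a further split by whether the active graph on the doubles is a path or a triangle, with two exceptional configurations in the triangle case.

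Finally, in the all-double case the core is not $Z(B)$ itself but $Z(B)$ plus the edges $o_io_j$ for the nonedges of $B$. The routing theorem makes the bags adjacent only along active pairs, and these odd--odd edges supply the remaining adjacencies.
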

\begin{proof}
Normalize by \cref{lem:normalization} and remove singleton fibres by \cref{lem:attachment}. The following cases construct the required rooted core model.

Write $N=n$ and $r=|V(H)|-N=\sum_i(|V_i|-1)\le5$ after normalization. If any even
vertex exists, its own fibre is nonsingleton because it has odd total
charge; each of at least two colours that it sees contains at least two
vertices because its positive degree into that fibre is even. There are
therefore at least three nonsingleton fibres. Partitioning $r\le5$ into
at least three positive excesses gives precisely
\[
 (2,2,2),\ (2,2,2,2),\ (3,2,2),\ (2,2,2,2,2),\
 (3,2,2,2),\ (3,3,2),\ (4,2,2).
\]
If all vertices are odd, every fibre size is odd. The possible nonsingleton
patterns are empty, $(3)$, $(5)$, or $(3,3)$. A single larger fibre supplies
one odd vertex. For two triples, every vertex has positive odd degree into
the other triple, so there is an edge; its endpoints form the two rooted
core branches. These cases and the displayed list exhaust the domain.

In a double write the unique odd and even vertices as $o_i,e_i$.
For two doubles the four parity equations uniquely imply, with
$b_{ij}=[e_ie_j\in E]$,
\[
 [o_ie_j]=[e_io_j]=[e_ie_j]=b_{ij},\qquad
 [o_io_j]=1-b_{ij}.
\]
Call $ij$ active when $b_{ij}=1$, and let $B$ be the active graph. When all core
fibres are doubles, $e_i$ sees exactly the colours adjacent to $i$ in $B$;
normalization is exactly $\delta(B)\ge2$.

\paragraph{All-double cores}
Suppose that every core fibre is a double, so that there are $k\in\{3,4,5\}$ of them. By \cref{lem:double} the core is determined by its active graph $B$ on $k$ indices: an inactive pair contributes $o_io_j$ and an active pair contributes $o_ie_j,e_io_j,e_ie_j$. In the notation of Kriesell and Mohr, $Z(B)$ has vertices $(i,1),(i,2)$ and edges $(i,a)(j,b)$ for $ij\in E(B)$ and $(a,b)\neq(1,1)$; identifying $(i,1)$ with $o_i$ and $(i,2)$ with $e_i$, the core is $Z(B)$ together with the edges $o_io_j$ for the nonedges $ij$ of $B$. Their routing theorem \cite[Theorem~6, Section~5, p.~12]{KriesellMohr2022}, applied with the canonical colouring and transversal $\{(i,1)\}$, gives disjoint connected bags $Q_i\ni o_i$ that are adjacent whenever $ij\in E(B)$. For a nonedge $ij$ the edge $o_io_j$ joins $Q_i$ and $Q_j$. Hence the bags form a $K_k$ model whose every branch contains an odd vertex. This uses neither normalization nor any property of $B$; direct constructions are recorded in Appendix~G of the technical companion.

\paragraph{One larger fibre and two doubles}

Let $T$ be the larger fibre and let the doubles be $\{b,u\}$ and
$\{c,v\}$, with $b,c$ odd. Even vertices $u,v$ cannot see singletons and
must see both other core colours; hence the double interface is active,
so $uv,uc,vb$ are edges. Every even $y\in T$ must likewise see both
double colours, and consequently sees all of $b,u,c,v$. Each odd $x\in T$
sees exactly one of $b,u$ and exactly one of $c,v$.

The number of odd $x\in T$ having unequal bits $[xu],[xv]$ is even: modulo
two it equals $d_T(u)+d_T(v)$, since each even $y\in T$ contributes two.
But $T$ has an odd number of odd vertices. Therefore some odd $x$ has
equal bits, and sees either both $u,v$ or both $b,c$. The model
\[
 \{b,v\},\quad\{c,u\},\quad\{x\}
\]
has internal edges $bv,cu$, the crossing $uv$ between the first two
branches, and two crossings from $x$ in either equal-bit case. The three
roots are $b,c,x$.
No all-odd or one-odd restriction on $T$ has slipped into this argument.

\paragraph{Two triples and a double}

Write the double $\{o,e\}$. The even $e$ sees each triple in precisely
two vertices, because it must see both other core colours and has even
degree into a triple. The unique nonneighbour in either triple is odd.
Indeed, in a charge-three triple this is automatic; in a charge-one triple,
both even vertices must see both other core colours and therefore both
vertices of the double, making the unique odd vertex the nonneighbour.
Let the two omitted odd vertices be $a,b$. Both meet $o$ by odd parity
toward the double.

If $ab$ is an edge, $\{a\},\{b\},\{o\}$ is a rooted triangle. If not,
odd degree into the opposite triple gives an edge $az$ with $z\ne b$ and
an edge $bx$ with $x\ne a$. Both $x,z$ meet $e$. The branches
\[
 \{o\},\qquad\{a,z,e\},\qquad\{b,x\}
\]
are disjoint: $a,x$ are distinct vertices of one triple and $b,z$ distinct
vertices of the other. They are connected by $az,ze,bx$, and their three
crossings are $oa,ob,ex$. The roots are $o,a,b$.

\paragraph{One triple and three doubles: charge one}

Let $T=\{t,x,y\}$ with $t$ odd and $x,y$ even. Define
$U=\{i:xe_i\in E\}$ and $V=\{i:ye_i\in E\}$. An even triple vertex
sees either both members or neither member of each double, and
normalization gives $|U|,|V|\ge2$. Even parity of $e_i$ toward $T$ gives
\[
 [te_i]=\mathbf1_{i\in U\triangle V},\qquad
 [to_i]=1-\mathbf1_{i\in U\triangle V}.
\]
Also $S_i=N(e_i)\cap T$ is nonempty exactly for $i\in U\cup V$.
An even double vertex cannot have active degree zero: its only possible
other colour would then be $T$. Thus the active graph is a three-vertex
path or triangle, although most of the following models do not need a
specific active edge.

There are exactly four possibilities up to exchanging $x,y$ and relabelling
the indices. For each, the four branches below are disjoint; every branch
contains one of the distinct odd vertices $t,o_1,o_2,o_3$.
\begin{enumerate}
\item $U=V=\{1,2,3\}$. Use
 $\{t\},\{o_1\},\{o_2,x\},\{o_3,y\}$.
 The nonsingleton branches have internal edges $o_2x,o_3y$.
 Three crossings from $t$ are $to_1,to_2,to_3$; the other three are
 $o_1x,o_1y,o_2y$.
\item $U=V=\{l,m\}$, with omitted $k$. Since $S_k$ is empty, $e_k$
 must see both other double colours; hence $kl,km$ are active. Use
 $\{t\},\{o_k\},\{o_l,x,e_l\},\{o_m,y,e_m\}$.
 Internal paths are $o_l,x,e_l$ and $o_m,y,e_m$.
 Three $t$-crossings use the three odd double vertices. The $o_k$
 crossings use $o_ke_l,o_ke_m$. The remaining crossing is $e_le_m$
 when $lm$ is active, and $o_lo_m$ when it is inactive. This is the
 only normalization-dependent active-edge step in these four models.
\item $U=\{l,m\}$ and $V=\{k,l,m\}$. Use
 $\{t\},\{o_k,y,e_k\},\{o_l\},\{o_m,x\}$.
 Internal edges give path $o_k,y,e_k$ and edge $o_mx$.
 Three crossings from $t$ are $te_k,to_l,to_m$; the three others are
 $yo_l,yo_m,xo_l$.
\item $U=\{i,k\}$ and $V=\{j,k\}$ with distinct $i,j,k$. Use
 $\{t\},\{o_i,x,e_i\},\{o_j,y,e_j\},\{o_k\}$.
 The internal paths go through $x$ and $y$. The $t$ crossings are
 $te_i,te_j,to_k$; the $o_k$ crossings to the two larger branches are
 $o_kx,o_ky$; their remaining crossing is $e_ie_j$ if $ij$ is active
 and $o_io_j$ otherwise.
\end{enumerate}
Equal subsets of a three-set have common size two or three; unequal
subsets of size at least two have sizes two and three or are distinct
two-subsets. Thus this is a proof of exhaustion, not an enumeration
assumption.

\paragraph{One triple and three doubles: charge three, active triangle}

Let $T=\{a,b,c\}$ be all odd. Every $S_i=N(e_i)\cap T$ is empty or a
two-set. If nonempty, write $q_i$ for its omitted vertex: $o_i$ meets
exactly $q_i$ in $T$. If empty, $o_i$ meets all of $T$.

If some $t\in T$ meets all three $o_i$, or meets all three $e_i$, it is
an apex for the rooted triangle
$\{o_1,e_2\},\{o_2,e_3\},\{o_3,e_1\}$; all six crossings for the
resulting $K_4$ are explicit. To prove completeness of this criterion,
let $h$ be the number of nonempty supports. For $h\le1$, an omitted
vertex meets all $o_i$ (with arbitrary choice if $h=0$). For $h=2$, the
same is true if both omissions agree, and different omissions are the
first exception. For $h=3$, equal omissions give the odd-apex case;
exactly two distinct omissions leave a third vertex contained in all
three supports, giving the even-apex case; three distinct omissions are
the second exception.

In the first exception, label $q_1=a,q_2=b,S_3=\varnothing$ and use
\[
 \{c\},\quad\{o_1,a,e_2\},\quad\{o_2,b,e_1\},\quad\{o_3\}.
\]
The nonsingleton paths are $o_1,a,e_2$ and $o_2,b,e_1$. Their mutual
crossing is $e_2e_1$. Three $c$ crossings are $ce_2,ce_1,co_3$;
the remaining two $o_3$ crossings are $o_3a,o_3b$.
In the second exception, label $q_1=a,q_2=b,q_3=c$ and use
\[
 \{a\},\quad\{o_1\},\quad\{b,e_3\},\quad\{c,e_2\}.
\]
Internal edges are $be_3,ce_2$; the six crossings in branch order are
$ao_1,ae_3,ae_2,o_1e_3,o_1e_2,e_3e_2$.
All roots are original current odd vertices, including the triple
vertices that replace some double roots.

\paragraph{One triple and three doubles: charge three, active path}

Label the path $1,2,3$. Normalization forces $S_1,S_3$ to be nonempty
because the two endpoint even vertices already see just one other double
colour. Their omissions $q_1,q_3$ are therefore defined; $S_2$ may be
empty. If $q_1=q_3=a$, use
\[
 \{o_1\},\quad\{o_3\},\quad\{a\},\quad\{o_2,e_1,e_2\}.
\]
The last branch is connected by $o_2e_1,e_1e_2$. The first three are
pairwise adjacent by $o_1o_3,o_1a,o_3a$. The last branch meets $o_1,o_3$
through $e_2$, and meets $a$ through exactly one of $o_2,e_2$, because
$a$ is odd. This works for every centre support.

Otherwise set $q_1=a,q_3=b$, with third triple vertex $c$.
The empty centre and its three possible omissions give all remaining
cases:
\begin{enumerate}
\item If $S_2=\varnothing$, use
 $\{o_1\},\{o_3\},\{a,o_2,b\},\{c,e_1,e_2\}$.
 The larger branches have paths $a,o_2,b$ and $c,e_1,e_2$.
 Their crossing is $o_2e_1$; the endpoint singletons meet each other
 through $o_1o_3$, meet the first larger branch through $a,b$, and
 meet the second through $e_2$.
\item If $q_2=a$, use
 $\{o_1\},\{o_3\},\{a,o_2,e_1,b\},\{c,e_2,e_3\}$.
 The first larger branch has path $a,o_2,e_1,b$, and the second has
 edges $ce_2,ce_3$. The endpoint crossings are exactly as in the
 previous case. The two larger branches meet through $e_1e_2$.
 The case $q_2=b$ is literally the same construction after exchanging
 endpoints $1,3$ and vertices $a,b$; no support assumption changes.
\item If $q_2=c$, use
 $\{c\},\{b,e_1\},\{a,e_2,e_3\},\{o_2\}$.
 Internal edges are $be_1,ae_2,ae_3$. The six crossings in branch
 order are $ce_1,ce_3,co_2,e_1e_2,e_1o_2,e_3o_2$.
\end{enumerate}
Every branch contains a current odd vertex, and the displayed branch
sets in each model are disjoint.

The cases exhaust the normalized core. Attach the singleton fibres and lift the models through their connected supports. This proves the theorem.
\end{proof}

\section{The excess-six theorem}
\label{sec:excess-six}

We prove the positive statement with respect to the odd set of a specified
witness. In particular, the root condition survives passage from a weak
oddomorphism witness to its host graph.

\begin{theorem}[Excess six]
\label{thm:excess-six}
Let $n\geq2$, and let $W\subseteq F$ be a specified oddomorphism witness
for $K_n$, with specified odd set $P\subseteq V(W)$. If
$|V(W)|\leq n+6$, then $W$ (and hence $F$) has a $K_n$ minor whose
every branch set meets $P$. The branch sets are not required to respect the fibres.
\end{theorem}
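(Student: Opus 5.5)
The plan follows the architecture of \cref{thm:excess-five}. First normalize by \cref{lem:normalization}, which can only decrease the order and preserves rooting through the supports $C_x$. Then strip singleton fibres by \cref{lem:attachment}. It suffices to produce a rooted clique model on the normalized core of excess $r=\sum_i(|V_i|-1)\le6$. If $r\le5$, \cref{thm:excess-five} applies directly, so we may assume $r=6$.

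If all vertices are odd, every fibre has odd size, and the nonsingleton patterns are $(7)$, $(5,3)$ and $(3,3,3)$ (excesses $6$, $(4,2)$, $(2,2,2)$). These are handled as in the excess-five proof. A single fibre is trivial. For two fibres, any edge between them gives the two rooted branches. For three all-odd triples, every vertex has odd degree into each other triple, so we can pick a vertex $a$ and neighbours forming a rooted triangle; a short pigeonhole over omitted vertices, as in the charge-three analysis, should suffice.

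Otherwise some even vertex exists, forcing at least three nonsingleton fibres. The partitions of $6$ into at least three parts $\ge1$, with sizes matching the allowed fibre types, are $(2,2,2)$, $(3,2,1)$, $(4,1,1)$, $(2,2,1,1)$, $(3,1,1,1)$, $(2,1,1,1,1)$ and $(1,1,1,1,1,1)$. These are the seven fibre patterns: a $7$-, $5$-, or $4$-vertex fibre, a triple, a quadruple with doubles, and so on. The largest fibre has size at most five unless there are only two nonsingleton fibres, which normalization excludes.

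The all-double pattern with six doubles is dispatched exactly as before. \Cref{lem:double} identifies the core with $Z(B)$ plus the edges $o_io_j$ for the nonedges of $B$, and the Kriesell--Mohr routing theorem for $|B|\le6$ gives bags $Q_i\ni o_i$.

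For the mixed patterns, I would mimic the structural arguments of the previous section.
\begin{itemize}
\item Even vertices in doubles see both members of their supported doubles.
\item An even vertex's degree into a larger fibre is even.
\item Parity counting, as in the ``equal bits'' argument, produces an odd vertex of a large fibre with a controlled neighbourhood.
\end{itemize}
For one large fibre $T$ plus doubles, I would first try to use an odd $x\in T$ as a singleton branch. I would then reroute the doubles into bags $\{o_i,\dots\}$, using even vertices of $T$ as connectors, just as $x,y$ were used in the charge-one triple case.

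A second strategy to try first for each mixed pattern is a reduction. Contract an even vertex $y$ of a large fibre together with an even number of its neighbours (parity-preserving merging, exactly as in the normalization star contraction, but now applied when $y$ sees several fibres), or contract a pair of vertices of $T$ through a double. The aim is to obtain a witness of smaller excess with the same rooted lift property, after which \cref{thm:excess-five} applies. Where such a merge preserves \eqref{eq:parity}--\eqref{eq:charge}, the whole case collapses.

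The main obstacle I expect is the patterns with two or three non-double fibres, such as a triple, a triple and two doubles, or a quadruple with two doubles. There the support sets $S_i$ and the omitted vertices interact across several large fibres, and no single contraction preserves parity. I would handle these by enumerating the even-vertex support patterns, as in the $U,V$ case analysis. For each class I would give explicit four- or five-branch models, using normalization to force the needed active edges. I would prove exhaustion by the same kind of subset-size argument used there, rather than by computer.
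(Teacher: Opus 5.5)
Your reduction to the seven excess-six patterns and your six-double case agree with the paper, but the proposal stops where the proof begins. For the six mixed patterns $(5,2,2)$, $(4,3,2)$, $(3,3,3)$, $(4,2,2,2)$, $(3,3,2,2)$ and $(3,2^4)$ you give intentions (``mimic'', ``try first'', ``enumerate the support patterns'') rather than arguments, and these cases are essentially the whole content of the theorem. Moreover, the one new tool you propose does not work. Contracting an even vertex $y$ that sees several fibres together with some of its neighbours, or two vertices of $T$ through a double, merges vertices of different fibres. In the star contraction of \cref{lem:normalization} the contracted set is $\{v\}\cup N(v)$ with $N(v)\subseteq V_i$, so the new vertex can join $V_i$. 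In your merge the new vertex generally keeps neighbours in every fibre it could join, so it fits into no independent fibre. Also, any fibre that does not receive it loses the charge $\sum p(u)$ of its merged vertices, and this may be odd. So no case can be collapsed to \cref{thm:excess-five} this way.

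The paper closes the mixed cases with devices your plan lacks.
\begin{itemize}
\item \emph{Absorption}: a connected branch containing a whole fibre is adjacent to every disjoint branch with an odd vertex. This drives models such as $D\cup\{g\}$ and $T\cup\{e_3,e_4\}$.
\item \emph{Same-fibre twin deletion}, the only excess-lowering reduction actually used. In $(4,2,2,2)$ with $A=\{a_1,a_2,a_3,g\}$, if the direct template fails for every supported index, then all $n(i)$ vanish and the three odd $a_h$ have identical neighbourhoods. Deleting two of them drops the excess to four.
\item Dirac's theorem that minimum degree three forces a $K_4$ subdivision. It is applied to the simple quotient after contracting $d_1x$ and $e_1y$, in the matching- and cross-interface subcases of $(3,3,2,2)$ with both triples all odd.
\item For $(3,2^4)$, a classification of the active graph on the four double indices into the seven types $2K_2$, $P_4$, $K_{1,3}$, $C_4$, paw, $K_4-e$, $K_4$. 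Each is treated for both triple charges.
\end{itemize}
Your single-large-fibre template (an odd $x\in T$ as a singleton, doubles rerouted through even vertices of $T$) does not even apply to $(3,3,3)$ or $(4,3,2)$.

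Finally, in the all-odd $(3,3,3)$ case a triangle on a vertex and two of its neighbours need not exist. The edges $a_ib_i$, $b_ic_i$, $a_ic_{i+1}$ (indices mod $3$) give an all-odd witness that is a $9$-cycle. The correct argument is that every vertex has a neighbour in both other triples, so there is a cycle. Three consecutive arcs of it form a rooted $K_3$.
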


We use the support-tracked normalization lemma and the original-rooted
excess-five theorem established earlier. The proof below supplies every
additional fibre-pattern case at excess six. The only further general
minor theorem used is the classical fact that a finite simple graph of
minimum degree at least three contains a subdivision of $K_4$.

\subsection{Interfaces and the reduction to seven patterns}
\label{subsec:six-interfaces}
Throughout the local arguments, a normalized witness has independent
fibres $V_i$ and an indicator $p:V(W)\to\{0,1\}$ satisfying
\[
 |N(v)\cap V_j|\equiv p(v)\pmod2\quad(j\ne i, v\in V_i),
 \qquad \sum_{v\in V_i}p(v)\equiv1\pmod2.
\]
Normalized means that every even vertex has neighbours in at least two
other fibres. Write a double fibre as $D_i=\{o_i,e_i\}$, with $o_i$
odd. An outside odd vertex has exactly one neighbour in $D_i$; an
outside even vertex sees both members or neither. For two doubles,
solving the four parity equations gives precisely
\begin{equation}
\label{eq:six-double-interface}
 \begin{array}{c|cc}
  &o_j&e_j\\\hline
 o_i&1-b_{ij}&b_{ij}\\
 e_i&b_{ij}&b_{ij}
 \end{array},\qquad b_{ij}\in\{0,1\}.
\end{equation}
The \emph{active graph} $B$ has edge $ij$ when $b_{ij}=1$.
An active pair therefore induces the path $o_i-e_j-e_i-o_j$.

We shall repeatedly use the following absorption observation. If a
connected branch contains a whole fibre $V_i$, then it is adjacent to
any disjoint branch containing an odd vertex: that odd vertex lies
outside $V_i$ and has positive odd degree into $V_i$. Thus, after
connectivity and disjointness are checked, a whole-fibre branch needs
no further adjacency check against odd-rooted branches.

We also use same-fibre twin deletion (cf.~Neuen and Seppelt~\cite[Lemma~5.28 in arXiv version~2]{NeuenSeppelt2026}). Two vertices $u,u'$ of one fibre with equal
open neighbourhoods have the same indicator, since $n\geq2$ and their
degrees into another fibre agree. Deleting both changes every relevant
degree by zero or two and changes their fibre charge by $2p(u)$.
All fibre charges remain odd, so no fibre disappears. The resulting
witness has its original odd set restricted to the surviving vertices.
In particular, if two vertices can be deleted from an excess-six
witness this way, the excess-five theorem applies without requiring
that the resulting witness still be normalized.

After normalization, discard singleton fibres temporarily. A singleton
vertex is odd and is adjacent exactly to the odd vertices outside its
fibre. Singleton vertices therefore extend any odd-rooted core model.
They have no even neighbours, so normalization is preserved in the
core. If the remaining excess is at most five, the preceding theorem
already applies.

Suppose that the remaining excess is six. If every vertex is odd,
the nonsingleton sizes are $(7)$, $(5,3)$ or $(3,3,3)$. One vertex,
an edge, or a cycle partitioned into three consecutive nonempty pieces,
respectively, supplies the required core model. In the last case every
vertex has a neighbour in each of the other two fibres, so a cycle exists.
If an even vertex remains, its own fibre and two other fibres are
nonsingletons. The partitions of six into at least three positive parts
are exactly
\[
 1^6,\quad2+1^4,\quad2+2+1+1,\quad3+1+1+1,\quad
 2+2+2,\quad3+2+1,\quad4+1+1.
\]
Consequently the remaining sizes are
\begin{equation}
\label{eq:six-patterns}
 (2^6),\quad(3,2^4),\quad(3,3,2,2),\quad(4,2,2,2),\quad
 (3,3,3),\quad(4,3,2),\quad(5,2,2).
\end{equation}
The following lemmas cover all seven patterns.

\subsection{Six doubles}
\label{subsec:six-doubles}

\begin{lemma}
\label[lemma]{lem:six-doubles}
A normalized witness with six double fibres has an odd-rooted $K_6$ minor.
\end{lemma}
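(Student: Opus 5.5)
The plan is to reuse verbatim the all-double argument from the proof of \cref{thm:excess-five}. That argument never depended on the number of doubles being at most five; it depended only on that number being at most six, which is the range of the routing theorem of Kriesell and Mohr.

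First, I apply \cref{lem:double} to every pair among the six double fibres $D_i=\{o_i,e_i\}$, $1\le i\le 6$. This shows that the witness is completely determined by its active graph $B$ on $\{1,\dots,6\}$:
\begin{itemize}
\item an active pair $ij$ contributes exactly the three edges $o_ie_j,e_ie_j,e_io_j$;
\item an inactive pair contributes exactly the edge $o_io_j$.
\end{itemize}
Identifying $o_i$ with $(i,1)$ and $e_i$ with $(i,2)$, the witness is therefore $Z(B)$ together with the edges $o_io_j$ for the nonedges $ij$ of $B$. Here $Z(B)$ is the doubled graph of Kriesell and Mohr, with edges $(i,a)(j,b)$ for $ij\in E(B)$ and $(a,b)\ne(1,1)$. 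The translation is exact because \eqref{eq:six-double-interface} gives precisely the three non-$(1,1)$ edges on active pairs.

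Second, since $|V(B)|=6$, I invoke the routing theorem \cite[Theorem~6, Section~5, p.~12]{KriesellMohr2022}. I apply it to $Z(B)$ with its canonical colouring and the transversal $\{(i,1)\}$. It yields pairwise disjoint connected bags $Q_1,\dots,Q_6$ with $o_i\in Q_i$, such that $Q_i$ and $Q_j$ are adjacent whenever $ij\in E(B)$. For each nonedge $ij$ of $B$, the extra edge $o_io_j$ joins $Q_i$ and $Q_j$. Hence the bags form a $K_6$ model in which every branch contains the odd vertex $o_i$. If the witness arose by normalization, \cref{lem:normalization} lifts this model to the original odd set.

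Normalization ($\delta(B)\ge2$) is not needed for this argument, although it is available. The only point that needs checking is that the hypotheses of Theorem~6 match: the theorem must be stated for all graphs $B$ on at most six vertices, and it must give bags rooted at the prescribed transversal rather than at an arbitrary one. This hypothesis match is the main obstacle.

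If I wanted a self-contained argument instead, the fallback would be a case analysis on $B$ with $\delta(B)\ge2$ on six vertices. I would first handle the case where $B$ has a vertex of degree at most $5$ with a common nonneighbour structure, by merging $o_i$ with $e_j$ along active edges as in the triangle model $\{o_1,e_2\},\{o_2,e_3\},\{o_3,e_1\}$. I would then use a Hamiltonian or perfect-matching structure in $B$ to pair each $o_i$ with some $e_{\sigma(i)}$ along active edges. Finally, I would verify the remaining crossings through $e_ie_j$ for active pairs and $o_io_j$ for inactive pairs. This fallback is where the real work would lie, and it is what the paper defers to Appendix~G of the companion.
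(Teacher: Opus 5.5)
Your proposal is correct and is essentially the paper's proof. The core is $Z(B)$ together with the odd--odd edges of the inactive pairs; Kriesell and Mohr's Theorem~6 with the transversal $\{(i,1)\}$ gives bags $Q_i\ni o_i$ adjacent on active pairs, the edges $o_io_j$ handle the nonedges, and normalization is unused (the paper likewise defers a self-contained argument to Appendix~G, using independent-set, two-singleton and matching constructions on $B$ with $\delta(B)\ge2$, so your fallback sketch is not needed).
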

\begin{proof}
This is the case of six doubles of Kriesell and Mohr's routing theorem \cite[Theorem~6, Section~5, p.~12]{KriesellMohr2022}, exactly as in the paragraph on all-double cores in the proof of \cref{thm:excess-five}: the witness is $Z(B)$ for its active graph $B$ on six indices together with the odd--odd edges of the inactive pairs, their theorem gives disjoint connected bags $Q_i\ni o_i$ adjacent for every active pair, and the odd--odd edges join the remaining pairs. Normalization is not needed. A direct proof for the normalized case, which does not use \cite{KriesellMohr2022}, is given in Appendix~G of the technical companion.
\end{proof}

\subsection{Three nonsingleton fibres}
\label{subsec:six-three-fibres}
In a normalized three-fibre core, every even vertex sees both other
fibres. In particular it sees both members of a double and exactly
two members of a triple.

\begin{lemma}
\label[lemma]{lem:six-three-fibres}
The patterns $(5,2,2)$, $(4,3,2)$ and $(3,3,3)$ have odd-rooted core
clique minors.
\end{lemma}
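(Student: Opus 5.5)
The three patterns have exactly three nonsingleton fibres, so the target is an odd-rooted $K_3$: three disjoint connected branches, each containing an odd vertex and pairwise adjacent. A triangle-shaped structure suffices. I will use one uniform strategy: find an odd vertex $z$ in one fibre together with two odd vertices $a,b$ in the other two fibres, and connect them by paths that avoid one another. The whole-fibre absorption observation supplies the crossings cheaply.

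\textbf{Step 1 (reduction to a cycle through the three fibres).} An odd-rooted $K_3$ exists whenever there is a cycle $C$ in the core that contains at least three odd vertices lying in three different fibres. Cut $C$ into three consecutive arcs, each containing one of them; consecutive arcs are adjacent along $C$. So it suffices to produce such a cycle, or to fall back on an explicit three-branch model.

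\textbf{Step 2 (degrees).} Every odd vertex has odd, hence positive, degree into each of the other two core fibres. Every even vertex sees both other fibres (normalization with three fibres) and has even positive degree into each, so it has degree at least four. If each fibre contains an odd vertex of degree at least two in total, the subgraph induced on the odd vertices together with the even vertices has minimum degree at least two. Any cycle there automatically passes through at least two fibres, since the fibres are independent.

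\textbf{Step 3 (building the cycle).} Fix an odd $a\in V_1$. It has neighbours $b\in V_2$ odd or even and $c\in V_3$. I route as follows. Take a neighbour $x$ of $a$ in $V_2$ and a neighbour $y$ of $a$ in $V_3$. If some vertex of $V_2$ meets some vertex of $V_3$ by a path avoiding $a$ from $x$ to $y$, we get a cycle through $a$ meeting both other fibres. Such a path exists because $V_2\cup V_3$ induces a bipartite graph in which every vertex has positive degree. The remaining difficulty is to ensure that the vertices used in $V_2$ and $V_3$ include odd ones. I will handle this by swapping an even endpoint for an odd one: an even vertex of $V_2$ on the cycle sees two vertices of $V_3$ and two of $V_1$, giving room to reroute. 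Alternatively, if the cycle through $V_2$ contains only even vertices, I will attach the branch $V_2\cap C$ plus an odd pendant neighbour.

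\textbf{Step 4 (per pattern).} For $(3,3,3)$ with some even vertex, and for $(4,3,2)$ and $(5,2,2)$, I will use the double's structure where a double is present. With $D=\{o,e\}$, the even vertex $e$ sees two vertices of each larger fibre, or an even number at least two. Charge parity in a larger fibre $V_i$ gives an odd $a\in V_i$, and $a$ meets exactly one of $o,e$. If the odd vertices $a\in V_1$ and $a'\in V_2$ are adjacent and both see $o$, then $\{o\},\{a\},\{a'\}$ is a triangle. Otherwise I mimic the two-triples-and-a-double model from \cref{thm:excess-five}: the branches are $\{o\}$, $\{a,z,e\}$ and $\{b,x\}$, with $z,x$ chosen by odd degree into the opposite fibre. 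The new feature is that the larger fibres may contain odd vertices that miss $e$ but see $o$, and this choice needs a parity count as in the ``one larger fibre and two doubles'' argument. Specifically, the number of odd vertices of $V_i$ seeing $e$ has the parity of $d_{V_i}(e)$, which is even, while the charge of $V_i$ is odd. So some odd vertex of $V_i$ sees $o$, and symmetrically in $V_j$. Then $o$ is a common neighbour of odd $a\in V_i$ and $a'\in V_j$. The branch $\{a\}$ together with the branch obtained by extending $a'$ along its odd-degree neighbour in $V_i\setminus\{a\}$ (or $V_i$ if $aa'$ is an edge) gives the model. For $(3,3,3)$ with no double, I will use Step 1 directly, starting from an odd vertex adjacent to an odd vertex.

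\textbf{Main obstacle.} The hardest case is the $(3,3,3)$ pattern with charge-one fibres, where each fibre has a single odd vertex. Here the three odd roots may be pairwise nonadjacent and connected only through even vertices, and disjointness of the connecting paths must be checked by hand. I would first try the explicit model $\{a,x\},\{b,y\},\{c\}$ with $x,y$ even. Then I would use the fact that each even vertex sees exactly two vertices in each other triple, so it misses one, and enumerate the omissions as in the charge-three case of \cref{thm:excess-five}.
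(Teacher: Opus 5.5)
Your proposal does not prove the lemma. Both general steps rest on false claims, and the case analysis that makes up the real content is only announced.

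\textbf{Step~3.} Positive degrees do not make the bipartite graph on $V_2\cup V_3$ connected. For two charge-one triples, the interface consisting of the single odd--odd edge $o_io_j$ plus the complete even--even block satisfies all parity equations. It occurs in normalized $(3,3,3)$ cores, yet $\{o_i,o_j\}$ is a component. Suppose the $V_2$--$V_3$ interface has this form, the only neighbour of $a$ in $V_2$ is the odd vertex of that component, and its neighbour in $V_3$ is even. Then no $x$--$y$ path exists inside $V_2\cup V_3$. Forcing the cycle through an odd vertex of each fibre is also left to an unspecified ``rerouting''.

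\textbf{Step~4.} The parity claim is wrong, because $d_{V_i}(e)$ also counts even vertices of $V_i$. In a normalized three-fibre core every even vertex of a larger fibre sees both $o$ and $e$. Take $(4,3,2)$ with $A=\{a,g_1,g_2,g_3\}$ and $a$ odd. Then $o$ sees all three $g_l$, so $d_A(o)=3$ and $a$ sees $e$. Thus no odd vertex of $A$ sees $o$, and the common-neighbour idea collapses. Your fallback model fails exactly where it is needed. The branches $\{a\}$ and $\{a',w\}$ with $w\in V_i$ can only be joined by $aa'$, since $a$ and $w$ lie in the same independent fibre.

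\textbf{Coverage.} The pattern $(5,2,2)$ has only one larger fibre, so your two-larger-fibre template does not apply to it. In $(3,3,3)$ with charge-one triples there may be no odd--odd edge at all, so ``starting from an odd vertex adjacent to an odd vertex'' is unavailable. Your final paragraph is a plan for this hardest case, not an argument.

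\textbf{What is missing.} You need whole-fibre absorption and exact parity bookkeeping, applied case by case. A connected branch containing a whole fibre is adjacent to every disjoint branch that has an odd vertex. Branches such as $D\cup\{g\}$ with $g$ even, or a triple covered by the stars of two even vertices seeing distinct two-subsets of it, therefore supply most crossings for free.
\begin{itemize}
\item For $(5,2,2)$ with doubles $\{b,u\},\{c,v\}$: normalization forces their interface active and makes every even vertex of $T$ see both $u$ and $v$. The number of odd $t\in T$ with $[tu]\ne[tv]$ is then $\equiv d_T(u)+d_T(v)\equiv0$. Since $T$ has an odd number of odd vertices, some odd $t$ has equal bits, and $\{b,v\},\{c,u\},\{t\}$ works.
\item For $(4,3,2)$: an odd--odd edge $at$ between the larger fibres gives $\{a\},\{t\},D\cup\{g\}$. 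Otherwise $T$ has charge one, and counting the $T$-neighbourhoods of the even vertices of $A$ yields a model $D\cup\{y\},\{a,x\},\{t,g\}$.
\item For $(3,3,3)$: you need the classification of charge-one triple interfaces. If the odd--odd entry is one, the even block is either complete, or the odd row and column are full and the even block is a perfect matching. If it is zero, there are unique even neighbours $c_{ij}$, and every even--even edge is present except $c_{ji}c_{ij}$. You then split according to whether the two pointers into each triple agree.
\end{itemize}
Your cycle criterion may serve in some of these subcases, but you would have to prove that such a cycle exists in each one.
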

\begin{proof}
\emph{Pattern $(5,2,2)$.} In fact the argument works for any odd-charge
fibre $T$ and two doubles $\{b,u\},\{c,v\}$, with $b,c$ odd.
Normalization forces their interface active. Every even vertex of
$T$ sees $u,v$. The sum of the two even degrees of $u,v$ into $T$
shows that an even number of odd vertices of $T$ have unequal
adjacency bits to $u,v$. Since the number of odd vertices is odd,
some odd $t$ has equal bits. Thus $t$ sees either $u,v$ or $b,c$,
and
\[
 \{b,v\},\quad\{c,u\},\quad\{t\}
\]
is the required model, with the first two branches joined by $uv$.

\emph{Pattern $(4,3,2)$.} Write the fibres as $A,T,D=\{o,e\}$.
Every even vertex of $A\cup T$ connects the whole double $D$.
If there is an odd--odd edge $at$ between $A,T$, choose any even
$g\in A$ and use $\{a\},\{t\},D\cup\{g\}$.

Otherwise $T=\{t,x,y\}$ with unique odd vertex $t$; each odd
vertex of $A$ sees exactly one of $x,y$. If $A$ has three odd
vertices and one even $g$, then $tg$ is forced, and $g$ sees one
of $x,y$, say $x$. Even parity of $x$ into $A$ forces an odd
neighbour $a$. Use
\[
 D\cup\{y\},\quad\{t,g\},\quad\{a,x\},
\]
whose last two branches meet by $gx$. If instead
$A=\{a,g_1,g_2,g_3\}$ with $a$ odd, let $x$ be the even
neighbour of $a$ in $T$. Count the $g_i$-neighbourhoods
$\{t,x\},\{t,y\},\{x,y\}$ by $h,k,\ell$. We have
$h+k+\ell=3$, $h+k$ odd and $1+h+\ell$ even. If $h=0$,
both $k,\ell$ are odd, a contradiction. Choose $g_i$ seeing
$t,x$, and use $D\cup\{y\},\{a,x\},\{t,g_i\}$.

\emph{Pattern $(3,3,3)$ with at least one all-odd triple.}
The all-odd case was handled above. If $T_0$ is all odd and
$T_j=\{o_j,x_j,y_j\}$ has one odd vertex, the two two-element
neighbourhoods of $x_j,y_j$ in $T_0$ intersect in a vertex $v_j$.
Odd parity forces $v_jo_j$ as well, so $T_j\cup\{v_j\}$ is
connected. With two all-odd triples, choose an odd edge between
one unused vertex of $T_0$ and the other all-odd triple; these two
singletons and $T_j\cup\{v_j\}$ give the model.

With just one all-odd triple $T_0$, choose such connectors $v_1,v_2$.
If some $u\in T_0\setminus\{v_1\}$ sees $o_2$, use
$\{u\},\{o_2\},T_1\cup\{v_1\}$; also allow the symmetric
choice. If neither exists, positive odd degree gives
$N(o_2)\cap T_0=\{v_1\}$ and $N(o_1)\cap T_0=\{v_2\}$.
Since $v_1o_1$ is an edge, $v_1=v_2=:v$. An edge $o_1o_2$
now gives a triangle. Otherwise the two two-element sets
$N(x_2)\cap T_1,N(y_2)\cap T_1$ must differ: if equal, their
omitted vertex cannot be even, since an even vertex needs two
neighbours in $T_2$; if it is $o_1$, odd parity forces the excluded
edge $o_1o_2$. The distinct sets intersect and cover $T_1$,
so $T_1\cup\{x_2,y_2\},\{v\},\{o_2\}$ is a model.

\emph{Pattern $(3,3,3)$ with one odd vertex per triple.}
Write $T_i=\{o_i,x_i,y_i\}$. Every even row and column of an
interface matrix has two ones. If its odd--odd entry is one, either
it is the only odd-incident edge and the even--even block is complete,
or the odd row and column are full and the even--even block is a
perfect matching. If the odd--odd entry is zero, each odd vertex has
a unique even neighbour in the opposite triple. Denote the neighbour
of $o_i$ in $T_j$ by $c_{ij}$. All four even--even edges occur
except $c_{ji}c_{ij}$. These are all possibilities, directly from
row and column parity.

If $o_io_j$ is an edge, and two even vertices of $T_i\cup T_j$
see different two-subsets of the third triple $T_k$, then these two
distinct two-subsets intersect and together cover $T_k$. The two stars
at these even vertices therefore form a connected set containing the
whole fibre $T_k$. Use this set as one branch, with $\{o_i\}$ and
$\{o_j\}$ as the other two branches; $o_io_j$ joins the latter, and
absorption gives the two remaining adjacencies. If all four see the same two-subset, the
omitted vertex must be $o_k$ (an omitted even vertex would have
at most one neighbour in $T_i$). Odd parity then forces
$o_ko_i,o_ko_j$, giving a triangle.

Hence suppose all odd--odd edges are absent. In each triple the
two incoming pointers either agree, denoted E, or differ, denoted D.
Swapping the two even vertices independently and permuting triples
leaves four cases, according to the number of D's. A bar denotes
the other even vertex of the same triple.
\begin{itemize}
\item With no D's, let the common pointer in $T_i$ be $p_i$.
Use $\{o_0\},\{o_1,p_2\},\{o_2,p_0,p_1\}$.
The crossings are $o_0p_2,o_0p_1,o_1p_0$.
\item With three D's, use
$\{o_0,c_{01}\},\{o_1,c_{12}\},\{o_2,c_{20}\}$.
The crossings $c_{01}c_{12},c_{12}c_{20},c_{20}c_{01}$ avoid
the unique forbidden pair in each interface because its incoming
pointers differ.
\item With only $T_0$ of type D, put
$\alpha=c_{10}\ne\beta=c_{20}$ and let $p_1,p_2$ be the common
pointers elsewhere. Use
\[
 \{o_0\},\quad\{o_1,\alpha,\bar p_2,p_1\},\quad
 \{o_2,\beta,\bar p_1,p_2\}.
\]
The latter two contain the paths
$o_1-\alpha-\bar p_2-p_1$ and
$o_2-\beta-\bar p_1-p_2$; their crossings with each other and
the singleton are $\alpha\bar p_1,o_0p_1,o_0p_2$.
\item With only $T_2$ of type E, put
$c_{02}=c_{12}=p_2$, $\beta=c_{20}\ne c_{10}$ and
$\gamma=c_{01}\ne c_{21}$. Use
$\{o_0\},\{o_1,p_2\},\{o_2,\beta,\gamma\}$.
The third branch contains $o_2-\beta-\gamma$, and the crossings
are $o_0p_2,o_0\gamma,p_2\gamma$.
\end{itemize}
Each listed even--even edge differs from the sole forbidden pointer
pair of its interface. The branches are disjoint and contain the
three distinct odd vertices, completing the proof.
\end{proof}

\subsection{One four-vertex fibre and three doubles}
\label{subsec:six-four-triple-double}
\begin{lemma}
\label[lemma]{lem:six-four-triple-double}
The pattern $(4,2,2,2)$ has an odd-rooted $K_4$ minor.
\end{lemma}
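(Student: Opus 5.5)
We split by the charge of the four-vertex fibre $A$ and reduce first by same-fibre twin deletion. If two vertices of $A$ have equal open neighbourhoods, we delete both. What remains has pattern $(2,2,2,2)$ on the core, so excess at most five, and \cref{thm:excess-five} gives an odd-rooted model with the original roots. Hence we may assume that the four vertices of $A$ have pairwise distinct neighbourhoods.

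In the normalized core the local structure is rigid. An even vertex $g\in A$ sees each double $D_i$ in both members or in neither, and by normalization it sees at least two doubles. Its neighbourhood is therefore $\bigcup_{i\in U_g}D_i$ for some $U_g\subseteq\{1,2,3\}$ with $|U_g|\ge2$, so there are only four possibilities. An odd vertex $a\in A$ chooses exactly one of $o_i,e_i$ in each double. Conversely:
\begin{itemize}
\item each $e_i$ has even degree into $A$ and each $o_i$ has odd degree into $A$;
\item an $e_i$ of active degree at most one must see $A$;
\item the doubles interact only through \eqref{eq:six-double-interface}, so $B$ is an arbitrary graph on three indices, with the normalization constraint on $e_i$ satisfied either inside $B$ or through $A$.
\end{itemize}

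\textbf{Charge three: $A=\{a_1,a_2,a_3,g\}$.} First, if some odd $a_j$ sees all three $o_i$, we restrict to the three doubles, which form a witness to $K_3$ by \cref{lem:attachment}. The all-double case of \cref{thm:excess-five} then gives bags $Q_i\ni o_i$, and $\{a_j\}$ completes a rooted $K_4$. Similarly, if $B$ is a triangle and some $a_j$ sees all $e_i$, the triangle $\{o_1,e_2\},\{o_2,e_3\},\{o_3,e_1\}$ plus $\{a_j\}$ works.

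Otherwise we use the parity counts. The degree of $e_i$ into $A$ is even and $g$ contributes $[i\in U_g]$, which fixes the parity of the number of odd $a_j$ choosing $e_i$. This, together with distinctness of the neighbourhoods, should force an odd vertex $a$ and an index $k$ with a short list of choice patterns. For these I would write explicit models of the shape $\{a\},\{o_k\},\{o_l,g,e_l\},\{o_m,\ldots\}$. The whole-double branches $D_i\cup\{g\}$ are especially useful here, since by absorption they need no adjacency checks against odd-rooted branches.

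\textbf{Charge one: $A=\{a,g_1,g_2,g_3\}$.} This is the analogue of the charge-one triple case in \cref{thm:excess-five}. The three even vertices have distinct sets $U_{g_1},U_{g_2},U_{g_3}$ among the four allowed ones, so up to relabelling there are only two configurations: the omitted set is $\{1,2,3\}$ or a two-subset. Parity of $e_i$ into $A$ then determines $[ae_i]$, and hence $[ao_i]$, exactly as $[te_i]=\mathbf1_{i\in U\triangle V}$ did before. For each configuration, and for $B$ a path or triangle where needed, I would write down models patterned on items 1--4 of that proof: $\{a\}$ as a singleton, doubles threaded through one or two $g$'s, and the last crossing supplied by $e_le_m$ or $o_lo_m$ according to whether $lm$ is active.

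\textbf{Expected obstacle.} I expect the charge-three case to be the main difficulty. There the odd vertices have eight choice patterns each, and it is not obvious that distinctness plus parity leaves few enough configurations. My first attempt would be to show that, failing the apex criteria above, two odd vertices make different choices on some double $D_k$. This gives a connector like $v_j$ in the $(3,3,3)$ argument: a connected set $D_k\cup\{a_j,a_{j'}\}$, or $D_k\cup\{g\}$, containing a whole fibre. That set is used as one absorbing branch, and the remaining task is to find a rooted triangle among the other two doubles and one odd vertex of $A$.
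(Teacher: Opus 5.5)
Your twin-deletion reduction is sound. Your charge-one case is only planned, but it closes immediately without imitating items 1--4. In both of your configurations every double is seen by some $g_j$, and each $g_j$ misses at most one double. So Hall's condition gives distinct $g_{\sigma(i)}$ seeing $D_i$, and $D_1\cup\{g_{\sigma(1)}\}$, $D_2\cup\{g_{\sigma(2)}\}$, $D_3\cup\{g_{\sigma(3)}\}$, $\{a\}$ is a rooted $K_4$ by absorption alone.

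The genuine gap is the charge-three case $A=\{a_1,a_2,a_3,g\}$. Beyond the two apex criteria nothing is proved there: you do not show that parity and distinctness ``should force'' a short list. Moreover, the one concrete mechanism you offer does not work, because the set $D_k\cup\{a_j,a_{j'}\}$ is never connected. The vertices $a_j,a_{j'}$ lie in the independent fibre $A$, the vertices $o_k,e_k$ lie in the independent fibre $D_k$, and an odd vertex sees exactly one member of $D_k$. So the only edges are $a_jo_k$ and $a_{j'}e_k$. The analogy with the connector $v_j$ of the $(3,3,3)$ argument fails because any vertex joining $o_k$ to $e_k$ must be even: it is $g$ when $g$ sees $D_k$, or $e_l$ with $kl$ active.

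The missing idea is to use both kinds of even connector at once. Let $S$ be the set of doubles seen by $g$ and put $n(i)=|N(o_i)\cap\{a_1,a_2,a_3\}|$. Your parity count says exactly that $n(i)$ is even for $i\in S$ and odd for $i\notin S$.

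\emph{The construction.} Take $i\in S$ with remaining indices $j,k$. If $jk$ is active and some odd $a$ sees $o_k$, then $D_i\cup\{g\}$, $D_j\cup\{e_k\}$, $\{a\}$, $\{o_k\}$ is a rooted $K_4$. Both nonsingletons contain whole doubles, so absorption supplies five of the six adjacencies, and the sixth is the edge $ao_k$.

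\emph{Failure analysis.} Suppose the construction fails for every $i\in S$ and both orders of $j,k$. Then each complementary pair $jk$ is inactive or has $n(j)=n(k)=0$.
\begin{itemize}
\item If $S=\{1,2\}$, then $n(3)$ is odd, hence positive, which forces $13$ and $23$ to be inactive. So $e_3$ sees at most the fibre $A$, contradicting normalization.
\item If $S=\{1,2,3\}$, every $e_i$ sees $g$ and so needs an active neighbour. Hence every index lies in an active pair, and all $n(i)$ vanish. Then $a_1,a_2,a_3$ all have core neighbourhood $\{e_1,e_2,e_3\}$, so they are twins, which your distinctness assumption excludes.
\end{itemize}
This is the paper's argument, and your charge-three paragraph needs it (or an equally complete case analysis) in place of the unverified case list.
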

\begin{proof}
If the four-fibre is $A=\{a,g_1,g_2,g_3\}$ with unique odd
vertex $a$, each $g_j$ sees at least two doubles. If every double
is seen, the incidence graph matches the three doubles to distinct
$g_j$: a pair of doubles together sees all three $g_j$, so Hall's
condition holds. The three resulting whole-double branches and
$\{a\}$ give the model. If a double $D_3$ is unseen, all $g_j$
see $D_1,D_2$. Even parity at $e_3$ forces $ae_3$ absent and
hence $ao_3$ present. Use
$\{a\},\{o_3\},D_1\cup\{g_1\},D_2\cup\{g_2\}$.

Now write $A=\{a_1,a_2,a_3,g\}$ with $g$ even, and let $S$
be the double indices seen by $g$, so $|S|\geq2$. Put
$n(i)=|N(o_i)\cap\{a_1,a_2,a_3\}|$. Odd parity makes $n(i)$
even on $S$ and odd outside it. For $i\in S$, with remaining
indices $j,k$, if $jk$ is active and some odd $a$ sees $o_k$,
use
\[
 D_i\cup\{g\},\quad D_j\cup\{e_k\},\quad\{a\},\quad\{o_k\}.
\]
Absorption handles all but the singleton crossing $ao_k$.
Allowing exchange of $j,k$, failure means $jk$ is inactive or
$n(j)=n(k)=0$.

Suppose this fails for every $i\in S$. If $S=\{1,2\}$, the
positive odd number $n(3)$ forces $13,23$ inactive. Then $e_3$
sees at most the four-fibre, contradicting normalization. Thus $S$
contains all three indices. Every $e_i$ sees $g$, so normalization
requires an active double neighbour. Every active $jk$ forces
$n(j)=n(k)=0$, and therefore all three numbers vanish. Each $a_h$
sees exactly $e_1,e_2,e_3$ in the core, and all singleton vertices
outside the core. The three $a_h$ are consequently twins in the
full normalized graph. Delete two and apply the original-rooted
excess-five theorem, since the excess drops from six to four.
\end{proof}

\subsection{Two triples and two doubles}
\label{subsec:six-two-triples}
\begin{lemma}
\label[lemma]{lem:six-two-triples}
A normalized $(3,3,2,2)$ witness has an odd-rooted $K_4$ minor.
\end{lemma}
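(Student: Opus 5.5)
The plan is to mirror the excess-five case ``two triples and a double'' and the $(4,2,2,2)$ argument. We reduce to a few configurations determined by the charges of the two triples, and in each case either exhibit an explicit rooted $K_4$ model or find two same-fibre twins, delete them, and apply \cref{thm:excess-five}.

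Write the triples as $T,T'$ and the doubles as $D_1=\{o_1,e_1\}$, $D_2=\{o_2,e_2\}$. First record the forced local structure.
\begin{itemize}
\item An even vertex of a triple sees each double it sees in both members, and sees a triple it sees in exactly two vertices.
\item An odd vertex sees exactly one member of each double.
\item By \eqref{eq:six-double-interface}, the interface between $D_1$ and $D_2$ is either the edge $o_1o_2$ or the path $o_1-e_2-e_1-o_2$.
\item Normalization requires each $e_i$ to see at least two of $T,T',D_{3-i}$.
\end{itemize}

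The first attempt in each case uses absorption. If some even triple vertex $g$ sees $D_1$, then $D_1\cup\{g\}$ is a connected whole-fibre branch, and it is automatically adjacent to any disjoint odd-rooted branch. So I would look for two whole-fibre branches plus two odd singletons joined by an edge. Concretely, the target model is
\[
D_1\cup\{g\},\quad D_2\cup\{g'\},\quad\{a\},\quad\{b\},
\]
with $g,g'$ even vertices of the triples seeing $D_1$ and $D_2$ respectively, and $a,b$ odd with $ab\in E$. An alternative replaces $D_2\cup\{g'\}$ by $D_2\cup\{e_1\}$ when $12$ is active, or uses a whole triple made connected by two even vertices of the other fibre with distinct two-subsets, exactly as in the $(3,3,3)$ proof.

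The case split is by the charges of $T,T'$: both charge three, one of each, or both charge one.

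\emph{Both triples charge one.} Say $T=\{t,x,y\}$ and $T'=\{t',x',y'\}$. The even vertices $x,y$ have neighbourhoods in $D_1,D_2,T'$. As in the charge-one case of \cref{thm:excess-five}, parity expresses the adjacencies of $t$ and of $e_i$ by symmetric-difference formulas such as $[te_i]=\mathbf1_{i\in U\triangle V}$. A Hall-type matching of even triple vertices to the doubles, as in \cref{lem:six-four-triple-double}, should give two whole-double branches. The remaining two odd singletons then come from $t,t'$, or from $t$ and some $o_i$, and they need an odd--odd edge.

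\emph{Some triple all odd.} The counting trick of the $(5,2,2)$ case applies. An even number of odd vertices of $T$ have unequal bits toward $e_1,e_2$, so some odd vertex has equal bits. That vertex is a good singleton.

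The main obstacle is the residual subcase in which the required odd--odd edge is missing everywhere: $t\not\sim t'$, $12$ is active (so $o_1\not\sim o_2$), and every odd triple vertex sees the $e_i$ rather than the $o_i$. Here I would first try the $(3,3,3)$ pointer analysis on the $T$--$T'$ interface. With the odd--odd entry zero, each odd vertex has a unique pointer into the other triple, and one of the four E/D patterns gives three branches $\{t,\cdot\},\{t',\cdot\},\dots$ that extend by a fourth branch containing $o_1$ or $o_2$. If that fails, I would show that the two odd vertices of a double, or two even vertices of one triple, have identical neighbourhoods. Twin deletion then drops the excess to four, and \cref{thm:excess-five} finishes, just as at the end of \cref{lem:six-four-triple-double}. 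Checking that these alternatives really exhaust the residual configurations is where I expect the work to lie.
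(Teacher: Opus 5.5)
Your proposal is an outline rather than a proof. The configurations you flag as ``where the work lies'' are exactly where the paper's argument lives, and the tools you list do not reach them.

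Your absorption model $D_1\cup\{g\},D_2\cup\{g'\},\{a\},\{b\}$ and the distinct-connector trick are precisely the paper's easy subcases: both charges one with $ab$ an edge, and charges $(3,1)$ with both connector sets nonempty. The real obstruction, however, is not ``no odd--odd edge anywhere'', and twin deletion is not the way out. Take triples $A=\{a,z,x\}$, $B=\{b,y,w\}$ ($a,b$ odd) and doubles $D=\{d_0,d_1\}$, $E=\{e_0,e_1\}$ ($d_0,e_0$ odd). Give it the edges $ay,yz,zw,wx,xb$; $ad_1,xd_0,xd_1,bd_1,wd_0,wd_1$; $ae_1,ze_0,ze_1,be_1,ye_0,ye_1$; and $d_0e_0$. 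Every parity, charge and normalization condition holds. This is the paper's terminal configuration: charge one, $ab$ absent, six-vertex path, inactive double interface, no cut. Your tools all fail here:
\begin{itemize}
\item $ab$ is not an edge, so the absorption model is unavailable.
\item The double interface is inactive, so $D\cup\{e_1\}$ is not connected.
\item The only odd--odd edge is $d_0e_0$. With $\{d_0\},\{e_0\}$ as singletons, the whole-triple branch $A\cup\{y,w\}$ leaves only $\{b,d_1,e_1\}$, which misses $N(d_0)=\{x,w,e_0\}$; symmetrically for $B\cup\{z,x\}$.
\item No two vertices of one fibre are twins, so the excess cannot be lowered.
\end{itemize}
Since $d_0e_0$ is an edge, this witness is not even in your residual subcase.

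A rooted $K_4$ does exist, namely $\{a,x,y,d_1\},\{b,z,w,e_1\},\{d_0\},\{e_0\}$, but its two large branches mix both triples and a double. The missing idea is the paper's treatment of the case where $ab$ is absent. There the $A$--$B$ interface is either a four-vertex path plus two isolated vertices, or the path $a-y-z-w-x-b$. In the inactive case one cuts this path between the first and last neighbour positions of $d_0$ and $e_0$. When no cut exists, the supports are forced to be $S_y=S_z=\{E\}$ and $S_w=S_x=\{D\}$, which gives the model above.

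Your all-odd branch is also incomplete. The $(5,2,2)$ parity count does give an odd $t$ with equal bits toward $e_1,e_2$. But the model it feeds, $\{o_1,e_2\},\{o_2,e_1\},\{t\}$, requires an active double interface, and normalization forces that only in a three-fibre core; here $e_1$ may see just the two triples. Even when the interface is active, this is only a triangle, and you still need a fourth odd-rooted branch adjacent to all three. When both triples are all odd there is no even triple vertex at all, so no branch $D_i\cup\{g\}$ with $g$ in a triple exists either.

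The paper handles this case by classifying the odd--odd $A$--$B$ interface as a perfect matching, a five-edge cross or $K_{3,3}$. In the harder subcases it contracts $d_1x$ and $e_1y$ for carefully chosen $x,y$, checks that the simple quotient has minimum degree three, and applies Dirac's theorem that such a graph contains a $K_4$ subdivision. Your outline has no substitute for this step.
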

\begin{proof}
Write the triples as $A,B$ and the doubles as
$D=\{d_0,d_1\},E=\{e_0,e_1\}$, with subscript zero odd.
We distinguish the odd charges of the two triples.

\paragraph{Both triples have charge one.}
Let $a,b$ be their odd vertices. Suppose first that $ab$ is an edge.
Let $C_D,C_E$ be the sets of even triple vertices seeing the respective
doubles. Every even triple vertex sees at least one double, so their
union has size four. Each set is nonempty: normalization forces the
even vertex of its double to meet a triple, in two vertices, at least
one of which is even. Thus choose distinct $x\in C_D,y\in C_E$.
The branches $\{a\},\{b\},D\cup\{x\},E\cup\{y\}$ work by
absorption.

Suppose $ab$ is absent. Let $\alpha\in A$, $\beta\in B$ be
the unique neighbours of $b,a$ respectively, and let
$\alpha',\beta'$ be the remaining vertices. The interface is
\[
 \begin{array}{c|ccc}
 &b&\beta&\beta'\\\hline
 a&0&1&0\\
 \alpha&1&t&1-t\\
 \alpha'&0&1-t&1-t
 \end{array},\qquad t\in\{0,1\}.
\]
For $t=1$ it is a four-vertex path and two isolated vertices.
The latter vertices $\alpha',\beta'$ must each see both doubles;
use $\{a,\beta\},\{b,\alpha\},D\cup\{\alpha'\},
E\cup\{\beta'\}$.

For $t=0$, name the six-vertex path
\[
 a-y-z-w-x-b,
\]
with $A=\{a,z,x\}$ and $B=\{b,y,w\}$. For an internal even
vertex $h$, let $S_h\subseteq\{D,E\}$ record the doubles it sees.
Every $S_h$ is nonempty. If a triple has odd vertex $r$ and even
vertices $u,v$, parity at $f_1$ and $f_0$ of a double $F$ gives
\begin{equation}
\label{eq:six-endpoint-bits}
 [rf_1]=[F\in S_u]\mathbin\oplus[F\in S_v],\qquad
 [rf_0]=1\mathbin\oplus[F\in S_u]\mathbin\oplus[F\in S_v].
\end{equation}
If the double interface is active, use
\[
 \{a,y,z\},\quad\{b,x,w\},\quad\{d_0,e_1\},\quad\{e_0,d_1\}.
\]
The first pair meet by $zw$, the last pair by $d_1e_1$.
Each internal path vertex sees both vertices of some double, which
are split between the last two branches; $y$ and $x$ therefore
supply all four other crossings.

Suppose the double interface is inactive. Normalization of $d_1,e_1$
forces both to see both triples. Equivalently,
\[
 S_z\cup S_x=S_y\cup S_w=\{D,E\}.
\]
Number the path positions $0,\ldots,5$. For each odd double vertex
$f_0$, let $f_F,\ell_F$ be its first and last neighbour positions.
It has supported internal neighbours in both triples, so
$f_F<\ell_F$. If
\[
 \max(f_D,f_E)<k\leq\min(\ell_D,\ell_E)
\]
for an integer $k$, cut the path just before position $k$ and use
its prefix and suffix with $\{d_0\},\{e_0\}$. Each singleton
sees both path pieces, and $d_0e_0$ supplies their crossing.

If there is no such cut, exchange $D,E$ so
$\ell_E\leq f_D$. The mandatory support in both triples gives
$2\leq\ell_E\leq f_D\leq3$. Equality at two would force both
$z,x$ to support $D$, giving $ad_0$ by
\eqref{eq:six-endpoint-bits}, contrary to $f_D=2$. Equality at three similarly gives $be_0$, contrary to $\ell_E=3$.
Consequently $\ell_E=2$ and $f_D=3$: the odd vertex $e_0$ has no
neighbour among $w,x$, and $d_0$ has none among $y,z$. An internal even
vertex $h$ is adjacent to $f_0$ exactly when $F\in S_h$, and every
$S_h$ is nonempty, so the internal supports are exactly
\[
 S_y=S_z=\{E\},\qquad S_w=S_x=\{D\}.
\]
Use
\[
 U=\{a,x,y,d_1\},\quad V=\{b,z,w,e_1\},\quad
 \{d_0\},\quad\{e_0\}.
\]
The first two are connected by $ay,ad_1,d_1x$ and
$be_1,e_1z,zw$. Their six crossings, in the displayed branch order,
are $xb,xd_0,ye_0,wd_0,ze_0,d_0e_0$.

\paragraph{Exactly one triple has charge three.}
Let $A$ be all odd and $B=\{b,x,y\}$ have unique odd vertex $b$.
Let $T_D,T_E\subseteq\{x,y\}$ be the double-connector sets.
Their union is $\{x,y\}$ by normalization. If both are nonempty,
choose distinct representatives $s\in T_D,t\in T_E$ and any
$a\in N_A(b)$. The branches $D\cup\{s\},E\cup\{t\},
\{a\},\{b\}$ work.

Otherwise exchange doubles so $T_D=\varnothing$ and
$T_E=\{x,y\}$. The exact triple--double blocks are
\[
 E(B,D)=\{bd_0\},\qquad
 E(B,E)=\{be_0,xe_0,xe_1,ye_0,ye_1\}.
\]
Both $x,y$ have exactly two neighbours in $A$, and the double
interface must be active, since otherwise $d_1$ sees at most $A$.
If $b,d_0$ have a common neighbour $a\in A$, the triangle
$a,b,d_0$ together with $E\cup\{x\}$ gives the model.

Otherwise their nonempty odd neighbourhoods in $A$ are disjoint,
so both are singletons, say $N_A(b)=\{a\}$ and
$N_A(d_0)=\{f\}$; let $c$ be the third vertex of $A$.
The edges $ad_1,cd_1,fd_0$ are forced. Counting the four edges
from $x,y$ into $A$, odd parity gives, after exchanging $x,y$,
\[
 N_A(x)=\{a,c\},\qquad N_A(y)=\{a,f\}.
\]
Now use
\[
 \{a,b\},\quad\{c,x,e_0\},\quad\{f,y\},\quad
 \{d_0,d_1,e_1\}.
\]
They are connected by $ab$, $c-x-e_0$, $fy$, $d_0-e_1-d_1$.
Their six crossings are
\[ ax,\quad ay,\quad ad_1,\quad e_0y,\quad cd_1,\quad fd_0. \]
Every branch contains an odd vertex. Exchanging $A,B$ also covers
charge pair $(1,3)$.

\paragraph{Both triples have charge three.}
The $A$--$B$ matrix has all row and column sums odd. It is therefore
one of a perfect matching, a five-edge cross, or $K_{3,3}$: if a
row is full, the other two rows agree, with common support of size
one or three; without a full row it is a permutation matrix.
Put
\[
 S=N(d_1)\cap(A\cup B),\qquad T=N(e_1)\cap(A\cup B).
\]
Each meets each triple in zero or two vertices. If the double block
is inactive, normalization forces two in each triple; if active,
it forces only $S,T\ne\varnothing$.

We use Dirac's classical theorem that every finite simple graph of
minimum degree at least three contains a $K_4$ subdivision; see,
for example, the statement in Diwan~\cite{Diwan2020}.
In each use below, we contract disjoint connected bags, every bag
containing an original odd vertex, and verify minimum degree in
the resulting \emph{simple} quotient. A $K_4$ subdivision in that
quotient gives four connected branch sets: assign all internal
vertices of each subdivision path to one of its ends. Expanding
the bags gives an odd-rooted model in the original graph.

For the complete interface, the direct model
$\{a_0,b_0\},\{a_1,b_1\},\{a_2\},\{b_2\}$ suffices.

\emph{The matching interface.} Write its edges as $a_i b_i$.
Every triple vertex has exactly three neighbours, its mate and one
vertex of each double. In the inactive case choose
$x\in S\cap A$ and $y\in T\cap B$ as the two ends of one matching edge;
this is possible because the two corresponding two-element index sets
intersect. Contract
$d_1x,e_1y$. An unabsorbed triple vertex loses a distinct neighbour
only if its mate and one double neighbour were merged, but the only
vertices whose mates are anchors are $x,y$, themselves absorbed.
The odd double vertices keep three distinct neighbours. Each new
bag sees the three remaining vertices of its four-element support
in distinct bags. Thus the simple quotient has minimum degree three.

In the active case, if $d_0$ sees both ends $x,y$ of a matching
edge, use
\[ \{x\},\quad\{y\},\quad\{d_0\},\quad\{d_1,e_0,e_1\}. \]
The last branch is connected, sees both endpoints through their
$E$-neighbours, and sees $d_0$ through $e_1$. Treat $e_0$
symmetrically. If neither such triangle exists, neither $S$ nor
$T$ is confined to one triple: otherwise the corresponding odd
double vertex sees all of the other triple and the one omitted
vertex in the first, giving a matching pair. Consequently
\[
 S=(A\cup B)\setminus\{a_i,b_j\},\quad
 T=(A\cup B)\setminus\{a_k,b_\ell\},\quad i\ne j, k\ne\ell.
\]
Choose distinct $x,y\in S\cap T$ as follows:
\[
\begin{array}{c|cc}
\text{condition}&x&y\\\hline
j\ne k&a_j&b_k\\
j=k, i\ne\ell&b_i&a_\ell\\
j=k, i=\ell&a_t&b_t\quad(t\notin\{i,j\}).
\end{array}
\]
In the first two rows the mate of $x$ lies outside $S$ and the
mate of $y$ outside $T$; in the last row $x,y$ are mates.
Contract $d_1x,e_1y$. These conditions prevent any unabsorbed
triple vertex from having its mate merged with its double neighbour.
Also $y\in S$ ensures that the three neighbours $a_i,b_j,e_1$
of $d_0$ have distinct images, and $x\in T$ similarly protects
$e_0$. Each merged bag sees the three other support vertices in
distinct bags. Again the simple quotient has minimum degree three.

\emph{The cross interface.} Write its centres as $a_0,b_0$.
In the inactive case choose
$x\in S\cap\{a_1,a_2\}$ and $y\in T\cap\{b_1,b_2\}$,
and contract $d_1x,e_1y$. An unabsorbed leaf keeps its centre
neighbour and two distinct double-neighbour images. Each centre
started with degree five and loses at most one neighbour per
contraction, leaving degree at least three. The odd double vertices
retain three distinct neighbours, and each merged bag sees the three
other support vertices in distinct bags. Apply the same theorem.

In the active case put $U=\{d_0,e_1\}$ and $V=\{e_0,d_1\}$.
They are connected, odd-rooted and adjacent. A triple vertex sees
both precisely when its membership bits in $S,T$ agree; call such
a vertex balanced. Each triple has an even number of unequal bits,
so has a balanced vertex. If a leaf $z$ is balanced, use
\[
 \{z\},\quad(A\cup B)\setminus\{z\},\quad U,\quad V.
\]
Deleting a leaf leaves a connected tree, containing a balanced
vertex of the opposite triple, so both first branches see $U,V$.
If no leaf is balanced, both centres are balanced and
$\{a_0\},\{b_0\},U,V$ works. This completes all charge pairs.
\end{proof}

\subsection{One triple and four doubles}
\label{subsec:six-one-triple}
\begin{lemma}
\label[lemma]{lem:six-one-triple}
A normalized $(3,2,2,2,2)$ witness has an odd-rooted $K_5$ minor.
\end{lemma}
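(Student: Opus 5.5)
The plan is to follow the template of the case ``one triple and three doubles'' in the proof of \cref{thm:excess-five}, now with four doubles $D_1,\dots,D_4$ and a triple $T$. We distinguish the charge of $T$.

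\emph{Charge one.} Write $T=\{t,x,y\}$ with $t$ odd. For the even vertices put $U=\{i:xe_i\in E\}$ and $V=\{i:ye_i\in E\}$. An even triple vertex sees both members of a double or neither, so normalization gives $|U|,|V|\ge2$. Parity at $e_i$ and $o_i$ toward $T$ again gives
\[
[te_i]=\mathbf1_{i\in U\triangle V},\qquad [to_i]=1-\mathbf1_{i\in U\triangle V}.
\]
An even double vertex $e_k$ with $k\notin U\cup V$ sees no vertex of $T$, so it needs active degree at least two in $B$.

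The first attempt is a reduction to the excess-five case. Choose an index $k$ and try to realise $D_k$ as a singleton branch $\{o_k\}$ or as an absorbed branch. The difficulty is that deleting a whole double removes charge but does not lower the excess enough. So instead I would build models in the style of the four displayed models: $\{t\}$ together with branches $\{o_i\}$, $\{o_i,x,e_i\}$ and $\{o_j,y,e_j\}$. For pairs of double-based branches we use the interface rule of \eqref{eq:six-double-interface}: an active pair is joined by $e_ie_j$, an inactive pair by $o_io_j$.

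The case split is on the pair $(U,V)$ of subsets of a four-set, each of size at least two, up to symmetry. When both sets are large, the singletons $\{o_i\}$ and the $t$-crossings suffice. When $U\cup V$ misses some indices, the missed even vertices carry active edges, and these supply the missing crossings.

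\emph{Charge three.} Here $T=\{a,b,c\}$ is all odd. Each $S_i=N(e_i)\cap T$ is empty or a two-set with omitted vertex $q_i$, and $o_i$ then meets exactly $q_i$ (or all of $T$ if $S_i=\varnothing$). The natural first move is to look for an apex. Suppose some $t\in T$ sees all four $o_i$, or all four $e_i$. Then it suffices to find a rooted $K_4$ model on the four doubles in which each branch contains a whole vertex pair meeting $t$ appropriately. The all-double routing of Kriesell and Mohr, as used in \cref{lem:six-doubles}, gives bags $Q_i\ni o_i$. The subtle point is adjacency of $t$ to every bag: a bag containing $o_i$ sees $t$ if $t=q_i$ or $S_i=\varnothing$. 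I would therefore instead take the twisted bags $\{o_i,e_{\sigma(i)}\}$ along a cyclic order of an active cycle, generalising the triangle $\{o_1,e_2\},\{o_2,e_3\},\{o_3,e_1\}$.

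For that we need structure of the active graph $B$ on four vertices. Its even vertices that do not see $T$ force $\delta\ge2$ locally, so $B$ will typically contain a $4$-cycle or a triangle. Otherwise we are in the non-apex exceptions, indexed by the pattern of omissions $q_i$ (at most three distinct values among four indices). For these I would emulate the two exception models of the excess-five proof: use singleton triple vertices $\{c\}$ together with paths $o_i,q_i,e_j$. Whenever possible, apply twin deletion first: two doubles with identical neighbourhoods, or two triple vertices seeing the same even vertices, reduce to \cref{thm:excess-five}.

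\emph{Main obstacle.} I expect the charge-three subcase with $B$ sparse (a path or a star plus edge) and distinct omissions to be the hardest, because neither an apex nor a twin pair is available. There I would try first a contraction argument as in \cref{lem:six-two-triples}. Contract $d_1x$-type pairs, namely an even double vertex into a triple vertex it sees, so that each bag contains an odd vertex. Then verify that the simple quotient on five bags is complete, or else locate a $K_5$ by exhibiting one branch containing a whole fibre, which by absorption is adjacent to all odd-rooted branches.
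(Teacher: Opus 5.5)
Your proposal is a plan rather than a proof. No model is actually exhibited, and the subcase you yourself identify as the hardest is left to an unspecified contraction. More seriously, the concrete parts are organized around the wrong data. Adjacency between double-based branches is decided by the active graph $B$ on the four double indices. Normalization only gives $d_B(i)+[S_i\neq\varnothing]\ge2$, so $B$ has no isolated vertex but may be any of $2K_2$, $P_4$, $K_{1,3}$, $C_4$, the paw, $K_4-e$ or $K_4$; a split on $(U,V)$ alone cannot decide the model.

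Concretely, in charge one your templates $\{t\}$, $\{o_i\}$, $\{o_i,x,e_i\}$, $\{o_j,y,e_j\}$ have only two even triple vertices to spend. Hence at least two of the four double branches are singletons $\{o_k\},\{o_l\}$, and these are adjacent only if $kl$ is inactive. Take $B=K_4$ with $U=V=\{1,2,3,4\}$, which is a valid normalized witness. No model of your form exists there, so the claim that ``the singletons $\{o_i\}$ and the $t$-crossings suffice'' when $U,V$ are large is false. One needs branches built from active pairs, such as $\{o_1,e_2\},\{o_2,e_1\},\{o_3,e_4\},\{o_4,e_3\}$.

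In charge three the apex criterion is too restrictive with four doubles. Your twisted bags $\{o_i,e_{\sigma(i)}\}$ need a fixed-point-free $\sigma$ with every $i\sigma(i)$ active. None exists for $K_{1,3}$. For the path $1-2-3-4$ the only choice $\sigma=(1\,2)(3\,4)$ leaves $\{o_2,e_1\}$ and $\{o_3,e_4\}$ anticomplete. In the odd-apex case the Kriesell--Mohr bags already suffice, since each contains $o_i$. The real difficulties are the even apex over sparse $B$ and the non-apex patterns, and you do not resolve them. The remark that $B$ ``will typically contain a $4$-cycle or a triangle'' is not an argument, since $2K_2$, $P_4$ and $K_{1,3}$ are all permitted.

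The missing mechanism, as in the paper, is to run through the seven types of $B$. For each type one uses templates keyed to the bit vector $(s_i(t))_i$ of a single odd $t\in T$, with fifth branch $\{t\}$. Whole-double branches see $t$ by absorption. An active path $Q_{ij}=o_i-e_j-e_i-o_j$ can always be cut between the two neighbours of $t$, so that both halves, rooted at $o_i$ and $o_j$, see $t$.

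The few uncovered bit vectors are then settled as follows. In charge one, explicit models exploit normalization. For an all-odd triple, the three bit vectors XOR to zero coordinatewise, because every $|S_i|$ is even. This is combined with the fallback branch $T\cup\{e_i,e_j\}$. That branch is connected when $S_i\neq S_j$ are two-subsets of $T$, and it sees every $o_l$ because $o_l$ has odd degree into $T$.
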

\begin{proof}
Write the triple as $T$, and let $B$ be the active graph on the
four double indices. Put
\[
 S_i=N(e_i)\cap T,\qquad s_i(t)=[t\in S_i].
\]
Even parity gives $|S_i|\in\{0,2\}$, and the whole triple--double
interface is determined by
\begin{equation}
\label{eq:six-triple-interface}
 [te_i]=s_i(t),\qquad [to_i]=p(t)\mathbin\oplus s_i(t).
\end{equation}
Conversely these rules satisfy every interface parity, since the
charge of $T$ is odd. Normalization is exactly
\begin{equation}
\label{eq:six-one-triple-normalization}
 d_B(i)+[S_i\ne\varnothing]\geq2,\qquad
 |\{i:t\in S_i\}|\geq2\quad\text{for even }t\in T.
\end{equation}
In particular $B$ has no isolated vertex. Its seven possible
isomorphism types are
\[
 2K_2,\quad P_4,\quad K_{1,3},\quad C_4,\quad
 \text{paw},\quad K_4-e,\quad K_4.
\]
Indeed two edges force $2K_2$; three give one of the two spanning
trees; four are the complements of two disjoint or incident edges;
five and six give the last two types. We handle all seven, allowing
empty supports whenever \eqref{eq:six-one-triple-normalization} permits.

For an active pair $ij$, put $Q_{ij}=o_i-e_j-e_i-o_j$.
An odd $t\in T$ has one neighbour in each of $D_i,D_j$.
Splitting $Q_{ij}$ between these two neighbours produces two
connected branches, with odd roots $o_i,o_j$, both adjacent to $t$.
Numbering path positions $0,1,2,3$, the permitted cuts are
\[
\begin{array}{c|c|c}
(s_i(t),s_j(t))&\text{neighbour positions}&\text{cut after position}\\\hline
00&0,3&0,1,2\\
01&0,1&0\\
10&2,3&2\\
11&2,1&1.
\end{array}
\]
We refer to this as a path split. When we give five branches
$A_1,A_2,A_3,A_4,R$, a crossing list $W$ has pair order
$12,13,14,23,24,34$, and $V$ has order $1R,2R,3R,4R$.
Thus the two lists explicitly certify all ten adjacencies.

\paragraph{Matching and complete active graphs.}
For $B=2K_2$ with edges $12,34$, choose any odd $t$, split
$Q_{12},Q_{34}$, and add $R=\{t\}$. The four crossings between
paths are the inactive edges $o_io_j$ for $i\in\{1,2\}$,
$j\in\{3,4\}$.

For $B=K_4$, choose any odd $t$. Among its four bits there is an
equal pair; pair those indices, and pair the remaining two.
Split the two corresponding paths, using the middle cut for an
equal pair. Each equal pair gives two branches containing even
vertices. An unequal pair gives a singleton and a branch containing
both even vertices. Thus there is at most one singleton. Branches
from different paths see one another through even--even edges,
or through an odd--even edge if one is that singleton.

\paragraph{The path $1-2-3-4$.}
If an odd $t$ is omitted by an endpoint support, reflect so
$t\notin S_1$. Use $A_1=\{o_1\}$,
$A_2=\{o_2,e_1,e_2\}$, split $Q_{34}$ into $A_3,A_4$,
and take $R=\{t\}$. All apex and cut-edge crossings hold.
The other crossings are the inactive edges $o_1o_3,o_1o_4,o_2o_4$
and the active edge $e_2o_3$. This always covers an all-odd triple,
since $|S_1|\leq2$.

Otherwise $T=\{r,x,y\}$ has unique odd vertex $r$, and both
endpoint supports contain $r$. Relabel so $S_1=\{r,x\}$.
If $S_4=\{r,x\}$, use
\[
 (A_1,A_2,A_3,A_4,R)=
 (\{o_1,x\},\{o_2,e_1\},\{o_3,e_2\},\{o_4,e_3\},\{r,e_4\}).
\]
All five are edges, and their crossings are
\[
\begin{split}
 W&=(xe_1,o_1o_3,o_1o_4,e_1e_2,o_2o_4,e_2e_3),\\
 V&=(xe_4,e_1r,o_3e_4,e_3e_4).
\end{split}
\]
If $S_4=\{r,y\}$, use
\[
 (\{o_1,e_1,x\},\{o_2,e_3\},\{o_3,e_2\},\{o_4,y\},\{r,e_4\}).
\]
The first branch is the path $o_1-x-e_1$ and the others are edges.
The crossings are
\[
\begin{split}
 W&=(e_1o_2,o_1o_3,o_1o_4,e_3e_2,o_2o_4,*),\\
 V&=(e_1r,e_3e_4,o_3e_4,ye_4).
\end{split}
\]
Since even $y$ lies in $S_4$ but not $S_1$, normalization forces
$y\in S_2\cup S_3$. Accordingly $*$ is $ye_2$ or $yo_3$.

\paragraph{The cycle $1-2-3-4-1$.}
For an odd $t$ write $s_i=s_i(t)$. Two templates, both with
$R=\{t\}$, are
\[
\begin{array}{c|l|l}
 &\text{condition}&(A_1,A_2,A_3,A_4)\\\hline
L&s_2=s_4=0&
(\{o_1,e_1,e_2\},\{o_2\},\{o_3,e_3,e_4\},\{o_4\})\\
M&s_1=s_2, s_3=s_4&
(\{o_1,e_2\},\{o_2,e_1\},\{o_3,e_4\},\{o_4,e_3\}).
\end{array}
\]
The nonsingletons in L are whole-double paths and those in M are
active edges. Their conditions give all apex crossings. The
remaining crossings are respectively
\[
\begin{split}
 W_L&=(o_2e_1,e_2e_3,o_4e_1,o_2e_3,o_2o_4,o_4e_3),\\
 W_M&=(e_2e_1,o_1o_3,e_2e_3,e_1e_4,o_2o_4,e_4e_3).
\end{split}
\]
Up to cycle symmetry these handle zero, one, two opposite, two
adjacent, and four occurrences of $t$, using L,L,L,M,M respectively.
If $T$ is all odd, at least one of its vertices has a different
occurrence count from three, because the total incidence count is
at most eight. Thus that charge is covered.

If the unique odd vertex $r$ occurs three times, rotate so its
occurrences are $1,2,3$. Write $T=\{r,x,y\}$. The support $S_4$
is empty or $\{x,y\}$; it cannot be empty, since then the even
vertices have only three incidences in total, whereas normalization
requires at least four. Relabel $x,y$ so $x\in S_2$. Use
\[
 (\{o_1,e_1,e_4\},\{o_2,e_3\},\{o_3,e_2,x\},\{o_4\},\{r\}).
\]
The nonsingletons contain the paths $o_1-e_4-e_1$,
$o_2-e_3$, $o_3-e_2-x$. Their crossings are
\[
\begin{split}
 W&=(e_1o_2,e_1e_2,e_1o_4,e_3e_2,e_3o_4,xo_4),\\
 V&=(re_1,re_3,re_2,ro_4).
\end{split}
\]

\paragraph{The star with centre $1$.}
Every leaf support $S_2,S_3,S_4$ has size two by normalization.
The leaf odd vertices form a triangle. If an odd $r\in T$ is
omitted by all three supports, take
\[
 \{o_1,e_1,e_2\},\quad\{o_2\},\quad\{o_3\},\quad\{o_4\},\quad\{r\}.
\]
The first branch absorbs $D_1$ and sees every leaf root via $e_1$;
the last sees every leaf root by the omitted-support condition.

For charge one, if this does not apply, write $T=\{r,x,y\}$
and arrange $r\in S_2$. Use
\[
 A=\{o_1,e_3,e_4,x,y\},\quad\{o_2\},\quad\{o_3\},\quad
 \{o_4\},\quad R=\{r,e_1,e_2\}.
\]
The latter is the path $r-e_2-e_1$. The first contains the star
with centre $o_1$ and leaves $e_3,e_4$; each of $x,y$ attaches
to it because its at least two supporting indices cannot be confined
to $2$. Every leaf support contains $x$ or $y$, so $A$ meets
every leaf singleton. The edges $e_1o_i$ give the corresponding
$R$ crossings, and $o_1e_2$ joins $A,R$.

For charge three, if all leaf supports agree their common omitted
odd vertex gives the first construction. Otherwise arrange
$S_3\ne S_4$ and take
\[
 \{o_1,e_1,e_2\},\quad\{o_2\},\quad\{o_3\},\quad\{o_4\},\quad
 R=T\cup\{e_3,e_4\}.
\]
Distinct two-subsets of $T$ intersect and cover it, so $R$ is
connected. Each leaf root sees its omitted triple vertex in $R$;
$e_1e_3$ joins $R$ to the first branch, which sees all leaves.

\paragraph{The paw with edges $12,13,23,14$.}
For odd $t$ write $s_i=s_i(t)$. If $s_4=0$, take
$A_1=\{o_1,e_4,e_1\}$, $A_4=\{o_4\}$, $R=\{t\}$ and
split $Q_{23}$ into $A_2,A_3$. The six nonapex crossings are
\[
 (e_1o_2,e_1o_3,e_1o_4,\text{cut edge},o_2o_4,o_3o_4).
\]
If $s_2=s_3$, take $A_2=\{o_2,e_3\}$,
$A_3=\{o_3,e_2\}$, split $Q_{14}$ into $A_1,A_4$ and again
use $R=\{t\}$. The equal bits give the two apex crossings at
$A_2,A_3$; the four crossings between paths are
$o_1e_3,o_1e_2,o_4o_2,o_4o_3$.

An all-odd triple always has a vertex outside $S_4$, so the first
case applies. For charge one, failure of both conditions permits
labelling $T=\{r,x,y\}$ with $S_4=\{r,x\}$,
$r\in S_2$, $r\notin S_3$. Use
\[
 (\{o_1,e_2,e_3\},\{o_2,e_1\},\{o_3\},\{o_4,x\},\{r,e_4\}).
\]
The first is an $o_1$-centred star, and the other nonsingletons
are edges. Their crossings are
\[
\begin{split}
 W&=(e_2e_1,e_2o_3,*,e_1o_3,o_2o_4,o_3o_4),\\
 V&=(o_1e_4,e_1e_4,o_3r,xe_4).
\end{split}
\]
Since even $x\in S_4$, normalization puts it in some other
$S_i$. These alternatives give $*=xo_1,xe_2,xe_3$ for
$i=1,2,3$ respectively.

\paragraph{The diamond $K_4-34$.}
For odd $t$ let $q_i=s_i(t)$. The following templates all have
$R=\{t\}$:
\[
\begin{array}{c|l|l}
 &\text{condition}&(A_1,A_2,A_3,A_4)\\\hline
 A&q_3=q_4=0&
 (\{o_1,e_1,e_3\},\{o_2,e_2,e_4\},\{o_3\},\{o_4\})\\
 B&q_1=q_3, q_2=q_4&
 (\{o_1,e_3\},\{o_2,e_4\},\{o_3,e_1\},\{o_4,e_2\})\\
 C&q_1=0, q_2=q_3&
 (\{o_1\},\{o_2,e_3\},\{o_3,e_2\},\{o_4,e_1,e_4\})\\
 D&q_2=0, q_1=q_4&
 (\{o_1,e_4\},\{o_2\},\{o_3,e_2,e_3\},\{o_4,e_1\}).
\end{array}
\]
Every nonsingleton is an active edge or a whole-double path.
Whole-double branches see $t$ by absorption; zero-bit singletons
see $t$ by \eqref{eq:six-triple-interface}; swapped pairs with
equal bits both see $t$. The other six crossings are
\[
\begin{array}{c|llllll}
 A&e_1e_2&e_1o_3&e_1o_4&e_2o_3&e_2o_4&o_3o_4\\
 B&e_3o_2&e_3e_1&o_1e_2&o_2e_1&e_4e_2&o_3o_4\\
 C&o_1e_3&o_1e_2&o_1e_4&e_3e_2&o_2e_1&o_3o_4\\
 D&e_4o_2&o_1e_2&e_4e_1&o_2e_3&o_2e_1&o_3o_4.
\end{array}
\]
Their conditions cover all bit vectors except
\[
 u=0011,\qquad v=1101,\qquad w=1110.
\]
For completeness, the following table exhibits the coverage:
\[
\begin{array}{c|cccc}
q_1q_2\backslash q_3q_4&00&01&10&11\\\hline
00&A&C&D&u\\
01&A&B&C&C\\
10&A&D&B&D\\
11&A&v&w&B.
\end{array}
\]

A further construction handles either charge whenever $S_3,S_4$
are distinct and nonempty:
\[
 \{o_1,e_2\},\quad\{o_2,e_1\},\quad\{o_3\},\quad\{o_4\},\quad
 R=T\cup\{e_3,e_4\}.
\]
The supports are distinct two-subsets of $T$, so $R$ is connected.
It contains an odd vertex. Every $o_i$ has an odd, hence positive,
number of neighbours in $T$, giving all four $R$ crossings.
The other crossings are
$(e_2e_1,e_2o_3,e_2o_4,e_1o_3,e_1o_4,o_3o_4)$.

For charge one, write $T=\{r,x,y\}$. Swapping $3,4$ exchanges
$v,w$. For $w=1110$, $S_1,S_2,S_3$ each contain $r$ and
one even vertex. Normalization forces $S_4=\{x,y\}$,
since otherwise the two even vertices have only three incidences.
Thus $S_3,S_4$ are distinct and nonempty. For $u=0011$, the
same construction applies unless $S_3=S_4$, in which case
relabel so both equal $\{r,x\}$. Normalization of $y$ forces
$S_1=S_2=\{x,y\}$. Use
\[
 (A_1,A_2,A_3,A_4,R)=
 (\{y,o_1,e_1\},\{o_2,e_2,e_4\},\{o_3\},\{o_4\},\{r,x,e_3\}).
\]
The three nonsingletons contain paths $o_1-y-e_1$,
$o_2-e_4-e_2$, $r-e_3-x$. Their crossings are
\[
\begin{split}
 W&=(e_1e_2,e_1o_3,e_1o_4,e_2o_3,e_2o_4,o_3o_4),\\
 V&=(xe_1,e_3e_2,xo_3,xo_4).
\end{split}
\]

For charge three, suppose none of the three odd vertices has a
covered vector. Their three vectors lie in $\{u,v,w\}$ and have
coordinatewise XOR zero, since every $|S_i|$ is even. Repetition
would leave a nonzero vector, so they must be $u,v,w$ once each.
Consequently $S_3=\{t_u,t_w\}$ and $S_4=\{t_u,t_v\}$,
which are distinct and nonempty. The connected-triple construction
applies. This exhausts both charges in all seven active graphs.

In every displayed construction the branches are disjoint, and
$A_i$ contains $o_i$ while the fifth branch contains an odd vertex
of $T$ (or, in the star notation, these same five roots are distributed
between $A,R$ and the leaf singletons). Connectivity was established
by the displayed paths, stars or overlapping supports, and the
crossing lists verify every remaining adjacency.
\end{proof}

\subsection{Assembly and preservation of the original roots}
\begin{proof}[Proof of Theorem~\ref{thm:excess-six}]
Apply support-tracked normalization to the specified witness $W$.
The resulting simple minor $H$ has the same $n$ nonempty odd-charge
fibres and order at most $n+6$. Every current vertex $x$ has a
connected original support $C_x\subseteq V(W)$, these supports
are pairwise disjoint, and
\[
 p_H(x)\equiv|C_x\cap P|\pmod2.
\]
Every current edge is witnessed by an original edge between supports.
If the excess is at most five, apply the original-rooted excess-five
theorem to the current witness. Otherwise remove singleton fibres
conceptually. An all-odd core has one of the three elementary models
already described. A core containing an even vertex has one of
\eqref{eq:six-patterns}, and Lemmas~\ref{lem:six-doubles},
\ref{lem:six-three-fibres}, \ref{lem:six-four-triple-double},
\ref{lem:six-two-triples} and~\ref{lem:six-one-triple} supply an
odd-rooted core clique model. The only reduction within these lemmas
uses twin deletion, lowers the excess to four and invokes the
previous excess-five theorem; thus there is no circular dependency.

Add each singleton fibre as its own branch. These vertices are pairwise
adjacent and meet every core branch at its odd vertex. Finally replace
each branch $Q$ by $\bigcup_{x\in Q}C_x$. These unions are disjoint
and connected, all crossing edges lift, and a current odd vertex
in each branch has an original support meeting $P$. The lifted model
lies in $W\subseteq F$ and has the asserted original-root property.

An empty core is already handled by the singleton clique. One- and
two-fibre cores are covered by the all-odd cases, since a normalized
even vertex would require at least three nonsingleton fibres. Thus
no lower bound on $n$ beyond $n\geq2$ is hidden in the case analysis.
\end{proof}

\begin{remark}
The proof concerns the specified witness and its specified odd set;
extra host edges or vertices are irrelevant. Its rooted conclusion
means that every branch meets that set, not that a prescribed vertex
from each fibre occupies a prescribed branch. The positive theorem
is entirely structural. Its sharpness as a uniform excess bound is
established separately by the order-fifteen, $K_8$-minor-free example;
that negative example is not a premise of the proof above.
\end{remark}

\section{The fifteen vertex obstruction}\label{sec:sharpness}
Let $\Theta$ be the theta graph on $\{0,\ldots,6\}$ with paths
\[
 5,2,6;\qquad 5,0,3,6;\qquad 5,1,4,6.
\]
Thus $E(\Theta)=\{03,05,14,15,25,26,36,46\}$. Replace each vertex $i$ by an independent double $\{o_i,e_i\}$ and use the active interface of \cref{lem:double} precisely for $ij\in E(\Theta)$. Adjoin a vertex $s$ adjacent to all $o_i$ and to no $e_i$. Call the resulting graph $H$.

\begin{figure}[ht]
\centering
\begin{tikzpicture}[scale=.9,every node/.style={circle,draw,fill=white,inner sep=2pt}]
\node (5) at (0,0) {$5$}; \node (6) at (6,0) {$6$};
\node (2) at (3,0) {$2$};
\node (0) at (2,1.6) {$0$}; \node (3) at (4,1.6) {$3$};
\node (1) at (2,-1.6) {$1$}; \node (4) at (4,-1.6) {$4$};
\draw (5)--(2)--(6) (5)--(0)--(3)--(6) (5)--(1)--(4)--(6);
\end{tikzpicture}
\caption{The active graph $\Theta(2,3,3)$. Its doubled rooted construction is due to Kriesell and Mohr. The graph $H$ adds a singleton adjacent exactly to the seven odd double vertices.}\label{fig:theta}
\end{figure}

\begin{lemma}\label[lemma]{lem:h-witness}
The graph $H$ has fifteen vertices and forty-four edges and admits an oddomorphism to $K_8$. The odd vertices are $s,o_0,\ldots,o_6$ and have degree seven. The other vertices have degree four, except $e_5,e_6$, which have degree six.
\end{lemma}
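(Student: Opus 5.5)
The proof is a direct count plus a verification of the parity conditions \eqref{eq:parity}--\eqref{eq:charge}. Take the eight fibres $D_i=\{o_i,e_i\}$ for $i=0,\dots,6$ and the singleton $\{s\}$. Set $p(o_i)=p(s)=1$ and $p(e_i)=0$. There are fifteen vertices.

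The edges come in three groups.
\begin{itemize}
\item Each of the eight active pairs of $\Theta$ contributes the three edges $o_ie_j,e_ie_j,e_io_j$, giving $24$.
\item Each of the $\binom72-8=13$ inactive pairs contributes the single edge $o_io_j$, giving $13$.
\item The vertex $s$ contributes $7$.
\end{itemize}
The total is $24+13+7=44$.

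The fibres are independent by construction: there are no $o_ie_i$ edges and $s$ is alone. Each fibre has odd charge, since every double contains exactly one odd vertex and $s$ is odd.

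The parity condition between two doubles holds because the interface is, by construction, one of the two solutions of \cref{lem:double}. That lemma shows each such interface satisfies all four parity equations. Between a double and $\{s\}$, the odd vertex $o_i$ has exactly one neighbour $s$, and the even vertex $e_i$ has none, as required. The vertex $s$ has exactly one neighbour, $o_i$, in each double $D_i$, which is odd, as required. So the fibre map is an oddomorphism to $K_8$, and this is consistent with the singleton description in \cref{lem:attachment}.

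For the degrees, note that $o_i$ has one neighbour in every other double. In the active case that neighbour is $e_j$, and in the inactive case it is $o_j$. Adding $s$ gives degree $6+1=7$. The vertex $s$ has degree $7$. The even vertex $e_i$ has exactly two neighbours, $o_j$ and $e_j$, in each double $D_j$ with $ij$ active, and none elsewhere, so $\deg e_i=2\deg_\Theta(i)$.

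In $\Theta$ the branch vertices $5$ and $6$ have degree three and the other five vertices have degree two. This gives $\deg e_5=\deg e_6=6$ and degree four for the remaining $e_i$. As a cross-check, the degree sum is $8\cdot7+5\cdot4+2\cdot6=88=2\cdot44$.

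There is no real obstacle here; the only care needed is in counting the inactive pairs correctly, $21-8=13$.
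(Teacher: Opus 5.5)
Your proof is correct and follows the paper's argument: the seven double fibres plus $\{s\}$, \cref{lem:double} for the double interfaces, the edge count $3\cdot 8+13+7=44$, and the degree formula $\deg e_i=2\deg_\Theta(i)$. The degree-sum check is a useful extra, and one wording slip should be fixed: ``the eight fibres $D_i$ for $i=0,\dots,6$ and the singleton'' should read seven doubles and the singleton, eight fibres in total.
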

\begin{proof}
Use the seven double fibres and $\{s\}$. Each double interface satisfies the required parity equations by \cref{lem:double}. Every $o_i$ has one neighbour in the singleton, every $e_i$ has none, and $s$ has one neighbour in each double. Each fibre has exactly one odd vertex. There are $3\cdot8+(21-8)+7=44$ edges. Every odd double vertex has one neighbour in each of the six other doubles and the neighbour $s$, so has degree seven. The even degree is twice its degree in $\Theta$, and $s$ has degree seven.
\end{proof}

In the technical companion, $H$ is the root-apex graph $H(42232)$ of a seven-index active graph encoded by the integer $42232$. The rooted obstruction of \cref{lem:known-theta} handles a model with singleton branch $s$. An ordinary $K_8$ model could have a different singleton. We first record the counting observation that enables a complete structural exclusion of all possibilities.

\begin{lemma}[Singleton pivot]\label[lemma]{lem:pivot}
Let $G$ have at most $2t-1$ vertices. Every $K_t$ model contains a singleton branch $\{x\}$, where $\deg_G(x)\ge t-1$. If $\deg_G(x)=t-1$, then the other $t-1$ branches contain exactly one vertex of $N_G(x)$ each.
\end{lemma}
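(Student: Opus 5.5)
The plan is a pigeonhole count followed by an adjacency count for the singleton branch.

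\emph{Existence of a singleton branch.} Let $Q_1,\ldots,Q_t$ be the branches of a $K_t$ model in $G$. They are nonempty and pairwise disjoint, so $\sum_i|Q_i|\le|V(G)|\le 2t-1$. If every branch had at least two vertices, the sum would be at least $2t$, which is impossible. Hence some branch is a singleton $\{x\}$.

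\emph{Degree bound.} The branch $\{x\}$ must be joined by an edge to each of the other $t-1$ branches. Every such edge is incident with $x$ and ends in that branch. Because the branches are disjoint, these $t-1$ neighbours of $x$ are distinct. Therefore $|N_G(x)|\ge t-1$.

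\emph{Equality case.} Suppose $\deg_G(x)=t-1$. By the previous step, each of the other $t-1$ branches contains at least one vertex of $N_G(x)$, and distinct branches contribute distinct vertices. Together these account for at least $t-1=|N_G(x)|$ vertices of $N_G(x)$. So the count is tight: each branch contains exactly one neighbour of $x$, and every neighbour of $x$ lies in some branch.

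No step is a real obstacle. The only point to state carefully is that the $t-1$ witnessing neighbours are distinct, which follows directly from the disjointness of the branches.
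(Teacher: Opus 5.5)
Your proposal is correct and follows the paper's proof essentially step for step: the same pigeonhole count giving a singleton branch, the same observation that disjointness of the other $t-1$ branches makes their neighbours of $x$ distinct, and the same tight count in the case $\deg_G(x)=t-1$. Your write-up spells out the equality step slightly more explicitly than the paper does, but the argument is identical.
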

\begin{proof}
Otherwise the $t$ disjoint branches use at least $2t$ vertices. Each of the other $t-1$ branches must contain a neighbour of $x$, and their disjointness forces these neighbours to be distinct. When $x$ has exactly $t-1$ neighbours, each is used once, with exactly one in each branch.
\end{proof}

\begin{lemma}[Kriesell--Mohr rooted theta obstruction]\label[lemma]{lem:known-theta}
The graph $H-s$ has no $K_7$ model whose seven branches each meet $\{o_0,\ldots,o_6\}$.
\end{lemma}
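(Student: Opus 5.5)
The plan is to exploit the fact that the roots are forced. $H-s$ has fourteen vertices, seven of which are odd, and each of the seven branches must meet $\{o_0,\ldots,o_6\}$. So each branch $Q_i$ contains exactly one odd vertex, which I label $o_i$, together with some set $E_i$ of even vertices. The sets $E_i$ partition $\{e_0,\ldots,e_6\}$. By \cref{lem:double}, the only edges of $H-s$ are:
\begin{itemize}
\item $o_io_j$ for the thirteen pairs $ij\notin E(\Theta)$;
\item $o_ae_b$, $e_ao_b$ and $e_ae_b$ for the eight pairs $ab\in E(\Theta)$.
\end{itemize}
Thus the thirteen inactive pairs of branches are joined for free by $o_io_j$. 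The whole difficulty is the eight pairs $\{Q_i,Q_j\}$ with $ij\in E(\Theta)$. Each of these must be realized by an edge coming from some $\Theta$-edge $ab$ with a vertex of $D_a$ in $Q_i$ and a vertex of $D_b$ in $Q_j$, and at least one of the two endpoints even, since $o_ao_b$ is absent.

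First I would record the constraints on each branch. Let $\pi(k)$ be the index of the branch containing $e_k$.
\begin{itemize}
\item Since $o_i$ has only the even neighbours $e_b$ with $b\in N_\Theta(i)$, a nonsingleton $Q_i$ must contain $e_b$ for some $b\in N_\Theta(i)$.
\item The even vertices of $Q_i$ must be connected to $o_i$ through paths in $\Theta$: $e_ae_b$ is an edge only for $ab\in E(\Theta)$, and $o_ie_b$ only for $b\in N_\Theta(i)$.
\item Consequently, $\{i\}\cup\{k:\pi(k)=i\}$ is connected in $\Theta$.
\end{itemize}
Next I would translate the requirement into a statement purely about $\Theta$. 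Active adjacency $Q_iQ_j$ holds exactly when there is a $\Theta$-edge $ab$ with either
\begin{itemize}
\item $a\in\{i\}\cup\pi^{-1}(i)$, $b\in\pi^{-1}(j)$, or the symmetric condition with $i,j$ exchanged; or
\item $a\in\pi^{-1}(i)$, $b\in\pi^{-1}(j)$, giving an even--even edge.
\end{itemize}
The odd--odd case $a=i,b=j$ is excluded. The problem therefore becomes the following combinatorial routing question on $\Theta=\Theta(2,3,3)$. Assign each vertex $k$ a "carrier" $\pi(k)$ so that every carrier class together with its owner is connected in $\Theta$. Then every $\Theta$-edge $ij$ must be "witnessed" by a $\Theta$-edge other than the pure owner--owner edge $ij$ itself, running between the two extended classes.

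The main step, and the one I expect to be the obstacle, is the counting argument showing this is impossible. I would first count witnesses. A $\Theta$-edge $ab$ with $\pi(a)=\pi(b)$ is internal and witnesses nothing. Each of the seven even vertices is a single resource, while eight active pairs need witnesses.

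The tool I would try first is this. A $\Theta$-edge $ab$ can witness a branch pair only among $\{i,j\}$ for $i\in\{a,\pi(a)\}$ and $j\in\{b,\pi(b)\}$. Connectivity of the carrier classes forces many $\Theta$-edges to be internal: every $k$ with $\pi(k)\neq k$ uses at least one $\Theta$-edge on its path to its owner. Writing $m=|\{k:\pi(k)\neq k\}|$, at most $8-m'$ edges are non-internal, where $m'\ge$ the number of classes of size at least two. I would then case on $m$.
\begin{itemize}
\item If $\pi$ is the identity, every active pair $ij$ must use the $o_ie_j$ or $e_io_j$ or $e_ie_j$ edges directly. This is possible for each individual pair, so the identity needs a separate look. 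Here I expect the obstruction to come instead from the degree-two path vertices.
\item If some $e_k$ lies outside $Q_k$, then $Q_k$ is either the singleton $\{o_k\}$ or contains another $e_b$.
\end{itemize}
Using the three internally disjoint $5$--$6$ paths of lengths $2,3,3$, I would do a parity and length count: the eight edges minus the internal ones must witness eight distinct pairs. Each path vertex of degree two witnesses at most two pairs, and each such witness pattern consumes the path.

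I would finish by checking the finitely many shapes of $\pi$ on each path, namely shift towards $5$, shift towards $6$, or stay, exploiting the symmetry exchanging $5,6$ and the two long paths.
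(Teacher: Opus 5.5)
Your plan has a genuine gap. The reduction to a carrier map $\pi$ on $\Theta$ loses information, and the step that is supposed to produce the contradiction is never supplied.

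\textbf{The connectivity rule is too weak.} The rule ``$\{i\}\cup\pi^{-1}(i)$ is connected in $\Theta$'' forgets that $o_ie_i$ is not an edge. If $e_i\in Q_i$, then $o_i$ and $e_i$ are nonadjacent twins. So $Q_i$ is connected only if $\Theta[\pi^{-1}(i)]$ is connected and $\pi^{-1}(i)\ne\{i\}$. With your rule, the reformulated routing problem is actually solvable: the identity satisfies it and, as you note yourself, witnesses every active pair through $o_ie_j$. There is therefore no obstruction ``from the degree-two path vertices'' to find. The identity fails only because each $\{o_i,e_i\}$ is disconnected.

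\textbf{The witness count does not work.} A $\Theta$-edge $ab$ with $\pi(a)=\pi(b)$ does witness pairs: $o_ae_b$ and $e_ao_b$ join $Q_a$ and $Q_b$ to $Q_{\pi(a)}$. Moreover, eight $\Theta$-edges against eight active pairs gives no contradiction by itself, since the identity matches them one to one.

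\textbf{The final enumeration is incomplete.} ``Shift towards $5$, shift towards $6$, or stay'' does not exhaust the carrier maps. An even vertex may be carried two steps away, or through $5$ or $6$ onto another path.

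\textbf{A smaller point.} The $E_i$ partition the even vertices only after you extend the model to a spanning one. That needs the remark that every even vertex has an odd neighbour, so unused even vertices can be added to adjacent branches.

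\textbf{What is missing} is a vertex budget combined with expansion of independent sets in $\Theta$. The paper's argument runs as follows.
\begin{enumerate}
\item Let $A$ index the singleton branches $\{o_i\}$. They are pairwise adjacent, so $A$ is independent in $\Theta$.
\item Take $i\in A$ and $j\in N_\Theta(i)$. Since $o_io_j$ is absent, $Q_j$ must contain some $e_k$ with $k\in N_\Theta(i)$. If $Q_j=\{o_j,e_k\}$, then $i,j,k$ would form a triangle, so $|Q_j|\ge3$.
\item Counting the fourteen vertices of a spanning model then gives $|N_\Theta(A)|\le|A|$.
\item But every nonempty independent set of $\Theta$ has $|N_\Theta(A)|\ge|A|+1$; this follows from the $5$-cycle $5,0,3,6,2$ and the edge $14$. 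Hence $A=\varnothing$.
\item So every branch is a pair $\{o_j,e_{f(j)}\}$, where $f$ is a fixed-point-free permutation with $jf(j)\in E(\Theta)$.
\item Because $\Theta$ has no triangles or $4$-cycles, branches $Q_i,Q_j$ with $ij\in E(\Theta)$ are adjacent only if $f(i)=j$ or $f(j)=i$.
\item Since $f$ is injective, at most two of the three neighbours of $5$ satisfy this. That is the contradiction.
\end{enumerate}
None of these steps appears in your plan, so as written it does not prove the lemma.
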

\begin{proof}
The graph is the root completion of the construction of Kriesell and Mohr \cite[Section 3, pp.~8--9, immediately before Theorem 2]{KriesellMohr2022}: their $Z(\Theta)$ has vertices $(i,1)=o_i$ and $(i,2)=e_i$ and the three active edges for each $ij\in E(\Theta)$, and $H-s$ adds $o_io_j$ for each nonedge $ij$ of $\Theta$. We give their counting argument, written for the root-completed graph, to keep the proof self-contained.

Suppose such a model exists. Each branch contains exactly one odd vertex; let $C_i$ be the branch containing $o_i$. The graph $H-s$ is connected: its odd vertices induce the complement of $\Theta$, which has minimum degree three on seven vertices and is therefore connected, and every even vertex has an odd neighbour. All odd vertices are already used, so adding unused even vertices one at a time to an adjacent branch gives such a model whose branches partition $V(H-s)$. The three cycles of $\Theta$ have lengths $5$, $5$ and $6$; thus $\Theta$ is triangle-free and has no $4$-cycle.

(a) Let $A=\{i:C_i=\{o_i\}\}$. Singleton branches are pairwise adjacent, so $A$ is independent in $\Theta$. If $i\in A$ and $j\in N_\Theta(i)$, then $|C_j|\ge3$. Indeed $o_j$ is the only odd vertex of $C_j$ and is not adjacent to $o_i$, so $C_j$ contains a neighbour $e_k$ of $o_i$, where $k\in N_\Theta(i)$. If $C_j$ were $\{o_j,e_k\}$, then $o_je_k$ would be an edge, so $jk\in E(\Theta)$ with $k\ne j$, and $i,j,k$ would form a triangle.

(b) Counting the fourteen vertices,
\[
 14=\sum_j|C_j|\ \ge\ |A|+3|N_\Theta(A)|+2\bigl(7-|A|-|N_\Theta(A)|\bigr)=14+|N_\Theta(A)|-|A|,
\]
so $|N_\Theta(A)|\le|A|$.

(c) Every nonempty independent set $A$ of $\Theta$ satisfies $|N_\Theta(A)|\ge|A|+1$. The vertices $5,0,3,6,2$ form a $5$-cycle $Q$, and the remaining vertices $1,4$ are adjacent, with $N_\Theta(1)=\{4,5\}$ and $N_\Theta(4)=\{1,6\}$. Let $J\subseteq V(Q)$ be nonempty and independent. For a cyclic orientation of $Q$, the successor map and the predecessor map both inject $J$ into $N_Q(J)$. If $|N_Q(J)|=|J|$, both are bijections onto $N_Q(J)$, so $J$ is invariant under the rotation by two steps. That rotation generates all rotations of a $5$-cycle, so $J=V(Q)$, which is not independent. Hence $|N_Q(J)|\ge|J|+1$. An independent set $A$ contains at most one of $1$ and $4$. If it contains neither, $|N_\Theta(A)|\ge|N_Q(A)|\ge|A|+1$. If $1\in A$, then $4\in N_\Theta(A)\setminus V(Q)$; so $|N_\Theta(A)|\ge|N_Q(A\setminus\{1\})|+1\ge|A|+1$ when $A\ne\{1\}$, and $|N_\Theta(\{1\})|=2$. The case $4\in A$ is the same with $1$ and $6$ in place of $4$ and $5$.

By (b) and (c), $A=\varnothing$. The count in (b) then gives $|C_j|=2$ for every $j$, so $C_j=\{o_j,e_{f(j)}\}$ with $f(j)\in N_\Theta(j)$, and $f$ is a permutation of $\{0,\ldots,6\}$ without fixed points.

(d) If $ij\in E(\Theta)$, then $C_i$ and $C_j$ are adjacent only if $f(i)=j$ or $f(j)=i$. Indeed $o_io_j$ is not an edge. An edge $o_ie_{f(j)}$ would require $if(j)\in E(\Theta)$; since $f(j)\in N_\Theta(j)$, this gives a triangle unless $f(j)=i$, and $o_ie_i$ is not an edge. The same holds for $e_{f(i)}o_j$. Finally, if $e_{f(i)}e_{f(j)}$ is an edge while $f(i)\ne j$ and $f(j)\ne i$, then $i,j,f(j),f(i)$ are four distinct vertices forming a $4$-cycle.

(e) Vertex $5$ has the three neighbours $0,1,2$. At most one of them equals $f(5)$, and since $f$ is injective at most one satisfies $f(j)=5$. By (d), some neighbour $j$ of $5$ has $C_j$ not adjacent to $C_5$, a contradiction.
\end{proof}
This lemma and its proof are due to Kriesell and Mohr; no priority is claimed.

\begin{proposition}[Structural exclusion]\label[proposition]{prop:noK8}
The graph $H$ has no $K_8$ minor.
\end{proposition}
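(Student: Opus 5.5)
Suppose, for a contradiction, that $H$ has a $K_8$ model. I would first reduce to a spanning model. Since $H$ is connected, unused vertices can be added one at a time to adjacent branches, so we may assume the eight branches partition all fifteen vertices. Then $\sum|C|=15$ with eight branches.

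By \cref{lem:pivot} with $t=8$, some branch is a singleton $\{x\}$ with $\deg_H(x)\ge7$. By \cref{lem:h-witness}, $x$ is therefore $s$ or some $o_i$. The even vertices have degree at most six, so they are excluded. If the singleton is $\{s\}$, each of the other seven branches contains exactly one neighbour of $s$, i.e.\ exactly one $o_i$. Deleting $s$ then gives a $K_7$ model in $H-s$ with every branch meeting $\{o_0,\dots,o_6\}$, which contradicts \cref{lem:known-theta}.

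So we may assume that $s$ is not a singleton branch, and that the singleton branches are among the $o_i$. I would then use singleton saturation. Let $A$ be the set of indices $i$ with $\{o_i\}$ a branch. These branches are pairwise adjacent, so $A$ is independent in $\Theta$. Each such $o_i$ has degree exactly seven, so by \cref{lem:pivot} each other branch contains exactly one vertex of $N_H(o_i)=\{s\}\cup\{o_j:ij\notin E(\Theta)\}\cup\{e_j: j\in N_\Theta(i)\}$. The vertex count gives $\sum_{C}(|C|-1)=7$ over eight branches. Since $\Theta$ has independence number three, there are between one and three singletons.

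The branch $B_s$ containing $s$ has size at least two. The count forces many branches of size exactly two, and each such branch must be an edge of $H$ containing exactly one neighbour of each singleton $o_i$. I would split into the eight elementary cases according to $A$ up to the automorphisms of $\Theta$ (swapping the two $3$-paths, and reflecting $5\leftrightarrow6$ together with $0\leftrightarrow3$ and $1\leftrightarrow4$). The cases are $|A|=1$ with $i$ a degree-three vertex, a middle vertex of a $3$-path, or the vertex $2$; then the independent pairs and triples up to symmetry.

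In each case I would run the adaptation of arguments (a)–(e) of \cref{lem:known-theta}. A nonsingleton branch $C_j$ containing $o_j$ with $j$ adjacent in $\Theta$ to some $i\in A$ must contain some $e_k$ with $k\in N_\Theta(i)$ or contain $s$. Only one branch can use $s$, so this forces sizes at least three except for one branch. I would then recount as in (b). The main obstacle is that $s$ can now act as a universal connector among odd vertices, so $B_s$ (for example $\{s,e_j\}$, or $s$ together with odd vertices, where a branch may contain two $o$'s) breaks the one-odd-per-branch structure. Branches may contain two odd vertices or none. My first move is to bound the number of odd-free branches: an all-even branch must be adjacent to a singleton $o_i$, which requires an $e_k$ with $k\in N_\Theta(i)$, and it must also be adjacent to $B_s$. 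Combined with the tight size budget of seven surplus vertices, this should leave a permutation structure as in (c)–(e), which the triangle-free, $C_4$-free structure of $\Theta$ and the degree-three vertex $5$ (or $6$) then refute. Where the refutation fails, I would check by hand the small residual configurations where $B_s$ absorbs a whole double.
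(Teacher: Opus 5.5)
\paragraph{Where the proposal stops being a proof.} Your reduction coincides with the paper's: the spanning model, the pivot lemma placing every singleton among $s,o_0,\dots,o_6$, the disposal of a singleton $\{s\}$ by \cref{lem:known-theta}, exact singleton saturation, and the budget. Your $\sum_C(|C|-1)=7$ is the paper's $\sum(|B|-2)=k-1$ over nonsingleton branches. But the configurations in which $s$ is not a singleton are the whole content of the proposition, and you do not treat them.

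The route you sketch for them fails at its first step. Your adaptation of step (a) asserts that a branch containing $o_j$, with $j\in N_\Theta(i)$ for some $i\in A$, must contain some $e_k$ with $k\in N_\Theta(i)$ or contain $s$. Once branches may hold two odd vertices this is false: the branch can meet $N_H(o_i)$ in an odd vertex $o_l$ with $il\notin E(\Theta)$. So the conclusion that such branches have size at least three no longer holds. With it go the count (b) and the expected permutation structure $\{o_j,e_{f(j)}\}$.

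Concretely, take $A=\{0\}$. All seven other branches are pairs. Each pair must be an edge with at least seven external neighbours and exactly one vertex in $N_H(o_0)$. These conditions force the pairs to be $\{e_0,e_5\}$, $\{e_3,e_6\}$, $\{e_1,o_4\}$, $\{e_2,o_6\}$, $\{e_4,o_1\}$, $\{o_2,o_3\}$ and $\{s,o_5\}$. That is two all-even pairs and an odd--odd pair avoiding $s$. There is no permutation for the triangle and $4$-cycle arguments of (c)--(e) to act on. The contradiction is simply that $\{e_3,e_6\}$ and $\{s,o_5\}$ are anticomplete.

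\paragraph{What is missing.} The paper replaces (a)--(e) by a forcing argument with two ingredients.

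First, a restriction on the singletons. $B_s$ is connected with at least two vertices and $s$ has only odd neighbours, so $B_s$ contains some $o_j$. Since $s$ is already the unique neighbour in $B_s$ of each singleton $o_i$, saturation forbids the edge $o_io_j$, whence $A\subseteq N_\Theta(j)$. This cuts the case list to the eight orbits $\{0\},\{2\},\{5\},\{0,1\},\{0,2\},\{0,6\},\{5,6\},\{0,1,2\}$. Taking ``independent pairs and triples up to symmetry'', as you describe it, gives eleven orbits instead, with extra representatives $\{0,4\}$, $\{3,4,5\}$ and $\{0,2,4\}$. The same restriction is used again inside the case $\{0,2\}$.

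Second, saturation is used as a partner rule. A pair branch must be an edge with at least seven external neighbours, which excludes $\{e_0,e_3\}$ and $\{e_1,e_4\}$, and it must contain exactly one neighbour of every singleton. Listing eligible partners vertex by vertex forces most pairs. Each case then closes in one of these ways:
\begin{itemize}
\item two forced branches are anticomplete;
\item two vertices share a unique partner;
\item there are more partnerless vertices than the single spare triple can hold;
\item the forced branches clash with the restriction on $B_s$.
\end{itemize}
For $A=\{0,1,2\}$, a signature argument forces three branches of size at least three, against the budget $k-1=2$.

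Until those eight case arguments are supplied, your proposal is a correct reduction followed by a plan that would not work as stated.
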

\begin{proof}
Suppose such a model exists. The graph $H$ is connected: $s$ sees every $o_i$, and every even vertex has an odd neighbour. Extend the model to a partition of $V(H)$ into eight branches by repeatedly adding an unused vertex adjacent to a used vertex to that vertex's branch. Connectedness of $H$ guarantees such an edge until every vertex is used; connectedness, disjointness and pairwise adjacency of the branches are preserved.

Let $S$ be the \emph{exact} set of singleton vertices of this final spanning model, and put $k=|S|$. We do not require an original singleton to survive the extension. By \cref{lem:h-witness,lem:pivot}, $S$ is a nonempty subset of $\{s,o_0,\ldots,o_6\}$. Each $x\in S$ has degree seven, so every other branch contains exactly one neighbour of $x$. Call this the \emph{singleton saturation condition}. The nonsingleton branches satisfy
\begin{equation}\label{eq:singleton-budget}
 \sum_{B\text{ nonsingleton}}(|B|-2)=(15-k)-2(8-k)=k-1.
\end{equation}
If $s\in S$, the remaining seven branches each contain one odd vertex and contradict \cref{lem:known-theta}. Henceforth $s\notin S$.

\paragraph{The eight possible singleton types.}
Write $I=\{i:o_i\in S\}$, and let $B_s$ be the branch containing $s$. This branch is nontrivial and connected, so it contains some $o_j$ adjacent to $s$. For every $i\in I$, the vertex $s$ already supplies the unique neighbour of $o_i$ in $B_s$. Saturation therefore forbids $o_io_j$, giving
\begin{equation}\label{eq:singleton-neighbourhood}
 I\subseteq N_\Theta(j).
\end{equation}
This reasoning allows several odd vertices in $B_s$; each obeys the same restriction. Since $\Delta(\Theta)=3$, we have $1\le k\le3$.

The automorphisms
\[
 a=(0\ 1)(3\ 4),\qquad b=(0\ 3)(1\ 4)(5\ 6)
\]
of $\Theta$ generate a group of order four. They act on both odd and even indices and fix $s$, so preserve $H$. Up to this group, the possibilities in \eqref{eq:singleton-neighbourhood} are
\[
\begin{array}{c|c|c}
k&\text{representatives }I&\text{respective orbit sizes}\\\hline
1&\{0\},\{2\},\{5\}&4,1,2\\
2&\{0,1\},\{0,2\},\{0,6\},\{5,6\}&2,4,4,1\\
3&\{0,1,2\}&2.
\end{array}
\]
To check completeness directly, the pairs contained in a $\Theta$-neighbourhood are precisely
\[
01,02,12,23,24,34,35,45,56,06,16;
\]
the only triples are $012$ and $234$. The displayed permutations give the stated orbits. Thus the following eight cases exhaust the possibilities without a model search.

\paragraph{Necessary pair restrictions.}
Every pair branch is an edge and has at least seven distinct external neighbours, one in each other branch. Neither $\{e_0,e_3\}$ nor $\{e_1,e_4\}$ can be a pair branch: each endpoint has degree four, so the pair has at most six external neighbours. For this proof a pair is \emph{eligible for $S$} if it is a two-element subset of $V(H)\setminus S$, is an edge other than these two, and satisfies singleton saturation for every $x\in S$. This is only a necessary condition; eligible pairs need not form a model.

All partner assertions below follow by checking this rule against
\[
\begin{array}{c|ccccccc}
i&0&1&2&3&4&5&6\\\hline
N_\Theta(i)&\{3,5\}&\{4,5\}&\{5,6\}&\{0,6\}&\{1,6\}&\{0,1,2\}&\{2,3,4\}.
\end{array}
\]
We give every partner list used in a forcing step.

\paragraph{One singleton.}
By \eqref{eq:singleton-budget}, all seven other branches are pairs.

\emph{$I=\{0\}$.} The only eligible partner of $e_0$ is $e_5$, and that of $e_3$ is $e_6$, forcing these pairs. The partners of $e_1$ are $o_4,e_5$, so $e_1$ pairs with $o_4$. Those of $e_2$ are $e_5,o_6$, forcing $e_2o_6$. Those of $e_4$ are $o_1,o_6$, forcing $e_4o_1$. Those of $o_2$ are $o_3,e_6$, forcing $o_2o_3$. The final pair is $so_5$. But $\{e_3,e_6\}$ and $\{s,o_5\}$ are anticomplete: $s$ has no even neighbours and $35,65\notin E(\Theta)$.

\emph{$I=\{2\}$.} The eligible partners of $s$ are $o_5,o_6$. The automorphism $b$ fixes $2$ and swaps $5,6$, so assume the pair is $so_6$. The partners of $o_0$ are $e_3,o_6$, and those of $o_1$ are $e_4,o_6$, forcing $o_0e_3$ and $o_1e_4$. The partners of $e_6$ are $e_2,e_3,e_4$, forcing $e_2e_6$. Those of $e_5$ are $e_0,e_1,e_2$, so it pairs with $e_0$ or $e_1$. Either even pair is anticomplete to $\{s,o_6\}$, since $06,16,56\notin E(\Theta)$.

\emph{$I=\{5\}$.} Both $e_0$ and $e_1$ have $e_5$ as their only eligible partner, contradicting disjointness.

\paragraph{Two singletons.}
Now there is exactly one triple branch and five pair branches. A vertex with no eligible partner must belong to the unique triple.

\emph{$I=\{0,1\}$.} Neither $e_3$ nor $e_4$ has an eligible partner, so both lie in the triple. They are nonadjacent, and their common neighbours outside $S$ are $o_6,e_6$. The vertex $o_6$ violates saturation, because $e_3$ already sees $o_0$ and $e_4$ already sees $o_1$, while $o_6$ sees both. Thus the triple is $\{e_3,e_4,e_6\}$. Both $e_0$ and $e_1$ must now lie in pairs, and each has only the eligible partner $e_5$, a contradiction.

\emph{$I=\{0,2\}$.} Neither $o_3$ nor $o_6$ has an eligible partner, so both lie in the triple. They are nonadjacent; their common neighbours outside $S$ are $o_1,o_5,s$. Saturation excludes $o_1,s$, so the triple is $\{o_3,o_5,o_6\}$. But $N_\Theta(0)\cap N_\Theta(2)=\{5\}$, and the argument for \eqref{eq:singleton-neighbourhood} requires $B_s$ to contain $o_5$. That vertex is already in the triple not containing $s$, a contradiction.

\emph{$I=\{0,6\}$.} None of $e_0,e_1,o_2,e_4$ has an eligible partner. Four vertices cannot fit into the unique triple.

\emph{$I=\{5,6\}$.} None of $e_0,e_1,e_3,e_4$ has an eligible partner, giving the same contradiction.

For a direct check of the preceding pair exclusions, the complete eligible-pair lists for these four cases are
\begin{align*}
01:&\quad e_0e_5,e_1e_5,o_2e_6,e_2e_5,e_2o_6,o_3o_4,o_5o_6,o_5s;\\
02:&\quad e_0e_5,o_1e_4,e_1o_4,e_1e_5,e_2e_5,e_3e_6,o_4o_5,o_5s;\\
06:&\quad o_1o_3,e_2e_5,o_3s,e_3e_6,o_4o_5;\\
56:&\quad o_0o_4,o_1o_3,o_2s,e_2e_5,e_2e_6.
\end{align*}

\paragraph{Three singletons.}
Take $I=\{0,1,2\}$. A vertex's \emph{signature} is its set of neighbours in $S$. The signatures are
\[
\begin{array}{c|c}
\text{vertices}&\text{signature}\\\hline
e_0,e_1,e_2,o_5&\varnothing\\
e_3;\ e_4;\ e_6&\{o_0\};\ \{o_1\};\ \{o_2\}\\
o_3;\ o_4;\ o_6&\{o_1,o_2\};\ \{o_0,o_2\};\ \{o_0,o_1\}\\
e_5,s&S.
\end{array}
\]
The branch containing $o_3$ must contain $e_3$, the only vertex whose nonempty signature supplies the missing $o_0$ without repeating a neighbour of another singleton. Every further vertex in that branch has empty signature. Within the permitted set $\{o_3,e_3,e_0,e_1,e_2,o_5\}$, the only neighbour of $e_3$ is $e_0$. Connectivity therefore forces the path $o_3e_0e_3$ into the branch. In the same way, a branch contains $o_4e_1e_4$ and another contains $o_6e_2e_6$.

These three branches are distinct: any two of $o_3,o_4,o_6$ have overlapping nonempty signatures and cannot lie together under saturation. Three branches of size at least three contribute at least three to the left side of \eqref{eq:singleton-budget}, whereas $k-1=2$. This is the final contradiction.
\end{proof}

\begin{proof}[Proof of \cref{thm:sharp}]
For $n=8$, \cref{lem:h-witness,prop:noK8} give the fifteen-vertex example. By \cref{thm:main}, any witness to $K_8$ of order at most fourteen forces a $K_8$ minor. In particular this holds for every host of that order with a standard weak oddomorphism, proving minimality.

For $n>8$, adjoin $n-8$ vertices to $H$, pairwise adjacent and adjacent exactly to $s,o_0,\ldots,o_6$ among the old vertices. Give each a new singleton colour and odd indicator. The old even vertices see none of the new singletons; every odd vertex sees one vertex in each new singleton fibre, and every new vertex sees an odd number in each old fibre. Thus the colouring is an oddomorphism to $K_n$ on $15+n-8=n+7$ vertices.

Deleting $k$ vertices from a $K_t$ model destroys at most $k$ branch sets, so $h(G)\le h(G-X)+|X|$, where $h$ is the Hadwiger number. Delete the $n-8$ new vertices. Since $h(H)\le7$, the extension has Hadwiger number at most $7+n-8=n-1$. It has no $K_n$ minor. For every $n\ge8$, \cref{thm:main} rules out smaller hosts of order at most $n+6$ with a weak oddomorphism, so $m(n)=n+7$.
\end{proof}

\paragraph{A check that the obstruction is tight in minor order.}
The fourteen-vertex core itself contains a $K_7$ model:
\[
\{o_3\},\quad\{e_0,e_3\},\quad\{o_0,o_4\},\quad
\{e_1,o_5\},\quad\{e_2,e_6\},\quad\{o_2,e_5\},\quad\{o_1,o_6\}.
\]
Each two-vertex branch is an edge. In the listed branch order, witnesses for the twenty-one pairwise adjacencies are
\begin{align*}
12&:o_3e_0,&13&:o_3o_4,&14&:o_3o_5,&15&:o_3e_6,&16&:o_3o_2,&17&:o_3o_1,\\
23&:e_3o_0,&24&:e_0o_5,&25&:e_3e_6,&26&:e_0e_5,&27&:e_3o_6,\\
34&:o_4e_1,&35&:o_4e_6,&36&:o_0o_2,&37&:o_0o_1,\\
45&:o_5e_2,&46&:e_1e_5,&47&:o_5o_6,&56&:e_2e_5,&57&:e_2o_6,&67&:o_2o_1.
\end{align*}
Thus $h(H)=7$. Two of these branches contain only even vertices; this model cannot be used as if it were rooted in the specified odd set.

\section*{Status of the proofs}
\addcontentsline{toc}{section}{Status of the proofs}
\Cref{thm:main,thm:sharp} have written proofs in this paper. Besides elementary graph theory they use three external results, each cited at the point of use: the classical fact that a finite simple graph of minimum degree at least three contains a subdivision of $K_4$; Kriesell and Mohr's routing theorem for at most six doubled vertices \cite[Theorem~6, Section~5, p.~12]{KriesellMohr2022}, used only for cores consisting entirely of double fibres (direct proofs are given in Appendix~G of the technical companion); and their doubled theta construction \cite[Section~3, pp.~8--9, immediately before Theorem~2]{KriesellMohr2022}, whose rooted obstruction we prove in \cref{lem:known-theta} by their counting argument. No computation is a premise. Exhaustive enumerations of the fifteen-vertex graph and of all excess-six cores were run as independent corroboration; they are documented in the companion.

AI tools assisted with mathematical exploration, proof development, computational checks and manuscript preparation. The author takes responsibility for the final content.

\paragraph{Code and certificate archive.}
The supporting code and certificate archive is identified by the Zenodo DOI \url{https://doi.org/10.5281/zenodo.23603860}~\cite{CertikSupport2026}. No computation is a premise of the core results; the archive documents corroboration and the finite inputs of the technical companion.

\clearpage
\phantomsection\addcontentsline{toc}{part}{Technical companion}

\restoremaketitle
\title{Minimum orders of clique oddomorphism counterexamples:\\ technical companion}
\author{Michal \v{C}ert\'{\i}k\\ The World bank Group\\ Washington, DC, USA\\ \href{mailto:michal.certik@gmail.com}{\texttt{michal.certik@gmail.com}}}
\date{Research manuscript draft\quad 17 October 2026}
\maketitle
\begin{abstract}
This companion collects results and evidence that accompany, but are not used in, the core paper \emph{Minimum orders of clique oddomorphism counterexamples}. Within a specified thirty-one-member family of root-apex graphs we determine exactly which have no $K_8$ minor, giving three extremal families. We determine exactly when an even vertex attached to eight double fibres forces a $K_8$ minor, and we classify, for the three hard root-apex cores, when one extra apex--even edge admits a $K_8$ model with a singleton apex branch; the negative part of that classification has a structural proof. We also record a bounded computer-assisted nine-colour theorem, the direct proofs of the all-double cases that the core paper cites from Kriesell and Mohr, and the computational provenance of every finite statement.
\end{abstract}
\begingroup\small
\noindent\textbf{How to read this document.} Section and statement numbers continue those of the core paper: Sections~1--5, and every statement numbered there (for instance \cref{thm:main}, \cref{lem:double} or \cref{prop:noK8}), refer to the core paper. Sections~6--9 and Appendices~A--G are in this companion and keep the numbering of the integrated revision~6, with Appendix~G added. Each result states its evidence: \emph{structural} (a written proof), \emph{certificate-backed} (a written reduction to explicitly checked finite data), or \emph{inherited} (finite data and checks from earlier archived legs, not re-executed here).
\par\endgroup
\clearpage
\setcounter{section}{5}
\setcounter{equation}{9}
\section{Root-apex obstructions: three extremal families}\label{sec:root-apex}
The graph $H$ of \cref{sec:sharpness} of the core paper is one member of a natural family. This section determines exactly which members of a specified thirty-one-member family exclude an ordinary $K_8$ minor, and gives a structural proof for each excluded case.

\subsection{Root-apex graphs and the specified family}
For a graph $A$ on $\{0,\ldots,6\}$ let $K(A)$ be the graph on $o_0,\ldots,o_6,e_0,\ldots,e_6$ in which, for $i<j$, the pair $\{i,j\}$ contributes the single edge $o_io_j$ if $ij\notin E(A)$ and the three edges $o_ie_j,e_io_j,e_ie_j$ if $ij\in E(A)$; there are no other edges. Let $H(A)$ be obtained from $K(A)$ by adding a vertex $s$ adjacent exactly to $o_0,\ldots,o_6$. Put $P=\{o_0,\ldots,o_6\}$. By \cref{lem:double,lem:attachment}, the fibres $\{o_i,e_i\}$ and $\{s\}$ with odd set $P\cup\{s\}$ form an oddomorphism witness of $H(A)$ to $K_8$; every $o_i$ and $s$ has degree seven and $e_i$ has degree $2\deg_A(i)$.

A graph $A$ is encoded by the integer $\sum_{ij\in E(A)}2^{\beta(i,j)}$, where
\[
 \beta(i,j)=\tfrac12 i(13-i)+j-i-1\qquad(0\le i<j\le6)
\]
numbers the pairs lexicographically. We write $H(m)$ for $H(A)$ when $A$ has code $m$. The \emph{specified family} consists of the thirty-one codes
\begin{equation}\label{eq:thirtyone}
\begin{gathered}
42232,\ 42234,\ 42238,\ 42486,\ 44280,\ 44281,\ 44283,\ 44287,\ 44340,\ 44341,\ 44348,\\
44403,\ 44404,\ 44405,\ 44412,\ 44472,\ 44476,\ 44536,\ 44856,\ 44858,\ 44922,\ 44924,\\
48564,\ 48628,\ 48756,\ 48820,\ 48824,\ 48828,\ 61112,\ 111980,\ 112110.
\end{gathered}
\end{equation}
Each code is the least code in its isomorphism class. By the inherited seven-double classification of the historical archive, these are exactly the isomorphism types of seven-vertex graphs $A$ for which $K(A)$ has no $P$-rooted $K_7$; all other $1{,}013$ types have explicit rooted models. That classification is used only to interpret \cref{thm:root-apex} as a statement about witnesses of profile $(1,2^7)$, and its positive half is the input \ref{input:P7} of \cref{sec:attachment}; the proofs of \cref{prop:noK8-44340,prop:noK8-44348} and \cref{thm:root-apex} do not use it.

\begin{lemma}\label[lemma]{lem:rooted-ordinary}
$H(A)$ has a $K_8$ model whose branches all meet $P\cup\{s\}$ if and only if $K(A)$ has a $K_7$ model whose branches all meet $P$. If $K(A)$ has no such model, every $K_8$ model of $H(A)$ has a branch containing no vertex of $P\cup\{s\}$.
\end{lemma}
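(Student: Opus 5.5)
The plan is to prove the two directions separately, then obtain the second sentence as the contrapositive of the forward direction.

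For the ``if'' direction, take a $K_7$ model $C_0,\ldots,C_6$ of $K(A)$ whose branches all meet $P$, and add the branch $\{s\}$. It is disjoint from the others and connected. Each $C_i$ contains some $o_j$, and $s$ is adjacent to every $o_j$, so $\{s\}$ is adjacent to every $C_i$. This gives a $K_8$ model of $H(A)$ with every branch meeting $P\cup\{s\}$.

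For the ``only if'' direction, take a $K_8$ model of $H(A)$ whose eight branches each meet $P\cup\{s\}$. This set has exactly eight vertices and the branches are disjoint, so each branch contains exactly one of them. Let $B_s$ be the branch containing $s$; the other seven branches each contain exactly one $o_i$ and avoid $s$.

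The main point is to remove $s$ without losing the structure. Here $B_s\cap P=\varnothing$, and the neighbours of $s$ are exactly $P$. So if $|B_s|\ge2$, connectivity of $B_s$ would force $s$ to have a neighbour in $B_s$, which must be some $o_i$. That contradicts $B_s\cap P=\varnothing$. Hence $B_s=\{s\}$, and the other seven branches lie in $H(A)-s=K(A)$. They stay connected there and keep their pairwise crossing edges, since none of those edges uses $s$. So they form a $K_7$ model of $K(A)$ with every branch meeting $P$.

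For the final sentence, suppose some $K_8$ model of $H(A)$ had every branch meeting $P\cup\{s\}$. The forward direction would then give a $P$-rooted $K_7$ model in $K(A)$. So if $K(A)$ has no such model, every $K_8$ model of $H(A)$ has a branch disjoint from $P\cup\{s\}$.

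The only step needing care is showing that $B_s$ is a singleton. It relies on $N(s)=P$ together with the pigeonhole count that puts exactly one root in each branch. No earlier results beyond the definitions are needed, though this mirrors the step ``if $s\in S$'' in the proof of \cref{prop:noK8}.
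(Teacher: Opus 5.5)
Your proposal is correct and follows the paper's proof essentially step for step. Both use the pigeonhole count over the eight vertices of $P\cup\{s\}$ and $N(s)=P$ to force the branch of $s$ to be $\{s\}$, adjoin $\{s\}$ for the converse, and obtain the last sentence as the contrapositive of the forward direction.
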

\begin{proof}
Eight disjoint branches meeting the eight vertices of $P\cup\{s\}$ contain exactly one of them each. Every neighbour of $s$ lies in $P$, so the connected branch containing $s$ is $\{s\}$, and deleting it leaves the required model of $K(A)$. Conversely, $\{s\}$ is adjacent to every branch containing a vertex of $P$. For the last statement, a $K_8$ model in which every branch meets the eight vertices of $P\cup\{s\}$ would be of the first kind.
\end{proof}

\begin{lemma}[Spanning normal form]\label[lemma]{lem:normal-form}
Let $G$ be a connected graph on fifteen vertices with a $K_8$ minor. Then $G$ has a $K_8$ model whose branches partition $V(G)$. In any such model let $S$ be the set of vertices forming singleton branches and $q=|S|$. Then:
\begin{enumerate}[label=(\alph*)]
\item $S$ is a nonempty clique of vertices of degree at least seven, and
\begin{equation}\label{eq:budget15}
 \sum_{B\text{ nonsingleton}}(|B|-2)=q-1,
\end{equation}
so every nonsingleton branch has at most $q+1$ vertices;
\item if $x\in S$ has degree seven, every branch other than $\{x\}$ contains exactly one neighbour of $x$; hence every nonsingleton branch contains exactly one vertex of $N_x:=N(x)\setminus S$;
\item if $x\in S$ has degree eight, every branch other than $\{x\}$ contains one or two neighbours of $x$, and exactly one of them contains two;
\item a branch with two vertices is an edge $uv$ with $|N(\{u,v\})|\ge7$, where $N(X)$ denotes the set of vertices outside $X$ adjacent to $X$.
\end{enumerate}
\end{lemma}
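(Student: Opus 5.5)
The plan is to mirror the opening of the proof of \cref{prop:noK8} and then read off each item from \cref{lem:pivot} together with elementary counting.

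\emph{Spanning extension.} First I would obtain the spanning model. Starting from any $K_8$ model of the connected graph $G$, I repeatedly pick an unused vertex adjacent to a used vertex and add it to that vertex's branch. Such an edge exists while unused vertices remain, by connectedness. Each addition preserves disjointness, connectivity and pairwise adjacency, so the process ends with eight branches partitioning the fifteen vertices.

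\emph{Item (a).} Fix a spanning model, let $S$ be its set of singleton branches, and set $q=|S|$. Since $15\le 2\cdot8-1$, \cref{lem:pivot} shows that $S$ is nonempty and that each singleton has degree at least seven. Singleton branches are pairwise adjacent, so $S$ is a clique. The budget identity comes from
\[
\sum_B |B| = 15 = q + \sum_{\text{nonsingleton}} |B|,
\]
which gives
\[
\sum_{\text{nonsingleton}}(|B|-2) = 15 - q - 2(8-q) = q-1.
\]
Every summand is nonnegative, so each nonsingleton branch has $|B|-2\le q-1$, that is, $|B|\le q+1$.

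\emph{Item (b).} If $\deg x=7$, the last clause of \cref{lem:pivot} applied to the singleton $\{x\}$ says that each other branch contains exactly one neighbour of $x$. Every other singleton $y\in S$ is itself such a branch and is a neighbour of $x$. Hence a nonsingleton branch contains no vertex of $S$, and its unique neighbour of $x$ lies in $N_x$.

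\emph{Item (c).} If $\deg x=8$, each of the other seven branches must contain at least one of the eight neighbours of $x$. These sets are disjoint because the branches are, and the model is spanning, so all eight neighbours are distributed among the seven branches. Seven branches each receiving at least one of eight neighbours forces exactly one branch to contain two and the rest one each.

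\emph{Item (d).} A two-vertex branch is connected, so it is an edge $uv$. It must be adjacent to each of the other seven branches. These branches are disjoint, so $\{u,v\}$ has at least seven distinct outside neighbours, which is $|N(\{u,v\})|\ge7$.

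I expect no real obstacle here. The only points needing care are spanning-ness in (c), which is what makes all eight neighbours land in the other branches, and noting in (b) that singletons are excluded from nonsingleton branches automatically.
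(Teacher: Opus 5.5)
Your proposal is correct and follows the paper's proof essentially step for step: the same greedy spanning extension, the same vertex count for the budget identity, and the same neighbour-distribution arguments for (b)--(d). The only difference is that you cite \cref{lem:pivot} for nonemptiness of $S$ and for the degree bounds. The paper instead repeats that count directly: eight branches of size at least two would need sixteen vertices.
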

\begin{proof}
As in the proof of \cref{prop:noK8}, repeatedly add an unused vertex adjacent to a branch to that branch; connectivity of $G$ makes this possible until the branches partition $V(G)$, and it preserves the model. If $q=0$, eight branches of size at least two would need sixteen vertices. Singleton branches are pairwise adjacent, and each is adjacent to the seven other branches, so $S$ is a clique of vertices of degree at least seven. The $8-q$ nonsingleton branches contain $15-q$ vertices, which gives \eqref{eq:budget15}. For (b), the seven other branches are disjoint and each contains a neighbour of $x$; with exactly seven neighbours each contains exactly one. The vertices of $S\setminus\{x\}$ are neighbours of $x$ and form their own branches, which gives the second sentence. For (c), the eight neighbours of $x$ lie in the seven other branches, each containing at least one. For (d), a branch is adjacent to seven disjoint branches.
\end{proof}

For $x\in S$ of degree seven we call $N_x$ the \emph{index set} of $x$. A nonsingleton branch containing a vertex of $N_x$ contains no other vertex of $N_x$. A vertex outside $S$ and outside every index set is \emph{free}. We write $B(v)$ for the branch containing $v$.

\subsection{The classification}
\begin{theorem}[Root-apex classification]\label{thm:root-apex}
Let $m$ be one of the thirty-one codes \eqref{eq:thirtyone}. Then $H(m)$ has no $K_8$ minor if and only if $m\in\{42232,44340,44348\}$.
\end{theorem}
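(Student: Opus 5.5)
The plan has two halves: exclude a $K_8$ minor in the three listed graphs, and exhibit an explicit $K_8$ model in each of the other twenty-eight.

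\textbf{Exclusion for $42232$, $44340$, $44348$.} Code $42232$ is the doubled theta graph, and \cref{prop:noK8} already shows that $H(42232)$ has no $K_8$ minor. For $44340$ and $44348$ the paper states these as \cref{prop:noK8-44340,prop:noK8-44348}; I would prove them by repeating the structure of the proof of \cref{prop:noK8} inside \cref{lem:normal-form}.

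\begin{enumerate}
\item Pass to a spanning model. The singleton set $S$ then lies among the vertices of degree at least seven: $s$, the $o_i$, and any $e_i$ with $\deg_A(i)\ge4$. Singletons of degree eight are handled by part (c) of \cref{lem:normal-form}.
\item The case $s\in S$ is excluded by \cref{lem:rooted-ordinary}, because these codes have no $P$-rooted $K_7$. This is the one point where I use that each code is rooted-obstructed. For $44340$ and $44348$ it would need its own counting argument in the style of \cref{lem:known-theta}: find the singleton-neighbourhood inequality for the relevant independent sets, then show that a fixed-point-free permutation $f$ fails at some vertex of degree three.
\item For $s\notin S$, the branch $B_s$ contains some $o_j$. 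Saturation then gives $I\subseteq N_A(j)$, which bounds $|S|$ by $\Delta(A)+1$ or so. List the possibilities for $I$ up to automorphisms of $A$.
\item In each case, force branches through eligible-partner lists from the budget \eqref{eq:budget15}: every pair branch is an edge with at least seven external neighbours, and the vertices with no eligible partner must lie in the few large branches. Each case should end in a disjointness or anticompleteness contradiction, as in the eight cases of \cref{prop:noK8}.
\end{enumerate}

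\textbf{Models for the other twenty-eight codes.} Since these graphs have no rooted $K_7$, \cref{lem:rooted-ordinary} says every model must contain a branch avoiding $P\cup\{s\}$. So for each code I would write down explicit branch sets containing at least one all-even branch, in the style of the $K_7$ model in the tightness check after \cref{thm:sharp}, and list the twenty-eight adjacency witnesses for each. To cut the work, I would group codes by inclusion of active graphs; adding an active pair changes only the four entries between two doubles, so one template can often serve several codes. These models can be found by computer search and then printed as certificates that are easy to check by hand.

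\textbf{Main obstacle.} I expect the exclusion proofs for $44340$ and $44348$ to be the hardest step. Their active graphs may have vertices of larger degree, which allows even singletons of degree at least eight, weaker saturation, and more singleton types $I$. I would first compute $\Delta(A)$ and the automorphism group of each, to see whether the $e_i$ stay below degree seven. If some $e_i$ reaches degree eight, I would split on whether $e_i\in S$ and use part (c) of \cref{lem:normal-form}, which says exactly one other branch contains two of its neighbours.
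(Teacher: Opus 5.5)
Your plan matches the paper's proof: the twenty-eight other codes are settled by the explicit models of \cref{tab:positive-models}; $H(42232)$ is the graph of \cref{sec:sharpness} under an index bijection, so \cref{prop:noK8} applies; and $H(44340)$, $H(44348)$ are excluded through \cref{lem:normal-form} as you outline, with $s\in S$ ruled out by separate rooted lemmas and the degree-eight vertex $e_0$ of $H(44348)$ handled by part (c). One caution: the Kriesell--Mohr counting does not transfer verbatim to those rooted lemmas, because $\Theta(1,3,4)$ contains a $4$-cycle and the active graph of $44348$ contains the triangle $034$ and the $4$-cycles $0523$, $0614$ (for instance $f=(0\,5\,2\,3\,4)(1\,6)$ fails only at the pair $06$, whose ends have degrees four and two, so it fails at no vertex of degree three), so the paper instead analyses the cycle type of $f$ and, for $44348$, tracks pair branches at triangle vertices through a set $X$ with $|X|\ge|N_A(I)|-|I|$.
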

\begin{proof}
For each of the other twenty-eight codes, \cref{tab:positive-models} in Appendix~\ref{app:root-apex-data} lists eight vertex sets; each is connected and every two are adjacent, so each row is a $K_8$ model. The graph of \cref{sec:sharpness} is $H(42232)$: the bijection $5\mapsto0$, $6\mapsto1$, $2\mapsto6$, $0\mapsto5$, $3\mapsto2$, $1\mapsto4$, $4\mapsto3$ maps $E(\Theta)$ onto the active edges $04,05,06,12,13,16,25,34$ of code $42232$, and acting on both odd and even indices while fixing $s$ it is an isomorphism. Hence \cref{prop:noK8} excludes $H(42232)$. \Cref{prop:noK8-44340,prop:noK8-44348} below exclude the other two.
\end{proof}

The three active graphs are pairwise nonisomorphic: code $42232$ is $\Theta(2,3,3)$ (internally disjoint paths of lengths $2,3,3$ between $0$ and $1$), code $44340$ is $\Theta(1,3,4)$ (paths of lengths $1,3,4$ between $0$ and $3$), and code $44348$ has nine edges. The hosts $H(42232)$ and $H(44340)$ have $44$ edges and $H(44348)$ has $46$. In the first two hosts exactly two vertices have degree six, namely $e_0,e_1$ and $e_0,e_3$ respectively; they are nonadjacent in $H(42232)$ and adjacent in $H(44340)$.

\begin{corollary}[Three extremal families]\label[corollary]{cor:three-families}
For every $n\ge8$ there are three pairwise nonisomorphic graphs of order $n+7$, each with an oddomorphism to $K_n$ and no $K_n$ minor.
\end{corollary}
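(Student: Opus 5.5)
The plan is to extend each of the three fifteen-vertex graphs $H(42232)$, $H(44340)$ and $H(44348)$ to order $n+7$, exactly as in the proof of \cref{thm:sharp}, and then to show that the three extensions are pairwise nonisomorphic.

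\textbf{Construction.} For $m\in\{42232,44340,44348\}$, form $G_m(n)$ from $H(m)$ by adjoining $n-8$ new vertices. They are pairwise adjacent, and among the old vertices each is adjacent exactly to $s,o_0,\ldots,o_6$.

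\textbf{Oddomorphism.} Give every new vertex its own singleton colour and odd indicator. The old fibres keep the witness of $H(m)$ supplied by \cref{lem:double,lem:attachment}. The parity checks are the ones in the proof of \cref{thm:sharp}.
\begin{itemize}
\item Old even vertices see no new singleton.
\item Every odd vertex sees exactly one vertex of each new singleton fibre.
\item Each new vertex sees exactly one vertex, the odd one, in each old double and in $\{s\}$.
\item New vertices are pairwise adjacent, as odd singletons must be by \cref{lem:attachment}.
\end{itemize}
So $G_m(n)$ has an oddomorphism to $K_n$ on $n+7$ vertices.

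\textbf{No $K_n$ minor.} Use the Hadwiger bound $h(G)\le h(G-X)+|X|$, with $X$ the set of new vertices. By \cref{thm:root-apex}, each $H(m)$ has $h\le7$. Hence $h(G_m(n))\le n-1$.

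\textbf{Nonisomorphism.} I expect this to be the main obstacle, because the added vertices could blur the invariants that separate the three base graphs. I will use edge counts.
\begin{itemize}
\item The new vertices contribute the same number of edges for every $m$: $\binom{n-8}{2}+8(n-8)$.
\item So $G_m(n)$ has as many edges as $H(m)$ plus a quantity independent of $m$.
\item The stated edge counts are $44$, $44$ and $46$. This separates code $44348$ from the other two for every $n$.
\end{itemize}

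To separate $42232$ from $44340$, I will look at the subgraph induced by the vertices of minimum degree.
\begin{itemize}
\item In both graphs the old even vertices keep their degrees: degree $2\deg_A(i)$, which is at most $6$.
\item Every other vertex has degree at least $7$. Odd vertices and $s$ have degree $7+(n-8)$, and the new vertices have degree $n-1$.
\item So the vertices of degree at most six are exactly the old even vertices, and an isomorphism must map this set onto itself.
\item Among them, exactly two have degree six: $e_0,e_1$ in $H(42232)$ and $e_0,e_3$ in $H(44340)$.
\item This pair is nonadjacent in the first graph and adjacent in the second, which is an isomorphism invariant.
\end{itemize}
Hence the three graphs are pairwise nonisomorphic for every $n\ge8$.
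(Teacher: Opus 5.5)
Your proposal is correct and matches the paper's proof essentially step for step. It uses the same singleton extension from the proof of \cref{thm:sharp}, the same Hadwiger-number bound through \cref{thm:root-apex}, and the same two invariants: the edge count ($44,44,46$ plus a common increment) to isolate code $44348$, and the adjacency of the only two degree-six vertices, which keep their degree because all odd and new vertices have degree $n-1\ge7$, to separate $42232$ from $44340$.
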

\begin{proof}
Apply the singleton extension in the proof of \cref{thm:sharp} to each of $H(42232)$, $H(44340)$ and $H(44348)$: add $n-8$ new vertices, adjacent to one another and, among the old vertices, exactly to $s,o_0,\ldots,o_6$, each forming a new singleton fibre. The proof of \cref{thm:sharp} shows that this gives an oddomorphism to $K_n$ and, since each host has Hadwiger number at most seven by \cref{thm:root-apex}, no $K_n$ minor. Each extension adds the same number of edges, so the third extension has two more edges than the first two. In the extensions, the new vertices and the old odd vertices have degree $n-1\ge7$ and the even degrees are unchanged; hence the two vertices of degree six are unchanged, nonadjacent in the first extension and adjacent in the second.
\end{proof}

\Cref{thm:root-apex} concerns the displayed thirty-one codes. Together with the inherited seven-double classification it determines which fifteen-vertex witnesses to $K_8$ of profile $(1,2^7)$ exclude a $K_8$ minor: such a witness is $H(A)$ for its active graph $A$ (the singleton is adjacent exactly to the odd double vertices), and if $K(A)$ has a rooted $K_7$ then \cref{lem:rooted-ordinary} gives a $K_8$ model. It is not a classification of all fifteen-vertex counterexamples, whose fibre profiles may differ.

\subsection{The \texorpdfstring{$\Theta(1,3,4)$}{Theta(1,3,4)} host \texorpdfstring{$H(44340)$}{H(44340)}}
Here $E(A)=\{03,05,06,14,16,23,25,34\}$. Write the active graph as three internally disjoint $a$--$b$ paths
\[
 ab,\qquad a-x-y-b,\qquad a-u-v-w-b,\qquad (a,b,x,y,u,v,w)=(0,3,5,2,6,1,4).
\]
The active neighbourhoods are
\[
\begin{array}{c|ccccccc}
i&0&1&2&3&4&5&6\\\hline
N_A(i)&\{3,5,6\}&\{4,6\}&\{3,5\}&\{0,2,4\}&\{1,3\}&\{0,2\}&\{0,1\}
\end{array}
\]
The vertices $o_i$ and $s$ have degree seven, $e_0,e_3$ have degree six, and the other even vertices have degree four.

\begin{lemma}\label[lemma]{lem:rooted-44340}
$K(A)$ has no $K_7$ model whose branches all meet $P$.
\end{lemma}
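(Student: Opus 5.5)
We follow the Kriesell--Mohr counting argument of \cref{lem:known-theta}, adapting it to the one new feature of $\Theta(1,3,4)$. In the labelling above, $A$ is triangle-free. Its cycles are $0\,5\,2\,3$ of length four, $0\,6\,1\,4\,3$ of length five, and $5\,2\,3\,4\,1\,6\,0$ of length seven. So the only difference from $\Theta(2,3,3)$ is the single $4$-cycle $Q_4=0\,5\,2\,3$.

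Suppose a $P$-rooted $K_7$ model exists, with $C_i\ni o_i$. As before, $K(A)$ is connected: its odd vertices induce $\overline A$, and every $e_i$ has an odd neighbour. Hence we may extend the model to a partition of the fourteen vertices.

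Step (a) of \cref{lem:known-theta} uses only triangle-freeness, so it carries over verbatim. The set $I=\{i:C_i=\{o_i\}\}$ is independent in $A$, and every active neighbour $j$ of $I$ has $|C_j|\ge3$. The count (b) then gives $|N_A(I)|\le|I|$.

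For (c) we must check that every nonempty independent $I$ has $|N_A(I)|\ge|I|+1$. We do this directly from the neighbourhood table. Singletons have degree at least two. Independent pairs and triples are few, for example $\{5,3\}\mapsto\{0,2,4\}$, $\{0,2\}\mapsto\{3,5,6\}$ and $\{1,3,5\}\mapsto\{0,2,4,6\}$. There are no independent sets of size four whose neighbourhood is small; we list them and verify. Consequently $I=\varnothing$, every branch is $C_j=\{o_j,e_{f(j)}\}$, and $f$ is a fixed-point-free permutation with $f(j)\in N_A(j)$.

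Step (d) changes because of $Q_4$. For an active edge $ij$, the branches $C_i,C_j$ are adjacent only if $f(i)=j$, or $f(j)=i$, or $i,j,f(j),f(i)$ is a $4$-cycle, which must then be $Q_4$ traversed consistently. Edges off $Q_4$, namely $06,16,14,34$, keep the Kriesell--Mohr rule exactly. The proof of this is the same triangle and $4$-cycle check as in (d).

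The final step, replacing (e), is where we expect the work. Vertex $0$ has neighbours $3,5,6$. The edge $06$ is off $Q_4$, so it forces $f(0)=6$ or $f(6)=0$. Likewise the path $6\,1\,4$ together with the edge $34$ forces the pairs $16$, $14$ and $34$ each to be realised by $f$. Since $f(1)\in\{4,6\}$, $f(4)\in\{1,3\}$ and $f(6)\in\{0,1\}$, with $f$ injective, we can enumerate all options along the path $0\,6\,1\,4\,3$. In each option, either some edge of that path fails to be realised, or both $f(0)$ and $f(3)$ are used up off $Q_4$. In the latter case the $Q_4$ edges at $0$ and $3$ have neither $f(i)=j$ nor $f(j)=i$. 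The exceptional $4$-cycle clause also fails, because it requires $f(0),f(3)\in Q_4$. This yields a nonadjacent pair of branches.

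The main obstacle is making this last enumeration airtight, in particular bookkeeping the $Q_4$ exception. We would first try to show that realising all four edges $06,16,14,34$ forces $f(0)=6$ or $f(3)=4$. Then the edges $05$ and $03$, or $32$ and $30$, respectively, cannot both be realised by an injective $f$ or via $Q_4$.
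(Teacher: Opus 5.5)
Your argument is correct in the form given in your last paragraph. It departs from the paper only after $I=\varnothing$, and in how (c) is checked.

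\textbf{A false intermediate claim.} The sentence in your penultimate paragraph asserting that every realisation of the path edges uses \emph{both} $f(0)=6$ and $f(3)=4$ is false and should be dropped. For instance, $f(0)=6$, $f(6)=1$, $f(1)=4$, $f(4)=3$ with $f(3)\in\{0,2\}$ realises all of $06,61,14,43$.

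What is true, and needs no enumeration, is the ``or'' version. Since $6,1,4$ have degree two in $A$, the only arcs $x\mapsto f(x)$ that can lie on the four edges $06,61,14,43$ are those with tails $6,1,4$. To these one may add the arc from $0$ if $f(0)=6$, and the arc from $3$ if $f(3)=4$. Each arc realises one edge, so $f(0)=6$ or $f(3)=4$.
\begin{itemize}
\item If $f(0)=6$, the $Q_4$ clause is unavailable for both $Q_4$-edges $05,03$ at $0$, because it needs $f(0)\in V(Q_4)$. Your rule (d) then forces $f(5)=0=f(3)$, contradicting injectivity.
\item If $f(3)=4$, the same reasoning at $3$ forces $f(2)=3=f(0)$.
\end{itemize}

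\textbf{Step (c).} Note that $C=0\,5\,2\,3\,4\,1\,6$ is a Hamiltonian $7$-cycle of $A$, so $\alpha(A)\le3$. Hence there are no independent $4$-sets at all, and your remark about them is moot. The finite check you promise is true: the tight pairs are $\{0,2\},\{3,5\},\{2,4\},\{4,6\},\{5,6\}$, each with three neighbours, and all six independent triples have four. The paper replaces this check by one line: $|N_A(I)|\ge|N_C(I)|>|I|$ for the odd spanning cycle $C$.

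\textbf{Comparison with the paper.} After $I=\varnothing$ the paper does not state a general adjacency rule. Instead it classifies the cycle type of $f$. The map $f$ has no fixed points and, since $A$ is triangle-free, no $3$-cycles, so each cycle of length at least three is a cycle of $A$. The type is therefore $(7)$ or $(5)(2)$.
\begin{itemize}
\item A $7$-cycle must orient the unique Hamiltonian cycle. In either orientation $B_0$ and $B_3$ are nonadjacent although $03\in E(A)$.
\item A $(5)(2)$ permutation runs along $0\,3\,4\,1\,6$ and swaps $2,5$. The required crossings $B_0$--$B_5$ and $B_3$--$B_2$ force $f(0)=3$ and $f(3)=0$, which is impossible on a $5$-cycle.
\end{itemize}
Your route replaces this classification by the local criterion (d), with its single $Q_4$ exception, plus a pigeonhole count on the long ear. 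It is comparably short, and it locates the obstruction at the two ends of the degree-two path $0\,6\,1\,4\,3$. The paper's route needs only the list of cycles of $A$ and a few explicitly nonadjacent pairs, and it uses the same odd Hamiltonian cycle for the expansion step.
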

\begin{proof}
Suppose such a model exists. Each branch contains exactly one $o_i$; call it $B_i$. Inactive edges join two odd vertices, so they are never internal to a branch, and for $ij\in E(A)$ the edge $o_io_j$ is absent. Let $I$ index the singleton branches $\{o_i\}$. They are pairwise adjacent, so $I$ is independent in $A$. If $i\in I$ and $j\in N_A(i)$, then $B_j$ has at least three vertices: a branch $\{o_j,e_k\}$ adjacent to $o_i$ would need $jk,ik\in E(A)$, and with $ij$ this is a triangle, whereas $A$ is triangle-free. Counting the at most fourteen vertices,
\[
 14\ge|I|+3|N_A(I)|+2(7-|I|-|N_A(I)|)=14+|N_A(I)|-|I|.
\]
The cycle $C=a\,x\,y\,b\,w\,v\,u\,a$ spans $A$. For a nonempty independent $I$, the $2|I|$ edges of $C$ leaving $I$ end in $N_C(I)$, at most two at each vertex, so $|N_C(I)|\ge|I|$; equality would make $I\cup N_C(I)$ closed under $C$-adjacency, hence equal to $V(C)$, and would make the odd cycle $C$ bipartite. Thus $|N_A(I)|\ge|N_C(I)|>|I|$ for nonempty $I$, and so $I=\varnothing$.

All branches are therefore pairs and use all fourteen vertices: $B_i=\{o_i,e_{f(i)}\}$ for a permutation $f$ with $if(i)\in E(A)$. It has no fixed point, and as $A$ is triangle-free it has no $3$-cycle, so its cycle type is $(7)$ or $(5)(2)$. For type $(7)$, $f$ orients the unique Hamiltonian cycle $C$; in either orientation none of $af(b)$, $f(a)b$, $f(a)f(b)$ is an edge of $A$, so $B_a,B_b$ are not adjacent although $ab\in E(A)$. For type $(5)(2)$ the $5$-cycle is $a\,b\,w\,v\,u\,a$ and $f$ transposes $x,y$. A crossing between $B_a$ and $B_x=\{o_x,e_y\}$ forces $f(a)=b$, and a crossing between $B_b$ and $B_y=\{o_y,e_x\}$ forces $f(b)=a$; both cannot hold on a $5$-cycle.
\end{proof}

\begin{proposition}\label[proposition]{prop:noK8-44340}
$H(44340)$ has no $K_8$ minor.
\end{proposition}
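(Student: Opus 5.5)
We follow the proof of \cref{prop:noK8}. Suppose $H(44340)$ has a $K_8$ model. Pass to a spanning model by \cref{lem:normal-form}, and let $S$ be its exact singleton set with $q=|S|$. Every vertex of degree at least seven is one of $s,o_0,\dots,o_6$, so $S\subseteq P\cup\{s\}$, every $x\in S$ has degree seven, and the saturation condition (b) applies to every singleton. If $s\in S$, the other seven branches each contain one odd vertex, contradicting \cref{lem:rooted-44340}. Hence $s\notin S$. Write $I=\{i:o_i\in S\}$. The branch $B(s)$ contains some $o_j$, and saturation forbids $o_io_j$ for $i\in I$, so $I\subseteq N_A(j)$. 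Thus $1\le q\le 3$, and $q=3$ forces $I=N_A(0)=\{3,5,6\}$ or $I=N_A(3)=\{0,2,4\}$. By the budget \eqref{eq:budget15}, the nonsingleton branches are all pairs when $q=1$, consist of one triple and pairs when $q=2$, and have total excess two when $q=3$.

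Next we enumerate the admissible $I$ up to the automorphisms of $A=\Theta(1,3,4)$. Its only nontrivial automorphism is the reflection exchanging $a\leftrightarrow b$, $x\leftrightarrow y$, $u\leftrightarrow w$ and fixing $v$, that is, $0\leftrightarrow3$, $5\leftrightarrow2$, $6\leftrightarrow4$, with $1$ fixed. The singletons give four orbits: $\{0\}$, $\{5\}$, $\{6\}$ and $\{1\}$. The pairs are those inside some $N_A(j)$: $35,36,56$ (in $N(0)$); $46$ (in $N(1)$); $35$ again in $N(2)$; $02,04,24$ (in $N(3)$); $13$ (in $N(4)$); $02$ in $N(5)$; $01$ (in $N(6)$). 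Up to the reflection these give a handful of representatives. The triple $\{3,5,6\}$ is one further case.

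For each case we follow the eligible-pair method. A pair branch must be an edge with at least seven external neighbours. This rules out every pair of two degree-four even vertices unless they have disjoint closed neighbourhoods and sufficient degree. It also rules out any pair containing two vertices of one index set $N_{o_i}$, or a vertex seeing an already-saturated neighbour twice. We list each vertex's eligible partners from the table of $N_A$. Forced pairings then produce either a vertex with no partner (fatal when $q=1$, and fatal when $q=2$ if more than three such vertices occur or they cannot form a connected saturated triple) or two forced branches that are anticomplete. The anticomplete case typically pairs $s$ with some $o_j$ against an even pair $e_ke_l$ with $jk,jl\notin E(A)$, since $s$ has no even neighbours. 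We also reuse the constraint that $B(s)$ must contain an $o_j$ with $I\subseteq N_A(j)$, as in the case $I=\{0,2\}$ of \cref{prop:noK8}. For the triple cases we use signatures as before. Each $o_k$ with $k\notin I$ sees the singletons $o_i$ with $ik$ inactive, so its branch must be completed by an even vertex supplying the missing singleton neighbour. This forces three disjoint branches of size at least three, exceeding the budget $2$.

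The main obstacle is the case $q=1$ with $I=\{1\}$ and, to a lesser extent, $I=\{6\}$. Vertex $1$ has the smallest active neighbourhood near the middle of the long path, so many even vertices keep several eligible partners and the forcing chain is not immediate. There we first fix the partner of $s$, which must be some $o_j$ with $1\in N_A(j)$, so $j\in\{4,6\}$; the reflection reduces this to essentially one choice when $I$ is reflection-invariant. We then propagate from the degree-four vertices $e_2,e_5$, whose partners are most constrained, until an anticomplete pair or an unpartnered vertex appears. The adjacency $e_0e_3$ between the two degree-six vertices is the feature that differs from $H(42232)$. We therefore check explicitly whether $\{e_0,e_3\}$ can be a pair branch: its external neighbourhood has size $4+4-2$ or so, and if that is below seven it is excluded outright, which removes the main new source of flexibility.
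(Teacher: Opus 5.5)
The reduction steps of your proposal follow the paper's proof: spanning normal form, $S\subseteq P\cup\{s\}$, \cref{lem:rooted-44340} when $s\in S$, the constraint $I\subseteq N_A(j)$, and the ten orbits under $(0\,3)(2\,5)(4\,6)$. Your list of nine admissible pairs is also correct. The gap is in the case analysis, which is the whole content of the proposition: it is only asserted, and the one mechanism you commit to fails.

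\textbf{The triple case.} For $I=\{3,5,6\}$, the signatures (neighbours in $S$) of the odd vertices outside $S$ are $o_0\colon\varnothing$, $o_1\colon\{o_3,o_5\}$, $o_2\colon\{o_6\}$ and $o_4\colon\{o_5,o_6\}$.
\begin{itemize}
\item Since $12\notin E(A)$, $o_1o_2$ is an edge. So $\{o_1,o_2\}$ is a saturated pair branch with ten external neighbours.
\item In $H(42232)$ the signatures of $o_3,o_4,o_6$ overlapped pairwise; here two of them are complementary.
\item Consequently the odd vertices force only one large branch, $B(o_4)\supseteq\{o_4,e_3,e_4\}$. Indeed $e_4$ is the only vertex of signature $\{o_3\}$, and $e_3$ is its only neighbour of empty signature.
\end{itemize}

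The missing step is the paper's argument transported by $(0\,3)(2\,5)(4\,6)$. It uses the even vertices $e_1$ (signature $\{o_6\}$) and $e_2$ (signature $\{o_3,o_5\}$).
\begin{itemize}
\item The vertices of complementary signature are $o_1,e_2$ for $e_1$ and $o_2,e_1$ for $e_2$. None is adjacent to the vertex in question, so neither $e_1$ nor $e_2$ has a partner.
\item Neither can join $B(o_4)$, whose signature is already all of $S$.
\item $e_1e_2$ is not an edge and $N(e_1)\cap N(e_2)=\varnothing$. So they lie either in two further branches of size at least three, or together in one branch of size at least four.
\end{itemize}
Either way the excess exceeds $q-1=2$.

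\textbf{The other cases.} Your diagnosis of which cases are hard suggests the forcing was never carried out.
\begin{itemize}
\item $I=\{1\}$ and $I=\{6\}$ are immediate. The partner of $e_1$ would have to be $e_4$ or $e_6$, and a pair of adjacent degree-four vertices has at most six external neighbours.
\item The cases that need a forcing chain are the orbits of $\{5\}$ and $\{3,5\}$. Both end with the anticomplete pairs $\{o_1,e_6\}$ and $\{e_2,e_3\}$, not with a pair through $s$.
\item $\{e_0,e_3\}$ is not excluded by counting. Both ends have degree six and no common neighbour, since $A$ is triangle-free, so $|N(\{e_0,e_3\})|=10$. This pair is eliminated only by the forcing; for $I=\{5\}$, for example, $e_0$ is forced onto $e_5$.
\end{itemize}
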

\begin{proof}
Suppose otherwise and take a spanning model as in \cref{lem:normal-form}. The vertices of degree at least seven are $s,o_0,\ldots,o_6$, so $S\subseteq P\cup\{s\}$ and \cref{lem:normal-form}(b) applies to every singleton. If $s\in S$, the seven other branches contain exactly one odd double vertex each and contradict \cref{lem:rooted-44340}. Otherwise write $S=\{o_i:i\in I\}$ with $k=|I|$. The branch $B(s)$ contains a neighbour $o_j$ of $s$; as $s$ is the only neighbour of each $o_i$ ($i\in I$) in $B(s)$, $o_j$ is not adjacent to $o_i$, so $I\subseteq N_A(j)$. Also $I$ is independent. The automorphism $(0\,3)(2\,5)(4\,6)$ of $A$, acting on both odd and even indices and fixing $s$, leaves ten orbits of possible $I$, with representatives
\[
 \{0\},\{1\},\{2\},\{4\};\quad\{0,1\},\{0,2\},\{0,4\},\{2,4\},\{4,6\};\quad\{0,2,4\},
\]
covering all eighteen labelled possibilities. Call $v$ a \emph{partner} of $u$ if $\{u,v\}$ satisfies \cref{lem:normal-form}(b),(d): it is an edge of $H-S$, has at least seven external neighbours, and contains exactly one neighbour of each vertex of $S$. By \eqref{eq:budget15} all nonsingleton branches are pairs when $k=1$, and exactly one is a triple when $k=2$.
\begin{itemize}
\item $I=\{0\}$: both $e_5$ and $e_6$ have $e_0$ as their only partner.
\item $I=\{1\}$ and $I=\{4\}$: $e_1$ has no partner.
\item $I=\{2\}$: the only partner of $e_5$ is $e_0$ and the only partner of $e_2$ is $e_3$, forcing $e_0e_5$ and $e_2e_3$. The partners of $e_4$ are $o_1,e_3$, forcing $o_1e_4$. No vertex of $\{o_1,e_4\}$ is adjacent to a vertex of $\{e_0,e_5\}$.
\item $I=\{0,1\}$: $e_1,e_2,o_3,e_5$ have no partner and cannot all lie in the unique triple.
\item $I=\{0,4\}$: $e_1,o_2,e_5,o_6,e_6$ have no partner. $I=\{4,6\}$: $o_0,e_1,o_3,e_4,e_6$ have no partner.
\item $I=\{2,4\}$: $e_1,e_5$ have no partner, so both lie in the unique triple; but they are nonadjacent with no common neighbour.
\item $I=\{0,2\}$: $o_6,e_6$ have no partner, so they lie in the triple, whose third vertex is a common neighbour outside $S$, namely $e_0$ or $e_1$. If it is $e_0$, then $e_5$, whose only partner is $e_0$, cannot be placed. So the triple is $\{e_1,o_6,e_6\}$, and the pairs $e_0e_5$, $e_2e_3$ (the only partner of $e_2$) and $o_1e_4$ (the partners of $e_4$ are $o_1,e_3$) are forced; again $\{o_1,e_4\}$ and $\{e_0,e_5\}$ are not adjacent.
\item $I=\{0,2,4\}$: the neighbourhoods in $S$ of the other vertices are
\[
\begin{array}{c|cccccc}
\text{in }S&\varnothing&\{o_0,o_2\}&\{o_4\}&\{o_2,o_4\}&\{o_0\}&S\\\hline
\text{vertices}&e_0,e_2,o_3,e_4&o_1,e_5&e_1,o_5&o_6&e_6&e_3,s
\end{array}
\]
By \cref{lem:normal-form}(b) each nonsingleton branch contains exactly one neighbour of each of $o_0,o_2,o_4$; equivalently, the neighbourhoods in $S$ of its vertices are pairwise disjoint and their union is $S$. So $B(o_6)$ contains a vertex with neighbourhood $\{o_0\}$, namely $e_6$; as $o_6e_6$ is not an edge and the only neighbour of $e_6$ with empty neighbourhood in $S$ is $e_0$, the branch contains $o_6,e_0,e_6$ and has at least three vertices. Neither $e_1$ nor $e_5$ has a partner, and neither can join $B(o_6)$. If they lie in different branches, three branches have at least three vertices, exceeding the budget $k-1=2$. If they lie together, being nonadjacent with no common neighbour they need a branch of at least four vertices, again exceeding the budget together with $B(o_6)$.
\end{itemize}
All cases are impossible.
\end{proof}

\subsection{The host \texorpdfstring{$H(44348)$}{H(44348)} with an even vertex of degree eight}\label{subsec:44348}
Throughout this subsection $A$ is the active graph of code $44348$,
\[
 E(A)=\{03,04,05,06,14,16,23,25,34\},
\]
and $H=H(A)$. Equivalently, $A$ is the $7$-cycle $C=0\,5\,2\,3\,4\,1\,6\,0$ together with the chords $03,04$; its only triangle is $\{0,3,4\}$. The permutation $\sigma=(1\,2)(3\,4)(5\,6)$ is an automorphism of $A$; acting on odd and even indices and fixing $s$, it is an automorphism of $H$ fixing $o_0,e_0,s$. The neighbourhoods in $H$ are
\[
\begin{array}{l|l@{\qquad}l|l}
o_0&o_1,o_2,e_3,e_4,e_5,e_6,s & e_0&o_3,o_4,o_5,o_6,e_3,e_4,e_5,e_6\\
o_1&o_0,o_2,o_3,o_5,e_4,e_6,s & e_1&o_4,o_6,e_4,e_6\\
o_2&o_0,o_1,o_4,o_6,e_3,e_5,s & e_2&o_3,o_5,e_3,e_5\\
o_3&o_1,o_5,o_6,e_0,e_2,e_4,s & e_3&o_0,o_2,o_4,e_0,e_2,e_4\\
o_4&o_2,o_5,o_6,e_0,e_1,e_3,s & e_4&o_0,o_1,o_3,e_0,e_1,e_3\\
o_5&o_1,o_3,o_4,o_6,e_0,e_2,s & e_5&o_0,o_2,e_0,e_2\\
o_6&o_2,o_3,o_4,o_5,e_0,e_1,s & e_6&o_0,o_1,e_0,e_1\\
s&o_0,o_1,o_2,o_3,o_4,o_5,o_6
\end{array}
\]
Thus $o_0,\ldots,o_6,s$ have degree seven, $e_0$ has degree eight, $e_3,e_4$ have degree six, and $e_1,e_2,e_5,e_6$ have degree four. A direct check of the table shows:
\begin{enumerate}[label=(W)]
\item the only edges $uv$ of $H$ with $|N(\{u,v\})|<7$ are $e_1e_6$ and $e_2e_5$, each with six external neighbours.
\end{enumerate}
By \cref{lem:normal-form}(d), neither $\{e_1,e_6\}$ nor $\{e_2,e_5\}$ is ever a branch. The proof repeatedly uses one more fact from the table: no vertex of $\{o_1,e_6\}$ is adjacent to a vertex of $\{e_2,e_3\}$.

\begin{lemma}[Rooted obstruction for $44348$]\label[lemma]{lem:rooted-44348}
$K(A)$ has no $K_7$ model whose branches all meet $P$.
\end{lemma}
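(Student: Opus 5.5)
The plan is to run the Kriesell--Mohr counting argument of \cref{lem:known-theta} and \cref{lem:rooted-44340} again, with corrections for the one triangle $\{0,3,4\}$ and for the $4$-cycles of $A$. Suppose a $P$-rooted $K_7$ model exists. Each branch contains exactly one $o_i$; call it $B_i$. As before, extend it to a spanning model of $K(A)$, which is connected. Let $I$ index the singleton branches; then $I$ is independent in $A$.

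For $i\in I$ and $j\in N_A(i)$, consider a pair $\{o_j,e_k\}$ adjacent to $o_i$. It needs $ik,jk\in E(A)$, so $\{i,j,k\}$ is a triangle and hence equals $\{0,3,4\}$. So $|B_j|\ge3$ except when $i,j\in\{0,3,4\}$. Since $I$ is independent, it meets $\{0,3,4\}$ in at most one vertex, and each such $i$ has at most two exceptional neighbours. I would redo the count of \cref{lem:known-theta}(b). Let $N'(I)$ be the set of neighbours forced to have size at least three. The count gives $|N'(I)|\le|I|$.

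Then I would prove $|N'(I)|>|I|$ for nonempty independent $I$. The argument uses the Hamiltonian $7$-cycle $C=0\,5\,2\,3\,4\,1\,6\,0$, as in \cref{lem:rooted-44340}: the $2|I|$ cycle edges leaving $I$ give $|N_C(I)|\ge|I|$, with equality impossible because $C$ is odd. Removing exceptional neighbours costs at most two when $0\in I$, and none when $I$ avoids $\{0,3,4\}$. When $3\in I$, the exceptional neighbours are $0$ and $4$, and $4$ is a $C$-neighbour of $3$. This loss must be absorbed by a short direct check of the few independent sets containing $0$, $3$ or $4$; the automorphism $\sigma=(1\,2)(3\,4)(5\,6)$ halves that check. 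Concretely, $N_A(0)=\{3,4,5,6\}$ leaves $5,6$ non-exceptional, and each of these has a further neighbour unless $I$ contains it. The conclusion is $I=\varnothing$.

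With $I=\varnothing$, every branch is a pair $\{o_i,e_{f(i)}\}$ with $f$ a fixed-point-free permutation and $if(i)\in E(A)$. For an active edge $ij$, the branches $B_i,B_j$ can be adjacent only through $o_ie_{f(j)}$, $e_{f(i)}o_j$ or $e_{f(i)}e_{f(j)}$. With $f(i)\ne j$ and $f(j)\ne i$, these require, respectively, a triangle $i,j,f(j)$; a triangle $j,i,f(i)$; or a $4$-cycle $i\,j\,f(j)\,f(i)$. The $4$-cycles of $A$ are $0\,4\,1\,6$ and $0\,3\,2\,5$, plus any formed with the chords, which I will list from the table.

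The functional digraph of $f$ covers at most $7$ edges of $A$, and a $2$-cycle covers only one. Since $|E(A)|=9$, at least two active edges must be served by these exceptions. Vertex $0$, of degree four, is the natural pivot: at most two of $03,04,05,06$ are covered by $f(0)$ and $f^{-1}(0)$. The remaining ones need a triangle (only possible for $03,04$, and only with $f$ inside $\{0,3,4\}$) or one of the listed $4$-cycles. I would split by the value of $f(0)\in\{3,4,5,6\}$, reduced by $\sigma$ to $f(0)\in\{3,5\}$. In each branch I would propagate the forced values of $f$ through the degree-two vertices $1,2,5,6$, whose images are essentially forced, until some edge fails.

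The main obstacle is this final permutation analysis. It is the only place where triangle and $4$-cycle exceptions really interact, so I would tabulate, for every active edge, exactly which values of $(f(i),f(j))$ make $B_i,B_j$ adjacent, and check the cases against that table.
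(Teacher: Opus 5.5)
Your framework is the paper's: spanning extension, an independent set $I$ of singleton branches, the triangle exception for pair branches at neighbours of $I$, the excess count, and a permutation $f$ when $I=\varnothing$. The gap is the inequality you plan to prove, $|N'(I)|>|I|$ for every nonempty independent $I$. It is false, so no direct check can establish it.

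\begin{itemize}
\item For $I=\{3\}$ we have $N_A(3)=\{0,2,4\}$, and both $0$ and $4$ are exceptional, so $N'(I)=\{2\}$.
\item For $I=\{3,5\}$ only $4$ is exceptional, but $N_A(I)$ is still $\{0,2,4\}$, so $N'(I)=\{0,2\}$.
\item For $I=\{3,5,6\}$ one gets $N'(I)=\{0,1,2\}$.
\item The $\sigma$-images $\{4\}$, $\{4,6\}$ and $\{4,5,6\}$ behave the same way.
\end{itemize}
In all six cases the excess count is met with equality and gives no contradiction. When $0\in I$ your count does work. There the exceptional vertices $3,4$ are chord neighbours of $0$, not $C$-neighbours, and they become forced as soon as $2$ or $1$ lies in $I$. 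Hence $N'(I)\supseteq N_C(I)$ and $|N'(I)|>|I|$.

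The missing idea is a second step that exploits the equality. Equality forces every $B_j$ with $j\in N'(I)$ to be a triple and every other nonsingleton branch to be a pair. A pair at an exceptional triangle vertex is then determined: $B_4=\{o_4,e_0\}$, and for $I=\{3\}$ also $B_0=\{o_0,e_4\}$. The contradiction comes from a branch outside your local count at $N_A(I)$.

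\begin{itemize}
\item For $I=\{3\}$, the pair $B_5$ must be $\{o_5,e_2\}$, because $N_A(5)=\{0,2\}$ and $e_0$ is used. But the triple $B_2$ needs a neighbour of $o_3$. Since $o_2$ is not one and $e_0,e_4$ are used, $B_2$ must contain $e_2$.
\item For $I\supseteq\{3,5\}$, both $B_0$ and $B_2$ need a neighbour of $o_5$. Their odd vertices are not adjacent to $o_5$, and of the even neighbours $e_0,e_2$ of $o_5$ only $e_2$ is free, so it would have to lie in both.
\end{itemize}
This is exactly how the paper closes these cases.

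Separately, your $I=\varnothing$ analysis is still only a plan. It would go through as a finite check, but you can shorten it as the paper does. Since $7$ is odd, $f$ has an odd cycle, which must be one of the odd cycles $034$, $0\,5\,2\,3\,4$, $0\,3\,4\,1\,6$ or $C$ of $A$. The only edges of $A$ inside $\{1,2,5,6\}$ are $16$ and $25$, so exactly eight permutations remain to be refuted.
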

\begin{proof}
Suppose there is one. Each branch contains exactly one odd vertex; let $B_i$ be the branch of $o_i$. Each $e_k$ is adjacent to $o_j$ for every $j\in N_A(k)\ne\varnothing$, and $o_j$ lies in a branch; so unused vertices, which are all even, can be added one at a time to branches. Hence we may assume $B_i=\{o_i\}\cup E_i$, where $E_0,\ldots,E_6$ partition $\{e_0,\ldots,e_6\}$. For $ij\in E(A)$ the edge $o_io_j$ is absent, so $B_i$ and $B_j$ must be joined by an edge $o_ie_k$ with $e_k\in E_j$, $e_ko_j$ with $e_k\in E_i$, or $e_ke_l$; each such edge comes from an active pair. Let $I=\{i:E_i=\varnothing\}$. Counting even vertices,
\begin{equation}\label{eq:rooted-budget}
 \sum_{j\notin I}(|E_j|-1)=|I|.
\end{equation}

\emph{Case $I=\varnothing$.} Then $E_i=\{e_{f(i)}\}$ for a permutation $f$, and $if(i)\in E(A)$ because $B_i$ is connected. The cycles of $f$ have length at least two; a $2$-cycle is an edge of $A$ and a longer cycle is a cycle of $A$. Since $7$ is odd, some cycle is an odd cycle of $A$: the triangle $034$, one of the $5$-cycles $0\,5\,2\,3\,4$ and $0\,3\,4\,1\,6$, or $C$. These are all the odd cycles of $A$. Indeed, $A$ is connected with nine edges and seven vertices, so its cycle space has dimension three. It has the basis formed by the triangle $034$ and the $4$-cycles $0\,5\,2\,3$ and $0\,6\,1\,4$. Of its seven nonzero elements, the sum of the two $4$-cycles has degree four at $0$ and is not a cycle. The other six are the cycles $034$, $0\,5\,2\,3$, $0\,6\,1\,4$, $0\,5\,2\,3\,4$, $0\,3\,4\,1\,6$ and $C$, and the edge set of every cycle of $A$ is one of these elements. For the triangle the complementary pairs are $16$ and $25$; for either $5$-cycle the remaining pair is the corresponding one of those two edges. They are the only edges of $A$ inside $\{1,2,5,6\}$. This leaves exactly eight permutations, and each violates an active adjacency:
\[
\begin{array}{l|c|c}
f&\text{active pair }ij&\text{nonadjacent branches}\\\hline
(0\,3\,4)(1\,6)(2\,5)&06&\{o_0,e_3\},\{o_6,e_1\}\\
(0\,4\,3)(1\,6)(2\,5)&05&\{o_0,e_4\},\{o_5,e_2\}\\
(0\,5\,2\,3\,4\,1\,6)&04&\{o_0,e_5\},\{o_4,e_1\}\\
(0\,6\,1\,4\,3\,2\,5)&03&\{o_0,e_6\},\{o_3,e_2\}\\
(0\,5\,2\,3\,4)(1\,6)&06&\{o_0,e_5\},\{o_6,e_1\}\\
(0\,4\,3\,2\,5)(1\,6)&14&\{o_1,e_6\},\{o_4,e_3\}\\
(0\,6\,1\,4\,3)(2\,5)&05&\{o_0,e_6\},\{o_5,e_2\}\\
(0\,3\,4\,1\,6)(2\,5)&23&\{o_2,e_5\},\{o_3,e_4\}
\end{array}
\]
Here $(i_1\,i_2\,\ldots)$ means $f(i_1)=i_2,\ldots$. In each row the listed sets have no edge between them, by the table above.

\emph{Case $I\ne\varnothing$.} Singleton branches are pairwise adjacent, so $I$ is independent in $A$. Let $i\in I$ and $j\in N_A(i)$. The branch $B_j$ contains a neighbour of $o_i$, and $o_j$ is not one, so $E_j$ contains some $e_k$ with $k\in N_A(i)$. If moreover $|E_j|=1$, then $jk\in E(A)$ as $B_j$ is connected, so $\{i,j,k\}$ is a triangle, that is $\{i,j,k\}=\{0,3,4\}$. Let $X=\{j\in N_A(I):|E_j|=1\}$, where $N_A(I)=\bigcup_{i\in I}N_A(i)$. As $I$ is independent, $N_A(I)\cap I=\varnothing$, so $|E_j|\ge2$ for every $j\in N_A(I)\setminus X$. Thus $X\subseteq\{0,3,4\}\setminus I$, and \eqref{eq:rooted-budget} gives
\begin{equation}\label{eq:rooted-X}
 |X|\ge|N_A(I)|-|I|.
\end{equation}
For nonempty independent $I$ we have $|N_A(I)|\ge|N_C(I)|>|I|$ by the argument in the proof of \cref{lem:rooted-44340}, since $C$ is an odd cycle spanning $A$. Hence $X\ne\varnothing$, and $I$ contains exactly one vertex of the triangle $\{0,3,4\}$.

Suppose $0\in I$. Then $I\in\{\{0\},\{0,1\},\{0,2\},\{0,1,2\}\}$, $N_A(I)=\{3,4,5,6\}$, $X\subseteq\{3,4\}$, and a pair branch at $3$ or $4$ must be $\{o_3,e_4\}$ or $\{o_4,e_3\}$, respectively. If $1\in I$ then $4\notin X$, because $4\in N_A(1)$ and $\{o_4,e_3\}$ contains no neighbour of $o_1$; similarly $2\in I$ gives $3\notin X$. So $|X|\le2,1,1,0$ respectively, while \eqref{eq:rooted-X} requires $|X|\ge3,2,2,1$.

By the symmetry $\sigma$, it remains to suppose $3\in I$. The non-neighbours of $3$ are $1,5,6$, and $16\in E(A)$, so $I$ is one of $\{3\},\{1,3\},\{3,5\},\{3,6\},\{1,3,5\},\{3,5,6\}$, with $|N_A(I)|-|I|$ equal to $2,2,1,2,1,1$. Now $X\subseteq\{0,4\}$ and a pair branch at $0$ or $4$ is $\{o_0,e_4\}$ or $\{o_4,e_0\}$. The first contains no neighbour of $o_5$ or $o_6$, although $0\in N_A(5)\cap N_A(6)$; so $0\notin X$ when $I$ meets $\{5,6\}$. The second contains no neighbour of $o_1$, although $4\in N_A(1)$; so $4\notin X$ when $1\in I$. By \eqref{eq:rooted-X} only two situations survive: $I=\{3\}$ with $X=\{0,4\}$, and $I\in\{\{3,5\},\{3,5,6\}\}$ with $X=\{4\}$.
\begin{itemize}
\item $I=\{3\}$ and $X=\{0,4\}$: $E_0=\{e_4\}$, $E_4=\{e_0\}$, and by \eqref{eq:rooted-budget} $|E_2|=2$ and $|E_j|=1$ for $j\ne2,3$. Then $E_5=\{e_k\}$ with $k\in N_A(5)=\{0,2\}$, so $E_5=\{e_2\}$. But $B_2$ needs a neighbour of $o_3$; $o_2$ is not one and the even neighbours of $o_3$ are $e_0,e_2,e_4$, so $e_2\in E_2$, a contradiction.
\item $I\in\{\{3,5\},\{3,5,6\}\}$ and $X=\{4\}$, so $5\in I$ and $E_4=\{e_0\}$: since $0,2\in N_A(5)$, both $B_0$ and $B_2$ need a neighbour of $o_5$. Neither $o_0$ nor $o_2$ is one, and the even neighbours of $o_5$ are $e_0,e_2$, of which only $e_2$ is available. It cannot lie in both.\qedhere
\end{itemize}
\end{proof}

\begin{proposition}\label[proposition]{prop:noK8-44348}
$H(44348)$ has no $K_8$ minor.
\end{proposition}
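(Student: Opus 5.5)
We argue by contradiction, following the pattern of \cref{prop:noK8,prop:noK8-44340}. Suppose $H=H(44348)$ has a $K_8$ minor. Since $H$ is connected, \cref{lem:normal-form} gives a spanning model. Its singleton set $S$ is a nonempty clique of vertices of degree at least seven, so $S\subseteq\{s,o_0,\ldots,o_6,e_0\}$, and the budget \eqref{eq:budget15} applies. The new feature, compared with the two earlier hosts, is $e_0$, of degree eight. Every other candidate has degree seven and obeys the exact saturation rule of \cref{lem:normal-form}(b). The vertex $e_0$ obeys only the weaker rule (c): exactly one branch contains two of its neighbours.

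\emph{Case $s\in S$.} The other seven branches each contain exactly one $o_i$. Since $e_0\notin N(s)$, $e_0$ is not a singleton. Deleting $\{s\}$ leaves a $P$-rooted $K_7$ model of $K(A)$, contradicting \cref{lem:rooted-44348}.

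\emph{Case $s\notin S$ and $e_0\notin S$.} Write $S=\{o_i:i\in I\}$ with $k=|I|$. As before, $I$ is independent in $A$. Since $B(s)$ contains some $o_j$ and $s$ already supplies the neighbour of each $o_i$ in $B(s)$, we get $I\subseteq N_A(j)$. Because $A$ has maximum degree four only at $0$ and no independent set of size three inside a neighbourhood except $\{3?\}$-free ones, one checks that $k\le 3$. I would list the independent sets contained in some $N_A(j)$ and reduce them modulo $\sigma$, obtaining roughly a dozen representatives. For $k=1$, all other branches are pairs. For each representative I would force pairs through partner lists, using (W): $e_1e_6$ and $e_2e_5$ are never branches. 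The degree-four vertices $e_1,e_2,e_5,e_6$ have very few eligible partners, so the forcing should quickly produce two forced pairs with no edge between them. The table fact that $\{o_1,e_6\}$ and $\{e_2,e_3\}$ are anticomplete is the natural final contradiction, as its recorded presence suggests. For $k=2$ there is one triple, so the count of partnerless vertices should exceed what a single triple can hold. For $k=3$ I would use the signature argument from \cref{prop:noK8}: the exact partition of $S$-neighbourhoods forces at least three branches of size three, exceeding the budget $k-1=2$.

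\emph{Case $e_0\in S$.} Then $S$ is a clique containing $e_0$. The only odd neighbours of $e_0$ are $o_3,o_4,o_5,o_6$, and among these the pairwise adjacent ones are $o_3o_5,o_3o_6,o_4o_5,o_4o_6,o_5o_6$. So $S=\{e_0\}\cup\{o_i:i\in J\}$ with $J$ a clique in $\{3,4,5,6\}$ of the complement of $A$, giving $k\le 4$ in principle. When $J\ne\varnothing$, the degree-seven singletons still give exact saturation, and their index sets combined with rule (c) for $e_0$ should force the branches, as in the previous case. When $S=\{e_0\}$, all seven other branches are pairs. Exactly one of them contains two of the eight neighbours $o_3,o_4,o_5,o_6,e_3,e_4,e_5,e_6$, and the other six each contain exactly one. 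I would first place $s$: it has only odd neighbours, so its pair is $so_j$. I would then run partner forcing on the low-degree vertices $e_1,e_2,e_5,e_6$, whose partners are severely limited by (W) and by the neighbour-count rule.

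The main obstacle is this last case, $S=\{e_0\}$ and its relatives with $e_0\in S$. Here saturation is not exact, so the clean ``exactly one index vertex per branch'' forcing weakens into a choice of which branch is doubled. I would split according to which pair contains two neighbours of $e_0$. Using $\sigma$, I would reduce the choice of $s$'s partner to $o_j$ with $j\in\{0,1,3,5\}$. In each subcase I expect forcing through $e_1,e_2,e_5,e_6$ to end in an anticomplete pair of branches.
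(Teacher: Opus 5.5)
Your case split (first on whether $s\in S$, then on whether $e_0\in S$) matches the paper's proof, and so do your tools: \cref{lem:rooted-44348} when $s\in S$, the constraint $I\subseteq N_A(j)$ reduced modulo $\sigma$, partner forcing with (W), the budget, index sets, and rule (c) of \cref{lem:normal-form} for $e_0$. Your case $s\in S$ is complete. The gap is that everything after it is a forecast, and for a finite exclusion like this the case analysis \emph{is} the proof. An outline of this shape does not yet distinguish $H(44348)$ from the positive hosts of \cref{tab:positive-models}.

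Where you do commit to a mechanism, it fails. For $k=2$ you say the partnerless vertices should overflow the unique triple. For $I=\{3,5\}$ the index sets are $N_{o_3}=\{o_1,o_6,e_0,e_2,e_4,s\}$ and $N_{o_5}=\{o_1,o_4,o_6,e_0,e_2,s\}$. The only partnerless vertices are $o_4,e_4$, and $\{o_4,e_4,e_1\}$ is an admissible triple. The contradiction needs a chain: (W) forces $B(e_2)=\{e_2,e_3\}$, so the triple is $\{e_4,o_4,e_1\}$. Then $B(o_6)=\{o_6,o_2\}$, $B(s)=\{s,o_0\}$ and $B(o_1)=\{o_1,e_6\}$ are forced, and the last is anticomplete to $\{e_2,e_3\}$. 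Similarly, for $I=\{1,3\}$ the partnerless $o_0,e_2$ fit in the triple $\{o_0,e_2,e_3\}$. What closes this case is that $o_5$ and $s$ both have $o_4$ as their only admissible partner; the paper puts it as $o_4\in B(s)$ because $C(I)=\{4\}$, forcing $B(o_5)=\{o_5\}$.

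Your other forecasts also need correcting once executed. For $I=\{0\}$ the forcing ends not in an anticomplete pair but in a Hall-type shortage: after (W) forces $\{e_2,e_3\}$ and $\{e_1,e_4\}$, the four vertices $o_3,o_4,o_5,o_6$ have admissible partners only in $\{o_1,o_2,s\}$.

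For $S=\{e_0\}$, rule (c) already forces $B(s)=\{s,o_j\}$ with $o_j\in N(e_0)$, so $j\in\{3,5\}$ up to $\sigma$, not $\{0,1,3,5\}$. The paper instead starts from $e_5$ and $e_6$, whose only admissible partners are $o_0,o_2$ and $o_0,o_1$, and closes the two resulting subcases by short chains.

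You do not treat the six cases with $\{e_0\}\subsetneq S$ at all, and they close in different ways:
\begin{itemize}
\item for $\{e_0,o_3\}$, $\{e_0,o_5\}$ and $\{e_0,o_3,o_5\}$, by the anticomplete pair $\{o_1,e_6\}$, $\{e_2,e_3\}$;
\item for $\{e_0,o_3,o_6\}$, by a forced singleton $B(o_5)=\{o_5\}$;
\item for $\{e_0,o_5,o_6\}$, by the budget;
\item for $\{e_0,o_3,o_5,o_6\}$, by rule (c) itself, since $B(e_4)\supseteq\{e_3,e_4,o_4\}$ contains three neighbours of $e_0$.
\end{itemize}
Finally, your justification of $k\le 3$ is garbled. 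The reason is that the only four-element neighbourhood in $A$ is $N_A(0)=\{3,4,5,6\}$, and it contains the edge $34$.
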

\begin{proof}
Suppose otherwise and take a spanning model as in \cref{lem:normal-form}. The vertices of degree at least seven are $o_0,\ldots,o_6,s,e_0$, so $S$ is a nonempty clique among them. Part (b) of \cref{lem:normal-form} applies to every vertex of $S$ other than $e_0$, and part (c) applies to $e_0$ if $e_0\in S$.

\emph{Step 1: $s\notin S$.} If $s\in S$ then $e_0\notin S$, as $s,e_0$ are not adjacent. The seven other branches contain exactly one neighbour of $s$ each, that is, exactly one odd double vertex, and they contradict \cref{lem:rooted-44348}.

\emph{Step 2: the case $e_0\notin S$.} Then $S\subseteq P$ by Step~1; write $S=\{o_i:i\in I\}$, so $I$ is nonempty and independent in $A$. The branch $B(s)$ has at least two vertices, so it contains a neighbour $o_j$ of $s$. Since $s$ is the only neighbour of $o_i$ in $B(s)$ for each $i\in I$, the vertex $o_j$ is not adjacent to $o_i$, so $ij\in E(A)$. Hence every odd double vertex in $B(s)$ has its index in
\[
 C(I)=\bigcap_{i\in I}N_A(i),
\]
and in particular $C(I)\ne\varnothing$. The nonempty independent sets of $A$ are the seven singletons, the twelve pairs $01,02,12,13,15,24,26,35,36,45,46,56$, and the five triples $012,135,246,356,456$. For $12,15,26,012,135,246$ the set $C(I)$ is empty. Up to $\sigma$ the remaining cases are represented by
\[
 \{0\},\{1\},\{3\},\{5\};\quad\{0,1\},\{1,3\},\{3,5\},\{3,6\},\{5,6\};\quad\{3,5,6\}.
\]
When $q=1$ all seven other branches are pairs, each consisting of one vertex of the index set $N(o_i)$ and one vertex outside $N[o_i]$.

\emph{$I=\{0\}$.} The neighbours of $e_2$ in $N(o_0)$ are $e_3,e_5$; by (W) the pair is $\{e_2,e_3\}$. Likewise $\{e_1,e_4\}$ is a pair. The neighbours in $N(o_0)$ of $o_3,o_4,o_5,o_6$ are respectively $\{o_1,e_4,s\}$, $\{o_2,e_3,s\}$, $\{o_1,s\}$, $\{o_2,s\}$. With $e_3,e_4$ used, these four vertices need four distinct partners in $\{o_1,o_2,s\}$, which is impossible.

\emph{$I=\{1\}$.} Here $N(o_1)=\{o_0,o_2,o_3,o_5,e_4,e_6,s\}$. The neighbours of $e_1$ in $N(o_1)$ are $e_4,e_6$, so by (W) $\{e_1,e_4\}$ is a pair. The vertex $e_6\in N(o_1)$ pairs with a neighbour outside $N[o_1]$, namely $e_0$ or $e_1$; so $\{e_0,e_6\}$ is a pair. The neighbours of $e_5$ in $N(o_1)$ are $o_0,o_2$, and $\{e_5,o_2\}$ has no neighbour in $\{e_1,e_4\}$; so $\{e_5,o_0\}$ is a pair. The neighbours of $e_3$ in $N(o_1)$ are $o_0,o_2,e_4$; so $\{e_3,o_2\}$ is a pair. The neighbours of $e_2$ in $N(o_1)$ are $o_3,o_5$, and $\{e_2,o_5\}$ has no neighbour in $\{e_1,e_4\}$; so $\{e_2,o_3\}$ is a pair. The remaining vertices $o_4,o_5,o_6,s$ form two pairs with $B(s)\in\{\{s,o_4\},\{s,o_6\}\}$ by $C(\{1\})=\{4,6\}$, so the last pair is $\{o_5,o_6\}$ or $\{o_4,o_5\}$. Neither has a neighbour in $\{e_5,o_0\}$.

\emph{$I=\{3\}$ and $I=\{5\}$.} For $i\in\{3,5\}$ the neighbours of $e_5$ in $N(o_i)$ are $e_0,e_2$, so by (W) $\{e_0,e_5\}$ is a pair. The neighbours of $e_6$ in $N(o_i)$ are $o_1,e_0$, so $\{o_1,e_6\}$ is a pair. The vertex $e_2$ lies in $N(o_i)$ and its neighbours outside $N[o_i]$ are $e_3,e_5$, so $\{e_2,e_3\}$ is a pair. The pairs $\{o_1,e_6\}$ and $\{e_2,e_3\}$ are not adjacent.

For $q\ge2$ we use the index sets $N_{o_i}=N(o_i)\setminus S$; by \cref{lem:normal-form}(b) every nonsingleton branch contains exactly one vertex of each.

\emph{$I=\{0,1\}$.} The index sets are $N_{o_0}=\{o_2,e_3,e_4,e_5,e_6,s\}$ and $N_{o_1}=\{o_2,o_3,o_5,e_4,e_6,s\}$. The branches $B(e_3)$ and $B(e_5)$ are distinct and contain no vertex of $\{o_2,e_4,e_6,s\}\subseteq N_{o_0}$, so each contains one of $o_3,o_5$. Neither $e_3$ nor $e_5$ is adjacent to $o_3$ or $o_5$, so both branches have at least three vertices, exceeding the budget $q-1=1$.

\emph{$I=\{1,3\}$.} Here $C(I)=\{4\}$, so $o_4\in B(s)$. The vertex $o_5$ lies in both $N_{o_1}$ and $N_{o_3}$, so $B(o_5)\setminus\{o_5\}$ consists of vertices outside $S\cup N(o_1)\cup N(o_3)$. The only such neighbour of $o_5$ is $o_4$, which lies in $B(s)$, and $B(s)\ne B(o_5)$ because $s,o_5\in N_{o_3}$. So $B(o_5)=\{o_5\}$, contradicting $o_5\notin S$.

\emph{$I=\{3,6\}$.} Again $o_5\in N_{o_3}\cap N_{o_6}$, and every neighbour of $o_5$ outside $S$, namely $o_1,o_4,e_0,e_2,s$, lies in $N(o_3)\cup N(o_6)$. As before $B(o_5)=\{o_5\}$, a contradiction.

\emph{$I=\{3,5\}$.} The index sets are $N_{o_3}=\{o_1,o_6,e_0,e_2,e_4,s\}$ and $N_{o_5}=\{o_1,o_4,o_6,e_0,e_2,s\}$; the free vertices are $o_0,o_2,e_1,e_3,e_5,e_6$. The five vertices in both index sets lie in five distinct branches, so the sixth nonsingleton branch contains $e_4$ and $o_4$. They are not adjacent, so $B(e_4)$ is the unique triple and all other nonsingleton branches are pairs. The free neighbours of $e_2$ are $e_3,e_5$, so $B(e_2)=\{e_2,e_3\}$ by (W). The third vertex of $B(e_4)$ is a free common neighbour of $e_4,o_4$, namely $e_1$ or $e_3$; hence $B(e_4)=\{e_4,o_4,e_1\}$. The free neighbours of $o_6$ are $o_2,e_1$, so $B(o_6)=\{o_6,o_2\}$. The free neighbours of $s$ are $o_0,o_2$, so $B(s)=\{s,o_0\}$. The free neighbours of $o_1$ are $o_0,o_2,e_6$, so $B(o_1)=\{o_1,e_6\}$, which is not adjacent to $\{e_2,e_3\}$.

\emph{$I=\{5,6\}$.} The index sets are $N_{o_5}=\{o_1,o_3,o_4,e_0,e_2,s\}$ and $N_{o_6}=\{o_2,o_3,o_4,e_0,e_1,s\}$, the free vertices are $o_0,e_3,e_4,e_5,e_6$, and $C(I)=\{0\}$, so $o_0\in B(s)$. The two branches other than $B(o_3),B(o_4),B(e_0),B(s)$ each contain one of $o_1,e_2$ and one of $o_2,e_1$. If one contains $e_2$ and $e_1$, it contains a path between them whose interior consists of free vertices other than $o_0$; the shortest is $e_2\,e_3\,e_4\,e_1$, so that branch has at least four vertices. Otherwise the branches contain $\{o_1,e_1\}$ and $\{o_2,e_2\}$, two nonadjacent pairs, and each has at least three vertices. Either way the budget $q-1=1$ is exceeded.

\emph{$I=\{3,5,6\}$.} Here $q=3$. The index sets are $N_{o_3}=\{o_1,e_0,e_2,e_4,s\}$, $N_{o_5}=\{o_1,o_4,e_0,e_2,s\}$, $N_{o_6}=\{o_2,o_4,e_0,e_1,s\}$; the free vertices are $o_0,e_3,e_5,e_6$, and $o_0\in B(s)$ as $C(I)=\{0\}$. The five nonsingleton branches are $B(o_1),B(e_0),B(e_2),B(e_4),B(s)$, one for each vertex of $N_{o_3}$. The branch $B(e_4)$ needs a vertex of $N_{o_5}$, and $o_1,e_0,e_2,s$ lie in other branches, so $o_4\in B(e_4)$. The only free neighbour of $o_4$ is $e_3$, so $e_3\in B(e_4)$ and $|B(e_4)|\ge3$. The vertices $o_2,e_1$ of $N_{o_6}$ lie one each in $B(o_1)$ and $B(e_2)$. If $e_1\in B(e_2)$, a path from $e_2$ to $e_1$ would have to use free vertices other than $o_0,e_3$, that is, among $e_5,e_6$; but $e_2e_5$, $e_1e_6$ are the only such edges and $e_5e_6$ is not an edge. Hence $o_2\in B(e_2)$ and $e_1\in B(o_1)$. Since $o_2e_2$ and $o_1e_1$ are not edges, $B(e_2)$ and $B(o_1)$ have at least three vertices. Together with $B(e_4)$ this exceeds the budget $q-1=2$.

\emph{Step 3: the case $e_0\in S$.} Every other vertex of $S$ is a neighbour of $e_0$ of degree seven, that is, one of $o_3,o_4,o_5,o_6$, and $o_3o_4$ is not an edge. Up to $\sigma$, the set $S\setminus\{e_0\}$ is one of
\[
 \varnothing,\ \{o_3\},\ \{o_5\},\ \{o_3,o_5\},\ \{o_3,o_6\},\ \{o_5,o_6\},\ \{o_3,o_5,o_6\}.
\]
We use \cref{lem:normal-form}(c): every nonsingleton branch contains one or two neighbours of $e_0$, and exactly one contains two. Note that $s$ is not a neighbour of $e_0$ and has no even neighbour, and $o_0,o_1,o_2$ are not neighbours of $e_0$.

\emph{$S=\{e_0\}$.} All seven other branches are pairs. The six vertices $o_0,o_1,o_2,e_1,e_2,s$ outside $N[e_0]$ lie in six distinct pairs, each with one neighbour of $e_0$, and the seventh pair consists of two neighbours of $e_0$. The partner of $e_5$ lies in $N(e_5)\setminus\{e_0\}=\{o_0,o_2,e_2\}$, and by (W) it is $o_0$ or $o_2$. Likewise the partner of $e_6$ is $o_0$ or $o_1$. Since $\sigma$ fixes $e_0,o_0$ and exchanges $e_5,e_6$ and $o_1,o_2$, there are two cases.
\begin{itemize}
\item $\{e_5,o_2\}$ and $\{e_6,o_1\}$ are pairs. The partner of $o_0$ lies in $N(o_0)\cap N(e_0)=\{e_3,e_4,e_5,e_6\}$, and $e_5,e_6$ are used; as $\sigma$ preserves this case and exchanges $e_3,e_4$, we may take $\{o_0,e_3\}$. The partner $y$ of $e_1$ lies in $N(e_1)\cap N(e_0)\setminus\{e_6\}=\{o_4,o_6,e_4\}$. The pair $\{e_1,y\}$ must be adjacent to $\{e_5,o_2\}$ and to $\{o_0,e_3\}$, and $e_1$ has no neighbour in either set; so $y$ has a neighbour in both, which excludes $e_4$ (no neighbour in $\{e_5,o_2\}$) and $o_6$ (no neighbour in $\{o_0,e_3\}$). Hence $\{e_1,o_4\}$ is a pair. Now $e_4$ must be paired with a neighbour, and $o_3$ is the only unused one; but $\{o_3,e_4\}$ has no neighbour in $\{e_5,o_2\}$.
\item $\{e_5,o_0\}$ and $\{e_6,o_1\}$ are pairs. The partner of $e_1$ lies in $\{o_4,o_6,e_4\}$ and, as $e_1$ has no neighbour in $\{e_5,o_0\}$, must have one there; so $\{e_1,e_4\}$ is a pair. The pair formed by two neighbours of $e_0$ uses two of the unused vertices $o_3,o_4,o_5,o_6,e_3$ and must have a neighbour in $\{e_5,o_0\}$, so it contains $e_3$, whose only neighbour among $o_3,o_4,o_5,o_6$ is $o_4$. But $\{e_3,o_4\}$ has no neighbour in $\{e_6,o_1\}$.
\end{itemize}

\emph{$S=\{e_0,o_3\}$.} Here $q=2$. The index set $N_{o_3}=\{o_1,o_5,o_6,e_2,e_4,s\}$ meets the six nonsingleton branches once each; the free vertices are $o_0,o_2,o_4,e_1,e_3,e_5,e_6$. The only free neighbour of $o_5$ is $o_4$, so $o_4\in B(o_5)$, and this branch contains the two neighbours $o_4,o_5$ of $e_0$. The branch $B(s)$ must contain a neighbour of $e_0$; its other vertices are free, the free neighbours of $s$ other than $o_4$ are $o_0,o_2$, and these are not neighbours of $e_0$. Hence $|B(s)|\ge3$, $B(s)$ is the unique triple, and all other nonsingleton branches are pairs. The free neighbours of $e_2$ are $e_3,e_5$, so $B(e_2)=\{e_2,e_3\}$ by (W). The free neighbours of $o_1$ are $o_0,o_2,e_6$, and $B(o_1)$ contains a neighbour of $e_0$, so $B(o_1)=\{o_1,e_6\}$. These two pairs are not adjacent.

\emph{$S=\{e_0,o_5\}$.} Here $N_{o_5}=\{o_1,o_3,o_4,o_6,e_2,s\}$ and the free vertices are $o_0,o_2,e_1,e_3,e_4,e_5,e_6$. The free neighbours of $s$ are $o_0,o_2$, neither a neighbour of $e_0$, so $B(s)$ is the unique triple and the other branches are pairs. As before $B(e_2)=\{e_2,e_3\}$. The only free neighbour of $o_3$ is $e_4$, so $B(o_3)=\{o_3,e_4\}$. The free neighbours of $o_1$ are $o_0,o_2,e_4,e_6$; it needs a neighbour of $e_0$ and $e_4$ is used, so $B(o_1)=\{o_1,e_6\}$, which is not adjacent to $\{e_2,e_3\}$.

\emph{$S=\{e_0,o_3,o_5\}$.} Here $q=3$, $N_{o_3}=\{o_1,o_6,e_2,e_4,s\}$, $N_{o_5}=\{o_1,o_4,o_6,e_2,s\}$, and the free vertices are $o_0,o_2,e_1,e_3,e_5,e_6$. The four vertices in both index sets lie in four distinct branches, so the fifth nonsingleton branch contains $e_4$ and $o_4$, which are not adjacent; thus $|B(e_4)|\ge3$. The free neighbours of $s$ are $o_0,o_2$, neither a neighbour of $e_0$, so $|B(s)|\ge3$. By the budget $q-1=2$ these two branches are triples and the others are pairs. Since $e_2,e_4,s\in N_{o_3}$, the branch $B(e_2)$ is neither triple, so it is a pair; the free neighbours of $e_2$ are $e_3,e_5$, so $B(e_2)=\{e_2,e_3\}$ by (W). Likewise $B(o_1)$ is a pair; it contains a neighbour of $e_0$, and of the free neighbours $o_0,o_2,e_6$ of $o_1$ only $e_6$ is one, so $B(o_1)=\{o_1,e_6\}$. These pairs are not adjacent.

\emph{$S=\{e_0,o_3,o_6\}$.} The vertex $o_5$ lies in $N_{o_3}\cap N_{o_6}$, and its neighbours outside $S$, namely $o_1,o_4,e_2,s$, all lie in $N(o_3)\cup N(o_6)$. Hence $B(o_5)=\{o_5\}$, contradicting $o_5\notin S$.

\emph{$S=\{e_0,o_5,o_6\}$.} Here $q=3$, $N_{o_5}=\{o_1,o_3,o_4,e_2,s\}$, $N_{o_6}=\{o_2,o_3,o_4,e_1,s\}$, and the free vertices are $o_0,e_3,e_4,e_5,e_6$. The only free neighbour of $s$ is $o_0$, which is not a neighbour of $e_0$, so $B(s)\ni o_0$ and $|B(s)|\ge3$. The two branches other than $B(o_3),B(o_4),B(s)$ each contain one of $o_1,e_2$ and one of $o_2,e_1$. If one contains $e_2,e_1$, a path between them through free vertices other than $o_0$ has at least two interior vertices ($e_2\,e_3\,e_4\,e_1$ being shortest), so that branch has at least four vertices. Otherwise both branches contain a nonadjacent pair, $\{o_1,e_1\}$ or $\{o_2,e_2\}$, and have at least three vertices. Either way the budget $q-1=2$ is exceeded.

\emph{$S=\{e_0,o_3,o_5,o_6\}$.} Here $N_{o_3}=\{o_1,e_2,e_4,s\}$, $N_{o_5}=\{o_1,o_4,e_2,s\}$, $N_{o_6}=\{o_2,o_4,e_1,s\}$, and the free vertices are $o_0,e_3,e_5,e_6$. The branch $B(e_4)$ needs a vertex of $N_{o_5}$; $o_1,e_2,s$ lie in other branches, so $o_4\in B(e_4)$. The only free neighbour of $o_4$ is $e_3$, so $e_3\in B(e_4)$. Then $B(e_4)$ contains the three neighbours $e_3,e_4,o_4$ of $e_0$, contrary to \cref{lem:normal-form}(c).

All cases are impossible, so $H(44348)$ has no $K_8$ minor.
\end{proof}

\begin{remark}
The proof covers all sixty-one nonempty cliques of vertices of degree at least seven: the twenty-five containing $s$ in Step~1, the twenty-four contained in $P$ in Step~2, and the twelve containing $e_0$ in Step~3. The symmetry $\sigma$ is used to choose orbit representatives and, within the rooted lemma and the case $S=\{e_0\}$, to make without-loss-of-generality choices; each such use is valid because $\sigma$ fixes $o_0,e_0,s$ and the data of the case in question. The independent finite verification of \cref{thm:root-apex} in \cref{subsec:apex-corroboration} also checks every one of these sixty-one sets separately; it is corroboration, not a premise.
\end{remark}

\section{An even vertex attached to eight double fibres}\label{sec:attachment}
This section determines exactly when an even vertex attached to eight double fibres forces a $K_8$ minor. The proof contracts the attached vertex together with one supported double to the apex of a root-apex graph. All minor models in this section are ordinary: their branches need not meet the odd set.

\subsection{Attachment graphs, cards and apex--even edges}
For a graph $A$ on $\{0,\ldots,7\}$ let $K(A)$ be defined by the interface rule of \cref{sec:root-apex} on eight doubles: its vertices are $o_i,e_i$ ($0\le i\le7$), and for $i<j$ the pair $\{i,j\}$ contributes the single edge $o_io_j$ if $ij\notin E(A)$ and the three edges $o_ie_j,e_io_j,e_ie_j$ if $ij\in E(A)$; there are no other edges. For $S\subseteq\{0,\ldots,7\}$ the \emph{attachment graph} $J(A,S)$ is obtained from $K(A)$ by adding a vertex $x$ adjacent exactly to $o_i$ and $e_i$ for $i\in S$. For an index $i$, let $\rho_i$ be the increasing bijection from $\{0,\ldots,7\}\setminus\{i\}$ onto $\{0,\ldots,6\}$. The \emph{card} $A-i$ is the graph on $\{0,\ldots,6\}$ with edge set $\{\rho_i(j)\rho_i(k):jk\in E(A),\ i\notin\{j,k\}\}$. Since an interface depends only on whether its pair is active, $o_j\mapsto o_{\rho_i(j)}$, $e_j\mapsto e_{\rho_i(j)}$ is an isomorphism from $K(A)-\{o_i,e_i\}$ onto $K(A-i)$ preserving odd vertices.

For a graph $B$ on $\{0,\ldots,6\}$ and $T\subseteq\{0,\ldots,6\}$, let $H_T(B)$ be obtained from the root-apex graph $H(B)$ of \cref{sec:root-apex} by adding the edges $se_j$ for $j\in T$. Thus $H_\varnothing(B)=H(B)$, and $s$ is adjacent in $H_T(B)$ exactly to $o_0,\ldots,o_6$ and to $e_j$ for $j\in T$. A graph on $\{0,\ldots,6\}$ is \emph{exceptional} if it is isomorphic to a graph whose code is one of \eqref{eq:thirtyone}, and \emph{hard} if it is isomorphic to a graph with code $42232$, $44340$ or $44348$. An isomorphism $\varphi$ between graphs $B,B'$ on $\{0,\ldots,6\}$ induces, through $o_i\mapsto o_{\varphi(i)}$, $e_i\mapsto e_{\varphi(i)}$ and $s\mapsto s$, an isomorphism from $H_T(B)$ onto $H_{\varphi(T)}(B')$.

The proofs below use the following inherited input, the positive half of the seven-double classification of the companion archive:
\begin{enumerate}[label=(P7)]
\item\label{input:P7} if a graph $B$ on seven indices is not exceptional, then $K(B)$ has a $K_7$ model whose branches all meet $\{o_0,\ldots,o_6\}$.
\end{enumerate}
Input~\ref{input:P7} is certificate-backed. The archive lists the least mask of each of the $1{,}044$ isomorphism types of seven-index graphs, with one rooted $K_7$ model for each of the $1{,}013$ nonexceptional types and a zero placeholder for each of the $31$ exceptional types. \ref{input:P7} follows from three checks: each listed model is a rooted $K_7$ model; the zero placeholders occur exactly at the thirty-one codes of \eqref{eq:thirtyone}; and the representative orbits together contain all $2^{21}$ seven-index graphs. Indeed, a nonexceptional graph lies in the orbit of a listed representative, which then carries a model, and an index permutation acts on both vertices of every double and therefore maps that model to a rooted model of the graph. That the orbits are disjoint and have the listed sizes was also checked, but is not needed. These checks were carried out independently in revisions~4 and~5 (\cref{sec:reproduce}). The $31$ zero records are not used as negative certificates; the negative half of the seven-double classification is not used in this section.

\subsection{Contraction to a root-apex graph}
\begin{lemma}[Root-apex contraction]\label[lemma]{lem:contraction}
Let $A$ be a graph on $\{0,\ldots,7\}$, let $S\subseteq\{0,\ldots,7\}$ and let $a\in S$. Put
\[
 T=\rho_a\bigl(N_A(a)\cup(S\setminus\{a\})\bigr).
\]
Contracting the set $C=\{x,o_a,e_a\}$ of $J(A,S)$ to a single vertex produces a graph isomorphic to $H_T(A-a)$, in which the new vertex corresponds to $s$ and every other vertex $o_j$ or $e_j$ corresponds to $o_{\rho_a(j)}$ or $e_{\rho_a(j)}$. In particular $H_T(A-a)$ is a minor of $J(A,S)$.
\end{lemma}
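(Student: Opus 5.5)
We argue directly from the definitions of contraction and of $H_T$. First, $C=\{x,o_a,e_a\}$ is connected in $J(A,S)$ because $a\in S$ gives the edges $xo_a$ and $xe_a$; contracting a connected set is a minor operation, so the final sentence follows once the isomorphism is established. In the contracted graph $G'$, we write $c$ for the new vertex and keep the names of the other vertices. Two vertices outside $C$ are adjacent in $G'$ exactly when they are adjacent in $J(A,S)$. Their adjacency does not involve $x$, so these vertices and edges form $K(A)-\{o_a,e_a\}$. By the card isomorphism recorded above, this graph is isomorphic to $K(A-a)$ via $o_j\mapsto o_{\rho_a(j)}$, $e_j\mapsto e_{\rho_a(j)}$. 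This identifies $G'-c$ with $H_T(A-a)-s=K(A-a)$.

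It remains to compare neighbourhoods: the neighbourhood of $c$ in $G'$ is $N(C)\setminus C$, and we must show it is the preimage of $N_{H_T(A-a)}(s)=\{o_0,\dots,o_6\}\cup\{e_{\rho_a(j)}:\rho_a(j)\in T\}$. We compute $N(C)$ index by index. Fix $j\ne a$.

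The neighbours of $x$ in $D_j=\{o_j,e_j\}$ are both $o_j,e_j$ if $j\in S$, and none otherwise. The neighbours of $\{o_a,e_a\}$ in $D_j$ are given by the interface rule. If $aj\notin E(A)$, the only edge is $o_ao_j$, so $o_j$ is a neighbour. If $aj\in E(A)$, the edges are $o_ae_j,e_ao_j,e_ae_j$, so both $o_j$ and $e_j$ are neighbours.

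In every case $o_j\in N(C)$. Moreover, $e_j\in N(C)$ if and only if $j\in S\setminus\{a\}$ or $j\in N_A(a)$, that is, exactly when $\rho_a(j)\in T$. This matches the neighbourhood of $s$ in $H_T(A-a)$ term by term. Hence the bijection $c\mapsto s$, $o_j\mapsto o_{\rho_a(j)}$, $e_j\mapsto e_{\rho_a(j)}$ is an isomorphism $G'\cong H_T(A-a)$.

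The argument involves no real obstacle. The only step needing care is the case split on the pair $\{a,j\}$ according to whether $aj$ is active, together with the observation that contraction creates no multiple edges in the simple quotient. We also need that $x$ has no neighbours outside the doubles indexed by $S$, which holds by the definition of $J(A,S)$, so no further edges at $c$ arise.
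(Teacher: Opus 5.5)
Your proof is correct and follows the paper's own argument essentially step for step. Both establish connectivity of $C$ through the edges $xo_a,xe_a$, identify the rest of the graph with $K(A-a)$ by the card isomorphism, and compute the neighbourhood of the contracted vertex index by index according to whether $aj$ is active and whether $j\in S$.
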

\begin{proof}
Since $a\in S$, the vertex $x$ is adjacent to $o_a$ and to $e_a$, so $C$ is connected and may be contracted. Let $c$ be the new vertex. Among the remaining vertices $o_j,e_j$ ($j\neq a$) the edges are those of $K(A)-\{o_a,e_a\}$, which $\rho_a$ maps onto $K(A-a)$. The neighbours of $c$ are the vertices outside $C$ having a neighbour in $C$. For $j\neq a$ the vertex $o_j$ is adjacent to $o_a$ if $aj\notin E(A)$ and to $e_a$ if $aj\in E(A)$, so $c$ is adjacent to every $o_j$. The vertex $e_j$ is adjacent to $o_a$ and to $e_a$ if $aj\in E(A)$ and to neither otherwise, and it is adjacent to $x$ exactly when $j\in S$. Hence $c$ is adjacent to $e_j$ exactly when $j\in N_A(a)\cup(S\setminus\{a\})$. These are precisely the neighbours of $s$ in $H_T(A-a)$ after applying $\rho_a$.
\end{proof}

\subsection{One apex--even edge suffices}
\begin{lemma}[The three hard cores]\label[lemma]{lem:hard-one-edge}
Let $h\in\{42232,44340,44348\}$ and $k\in\{0,\ldots,6\}$. Then $H_{\{k\}}(h)$ has a $K_8$ minor.
\end{lemma}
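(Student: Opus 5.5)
The plan is an explicit construction. For each of the three hard cores and each $k$, we exhibit a $K_8$ model in $H_{\{k\}}(h)$ in which $\{s\}$ is a singleton branch. The new edge $se_k$ is what makes such a model possible, since $s$ may now see a branch containing no odd vertex. Concretely, we look for seven disjoint connected sets in $K(h)$ that are pairwise adjacent and such that six of them meet $P$ while the seventh either meets $P$ or contains $e_k$. By \cref{lem:rooted-ordinary} and the three rooted obstructions (\cref{lem:known-theta}, \cref{lem:rooted-44340}, \cref{lem:rooted-44348}), the seventh set must avoid $P$. So the target is a $K_7$ model of $K(h)$ with exactly one \emph{even-only} branch, and that branch must contain $e_k$. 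The model $\{o_3\},\{e_0,e_3\},\ldots$ displayed after \cref{thm:sharp} shows that such nearly rooted $K_7$ models exist; it has two even-only branches, so it needs adjusting.

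The first step is to reduce the number of cases by symmetry. The automorphism groups of the three active graphs act on $k$: for $\Theta(2,3,3)$ the group of order four generated by $a,b$; for code $44340$ the involution $(0\,3)(2\,5)(4\,6)$ in its $a,b,x,y,u,v,w$ labelling; for $44348$ the involution $\sigma$. This leaves about four, five and five orbits of $k$ respectively. Next comes a counting constraint. The even-only branch $E$ has at least two vertices, because a single even vertex $e_k$ has degree $2\deg(k)\le 6$ but needs seven neighbours in other branches, counting $s$. The fourteen vertices of $K(h)$ must accommodate this branch. So the six rooted branches together with $E$ use at most fourteen vertices, and $E$ will typically be an active even pair $\{e_k,e_j\}$ with $jk\in E(h)$, or a short even path. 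The remaining seven odd vertices and the other even vertices are then split among six branches, so exactly one branch contains two odd vertices, or some odd vertex joins $E$ if we allow $E$ to contain a root. That second option is impossible, since then all seven branches would be rooted.

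For each orbit representative, we would choose $E$ to be an edge $e_ke_j$ whose joint neighbourhood covers as many odd vertices as possible; preferably both endpoints have high even degree, such as $e_5,e_6$ in $\Theta$. We would then build the six rooted branches greedily. One branch merges two odd vertices $o_io_j$ with $ij\notin E(h)$, and the others are singletons $\{o_i\}$ or pairs $\{o_i,e_l\}$, chosen so that every active pair $ij$ is bridged as in the analysis of (d) in \cref{lem:known-theta}. Each resulting row would be listed as eight explicit sets in a table, together with the witnessing crossing edges, in the style of \cref{tab:positive-models}.

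The main obstacle is the $k$ with small active degree, for example degree-two vertices of $\Theta(2,3,3)$ such as $k=0$. There $e_k$ has only four neighbours in $K(h)$, so $E$ must grow to three vertices, which leaves only eleven vertices for six rooted branches. The cycle-structure obstruction of \cref{lem:known-theta} then has to be broken entirely by the doubled-odd branch. I would try first to place $E=\{e_k,e_m,e_j\}$ along a path through a degree-three vertex $m$ of the active graph. If that fails, I would instead let $s$ join a larger branch and look for a different singleton, guided by the singleton-saturation bookkeeping of \cref{prop:noK8}. In the worst case, the finite check for these few orbits would be recorded as certificate-backed, as the companion does elsewhere.
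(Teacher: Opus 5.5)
Your proposal has a genuine gap. The construction you commit to is a $K_8$ model of $H_{\{k\}}(h)$ with $\{s\}$ as a singleton branch, and it does not exist for $h=42232$ and $k\in\{2,3,4,5\}$. In the statement's labelling, with $(u,v,z,a,b,c,d)=(0,1,6,5,2,4,3)$, these are the interior vertices $a,b,c,d$ of the two length-three paths of $\Theta(2,3,3)$. Your ``$k=0$'', read in the labelling of $\Theta$ in \cref{sec:sharpness}, is one of them.

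The paper proves in \cref{rc:classification,rc:apex} that for these marks $K(42232)$ has no $K_7$ model whose branches all meet $\{o_0,\ldots,o_6,e_k\}$. Hence no $K_8$ model of $H_{\{k\}}(42232)$ has $\{s\}$ as a branch. Routing $E=\{e_k,e_m,e_j\}$ through a degree-three vertex is therefore bound to fail, and the real obstruction is not the size of $E$.

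The idea you need, which your fallback only gestures at, is to absorb $s$ together with an odd vertex. The paper takes the branches $\{s,o_z\}$, $\{o_u\}$, $\{o_v\}$, $\{o_b,o_c\}$, $\{o_d,o_a\}$, $\{e_u,e_z\}$, $\{e_a,e_b\}$, $\{e_v,e_d,e_c\}$. Every pair of these is joined in $H(42232)$ except $\{s,o_z\}$ and $\{e_a,e_b\}$, and $se_k$ supplies that crossing for $k\in\{a,b\}$. Exchanging the two long paths gives $k\in\{c,d\}$ (\cref{ua:theta233}). Since you exhibit no model for any case, the proposal as it stands is a search plan rather than a proof.

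Two of your preliminary constraints are also false, and they would prune models the paper actually uses.
\begin{enumerate}
\item A singleton even branch $\{e_k\}$ is possible. Inside $K(h)$ it needs only six neighbours, one in each of the other six branches, because $se_k$ provides the adjacency to $\{s\}$; so $\deg_h(k)\ge3$ suffices. You compared the degree of $e_k$ in $K(h)$ against a requirement that already counts $s$. The paper's model for $k=u$ in $42232$ contains the branch $\{e_u\}$. Its model for $k=v$ in the triangle-with-ears graph $44348$ contains $\{e_v\}$. Moreover, in $44348$ the vertex $e_0$ has degree eight, not at most six.
\item A degree-four $e_k$ does not force $|E|\ge3$. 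For $k=z$ in $42232$ the paper uses $E=\{e_v,e_z\}$, whose external neighbourhood in $K(h)$ has eight vertices.
\end{enumerate}
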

\begin{proof}
This is \cref{ua:hard} in Appendix~\ref{ua:appendix}, proved by six path and triangle formulas with every internal connection and pairwise crossing justified. It uses no computational premise. The twenty-one explicit models of \cref{tab:hard-models} are independent corroboration, not a premise.
\end{proof}
By \cref{thm:root-apex}, no row of \cref{tab:hard-models} is a $K_8$ model of $H(h)$ itself, so each uses the added edge $se_k$; this is consistent with the separately checked fact that each listed row fails in $H(h)$. For $44340$ and $44348$, and for $42232$ with $k\in\{0,1,6\}$, the listed model has $\{s\}$ as a branch; equivalently its other seven branches form a $K_7$ model of $K(h)$ in which every branch meets $\{o_0,\ldots,o_6,e_k\}$. For $42232$ with $k\in\{2,3,4,5\}$ the listed model places $s$ in a branch with $o_6$. Appendix~\ref{rc:sec} proves that precisely these four marks admit no model with singleton branch $\{s\}$; that additional negative result is structural since revision~7 and is not used in the present attachment proof.

\begin{theorem}[One apex--even edge]\label{thm:one-edge}
Assume \ref{input:P7}. Let $B$ be a graph on $\{0,\ldots,6\}$ and $T\subseteq\{0,\ldots,6\}$. Then $H_T(B)$ has a $K_8$ minor if and only if $T\neq\varnothing$ or $B$ is not hard.
\end{theorem}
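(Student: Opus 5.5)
The plan is to prove the two directions separately and to reduce everything to results already stated.

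For the ``only if'' direction, suppose $T=\varnothing$ and $B$ is hard. Then $H_T(B)=H(B)$. Since $B$ is isomorphic to a graph with code $42232$, $44340$ or $44348$, the induced isomorphism (with $s\mapsto s$, acting on odd and even indices) carries $H(B)$ onto $H(h)$ for one of these three codes. \Cref{thm:root-apex} then says $H(h)$ has no $K_8$ minor, so neither does $H(B)$.

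For the ``if'' direction there are two cases, and the key observation in both is monotonicity. For $T\subseteq T'$, the graph $H_T(B)$ is a spanning subgraph of $H_{T'}(B)$, so a $K_8$ minor in $H_T(B)$ persists in $H_{T'}(B)$. It therefore suffices to exhibit a $K_8$ minor in $H_\varnothing(B)$ when $B$ is not hard, and in $H_{\{k\}}(B)$ for a single $k\in T$ when $B$ is hard.

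\emph{Case $B$ not exceptional.} By \ref{input:P7}, $K(B)$ has a $K_7$ model whose branches all meet $\{o_0,\ldots,o_6\}$. Adding $\{s\}$ as an eighth branch gives a $K_8$ model of $H(B)$ by \cref{lem:rooted-ordinary}, since $s$ is adjacent to every $o_i$.

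\emph{Case $B$ exceptional but not hard.} Transport $B$ by an isomorphism to one of the twenty-eight remaining codes of \eqref{eq:thirtyone}. \Cref{thm:root-apex}, via its table of positive models in Appendix~\ref{app:root-apex-data}, supplies a $K_8$ model of $H(m)$, and the induced isomorphism pulls it back to $H(B)$.

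\emph{Case $B$ hard and $T\neq\varnothing$.} Choose an isomorphism $\varphi\colon B\to B'$ with $B'$ of code $h\in\{42232,44340,44348\}$, and pick $k\in T$. The induced isomorphism maps $H_{\{k\}}(B)$ onto $H_{\{\varphi(k)\}}(h)$, which has a $K_8$ minor by \cref{lem:hard-one-edge}. Monotonicity in $T$ finishes this case.

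The only delicate point is bookkeeping rather than mathematics. One must check that the induced isomorphisms respect the $se_j$ edges, i.e.\ that $T$ is transported to $\varphi(T)$; this is stated in the definitions preceding \ref{input:P7}. One must also check that ``exceptional'' and ``hard'' are isomorphism-invariant, so that the three cases exhaust all pairs $(B,T)$. The substantive content sits entirely in \cref{thm:root-apex}, \ref{input:P7} and \cref{lem:hard-one-edge}, so no new obstacle is expected.
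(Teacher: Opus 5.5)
Your proposal is correct and follows essentially the same route as the paper's proof. The case split is the same: nonexceptional $B$ via \ref{input:P7} plus the singleton branch $\{s\}$; exceptional but not hard $B$ via the explicit models of \cref{tab:positive-models}; hard $B$ via \cref{thm:root-apex} when $T=\varnothing$ and via \cref{lem:hard-one-edge} for some $k\in T$. You also use the same observation that $H_T(B)$ only grows as $T$ grows, since each smaller graph is a spanning subgraph of the larger one.
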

\begin{proof}
Suppose first that $B$ is not exceptional. By \ref{input:P7}, $K(B)$ has a $K_7$ model each of whose branches contains some $o_i$. The vertex $s$ is adjacent to every $o_i$, so adding the branch $\{s\}$ gives a $K_8$ model of $H(B)$, which is a spanning subgraph of $H_T(B)$.

Suppose next that $B$ is exceptional but not hard. Then $B$ is isomorphic to a graph whose code $m$ is one of the twenty-eight codes of \eqref{eq:thirtyone} other than $42232,44340,44348$. The induced isomorphism maps $H(B)$ onto $H(m)$, and \cref{tab:positive-models} gives a $K_8$ model of $H(m)$. Again $H(B)$ is a spanning subgraph of $H_T(B)$.

Suppose finally that $B$ is hard, and let $\varphi$ be an isomorphism from $B$ onto the graph with code $h\in\{42232,44340,44348\}$. If $T=\varnothing$, then $H_T(B)=H(B)\cong H(h)$ has no $K_8$ minor by \cref{thm:root-apex}. If $k\in T$, then $\varphi$ induces an isomorphism from $H_{\{k\}}(B)$ onto $H_{\{\varphi(k)\}}(h)$, which has a $K_8$ minor by \cref{lem:hard-one-edge}. Since $H_{\{k\}}(B)$ is a spanning subgraph of $H_T(B)$, the latter has a $K_8$ minor.
\end{proof}

\subsection{The attachment theorem}
\begin{theorem}[Attachment theorem]\label{thm:attachment}
Assume \ref{input:P7}. Let $A$ be a graph on $\{0,\ldots,7\}$ and let $\varnothing\neq S\subseteq\{0,\ldots,7\}$. Then $J(A,S)$ has a $K_8$ minor if and only if it is not the case that $S=\{a\}$ for an index $a$ which is isolated in $A$ and whose card $A-a$ is hard.
\end{theorem}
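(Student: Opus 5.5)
The plan is to reduce everything to \cref{thm:one-edge} through \cref{lem:contraction}, and to handle the one exceptional configuration directly.

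\emph{Sufficiency.} Suppose the exceptional situation does not hold, and pick $a\in S$. By \cref{lem:contraction}, $H_T(A-a)$ is a minor of $J(A,S)$, where $T=\rho_a\bigl(N_A(a)\cup(S\setminus\{a\})\bigr)$. By \cref{thm:one-edge}, $H_T(A-a)$ has a $K_8$ minor as soon as $T\ne\varnothing$ or $A-a$ is not hard. Now $T=\varnothing$ exactly when $a$ is isolated in $A$ and $S=\{a\}$. So for any choice of $a\in S$, the only obstruction is that $S=\{a\}$, $a$ is isolated in $A$, and $A-a$ is hard. That is precisely the excluded case. Hence in every other case some (indeed any) $a\in S$ yields a $K_8$ minor of $J(A,S)$, since minors are transitive.

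\emph{Necessity.} Let $S=\{a\}$ with $a$ isolated in $A$ and $A-a$ hard. The approach is to show that $J(A,S)$ itself is isomorphic to $H(A-a)$ with one extra vertex that is deletable, or with a harmless modification. Since $a$ is isolated, every pair $\{a,j\}$ is inactive. So $o_a$ is adjacent to all $o_j$ ($j\ne a$) and to $x$, and $e_a$ is adjacent only to $x$. The vertex $x$ is adjacent only to $o_a$ and $e_a$. Thus $e_a$ is a pendant vertex hanging off $x$, and $x$ itself has degree two.

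Deleting $e_a$, or contracting the edge $e_ax$, does not change the existence of a $K_8$ minor. The reason is that a pendant vertex can always be dropped from a model: if it is a singleton branch it has degree one, which is too small for $K_8$, and otherwise it can be removed from its branch. After dropping $e_a$, the vertex $x$ is pendant on $o_a$, so it may be dropped as well. What remains is $K(A)-e_a-x$, and this is exactly $H(A-a)$ under $\rho_a$, with $o_a$ playing the role of $s$. By \cref{thm:root-apex}, and isomorphism invariance via the induced map $\varphi$, $H(A-a)$ has no $K_8$ minor because $A-a$ is hard. Therefore $J(A,S)$ has none either.

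The main point to check carefully is the pendant-vertex reduction. I will state it as follows: if $v$ has degree at most one in $G$, then $G$ has a $K_t$ minor ($t\ge3$) if and only if $G-v$ does. The proof is that $v$ cannot be a singleton branch, and removing a leaf from a connected branch keeps that branch connected and nonempty. Since $v$ is not a cut vertex of its branch, and at most one crossing edge is lost, I need to verify that no adjacency depends on $v$. This holds because $v$'s unique neighbour lies in the same branch, unless the branch is $\{v\}$, which is impossible. Applying this twice gives the reduction. Everything else is bookkeeping, namely identifying $o_a$'s neighbourhood with that of $s$.
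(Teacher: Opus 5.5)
Your proposal is correct and follows essentially the same route as the paper: sufficiency comes from \cref{lem:contraction} and \cref{thm:one-edge}, using that $T=\varnothing$ exactly when $S=\{a\}$ and $a$ is isolated. Necessity comes from recognising $J(A,\{a\})$ as $H(A-a)$ with a path of length two attached at $s$, deleting the leaves $e_a$ and then $x$, and invoking \cref{thm:root-apex}. One harmless notational slip: the graph left after these deletions is $J(A,\{a\})-\{e_a,x\}=K(A)-e_a$, not $K(A)-e_a-x$, and it is this graph that $\rho_a$ maps onto $H(A-a)$ with $o_a\mapsto s$.
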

\begin{proof}
Choose $a\in S$ and let $T$ be as in \cref{lem:contraction}, so that $H_T(A-a)$ is a minor of $J(A,S)$. If $T\neq\varnothing$, or if $A-a$ is not hard, then $H_T(A-a)$ has a $K_8$ minor by \cref{thm:one-edge}, and so does $J(A,S)$. Otherwise $T=\varnothing$ and $A-a$ is hard. Now $T=\varnothing$ means exactly that $S=\{a\}$ and $N_A(a)=\varnothing$, so this is the stated exception. Hence $J(A,S)$ has a $K_8$ minor whenever the exception does not hold.

Conversely, suppose that $S=\{a\}$, that $a$ is isolated in $A$, and that $A-a$ is hard. Every pair containing $a$ is inactive. Therefore $o_a$ is adjacent in $K(A)$ exactly to the vertices $o_j$ with $j\ne a$, and $e_a$ has no neighbour in $K(A)$. The subgraph of $J(A,\{a\})$ induced by $\{o_j,e_j:j\neq a\}\cup\{o_a\}$ is therefore mapped by $\rho_a$, together with $o_a\mapsto s$, isomorphically onto $H(A-a)$. The only other vertices are $x$ and $e_a$, and the only other edges are $o_ax$ and $xe_a$. Thus $J(A,\{a\})$ is obtained from a copy of $H(A-a)$ by attaching a path of length two at $s$.

Let $G$ be a graph, let $\ell$ be a vertex of degree one, and let $t\ge3$. If $G$ has a $K_t$ model, then so does $G-\ell$. Indeed, if $\ell$ lies in no branch, the model is already a model of $G-\ell$. The vertex $\ell$ is not a singleton branch, because a singleton branch is adjacent to $t-1\ge2$ other branches. If $\ell$ lies in a branch $X$ with $|X|\ge2$, its unique neighbour lies in $X$ because $X$ is connected. Hence $X-\ell$ is connected and $\ell$ is an endpoint of no edge between distinct branches, so deleting $\ell$ from $X$ leaves a $K_t$ model. Deleting the leaf $e_a$ and then the leaf $x$ shows that a $K_8$ minor of $J(A,\{a\})$ would give one of $H(A-a)$. This contradicts \cref{thm:root-apex}, since $H(A-a)$ is isomorphic to $H(h)$ for some $h\in\{42232,44340,44348\}$.
\end{proof}

\begin{corollary}[Two supports suffice]\label[corollary]{thm:support-two}
Assume \ref{input:P7}. For every graph $A$ on $\{0,\ldots,7\}$ and every $S\subseteq\{0,\ldots,7\}$ with $|S|\ge2$, the graph $J(A,S)$ has a $K_8$ minor. The bound is sharp: if $A$ has edge set $\{03,04,05,06,14,16,23,25,34\}$, then $J(A,\{7\})$ has no $K_8$ minor.
\end{corollary}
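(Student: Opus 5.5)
The plan is to read both halves of the corollary off \cref{thm:attachment}.

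For the positive half, take $|S|\ge2$. The exceptional condition in \cref{thm:attachment} requires $S=\{a\}$ for a single index $a$, so it cannot hold. Assuming \ref{input:P7}, \cref{thm:attachment} then gives a $K_8$ minor of $J(A,S)$ directly.

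One might want a more self-contained route through \cref{lem:contraction}. Choose any $a\in S$; then $T\supseteq\rho_a(S\setminus\{a\})\ne\varnothing$. Since $H_T(A-a)$ is a minor of $J(A,S)$, \cref{thm:one-edge} gives a $K_8$ minor whether or not $A-a$ is hard. This duplicates the forward direction of the theorem and is not needed.

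For sharpness, take $A$ on $\{0,\dots,7\}$ with the displayed nine edges, all among indices $0,\dots,6$. We need three facts, after which the converse direction of \cref{thm:attachment} applies with $a=7$ and shows that $J(A,\{7\})$ has no $K_8$ minor.

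\begin{enumerate}
\item The index $7$ is isolated in $A$, which is immediate from the edge list.
\item The card $A-7$ is hard. Since $\rho_7$ is the identity on $\{0,\dots,6\}$, the card $A-7$ has exactly the edge set $\{03,04,05,06,14,16,23,25,34\}$. By \cref{subsec:44348} this is the active graph of code $44348$. As a consistency check, the code is $\sum 2^{\beta(i,j)}$ with $\beta(0,3)=2$, $\beta(0,4)=3$, $\beta(0,5)=4$, $\beta(0,6)=5$, $\beta(1,4)=8$, $\beta(1,6)=10$, $\beta(2,3)=11$, $\beta(2,5)=13$ and $\beta(3,4)=15$. This sum is $44348$, so the card is hard.
\item The support set is the singleton $S=\{7\}$.
\end{enumerate}

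The converse direction of \cref{thm:attachment} relies only on \cref{thm:root-apex}, through \cref{prop:noK8-44348}, and on the leaf-deletion argument. So the sharpness example does not depend on \ref{input:P7}.

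There is no genuine obstacle. The only place needing care is identifying the card with code $44348$, which is a direct check of the relabelling under $\rho_7$ and of the encoding.
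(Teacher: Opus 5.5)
Your proposal is correct and follows the paper's own proof: both halves are read off \cref{thm:attachment}, whose exception requires $|S|=1$, and sharpness comes from checking that index $7$ is isolated in $A$ and that the card $A-7$ has code $44348$. Your further observation that the sharpness example needs only the structural converse, and so not \ref{input:P7}, agrees with the paper's later remark counting the exceptions.
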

\begin{proof}
The exception in \cref{thm:attachment} requires $|S|=1$. In the example, index~$7$ is isolated and $A-7=A$ restricted to $\{0,\ldots,6\}$ has code $44348$. In the lexicographic encoding on eight indices, in which the pair $i<j$ has bit $\tfrac12 i(15-i)+j-i-1$, the graph $A$ has code $305724$, and $S=\{7\}$ has indicator mask $2^7=128$.
\end{proof}
The case $|S|\ge3$ of \cref{thm:support-two} is the attachment lemma of the earlier versions, there called the support-three lemma.
\begin{corollary}[Support three]\label[corollary]{lem:quadruple}
Assume \ref{input:P7}. For every graph $A$ on $\{0,\ldots,7\}$ and every $S$ with $|S|\ge3$, the graph $J(A,S)$ has a $K_8$ minor.
\end{corollary}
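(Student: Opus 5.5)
This corollary is the special case $|S|\ge3$ of \cref{thm:support-two}, so I would simply deduce it from that corollary, or equivalently from \cref{thm:attachment}. The plan is to check that the exceptional configuration of \cref{thm:attachment} cannot occur when $|S|\ge3$.

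That exception requires $S=\{a\}$ for a single index $a$, so it needs $|S|=1$. If $|S|\ge3$ then $S$ is not a singleton, and \cref{thm:attachment} gives a $K_8$ minor of $J(A,S)$ directly.

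For a self-contained route through \cref{lem:contraction}, I would pick any $a\in S$ and set $T=\rho_a(N_A(a)\cup(S\setminus\{a\}))$. Because $|S\setminus\{a\}|\ge2$, the set $T$ is nonempty. Then \cref{thm:one-edge} shows that $H_T(A-a)$ has a $K_8$ minor, and \cref{lem:contraction} identifies $H_T(A-a)$ as a minor of $J(A,S)$. Since a minor of a minor is a minor, $J(A,S)$ has a $K_8$ minor.

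There is no real obstacle here. The only dependency to record is that, like the results it rests on, the statement assumes \ref{input:P7}, which enters through \cref{thm:one-edge} for non-exceptional cards.
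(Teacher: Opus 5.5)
Your proposal is correct and matches the paper, which also proves this corollary in one line as the special case $|S|\ge3$ of \cref{thm:support-two}. Your alternative route through \cref{lem:contraction} and \cref{thm:one-edge}, using that $T\neq\varnothing$ once $|S\setminus\{a\}|\ge2$, is also valid; it is essentially the argument the paper gives for \cref{thm:attachment}.
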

\begin{proof}
This is a special case of \cref{thm:support-two}.
\end{proof}

\begin{remark}[Counting the exceptions]
The automorphism groups of the three hard graphs are determined as follows.
\begin{itemize}
\item In $42232=\Theta(2,3,3)$ the vertices $0,1$ are the only vertices of degree three. The graph is the union of three internally disjoint $0$--$1$ paths, of lengths $2,3,3$. An automorphism permutes $\{0,1\}$ and permutes the three paths, preserving lengths. It is determined by these two actions, since a path whose ends are fixed or exchanged is mapped pointwise in the forced way. All four combinations occur, so there are four automorphisms.
\item In $44340=\Theta(1,3,4)$ the vertices $0,3$ are the only vertices of degree three. The three paths between them have distinct lengths, so each is fixed setwise, and an automorphism is determined by its action on $\{0,3\}$. The exchange $(0\,3)(2\,5)(4\,6)$ is an automorphism, so there are two.
\item In $44348$ the vertex $0$ is the only vertex of degree four, and $3,4$ are the only vertices of degree three. An automorphism therefore fixes $0$ and fixes or exchanges $3$ and $4$. The neighbourhoods $N(3)=\{0,2,4\}$, $N(4)=\{0,1,3\}$, $N(5)=\{0,2\}$ and $N(6)=\{0,1\}$ then force the identity, or the permutation $(1\,2)(3\,4)(5\,6)$, which maps $\{03,04,05,06,14,16,23,25,34\}$ onto itself. Again there are two automorphisms.
\end{itemize}
The three hard graphs are pairwise nonisomorphic (see the discussion after the proof of \cref{thm:root-apex}). Hence there are $5040/4+5040/2+5040/2=6300$ labelled hard graphs on $\{0,\ldots,6\}$. For each index $a$ and each such graph there is exactly one $A$ in which $a$ is isolated and $A-a$ equals that graph. The structural converse in \cref{thm:attachment} shows, without \ref{input:P7}, that each of these $8\cdot6300=50{,}400$ pairs $(A,a)$ gives a graph $J(A,\{a\})$ without a $K_8$ minor. Assuming \ref{input:P7}, \cref{thm:attachment} shows that they are exactly the pairs among the $8\cdot2^{28}$ with this property.
\end{remark}

\subsection{Consequence for the order-twenty profile with a quadruple fibre}
\begin{corollary}[Conditional $(2^8,4)$ step]\label[corollary]{cor:q4}
Assume Input~\ref{input:lower18} and \ref{input:P7}. Let $W$ be an oddomorphism witness to $K_9$ with eight double fibres and one fibre $Q$ of size four. Then $W$, and hence every graph containing $W$, has a $K_8$ minor.
\end{corollary}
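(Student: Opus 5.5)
The plan is to reduce to the attachment graphs $J(A,S)$ of this section. First I would normalize $W$ by \cref{lem:normalization}. Whenever that procedure does anything, it produces a minor witness to $K_9$ of strictly smaller order. A star contraction at an even vertex $v$ removes $v$ and merges its neighbourhood $S$, which has even size at least two, into one vertex, so the order drops by at least two. A minor of $W$ that carries a $K_8$ minor gives one in $W$, and then in every graph containing $W$. So in the non-normalized case I would invoke Input~\ref{input:lower18} on the smaller witness. I am assuming here that Input~\ref{input:lower18} is the bounded statement that witnesses to $K_9$ of order at most eighteen force a $K_8$ minor.

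The step I expect to be the main obstacle is the deletion of an isolated even vertex. It lowers the order only from twenty to nineteen, so I would have to handle it separately. If the deleted vertex lies in $Q$, the new profile is $(2^8,3)$. If it is $e_i$ in a double, then $o_i$ becomes a singleton fibre, and that singleton is adjacent to all odd vertices by \cref{lem:attachment}. In both subcases I would try to iterate normalization until the order falls to eighteen. If the process stalls at nineteen, I would need a direct argument for the stalled profile; this is the point where the exact scope of Input~\ref{input:lower18} matters.

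So assume $W$ is normalized. The fibre $Q$ has four vertices and odd charge, so it has one or three odd vertices, and in either case it contains an even vertex $g$. Every fibre other than $Q$ is a double. An even vertex has even degree into each double, so $g$ sees both vertices of a double or neither. Let $S$ be the set of doubles that $g$ sees. Normalization says $g$ supports at least two other fibres, so $|S|\ge2$.

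Now delete the other three vertices of $Q$, and let $A$ be the active graph of the eight doubles. By \cref{lem:double}, the eight doubles induce exactly $K(A)$ on the sixteen double vertices. The vertex $g$ is adjacent exactly to $o_i,e_i$ for $i\in S$. Hence $W-(Q\setminus\{g\})$ is precisely $J(A,S)$. Since $|S|\ge2$, \cref{thm:support-two}, which assumes \ref{input:P7}, gives a $K_8$ minor of $J(A,S)$, and this is a subgraph of $W$. The corollary only asks for an ordinary $K_8$ minor, so no rooting needs to be tracked through these steps.
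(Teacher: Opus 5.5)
There is a genuine gap, and it is the step you flag yourself. Suppose $Q$ has three odd vertices and its unique even vertex $g$ is isolated, while every other even vertex supports at least two fibres. Normalization deletes $g$ and then stops. Deleting an isolated vertex changes no other vertex's support, so no further step applies. You are left with a normalized nineteen-vertex witness to $K_9$ of profile $(2^8,3)$ whose triple is all odd.

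Input~\ref{input:lower18} covers only orders up to eighteen. ``Iterating normalization'' therefore gives nothing here, and your proposal offers no argument for this profile. The missing ingredient is \cref{prop:finalprofile}. Assuming only Input~\ref{input:lower18}, and using \cref{thm:main} and the finite \cref{lem:B7}, it shows that every order-nineteen $(2^8,3)$ witness has a $K_8$ minor. The paper invokes it exactly for the case $S=\varnothing$. The all-odd triple is the hard part of that proposition: it reduces to the transversal form of \cref{lem:B7} and combines that rooted $K_7$ lemma with one active-pair branch. Without this result, or a substitute for it, the corollary is not proved.

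Normalizing the whole witness also creates a second, spurious obstacle. The paper does not normalize $W$. It fixes one even vertex $x\in Q$ with support $S$ and splits into three cases:
\begin{itemize}
\item $|S|=0$: the case above.
\item $|S|=1$: a single star contraction merging $x,o_i,e_i$ gives an eighteen-vertex minor witness, so Input~\ref{input:lower18} applies.
\item $|S|\ge2$: your final paragraph, which matches the paper.
\end{itemize}
An isolated $e_i$ in a double is irrelevant. $J(A,S)$ is defined for every active graph $A$, and \cref{thm:support-two} puts no hypothesis on $A$, so the $(1,2^7,4)$ stall never has to be treated. Even in your framework it dissolves: if that deletion is the only step, $g$ still sees at least two doubles, and your last paragraph applies to the original $W$.
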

\begin{proof}
Suppose that $W$ has no $K_8$ minor. The charge of $Q$ is odd, so $Q$ contains one or three odd vertices and hence an even vertex $x$. Write the doubles as $D_i=\{o_i,e_i\}$ for $0\le i\le7$. Fibres are independent sets, so every neighbour of $x$ lies in some $D_i$. The degree of $x$ into each $D_i$ is even and at most two, so it is zero or two. Let $S$ be the set of indices $i$ for which $x$ has a neighbour in $D_i$.

If $S=\varnothing$, then $x$ is isolated. Deleting it leaves a witness to $K_9$ on nineteen vertices with profile $(2^8,3)$ and the same restricted indicator. By \cref{prop:finalprofile}, whose only hypothesis is Input~\ref{input:lower18} (its proof also uses \cref{thm:main} and the finite \cref{lem:B7}), this subgraph of $W$ has a $K_8$ minor. If $S=\{i\}$, the even-star contraction of \cref{lem:normalization} at $x$ produces a witness to $K_9$ on eighteen vertices that is a minor of $W$. By Input~\ref{input:lower18} it has a $K_8$ minor. Both cases contradict the assumption. Hence $|S|\ge2$. By \cref{lem:double} the subgraph of $W$ induced by $x$ and the eight doubles is exactly $J(A,S)$, where $A$ is the active graph of the doubles. Thus \cref{thm:support-two} gives a $K_8$ minor, again a contradiction.
\end{proof}
This is the support argument of \cref{lem:support19} specialised to the vertex $x$. It replaces the earlier route through a three-odd-vertex reduction of $Q$, a separate proof of $|S|\ge3$ and the support-three certificates. It still depends on Input~\ref{input:lower18} and on \cref{prop:finalprofile}, which belong to the accepted chain of \cref{thm:order19}; that proposition in turn uses \cref{thm:main} and the finite \cref{lem:B7}. It concerns only the profile $(2^8,4)$, which the historical archive had already excluded. It gives a $K_8$ minor, not a $K_9$ minor, and it changes none of the ten order-twenty profiles that remain open (\cref{sec:order19}).

\subsection{Earlier certificate proofs}
Earlier versions proved the attachment statements by finite positive certificates on top of \ref{input:P7}. Version~3 used $9{,}201$ models for $|S|\ge3$, of which the $4{,}010$ with $|S|=3$ suffice by support monotonicity. Revision~4 used $4{,}740$ models for $|S|=2$, one for every element of the residual domain in which both supported cards are exceptional. The residual domain was defined there after relabelling one supported index to $7$ and the other to $t\in\{0,\ldots,6\}$, with $A-7$ equal to one of the thirty-one listed codes. All these models were re-verified for this revision from their files and independently reconstructed domains. They remain in the archive as corroboration. They are not premises of the proofs above.

The $9{,}201$ version-3 models were assembled from companion data. For $9{,}189$ keys, a $K_7$ model of a card $A-i$ with $i\in S$ was used. In that model exactly one branch misses the odd double vertices, and that branch contains $e_a$ for some $a\in S$. It was extended by $\{x,o_i,e_i\}$. For the other twelve keys, a $K_7$ model of the card $A-7$ was used. In that model exactly one branch misses the odd double vertices, and that branch contains $e_a$ for an index $a\in\{0,1,6\}$ adjacent to $7$. It was extended by $\{x,o_7,e_7\}$. This set is connected because $7\in S$ in every record, and it meets the root-free branch through the active pair $a7$. In these twelve records $a\notin S$. This provenance is not used in any proof.

None of the $4{,}740$ revision-4 models has a singleton branch. This is a property of those stored witnesses only. It does not show that the corresponding graphs have no $K_8$ model with a singleton branch. What it illustrates is that eight pairs fit into seventeen vertices, so a $K_8$ model of $J(A,S)$ need not contain a singleton branch. For this reason the fifteen-vertex singleton-pivot reduction of \cref{lem:normal-form} is not a valid negative decision method for $J(A,S)$. No negative search result is used in this section. The negative half of \cref{thm:attachment} rests on the structural exclusions in \cref{thm:root-apex}.

\section{The bounded nine colour result}\label{sec:order19}
This section records a separate computer-assisted theorem and proves its final case. The earlier eighteen- and nineteen-vertex cases are supplied in five historical-dependency topic supplements, with source locations in Appendix~\ref{app:sources} and the accompanying dependency index. They are not re-proved by the excess-six argument. This distinction matters because a source with nineteen vertices and nine fibres has excess ten.

\begin{theorem}[Bounded nine-colour theorem]\label{thm:order19}
If $F$ has at most nineteen vertices and admits a weak oddomorphism to $K_9$, then $F$ has a $K_8$ minor.
\end{theorem}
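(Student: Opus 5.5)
The plan is to reduce the statement to finitely many fibre profiles of a witness and to dispose of them in three ways: by deleting a large fibre and applying \cref{thm:main} to $K_8$, by Input~\ref{input:lower18} for witnesses of order at most eighteen, and by a separate argument for the surviving nineteen-vertex profiles, finishing with \cref{prop:finalprofile}. First fix a weak oddomorphism of $F$ to $K_9$, and let $W\subseteq F$ be its oddomorphism witness, with fibres $V_1,\ldots,V_9$ and $|V(W)|\le19$. It suffices to find a $K_8$ minor in $W$. Apply the root-preserving normalization of \cref{lem:normalization}, which only passes to a minor of $W$. If normalization strictly lowers the order, then the normalized witness has at most eighteen vertices, and Input~\ref{input:lower18} gives the $K_8$ minor. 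So I may assume that $W$ is already normalized and has exactly nineteen vertices. Same-fibre twin deletion (\cref{sec:excess-six}) is handled the same way: if two same-fibre twins exist, deleting them leaves a witness of order seventeen, so I may assume there are none.

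Second, the deletion step. By \cref{lem:attachment}, deleting a whole fibre $V_i$ leaves a witness to $K_8$ on $19-|V_i|$ vertices. If some fibre has at least five vertices, this order is at most $14=8+6$, so \cref{thm:main} gives a rooted $K_8$ model inside $W$. Hence every fibre has size at most four, and the fibre sizes form a partition of $19$ into nine parts, each at most four. I would list these profiles explicitly; they are the finitely many multisets of sizes in $\{1,2,3,4\}$ with nine parts summing to $19$. Profiles containing a singleton $\{s\}$ have extra structure by \cref{lem:attachment}: $s$ is adjacent exactly to the odd vertices. This suggests contracting $s$ together with an adjacent double into a root-apex vertex, as in \cref{lem:contraction}, and then appealing to the positive models of \cref{sec:root-apex} or to \cref{thm:one-edge}.

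Third, the quadruple and even-vertex profiles. A size-four fibre contains an even vertex $x$. If $x$ supports only one other fibre, contracting its even star as in \cref{lem:normalization} lowers the order to at most eighteen, and Input~\ref{input:lower18} applies. Otherwise $x$ supports at least two fibres. In the profile $(2^8,4)$ this is exactly \cref{cor:q4}, which rests on \cref{thm:support-two}, and I would imitate its support argument for the other profiles that contain many doubles. The profiles that still remain after these reductions are the excess-ten ones dominated by doubles. The last of them, $(2^8,3)$, is settled by \cref{prop:finalprofile}, which uses \cref{thm:main} and the finite \cref{lem:B7}. The other nineteen-vertex profiles I would take from the historical supplements referred to in Appendix~\ref{app:sources}.

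I expect the main obstacle to be the profile $(2^8,3)$ and its close relatives. Deleting a fibre there leaves sixteen or seventeen vertices, far above the excess-six threshold, and the triple does not in general contain a vertex that can be contracted to a singleton apex. For this profile I would first try to choose an odd vertex $t$ of the triple and one double $D_i$, and contract them, together with a suitable even vertex, into a root-apex-like vertex over seven doubles. The aim is to obtain some $H_T(B)$ with $T\ne\varnothing$, or with $B$ not hard, so that \cref{thm:one-edge} applies. Where this fails, the remaining configurations have to be handled by the finite check of \cref{lem:B7}.
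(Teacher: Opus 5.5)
Your skeleton is the paper's proof. Input~\ref{input:lower18} disposes of every witness of order at most eighteen, which also covers your normalization and twin-deletion steps (compare \cref{lem:support19}). Deleting a fibre of size at least five and applying \cref{thm:main} with $n=8$ bounds every fibre by four, leaving the thirteen profiles. Input~\ref{input:profiles}, i.e.\ the historical supplements, handles the twelve profiles other than $(2^8,3)$, and \cref{prop:finalprofile} handles $(2^8,3)$. Resting on these inputs your argument is correct, and it is computer-assisted to exactly the same extent as the paper's.

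Several exploratory remarks around this skeleton are wrong, though, and would fail if you relied on them.
\begin{itemize}
\item The profile $(2^8,4)$ has $16+4=20$ vertices, so it is not one of your profiles. \Cref{cor:q4} is an order-twenty statement that itself uses \cref{prop:finalprofile}, and it plays no role here. Every nineteen-vertex profile with a quadruple has a singleton and at most seven doubles, so no eight-double attachment graph $J(A,S)$ arises.
\item \Cref{lem:contraction} needs a vertex adjacent to both $o_a$ and $e_a$. A singleton $s$ has no even neighbour and $o_ae_a$ is not an edge, so $\{s,o_a,e_a\}$ is disconnected. In any case $s$ is already an apex over the odd double vertices.
\item Your closing plan for $(2^8,3)$ is redundant once you cite \cref{prop:finalprofile}, and it breaks down in the all-odd case. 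There each triple vertex sees exactly one of the nonadjacent vertices $o_i,e_i$ and no other triple vertex, so $T\cup D_i$ induces a disconnected graph. A connected set containing a triple vertex and all of $D_i$ must therefore borrow a vertex of a third double, which destroys the seven-double structure that $H_T(B)$ requires. With a charge-one triple an even triple vertex is a valid connector, but only at the cost of Input~\ref{input:P7}.
\end{itemize}
The paper handles $(2^8,3)$ differently. It removes the charge-one triple by two whole-double stars plus a rooted $K_6$ from \cref{thm:main} on the other six doubles. It removes the case of some $o_i$ seeing all of $T$ in the same way. What remains is exactly the transversal form of \cref{lem:B7}. Then either the eight $o_i$ span a $K_8$, or an active path $o_ie_je_io_j$ is the eighth branch and \cref{lem:B7} on the other six doubles and $T$ supplies the other seven. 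Appealing to \cref{lem:B7} without these steps supplies neither its hypotheses nor the eighth branch.
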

The proof has three levels: the symbolic reductions below, explicit finite rooted-core statements, and their certificate validations. This section proves the final-profile reduction in detail; the historical archive supplies the other profile proofs and their certificates. The theorem is computer-assisted and its full proof requires those historical materials.

\subsection{How the earlier cases enter}
The minimum-order consequence of \cref{thm:main} says that a $K_8$-minor-free graph cannot contain a witness to $K_8$ on fewer than fifteen vertices. Therefore deleting any fibre from an order-$v$ counterexample to \cref{thm:order19} leaves at least fifteen vertices. For $v=19$, all fibres have size at most four.

If $s,d,t,q$ count singleton, double, triple, and quadruple fibres, respectively, then
\[
 s+d+t+q=9,\qquad s+2d+3t+4q=19.
\]
Equivalently $d=10-2t-3q$ and $s=t+2q-1$; nonnegativity gives exactly thirteen profiles. Their counts are
\[
\begin{array}{c|ccccc}
(s,d,t,q)&(0,8,1,0)&(1,6,2,0)&(2,4,3,0)&(3,2,4,0)&(4,0,5,0)\\[-2pt]
\end{array}
\]
for $q=0$, and
\[
(1,7,0,1),\ (2,5,1,1),\ (3,3,2,1),\ (4,1,3,1),\
(3,4,0,2),\ (4,2,1,2),\ (5,0,2,2),\ (5,1,0,3)
\]
for $q>0$.

The source-mapped preliminary proofs establish the following two inputs:
\begin{enumerate}[label=(C\arabic*)]
\item\label{input:lower18} Every witness to $K_9$ on at most eighteen vertices has a $K_8$ minor.
\item\label{input:profiles} Every order-nineteen profile other than $(2^8,3)$ has a $K_8$ minor.
\end{enumerate}
Input \ref{input:profiles} is not a single search claim. Structural reductions first leave five profiles,
\[
(2^8,3),\quad (1,2^6,3^2),\quad (1^2,2^4,3^3),\quad
(1^3,2^2,3^4),\quad(1^4,2,3^3,4),
\]
then distinct rooted-core arguments exclude the last four. The dependency map identifies each argument. Input \ref{input:lower18} uses the excess-six theorem followed by an order-seventeen argument and five order-eighteen profile exclusions, including a corrected cycle-background proof. A superseded disconnected branch construction is not used.

\begin{lemma}[Ambient support condition]\label[lemma]{lem:support19}
Assuming Input~\ref{input:lower18}, every even vertex in a nineteen-vertex $K_8$-minor-free witness to $K_9$ supports at least two other fibres.
\end{lemma}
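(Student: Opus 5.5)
Suppose $W$ is a nineteen-vertex witness to $K_9$ with no $K_8$ minor, and suppose some even vertex $x$ of colour $r$ supports at most one other fibre. The idea is to remove or contract $x$, obtaining a smaller witness to $K_9$ that is a minor of $W$, and then invoke Input~\ref{input:lower18}. This mirrors the two cases already used in the proof of \cref{cor:q4}.

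\emph{No other fibre supported.} Then $x$ is isolated, since fibres are independent. Delete $x$. Its fibre loses an even vertex, so its charge stays odd and the fibre stays nonempty. No remaining degree changes, so \eqref{eq:parity} and \eqref{eq:charge} still hold. The result is a witness to $K_9$ on eighteen vertices and a subgraph of $W$. By Input~\ref{input:lower18} it has a $K_8$ minor, and hence so does $W$, a contradiction.

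\emph{Exactly one other fibre $V_i$ supported.} Apply one step of the even-star contraction from the proof of \cref{lem:normalization} at $x$. Put $S=N_W(x)\subseteq V_i$; its size is positive and even. Contract the star on $\{x\}\cup S$ to a vertex $w$ of colour $i$. For each outside vertex $z$, retain $wz$ exactly when $|N_W(z)\cap S|$ is odd, and set $p(w)=\sum_{u\in S}p(u)$. The computation in that proof shows the following:
\begin{itemize}
\item the colouring stays proper;
\item every parity condition \eqref{eq:parity} is preserved;
\item the charge of colour $i$ is unchanged;
\item colour $r$ loses only the even vertex $x$, so it keeps odd charge and stays nonempty.
\end{itemize}
The new graph is obtained by contracting a connected set and deleting edges, so it is a minor of $W$. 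Its order is $19-|S|\le17$, so Input~\ref{input:lower18} gives a $K_8$ minor, which lifts to $W$. This is again a contradiction.

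The main point needing care is the bookkeeping that the contracted graph is a genuine witness to the same target $K_9$, with all nine fibres still nonempty. This is exactly what \cref{lem:normalization} verifies for a single step. We do not need the full normalization, only one step, so no root tracking is required. The remaining task is to note that Input~\ref{input:lower18} is stated for every witness of order at most eighteen. This covers the possibly much smaller witness produced when $|S|\ge4$, and it also covers the case where the new witness has even vertices that are not normalized, so no further hypothesis is needed.
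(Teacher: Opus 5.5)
Your proposal is correct and takes the same route as the paper's proof: delete the even vertex if it is isolated, and otherwise perform a single even-star contraction from \cref{lem:normalization}; either way you obtain a smaller witness to $K_9$ inside a minor of $W$, which contradicts Input~\ref{input:lower18}. Your extra bookkeeping spells out what the paper compresses into two sentences: the order $19-|S|\le17$, the preservation of parities and odd charges, and the nonemptiness of all nine fibres.
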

\begin{proof}
Deleting an isolated even vertex or performing one even-star contraction from \cref{lem:normalization} gives a smaller witness to the same target in a minor. It has at most eighteen vertices, contradicting Input~\ref{input:lower18}. The charges keep every fibre nonempty.
\end{proof}
The support condition applies to the full ambient witness. It need not hold in a restriction used as a finite lemma.

\subsection{The restricted rooted seven clique lemma}
\begin{lemma}[Finite rooted core]\label[lemma]{lem:B7}
Let $W$ consist of six double fibres $D_i=\{o_i,e_i\}$, $0\le i<6$, and a triple $T=\{t_0,t_1,t_2\}$ whose vertices are all odd. Every double pair has one of the two interfaces in \cref{lem:double}. For each $i$, the root $o_i$ is adjacent to one chosen $t_{c_i}$ and $e_i$ is adjacent to the other two vertices of $T$. There are no further edges. Then $W$ has a $K_7$ model rooted in $\{o_0,\ldots,o_5\}\cup T$.
\end{lemma}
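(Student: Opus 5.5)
The plan is to reduce to the all-double case. I would place one vertex of $T$ in a singleton branch and route the six doubles by the Kriesell--Mohr routing theorem \cite[Theorem~6]{KriesellMohr2022}, as in \cref{lem:six-doubles}. The two spare vertices of $T$ then serve only as connectors.

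For $a\in\{0,1,2\}$ put $I_a=\{i:c_i=a\}$. From the displayed interface, $t_a$ is adjacent to $o_i$ exactly when $i\in I_a$, and to $e_i$ exactly when $i\notin I_a$. So $\{t_a\}$ is adjacent to a branch $Q_i\ni o_i$ precisely when $i\in I_a$, or $Q_i$ contains some $e_j$ with $j\notin I_a$, or $Q_i$ contains some $o_j$ with $j\in I_a$. This makes the case analysis depend only on the colour vector $(c_i)$ and the active graph $B$ on six indices.

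\emph{Step 1: the colour vector is constant.} If all $c_i$ equal some $a$, then $t_a$ sees every $o_i$. \Cref{lem:six-doubles} gives a rooted $K_6$ model with $Q_i\ni o_i$ for the all-double part, and normalization is not needed there. Adding $\{t_a\}$ gives the $K_7$ model.

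\emph{Step 2: two colours occur.} Choose $a$ with $|I_a|$ maximal. For each $i\notin I_a$, I want the bag of index $i$ to pick up a vertex adjacent to $t_a$. There are two options:
\begin{itemize}
\item absorb $e_i$ through an active edge, using the path $o_i-e_j-e_i$ when $ij$ is active;
\item use a spare vertex $t_b$ ($b\ne a$) as a connector: $t_b$ is adjacent to $o_i$ for $i\in I_b$ and to $e_j$ for $j\notin I_b$. Adding $t_b$ to one bag makes that bag adjacent to $t_a$ whenever it also reaches some $e_j$ with $j\in I_a$.
\end{itemize}
The available slack is the two spare triple vertices plus the six even vertices. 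Since $\{t_a\}$ uses one vertex, up to eight vertices can enlarge bags. The absorption trick of \cref{lem:attachment} also applies: a branch containing a whole double $D_j$ is adjacent to every odd-rooted branch. In particular it is adjacent to $\{t_a\}$ and to the other $o$-roots.

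\emph{Step 3: organising the cases, and the main obstacle.} I would split by the partition type of $(|I_0|,|I_1|,|I_2|)$, namely $(6),(5,1),(4,2),(3,3),(4,1,1),(3,2,1),(2,2,2)$, and then by the active graph $B$. The latter is split as in \cref{lem:six-one-triple}, by whether the minority indices have active neighbours inside $I_a$. A useful alternative branch is $T\cup\{e_i,e_j\}$ with $c_i\ne c_j$. It is connected and contains the whole fibre $T$, so it is adjacent to every odd-rooted branch. It leaves a six-branch problem on four full doubles plus $o_i,o_j$, which I would route by hand.

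The main obstacle is that the routing theorem fixes the roots $o_i$ but gives no control over which even vertices land in which bag. Forcing $e_i$ into $Q_i$ for the minority indices is therefore not available off the shelf, and the balanced profile $(2,2,2)$ with sparse $B$ is where I expect the constructions to be tight. If a uniform structural argument does not close, the statement is finite: at most $2^{15}\cdot3^6$ configurations, reduced by the symmetries permuting indices and $T$. I would then fall back, as the section framing suggests, on an exhaustive certificate listing one explicit rooted $K_7$ model per orbit, each checked mechanically.
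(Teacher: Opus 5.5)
Your Step~1 is sound, but only through the Kriesell--Mohr routing theorem itself. The lemma imposes no support condition, so $\delta(B)\ge2$ can fail, and then the direct proof of \cref{lem:six-doubles} in Appendix~G does not apply.

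Beyond Step~1 the proposal is not a proof. Steps~2 and~3 list devices and a case split, but they never show that one of your restricted shapes exists for every non-constant colour vector and every one of the $2^{15}$ active graphs. The shapes are six $o_i$-rooted bags plus $\{t_a\}$, or $T\cup\{e_i,e_j\}$ plus six bags on the rest. You correctly name the obstacle: no control over which even vertices enter which bag. Overcoming that is the whole content of the lemma, and the paper lists a structural replacement for it as remaining work.

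One device is also misstated. The vertex $t_a$ is adjacent to $e_j$ only for $j\notin I_a$, so a bag reaching some $e_j$ with $j\in I_a$ gains nothing. Since $t_b$ sees $e_j$ only for $j\notin I_b$, it can supply a neighbour of $t_a$ only through indices of the third colour class. In a two-colour profile the minority vertex is therefore useless as a connector. Only the vertex of the unused colour, which sees all six even vertices, can serve.

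Your fallback is in fact the paper's entire proof rather than a backstop, and it counts only once it is carried out. The paper proceeds as follows.
\begin{itemize}
\item It indexes the $2^{15}\cdot3^6=23{,}887{,}872$ settings by $I=b+2^{15}\sum_i c_i3^i$.
\item It relabels the active graph by a permutation of the six indices to one of $156$ orbit representatives, checked to cover all $32{,}768$ masks, and carries the colour word along.
\item It relabels $T$ by first occurrence, giving $S(6,1)+S(6,2)+S(6,3)=122$ canonical words.
\item It keeps all $156\cdot122=19{,}032$ products, deliberately without quotienting by residual automorphisms, so no compatibility argument is needed.
\item Each setting receives seven branch masks. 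An independent checker verifies range, nonemptiness, disjointness, connectivity, intersection with the odd set and all $21$ crossings.
\item Both relabellings preserve adjacency and the odd set, so inverse images settle every labelled setting. A separate run also rechecks all $23{,}887{,}872$ graphs directly.
\end{itemize}
Your reduction to one model per orbit of $S_6\times S_3$ would also be valid, provided the orbit cover is established and each model is transported by the acting permutation. Until such a certificate is produced and checked, however, your proposal establishes only the constant-colour case. On the certificate route, Step~1 becomes redundant.
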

\begin{proof}[Computer-assisted proof]
The fifteen active-pair bits give $b\in\{0,\ldots,2^{15}-1\}$ and the six choices give $c_i\in\{0,1,2\}$. Index the graph by
\[
 I=b+2^{15}\sum_{i=0}^{5}c_i3^i.
\]
This is a bijection onto $\{0,\ldots,23,887,871\}$. Number vertices by $o_i=2i$, $e_i=2i+1$, and $t_a=12+a$; the fixed odd mask is hexadecimal $7555$.

To compress the certificate, first relabel the active graph by a permutation of its six vertices to a minimum representative. Exhaustive application of all $720$ permutations produces $156$ disjoint orbits covering all $32,768$ masks. Carry the six triple choices along this permutation. Next relabel the three triple vertices by first occurrence in the resulting word. These canonical words are the partitions of six positions into at most three blocks, so their number is
\[
 S(6,1)+S(6,2)+S(6,3)=1+31+90=122.
\]
All $156\cdot122=19,032$ product settings are retained; no assumption about compatibility with residual active-graph automorphisms is needed.

The certificate supplies seven unsigned sixteen-bit branch masks for each setting, fourteen bytes per record. The independent checker reconstructs the graph and checks each branch is in range, nonempty, disjoint from the others, connected, and meets the fixed odd mask, as well as all twenty-one crossings. The supplied file has exactly $266,448$ bytes and every record passes. Both relabellings preserve the complete adjacency relation and the odd set, so inverse images of the branches prove every labelled setting.

As an additional full-domain implementation check, the transport validator reconstructs each of the $23,887,872$ original graphs from its own index and directly checks the transported branches there. All pass. The representative list, certificate, independent checker, exact commands, and execution receipts are in the full historical archive; they are inherited and not replayed by the current package. This is a positive-certificate proof; it makes no inference from a failed search.
\end{proof}

\subsection{Two whole fibre branches and the final profile}
\begin{proposition}\label[proposition]{prop:finalprofile}
Assuming Input~\ref{input:lower18}, an order-nineteen witness to $K_9$ of profile $(2^8,3)$ has a $K_8$ minor.
\end{proposition}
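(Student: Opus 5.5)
The plan is to realise the title of this subsection literally: build a $K_8$ model from two branches that each contain a whole fibre, together with an odd-rooted clique model on the remaining fibres. The absorption observation of \cref{sec:excess-six} then supplies every adjacency between a whole-fibre branch and an odd-rooted branch. Throughout, I would take $W$ to be $K_8$-minor-free and argue by contradiction. By \cref{lem:support19}, which is where Input~\ref{input:lower18} enters, every even vertex of $W$ supports at least two other fibres. Write the doubles as $D_i=\{o_i,e_i\}$ for $0\le i\le7$, with active graph $A$, and let $T$ be the triple. I would split according to the charge of $T$.

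\emph{Charge one}, $T=\{t,x,y\}$ with $x,y$ even. Each of $x,y$ sees both members of every double it supports, and each supports at least two doubles. Hence I can choose distinct indices $i\ne j$ with $x$ supporting $D_i$ and $y$ supporting $D_j$. Put $X_1=D_i\cup\{x\}$ and $X_2=D_j\cup\{y\}$; both are connected stars. They are adjacent by absorption, since $X_1\supseteq D_i$ and $o_j\in X_2$. I would then delete the fibres $T$, $D_i$ and $D_j$ by \cref{lem:attachment}. This leaves a witness to $K_6$ on six doubles, of excess six. \Cref{thm:main}, or directly \cref{lem:six-doubles}, then gives six disjoint branches, each containing some $o_k$. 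Absorption makes every such branch adjacent to $X_1$ and to $X_2$. This yields eight branches, and the vertex $t$ is simply unused.

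\emph{Charge three}, so that every vertex of $T$ is odd. For each $i$ the set $S_i=N(e_i)\cap T$ is empty or has two elements. When $|S_i|=2$, the vertex $o_i$ sees exactly the remaining vertex of $T$. This is precisely the interface hypothesis of \cref{lem:B7}. I would look for an active pair $ab$ such that $S_k\ne\varnothing$ for all six $k\notin\{a,b\}$. The path $o_a-e_b-e_a-o_b$ is then a single connected branch containing the whole fibres $D_a$ and $D_b$. \Cref{lem:B7}, applied to the six remaining doubles and $T$, gives a $K_7$ model rooted in odd vertices. By absorption, each of its seven branches is adjacent to the path branch, which gives $K_8$.

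The main obstacle is this choice of $ab$. It fails when the set $Z=\{k:S_k=\varnothing\}$ of doubles whose even vertex avoids $T$ is too large, or is not covered by an active edge. For $k\in Z$, the support condition forces $d_A(k)\ge2$, and $o_k$ sees all of $T$. I would first try the following. If $Z$ contains an active pair, use that pair as $ab$. Otherwise, for $k\in Z$, build a whole-fibre branch $D_k\cup\{e_\ell\}$ through an active neighbour $\ell$, or $D_k\cup\{t\}$ via $o_k$ and a vertex $t\in S_k$, so that $Z$ is consumed by at most two whole-fibre branches. For the remaining fibres I would use \cref{thm:main} (excess at most six) in place of \cref{lem:B7}. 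If $|Z|$ is large, I would instead contract an even-star at some $e_k$ with $k\in Z$, as in the normalization lemma, to drop to eighteen vertices and invoke Input~\ref{input:lower18} directly. Verifying that these alternatives together exhaust the configurations of $Z$ is the step I expect to require real case work.
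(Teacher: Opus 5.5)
\textbf{Gap in the charge-three case.} Your charge-one argument is the paper's, and so is the last step of charge three: the active path $o_a-e_b-e_a-o_b$ as one branch, with \cref{lem:B7} applied to the other six doubles and $T$. The gap is the set $Z=\{k:S_k=\varnothing\}$ when no single active pair contains it, that is, when $|Z|\ge3$ or $Z$ is two nonadjacent indices. You leave this case open, and none of the remedies you list closes it.
\begin{itemize}
\item A set $D_k\cup\{t\}$ with $t\in T$ is never connected. The pair $o_ke_k$ is not an edge, and the odd vertex $t$ has exactly one neighbour in $D_k$. For $k\in Z$ the condition $t\in S_k$ is moreover vacuous.
\item A branch $D_k\cup\{e_\ell\}$ removes $e_\ell$ from the double $D_\ell$. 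Unless you also place $o_\ell$ in the other branch, the leftover vertices are not a union of whole fibres. Then neither \cref{thm:main} nor \cref{lem:B7} applies to them.
\item The even-star contraction of \cref{lem:normalization} applies only to an even vertex whose neighbours all lie in one colour class. \Cref{lem:support19}, which you invoked at the start, says no such vertex exists, and you yourself note $d_A(k)\ge2$ for $k\in Z$. So there is no route to eighteen vertices through Input~\ref{input:lower18} here.
\end{itemize}

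\textbf{The missing idea.} Split $D_k$ between the two whole-fibre branches, repeating the charge-one construction with $T$ as one of the absorbed fibres.
\begin{itemize}
\item You already observe that $o_k$ sees all of $T$ when $k\in Z$. Hence $T\cup\{o_k\}$ is a connected star containing the whole fibre $T$.
\item The vertex $e_k$ supports at least two fibres and sees none of $T$, so some pair $kj$ is active. Then $D_j\cup\{e_k\}$ is a connected star containing the whole fibre $D_j$.
\item The two stars are disjoint and adjacent through $o_ke_j$.
\item The six doubles other than $D_k,D_j$ form a twelve-vertex witness to $K_6$. So \cref{thm:main} gives a rooted $K_6$ on them, and absorption joins both stars to each of its branches.
\end{itemize}
Thus any nonempty $Z$ already gives $K_8$, with no case analysis on $Z$. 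When $Z=\varnothing$, every double--triple interface has the form required by \cref{lem:B7}, and any active pair can serve as $ab$. One exists, because each $e_i$ must also support some double; and if none were active, the eight $o_i$ would span a $K_8$.
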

\begin{proof}
Suppose otherwise. Write the doubles as $D_i=\{o_i,e_i\}$, $0\le i<8$, and the triple as $T$. By \cref{lem:support19}, every even vertex supports at least two fibres. Restrictions to any six doubles have twelve vertices and six colours, so \cref{thm:main} gives a model rooted in their \emph{unchanged original} odd set.

First suppose $T$ has one odd vertex and two even vertices $x,y$. Each of $x,y$ supports at least two doubles and sees both vertices of any supported double. Choose distinct supported doubles $D_i$ for $x$ and $D_j$ for $y$; this is possible even when the support sets coincide. The disjoint stars
\[
 A=D_i\cup\{x\},\qquad B=D_j\cup\{y\}
\]
are rooted and contain whole fibres. A rooted $K_6$ model on the remaining six doubles is disjoint from both stars. Whole-fibre adjacency joins each star to all six branches, and joins the stars to each other using $o_j$ into $D_i$. This gives $K_8$. Thus all three vertices of $T$ are odd.

Each triple vertex now has exactly one neighbour in any double. The root $o_i$ therefore sees either one or three vertices of $T$, with $e_i$ seeing the complement. If $o_i$ sees all of $T$, then $e_i$ sees none of $T$. It supports some other double $D_j$, whose interface with $D_i$ is active. The sets
\[
 A=T\cup\{o_i\},\qquad B=D_j\cup\{e_i\}
\]
are disjoint connected rooted stars, adjacent through $o_ie_j$. Again the other six doubles supply a rooted $K_6$; the two stars attach to all six by whole-fibre adjacency. This is a $K_8$ model. Consequently every double--triple interface has exactly the restricted transversal form of \cref{lem:B7}.

If no double pair is active, the eight $o_i$ form a $K_8$ subgraph. Otherwise choose an active pair $i,j$. Its four vertices form the path $o_i-e_j-e_i-o_j$; use their union as one branch $A$. The remaining six doubles together with $T$ satisfy \cref{lem:B7}, with their original odd set, and provide seven rooted branches. The whole double $D_i\subseteq A$ is adjacent to each. These eight branches form a $K_8$ model, a contradiction.
\end{proof}
Combining Inputs~\ref{input:lower18} and~\ref{input:profiles} with \cref{prop:finalprofile} proves \cref{thm:order19}. This last proof does not use a universal seven-double rooted $K_7$ assertion: that assertion is false for the known theta example.

\paragraph{Order twenty.} The companion archive also contains computer-assisted exclusions of the order-twenty profiles $(1,2^7,5)$ and $(2^8,4)$, conditional on the inherited chain above and on further reductions proved there. For the second profile, \cref{cor:q4} gives a short proof. It assumes Input~\ref{input:lower18} and \ref{input:P7}, and uses \cref{prop:finalprofile} and the attachment theorem of \cref{sec:attachment}. In this conditional form the exclusion of $(2^8,4)$ is a result of this paper. The exclusion of $(1,2^7,5)$ and the archive's other order-twenty reductions are not reproduced here and are not theorems of this paper. Ten order-twenty profiles remain open in that archive. In particular we do not claim that every weak oddomorphism witness to $K_9$ of order twenty forces a $K_8$ minor, and the order bound of \cref{thm:order19} is not improved.

\clearpage
\section{Reproducibility and limitations}\label{sec:reproduce}
\paragraph{Evidence by statement.}
\begin{itemize}
\item \Cref{thm:main,thm:sharp} (core paper): structural. Computations described below corroborate them and are not premises.
\item \Cref{thm:root-apex}: twenty-eight explicit models (\cref{tab:positive-models}) and three structural exclusions (\cref{prop:noK8,prop:noK8-44340,prop:noK8-44348}). Its reading as a statement about witnesses of profile $(1,2^7)$ uses the inherited seven-double classification.
\item \Cref{lem:contraction}, the negative half of \cref{thm:attachment}, \cref{ua:hard} and \cref{rc:classification,rc:apex}: structural. The negative half of \cref{rc:classification} became structural in revision~7.
\item \Cref{thm:one-edge,thm:attachment,thm:support-two,lem:quadruple}: certificate-backed through Input~\ref{input:P7} ($1{,}013$ rooted models and a complete $1{,}044$-orbit cover of all $2^{21}$ seven-index masks) and the twenty-eight models of \cref{tab:positive-models}.
\item \Cref{cor:q4}: additionally conditional on Input~\ref{input:lower18}, \cref{prop:finalprofile} and, through it, the finite \cref{lem:B7}. The B7 certificate is inherited from the full historical archive and was checked in the earlier order-nineteen review leg; it is not replayed by this package.
\item \Cref{thm:order19}: computer-assisted, through the inherited Inputs~\ref{input:lower18} and~\ref{input:profiles} and \cref{lem:B7}.
\end{itemize}
The $21$ hard-core models (\cref{tab:hard-models}), the $4{,}740$ revision-4 and $9{,}201$ version-3 attachment certificates, the revision-6 rejection stream and the $27$-case refutation forest of Appendix~\ref{rc:sec}, and the exhaustive fifteen-vertex check of Appendix~\ref{app:h15-computation} are valid corroboration, not premises.

\paragraph{Code and certificate archive.}
The supporting release~\cite{CertikSupport2026} has Zenodo DOI \url{https://doi.org/10.5281/zenodo.23603860}. Its six archives separate the active \mbox{checks} (archive 00) from five historical dependency packages. After extracting archive 00, follow its \texttt{START\_HERE.md}. The commands are:
\begin{quote}\small\ttfamily\raggedright
python3 reproduce.py --verify-only\par
python3 reproduce.py --quick --workdir ../replay\_00\_quick\par
python3 reproduce.py --full --workdir ../replay\_00\_full
\end{quote}
The first command verifies the files; the other two run the documented checks. Each historical archive has its own entry instructions. These commands differ from the full historical source-package command below; the public release does not use its \texttt{--with-pdf} option. The historical computations are not all re-executed by the active package.

\paragraph{Current replay in the full historical source package.}
From the package root, the command
\begin{center}\ttfamily python3 reproduce.py --workdir NEWDIR [--with-pdf]\end{center}
verifies the package manifest, copies the sources into an empty directory and runs, with recorded commands, UTC times, exit codes and hashes: the inherited revision-6 checks (P7 orbit cover and models, the twenty-eight and twenty-one printed models, local structural assertions, regeneration of the augmentation formulas, regeneration and validation of the rejection stream, and corruption controls), without any historical minor census; the revision-7 checks: every finite assertion in the structural proof of Appendix~\ref{rc:sec} with mutation controls, a corroborating enumeration of all marked-root spanning partitions, a third independent validation of the rejection stream, independent checkers for Appendices~\ref{ua:appendix} and~\ref{rc:sec}, the native $27$-case forest with its controls, and a comparison of the printed tables with the certificate files; and, with \texttt{--with-pdf}, the builds of the core paper, this companion and the current report, reference checks, and checks that the statement numbers and statements of earlier editions are unchanged. The option \texttt{--quick} runs only the two checkers that read these TeX sources, together with the builds and checks. Standard-library Python~3 and a C++17 compiler suffice; the PDFs need XeLaTeX and BibTeX. No network, solver or graph database is used.

\paragraph{Historical computations.}
Earlier legs are preserved byte for byte in the full provenance package, each with its own driver and receipts, and are not all re-executed here. They include the version-3 root-apex validators and refutation forests, the exhaustive enumeration of $156{,}545$ canonically labelled excess-six interface states ($47{,}259$ normalized), which found an odd-rooted clique model in every state, and the replay of the restricted B7 certificate on all $23{,}887{,}872$ graphs; the revision-4 regeneration of the $4{,}740$ support-two models; and the revision-5 three-checker validation of the hard-core models, the P7 orbit checker and constructive transport tests. Two optional computations remain author-recorded corroboration only: a full search of the $2^{21}$ seven-double masks, whose saved negative lists match the thirty-one exceptional orbits but whose execution ranges are not bound, and a $98{,}768$-instance singleton sweep without persisted per-instance models.

\paragraph{Citations.}
The doubled theta construction is in Kriesell and Mohr, arXiv:1911.09998v2, Section~3, pp.~8--9, immediately before Theorem~2 on p.~9, and their routing theorem for at most six doubled vertices is Theorem~6 in Section~5, p.~12; the earlier version~1 places these in Sections~2 and~4. These locators were established in revision~6 from the version-specific PDF bytes and are recorded with their hashes in \path{CITATION_ERRATUM.md}; a later independent audit re-read the same version-2 PDF and confirmed them. No journal edition of that preprint has been verified. Neuen and Seppelt's lemma numbers refer to arXiv:2601.18602v2; their paper also appeared in LICS~2026.

\paragraph{AI assistance.}
AI tools assisted with mathematical exploration, proof development, computational checks and manuscript preparation. The author takes responsibility for the final content.

\paragraph{Limitations of the evidence.}
Role separation between producing and reviewing AI tools does not establish statistical independence, independent human review, or formally verified correctness. Certificate-backed statements remain subject to the correctness of the supplied programs, their compilation and execution. No proof assistant has verified the symbolic proofs.

\paragraph{Remaining mathematical work.}
The minimum orders for $n\le7$ are not determined here; for $n=5$ the closure theorem of Neuen and Seppelt \cite[Theorem~5.22 in arXiv version~2]{NeuenSeppelt2026} rules out every counterexample. \Cref{thm:root-apex} classifies a specified family; whether every fifteen-vertex witness to $K_8$ without a $K_8$ minor has profile $(1,2^7)$ is open here. A structural proof of Input~\ref{input:P7} would remove the largest finite premise of \cref{sec:attachment}; structural replacements for the companion certificate lemmas behind Inputs~\ref{input:lower18} and~\ref{input:profiles} and \cref{lem:B7} would do the same for \cref{sec:order19}. Ten order-twenty profiles remain open in the archive, and no unrestricted $K_9$-to-$K_8$ statement or separation of graph-learning architectures is claimed. The literature comparison is a primary-source search, not a proof that no prior result exists. Related symmetric-difference merging appears in \cite[Definition 2.3 and Lemma 2.4]{JimenezMooreQuirozYoo2026}, and Arsenyan's thesis \cite[Sections 4.3 and 4.5]{Arsenyan2025} reports related small rooted-routing searches, made with a heuristic minor finder; these are context, not premises. The comparison of Arsenyan's Figure~2.3 with the thirty-one active graphs of \cref{sec:root-apex} remains unresolved; we make no claim of originality for individual cores.

\appendix
\section{Proof dependency and source map}\label{app:sources}
The revision-7 package provides a claim ledger, a coverage and dependency ledger, and input hashes; all six inputs of revision~6, including the complete revision-5 chain, are preserved byte for byte inside the revision-6 full provenance package, which is itself preserved in the revision-7 full package. The revision-4 package, with its \path{provenance/SOURCE_MAP.json}, is preserved byte for byte inside the revision-5 full provenance package. Hashes establish byte identity, not mathematical truth. Both incoming version-3 archives and their standalone PDFs are preserved byte for byte in the revision-4 full provenance package's \path{incoming/} directory. Legacy paths mentioned below are archive-relative, not paths at the revision-5 root. In particular the old \path{SOURCE_MAP.json} and structural QA directories occur inside the expanded archive's preserved incoming version-2 archive. The recovered P7 data come from the expanded archive's \path{incoming/order20_q4_exclusion_package.zip}, then its \path{incoming/final_order19_proof_package.zip}. No unverified filename alias is used. The new compact replay uses the extracted, hash-bound P7 files directly; it does not replay the older census.

\subsection{Attachment theorem}
The proof of \cref{thm:attachment} is given in full in \cref{sec:attachment}. Its finite inputs are the inherited rooted models and orbit cover behind \ref{input:P7}, recovered from the expanded archive's nested order-nineteen package (files \path{reps7.txt} and \path{k7_doubles_representatives.bin}) and the twenty-eight models of \cref{tab:positive-models}. Its hard-core step is supplied by the structural augmentation formulas of Appendix~\ref{ua:appendix}. The twenty-one models of \cref{tab:hard-models}, introduced in revision~5, are retained corroboration since revision~6. The enlarged-root classification of Appendix~\ref{rc:sec} is structural since revision~7; its revision-6 rejection stream and the independent $27$-case forest are corroboration. The revision-5 certificate files carry SHA-256 bindings in the package manifest.

\subsection{Main extremal theorem}
The main argument is given in the core paper, with its imported literature results explicitly stated and cited:
\begin{align*}
\text{parity equations}&\Longrightarrow\text{root-preserving normalization}\\
 &\Longrightarrow\text{excess five and seven excess-six patterns}\\
 &\Longrightarrow\text{Theorem~\ref{thm:main}}.
\end{align*}
For the upper bound, the explicit graph construction, the rooted theta obstruction \cref{lem:known-theta} (proved in the core paper by Kriesell and Mohr's counting argument for their arXiv~v2 construction), and the eight singleton cases prove \cref{prop:noK8}. Cores consisting only of doubles are covered by the cited routing theorem of Kriesell and Mohr; the direct proofs are in Appendix~\ref{app:all-doubles}. Singleton extension then gives $m(n)=n+7$ for every $n\ge8$. No order-eighteen or order-nineteen claim, or computational assertion, is used for the main result. The source map distinguishes the earlier mathematical material, the consolidated proof, and the corroborating verifiers. The accepted H15 structural supplement has SHA-256 \texttt{\seqsplit{ddbbaf9bee3c65569eee348eb4ddc2706899326878d3b30167adc471e40b6c40}}; it is retained with its audits under the archive-qualified legacy \path{qa/structural_h15} directory.

\subsection{Version-3 sources}
The new material is bound to the following inputs by SHA-256 (full digests in the package manifest).
\begin{sloppypar}
\begin{itemize}
\item The recovered final source of the preceding review leg, \path{gpt_final_reconstruction_source.md} ($65{,}259$ bytes, \texttt{6c2035cc\ldots 93511b21}), preserved unchanged. It supplies the thirty-one codes, the twenty-eight models of \cref{tab:positive-models}, the three compressed forests with their declared hashes and one declared transcription correction, the recovered validator, the proof text for $H(44340)$ adapted in \cref{prop:noK8-44340}, and the two scripts that build and check \path{q4_j8.txt}.
\item The materialized transport checkpoint \path{apex_materialized_transport_verified.zip} (\texttt{c4c46978\ldots dbc8f400}); every file used from it was re-derived from the recovered source and compared byte for byte.
\item The companion order-twenty package \path{order20_q4_exclusion_package.zip} (\texttt{007b5118\ldots 1e74c05a}); the files \path{data/q4_prune.txt} and \path{data/des_models.txt} are used through byte-identical copies.
\item The version-2 article archive and PDF, and the native Q4 referee supplement, preserved unchanged.
\end{itemize}
\end{sloppypar}
The structural proofs of \cref{lem:rooted-44348,prop:noK8-44348} and the version-3 verification programs were introduced in version~3.

\subsection{Companion dependency chain}
For reading the larger companion proof, the archive contains the following complete mathematical sources and certificate specifications. Short filenames below are resolved unambiguously by the machine-readable map.
\begin{enumerate}
\item \textbf{Orders at most eighteen.} The excess-six proof yields the fourteen-vertex $K_8$ witness bound. The subsequent absorption and all-double constructions are in \path{update_absorption.tex}. The five order-eighteen profiles are assembled in \path{order18_closure_report.tex}; its two finite mixed-profile lemmas have their domain and coverage records in the same archive. The corrected connected cycle argument is \path{c9_symbolic_boundary_and_core_proof.md}; disconnected cycle backgrounds are proved in \path{three_singleton_disconnected_backgrounds_proof_audit.md}.
\item \textbf{Reduction of the thirteen order-nineteen profiles.} \path{order19_large_fibres.md} proves the support and quadruple-fibre restriction. \path{rooted_triangle.md} and \path{pure_triple_profile_proof.md} supply the rooted triangle and five-triple arguments. The remaining quadruple profile is reduced to the explicitly stated core in \path{quad_refinement.md}; its rooted $K_4$ proof is \path{rooted_k4_core_proof.md}. The associated double--triple interface justification is in the QREF addendum.
\item \textbf{Three singleton fibres.} The independent three-singleton audit specifies the complete orbit cover and validates the rooted-core certificates. Its $589,072$ certificate records cover $51,118,080$ fixed-pattern states by the proved orbit partition. This count is not a claim of that many directly stored records. The residual contraction check is corroboration, not a substitute for the stated certificate domain.
\item \textbf{Two singleton fibres.} \path{NEW_RESULTS.md} and the rooted-$K_6$ report give the arbitrary rooted-$K_5$ lemma on $(2^3,3^2)$, the forced interfaces, the restricted rooted-$K_5$ lemma on $(2^2,3^3)$, and the complete no-active-pair constructions. The arbitrary lemma has $2,097,152$ named settings; the restricted lemma has $8,748$ normal forms covering $314,928$ fixed-label settings by explicitly root-preserving transports.
\item \textbf{One singleton fibre.} \path{ONE_SINGLETON_PROOF.md} gives the complete absorber and forced-interface argument. Its arbitrary rooted-$K_6$ premise on $(2^5,3)$ has $2,097,152$ settings. Its restricted premise on four doubles and two all-odd, perfectly matched triples has $419,904$ normalized settings, covering $2,519,424$ labelled settings by matching relabelling. Both use unchanged original odd sets.
\item \textbf{No singleton fibre.} The remaining $(2^8,3)$ case is proved in Section~\ref{sec:order19}; its finite restricted B7 premise belongs to the full historical archive and was checked in the earlier order-nineteen review leg. It is inherited here and is not replayed by the current compact attachment supplement.
\end{enumerate}
Every finite positive-model checker verifies range, nonemptiness, disjointness, connectedness, intersection with the specified original odd set, and all required pair adjacencies, together with the declared graph domain. Domain exhaustion or a proved covering map is essential: an unverified success count alone is not an exhaustive proof. The source map is a navigation aid; it does not replace these proofs and checks.

\section{Exhaustive corroboration for the fifteen vertex graph}\label{app:h15-computation}
The following independent line of verification was completed before the structural exclusion. Its finite counts are retained for reproducibility.

By \cref{lem:h-witness,lem:pivot}, a singleton pivot must be one of $s,o_0,\ldots,o_6$, and each has exactly seven neighbours. Fix such a pivot $x$ and order its neighbours $r_0,\ldots,r_6$ by the numerical encoding $o_i=2i$, $e_i=2i+1$, $s=14$. Order the seven remaining vertices as $f_0,\ldots,f_6$. A candidate model is fully described by a map
\[
 a:\{f_0,\ldots,f_6\}\longrightarrow\{\bot,0,\ldots,6\}.
\]
Its seven other branches are $B_i=\{r_i\}\cup a^{-1}(i)$; $\bot$ means unused. This includes every model with pivot $x$, because \cref{lem:pivot} permits no second neighbour in any branch.

There are $8^7=2,097,152$ assignments for each pivot. For every assignment the verifier tests connectedness of each $H[B_i]$ and adjacency of every pair $B_i,B_j$. Branches are automatically nonempty and disjoint. Their adjacency to $\{x\}$ is automatic through $r_i$. A passing assignment would therefore be exactly a $K_8$ model with pivot $x$, and every such model would pass.

For transparency, connectedness is computed as the fixed point of
\[
 R_0=\{\min B_i\},\qquad
 R_{k+1}=R_k\cup\bigl(N_H(R_k)\cap B_i\bigr).
\]
It passes exactly when the fixed point is $B_i$. Pair adjacency is the direct test $N_H(B_i)\cap B_j\ne\varnothing$. No search pruning or isomorphism reduction is used. The complete source appears in Appendix~\ref{app:verifier}. The executed counts are in \cref{tab:h15}; all $8\cdot8^7=16,777,216$ assignments were checked and none passed. The reduction above proves exhaustion, and the direct predicates prove correctness of the test. This gives a separate computer-assisted verification of \cref{prop:noK8}. It is corroboration and is not a premise of the structural proof in Section~\ref{sec:sharpness}.

\begin{table}[ht]
\centering
\begin{tabular}{lrrr}
\toprule
Pivot & Assignments & All seven branches connected & Models\\
\midrule
$o_0$ & 2,097,152 & 53,927 & 0\\
$o_1$ & 2,097,152 & 53,927 & 0\\
$o_2$ & 2,097,152 & 36,010 & 0\\
$o_3$ & 2,097,152 & 53,927 & 0\\
$o_4$ & 2,097,152 & 53,927 & 0\\
$o_5$ & 2,097,152 & 78,230 & 0\\
$o_6$ & 2,097,152 & 78,230 & 0\\
$s$   & 2,097,152 & 43,544 & 0\\
\bottomrule
\end{tabular}
\caption{Fresh exhaustive verifier results. All pivots are checked individually; equal counts are not used as a symmetry argument.}\label{tab:h15}
\end{table}

\section{Root-apex data and finite corroboration}\label{app:root-apex-data}
\subsection{Explicit models for the twenty-eight positive codes}
\Cref{tab:positive-models} lists, for each of the twenty-eight codes in \eqref{eq:thirtyone} other than $42232,44340,44348$, eight vertex sets of $H(m)$ forming a $K_8$ model. In each row every set is connected, the sets are disjoint, and every two of them are joined by an edge; this is checked directly from the definition of $H(m)$. The rows are transcribed from the recovered source file \path{models.json} (blocked vertex numbering $o_i=i$, $e_i=7+i$, $s=14$).

{\small
\begin{longtable}{>{\raggedright\arraybackslash}p{1.25cm}>{\raggedright\arraybackslash}p{13.9cm}}
\caption{Explicit $K_8$ models in the twenty-eight positive root-apex graphs.}\label{tab:positive-models}\\
\toprule
Code&Branch sets\\
\midrule
\endfirsthead
\toprule
Code&Branch sets\\
\midrule
\endhead
42234 & $\{e_{0}\}\;\{o_{0},e_{4}\}\;\{o_{1},e_{2}\}\;\{o_{2},e_{5}\}\;\{o_{3},o_{5}\}\;\{o_{4},e_{3}\}\;\{o_{6},s\}\;\{e_{1},e_{6}\}$\\
42238 & $\{e_{0}\}\;\{o_{0},e_{4}\}\;\{o_{1},e_{2}\}\;\{o_{2},e_{5}\}\;\{o_{3},o_{5}\}\;\{o_{4},e_{3}\}\;\{o_{6},s\}\;\{e_{1},e_{6}\}$\\
42486 & $\{e_{0}\}\;\{o_{0},e_{3}\}\;\{o_{1},e_{2}\}\;\{o_{2},e_{5}\}\;\{o_{3},e_{4}\}\;\{o_{4},o_{5}\}\;\{o_{6},s\}\;\{e_{1},e_{6}\}$\\
44280 & $\{o_{0}\}\;\{o_{1}\}\;\{o_{2},o_{4}\}\;\{o_{3},o_{5}\}\;\{o_{6},e_{0},s\}\;\{e_{1},e_{6}\}\;\{e_{2},e_{5}\}\;\{e_{3},e_{4}\}$\\
44281 & $\{o_{0}\}\;\{o_{2}\}\;\{e_{1}\}\;\{o_{1},o_{4},e_{6}\}\;\{o_{3},o_{6}\}\;\{o_{5},e_{0},s\}\;\{e_{2},e_{5}\}\;\{e_{3},e_{4}\}$\\
44283 & $\{o_{0}\}\;\{o_{3}\}\;\{e_{1}\}\;\{e_{2}\}\;\{o_{1},o_{5},e_{6}\}\;\{o_{2},o_{6},e_{5}\}\;\{o_{4},e_{0},s\}\;\{e_{3},e_{4}\}$\\
44287 & $\{o_{3}\}\;\{e_{0}\}\;\{e_{1}\}\;\{e_{2}\}\;\{o_{0},e_{4}\}\;\{o_{1},o_{5},e_{6}\}\;\{o_{2},o_{6},e_{5}\}\;\{o_{4},e_{3},s\}$\\
44341 & $\{o_{0}\}\;\{o_{2}\}\;\{o_{1},e_{6}\}\;\{o_{3},s\}\;\{o_{4},o_{5}\}\;\{o_{6},e_{1}\}\;\{e_{0},e_{5}\}\;\{e_{2},e_{3},e_{4}\}$\\
44403 & $\{o_{0}\}\;\{o_{4}\}\;\{e_{1}\}\;\{o_{1},e_{4},s\}\;\{o_{2},e_{5}\}\;\{o_{3},o_{6}\}\;\{o_{5},e_{0},e_{6}\}\;\{e_{2},e_{3}\}$\\
44404 & $\{o_{0}\}\;\{o_{1},o_{5}\}\;\{o_{2},o_{6}\}\;\{o_{3},s\}\;\{o_{4},e_{1}\}\;\{e_{0},e_{6}\}\;\{e_{2},e_{5}\}\;\{e_{3},e_{4}\}$\\
44405 & $\{e_{0}\}\;\{e_{1}\}\;\{o_{0},e_{3}\}\;\{o_{1},e_{6}\}\;\{o_{2},e_{2},e_{5}\}\;\{o_{3},e_{4}\}\;\{o_{4},o_{5}\}\;\{o_{6},s\}$\\
44412 & $\{o_{0}\}\;\{o_{1},o_{5}\}\;\{o_{2},o_{6}\}\;\{o_{3},e_{4}\}\;\{o_{4},s\}\;\{e_{0},e_{5}\}\;\{e_{1},e_{6}\}\;\{e_{2},e_{3}\}$\\
44472 & $\{o_{0}\}\;\{o_{1}\}\;\{o_{2},o_{6}\}\;\{o_{3},o_{5}\}\;\{o_{4},s\}\;\{e_{0},e_{4}\}\;\{e_{1},e_{6}\}\;\{e_{2},e_{3},e_{5}\}$\\
44476 & $\{o_{3}\}\;\{o_{5}\}\;\{e_{0}\}\;\{o_{0},e_{5},s\}\;\{o_{1},e_{4}\}\;\{o_{2},o_{6}\}\;\{o_{4},e_{1},e_{6}\}\;\{e_{2},e_{3}\}$\\
44536 & $\{o_{0}\}\;\{o_{3}\}\;\{o_{1},o_{5}\}\;\{o_{2},o_{6}\}\;\{o_{4},s\}\;\{e_{0},e_{4}\}\;\{e_{1},e_{6}\}\;\{e_{2},e_{3},e_{5}\}$\\
44856 & $\{o_{0}\}\;\{o_{1}\}\;\{o_{2}\}\;\{o_{3},o_{6}\}\;\{o_{4},e_{0},e_{6}\}\;\{o_{5},s\}\;\{e_{1},e_{5}\}\;\{e_{2},e_{3},e_{4}\}$\\
44858 & $\{o_{2}\}\;\{e_{0}\}\;\{o_{0},e_{5}\}\;\{o_{1},e_{4}\}\;\{o_{3},o_{6}\}\;\{o_{4},e_{1},e_{6}\}\;\{o_{5},s\}\;\{e_{2},e_{3}\}$\\
44922 & $\{o_{0}\}\;\{o_{1}\}\;\{o_{3}\}\;\{e_{2}\}\;\{o_{2},o_{6},e_{5}\}\;\{o_{4},e_{0},s\}\;\{o_{5},e_{1},e_{6}\}\;\{e_{3},e_{4}\}$\\
44924 & $\{o_{3}\}\;\{e_{0}\}\;\{o_{0},e_{4}\}\;\{o_{1},e_{5}\}\;\{o_{2},o_{6}\}\;\{o_{4},s\}\;\{o_{5},e_{1},e_{6}\}\;\{e_{2},e_{3}\}$\\
48564 & $\{o_{0}\}\;\{o_{1}\}\;\{e_{3}\}\;\{o_{2},o_{6}\}\;\{o_{3},e_{0},s\}\;\{o_{4},o_{5}\}\;\{e_{1},e_{6}\}\;\{e_{2},e_{4},e_{5}\}$\\
48628 & $\{o_{0}\}\;\{o_{1}\}\;\{e_{3}\}\;\{o_{2},e_{4}\}\;\{o_{3},o_{4},o_{5}\}\;\{o_{6},e_{0},s\}\;\{e_{1},e_{6}\}\;\{e_{2},e_{5}\}$\\
48756 & $\{o_{0}\}\;\{o_{1}\}\;\{o_{2},o_{3},e_{4}\}\;\{o_{4},o_{6}\}\;\{o_{5},s\}\;\{e_{0},e_{5}\}\;\{e_{1},e_{6}\}\;\{e_{2},e_{3}\}$\\
48820 & $\{o_{0}\}\;\{o_{1}\}\;\{o_{2}\}\;\{e_{3}\}\;\{o_{3},e_{0},e_{4},e_{6}\}\;\{o_{4},o_{6}\}\;\{o_{5},e_{1},s\}\;\{e_{2},e_{5}\}$\\
48824 & $\{o_{0}\}\;\{o_{2}\}\;\{o_{1},o_{4}\}\;\{o_{3},o_{6}\}\;\{o_{5},s\}\;\{e_{0},e_{4}\}\;\{e_{1},e_{3},e_{6}\}\;\{e_{2},e_{5}\}$\\
48828 & $\{o_{4}\}\;\{e_{0}\}\;\{e_{3}\}\;\{o_{0},o_{2},o_{6}\}\;\{o_{1},e_{6}\}\;\{o_{3},e_{4},s\}\;\{o_{5},e_{1}\}\;\{e_{2},e_{5}\}$\\
61112 & $\{o_{0}\}\;\{o_{1}\}\;\{o_{2}\}\;\{o_{3},o_{4},o_{5}\}\;\{o_{6},s\}\;\{e_{0},e_{5}\}\;\{e_{1},e_{6}\}\;\{e_{2},e_{3},e_{4}\}$\\
111980 & $\{o_{0}\}\;\{o_{1}\}\;\{o_{2},o_{3}\}\;\{o_{4},s\}\;\{o_{5},o_{6}\}\;\{e_{0},e_{4}\}\;\{e_{1},e_{6}\}\;\{e_{2},e_{3},e_{5}\}$\\
112110 & $\{o_{2}\}\;\{o_{3}\}\;\{o_{6}\}\;\{e_{0}\}\;\{o_{0},o_{5},e_{4}\}\;\{o_{1},e_{2},s\}\;\{o_{4},e_{3},e_{5}\}\;\{e_{1},e_{6}\}$\\
\bottomrule
\end{longtable}
}

\subsection{Explicit models with one apex--even edge}\label[appendix]{subsec:hard-models}
\Cref{tab:hard-models} lists, for each of the three hard codes $h$ and each index $k$, eight vertex sets of $H_{\{k\}}(h)$, the graph $H(h)$ with the single added edge $se_k$ (\cref{sec:attachment}). In each row every set is connected, the sets are disjoint, and every two of them are joined by an edge. The models for $k$ in one orbit of the automorphism group of $h$ were obtained from one representative model by that automorphism, and every row was then checked directly; no symmetry argument is needed to read the table. The orbits are $\{0,1\},\{2,3,4,5\},\{6\}$ for $42232$, $\{0,3\},\{1\},\{2,5\},\{4,6\}$ for $44340$ and $\{0\},\{1,2\},\{3,4\},\{5,6\}$ for $44348$, eleven representatives in all.

{\small
\begin{longtable}{>{\raggedright\arraybackslash}p{1.0cm}>{\raggedright\arraybackslash}p{0.45cm}>{\raggedright\arraybackslash}p{13.4cm}}
\caption{Explicit $K_8$ models in $H_{\{k\}}(h)$ for the three hard codes.}\label{tab:hard-models}\\
\toprule
Code&$k$&Branch sets\\
\midrule
\endfirsthead
\toprule
Code&$k$&Branch sets\\
\midrule
\endhead
42232 & 0 & $\{s\} \;\{e_{0}\} \;\{o_{4}\} \;\{o_{5}\} \;\{o_{6}\} \;\{o_{1},e_{1},e_{6}\} \;\{o_{2},e_{2},e_{5}\} \;\{o_{3},e_{3},e_{4}\}$\\
42232 & 1 & $\{s\} \;\{e_{1}\} \;\{o_{2}\} \;\{o_{3}\} \;\{o_{6}\} \;\{o_{0},e_{0},e_{6}\} \;\{o_{4},e_{3},e_{4}\} \;\{o_{5},e_{2},e_{5}\}$\\
42232 & 2 & $\{o_{0}\} \;\{o_{1}\} \;\{o_{6},s\} \;\{o_{2},o_{4}\} \;\{o_{3},o_{5}\} \;\{e_{0},e_{6}\} \;\{e_{2},e_{5}\} \;\{e_{1},e_{3},e_{4}\}$\\
42232 & 3 & $\{o_{0}\} \;\{o_{1}\} \;\{o_{6},s\} \;\{o_{3},o_{5}\} \;\{o_{2},o_{4}\} \;\{e_{0},e_{6}\} \;\{e_{3},e_{4}\} \;\{e_{1},e_{2},e_{5}\}$\\
42232 & 4 & $\{o_{1}\} \;\{o_{0}\} \;\{o_{6},s\} \;\{o_{2},o_{4}\} \;\{o_{3},o_{5}\} \;\{e_{1},e_{6}\} \;\{e_{3},e_{4}\} \;\{e_{0},e_{2},e_{5}\}$\\
42232 & 5 & $\{o_{1}\} \;\{o_{0}\} \;\{o_{6},s\} \;\{o_{3},o_{5}\} \;\{o_{2},o_{4}\} \;\{e_{1},e_{6}\} \;\{e_{2},e_{5}\} \;\{e_{0},e_{3},e_{4}\}$\\
42232 & 6 & $\{s\} \;\{o_{2}\} \;\{o_{0},e_{4}\} \;\{o_{3},o_{5}\} \;\{o_{4},e_{3}\} \;\{o_{6},e_{0}\} \;\{e_{1},e_{6}\} \;\{o_{1},e_{2},e_{5}\}$\\
44340 & 0 & $\{s\} \;\{o_{1}\} \;\{o_{3}\} \;\{o_{5}\} \;\{o_{2},o_{6}\} \;\{e_{0},e_{6}\} \;\{o_{0},e_{2},e_{3}\} \;\{o_{4},e_{1},e_{4}\}$\\
44340 & 1 & $\{s\} \;\{o_{0}\} \;\{o_{1}\} \;\{o_{2}\} \;\{o_{4},o_{5}\} \;\{o_{3},e_{2},e_{5}\} \;\{o_{6},e_{0},e_{6}\} \;\{e_{1},e_{3},e_{4}\}$\\
44340 & 2 & $\{s\} \;\{o_{0}\} \;\{o_{1}\} \;\{o_{2}\} \;\{o_{4},o_{5}\} \;\{o_{3},e_{0},e_{5}\} \;\{o_{6},e_{1},e_{6}\} \;\{e_{2},e_{3},e_{4}\}$\\
44340 & 3 & $\{s\} \;\{o_{1}\} \;\{o_{0}\} \;\{o_{2}\} \;\{o_{4},o_{5}\} \;\{e_{3},e_{4}\} \;\{o_{3},e_{0},e_{5}\} \;\{o_{6},e_{1},e_{6}\}$\\
44340 & 4 & $\{s\} \;\{o_{0}\} \;\{o_{1}\} \;\{o_{2}\} \;\{o_{4},o_{5}\} \;\{e_{3},e_{4}\} \;\{o_{3},e_{0},e_{5}\} \;\{o_{6},e_{1},e_{6}\}$\\
44340 & 5 & $\{s\} \;\{o_{3}\} \;\{o_{1}\} \;\{o_{5}\} \;\{o_{2},o_{6}\} \;\{o_{0},e_{2},e_{3}\} \;\{o_{4},e_{1},e_{4}\} \;\{e_{0},e_{5},e_{6}\}$\\
44340 & 6 & $\{s\} \;\{o_{3}\} \;\{o_{1}\} \;\{o_{5}\} \;\{o_{2},o_{6}\} \;\{e_{0},e_{6}\} \;\{o_{0},e_{2},e_{3}\} \;\{o_{4},e_{1},e_{4}\}$\\
44348 & 0 & $\{s\} \;\{o_{1}\} \;\{o_{3}\} \;\{o_{5}\} \;\{o_{2},o_{6}\} \;\{e_{0},e_{6}\} \;\{o_{0},e_{2},e_{3}\} \;\{o_{4},e_{1},e_{4}\}$\\
44348 & 1 & $\{s\} \;\{o_{1}\} \;\{o_{3}\} \;\{o_{5}\} \;\{o_{2},o_{6}\} \;\{o_{4},e_{3},e_{4}\} \;\{o_{0},e_{2},e_{5}\} \;\{e_{0},e_{1},e_{6}\}$\\
44348 & 2 & $\{s\} \;\{o_{2}\} \;\{o_{4}\} \;\{o_{6}\} \;\{o_{1},o_{5}\} \;\{o_{3},e_{3},e_{4}\} \;\{o_{0},e_{1},e_{6}\} \;\{e_{0},e_{2},e_{5}\}$\\
44348 & 3 & $\{s\} \;\{o_{2}\} \;\{e_{3}\} \;\{o_{6},e_{0}\} \;\{o_{4},e_{1}\} \;\{o_{1},e_{4}\} \;\{o_{0},e_{6}\} \;\{o_{5},e_{2},e_{5}\}$\\
44348 & 4 & $\{s\} \;\{o_{1}\} \;\{e_{4}\} \;\{o_{5},e_{0}\} \;\{o_{3},e_{2}\} \;\{o_{2},e_{3}\} \;\{o_{0},e_{5}\} \;\{o_{6},e_{1},e_{6}\}$\\
44348 & 5 & $\{s\} \;\{o_{2}\} \;\{o_{4}\} \;\{o_{6}\} \;\{o_{1},o_{5}\} \;\{e_{0},e_{5}\} \;\{o_{3},e_{2},e_{3}\} \;\{o_{0},e_{1},e_{4}\}$\\
44348 & 6 & $\{s\} \;\{o_{1}\} \;\{o_{3}\} \;\{o_{5}\} \;\{o_{2},o_{6}\} \;\{e_{0},e_{6}\} \;\{o_{4},e_{1},e_{4}\} \;\{o_{0},e_{2},e_{3}\}$\\
\bottomrule
\end{longtable}
}

\subsection{Finite corroboration of \texorpdfstring{\cref{thm:root-apex}}{the classification}}\label[appendix]{subsec:apex-corroboration}
\Cref{thm:root-apex} is proved by \cref{tab:positive-models} and the structural \cref{prop:noK8,prop:noK8-44340,prop:noK8-44348}. The following finite checks are independent corroboration and are not premises.

\emph{Refutation forests.} For each negative host $G$, a forest has one root for every nonempty clique $S$ of vertices of degree at least seven ($53$, $53$ and $61$ cliques for $42232$, $44340$, $44348$; the last set includes all twelve cliques containing $e_0$). A node records the uncovered set $R$, the number $r$ of branches still to be chosen and the branches already chosen. It is a valid leaf if $|R|<2r$ or $|R|>(|S|+1)r$, or if no permissible branch contains the least vertex of $R$; otherwise its children must be exactly all permissible branches containing that vertex. A branch is permissible if it is connected, has between two and $|S|+1$ vertices, lies in $R$, leaves at least two vertices for each later branch, is adjacent to every chosen branch, contains exactly one neighbour of each degree-seven vertex of $S$ and at least one neighbour of $e_0$ if $e_0\in S$. By \cref{lem:normal-form} every spanning model with singleton set $S$ follows one root-to-node path through such a forest to a state with $R=\varnothing$ and $r=0$, which a valid forest never reaches. The three forests have $1{,}062$, $1{,}224$ and $2{,}683$ nodes. They were recovered from a transcribed compressed stream; the source declares one transcription correction in the $44348$ stream (deleting the two characters \texttt{u8} from a unique fourteen-character substring), and only that correction was applied. The uncorrected stream is not valid Base64, and the corrected stream reproduces the declared compressed and decompressed SHA-256 values.

\emph{Validators.} The recovered validator was replayed in a clean directory: it accepts the $28$ models and the three forests, agrees with its own search on exactly three negatives, rejects ten forest corruptions and four model corruptions, and finds the required model in a subdivided-$K_8$ positive control. A separately written checker binds every datum to the recovered source by SHA-256, recomputes the permissible branches by its own subset enumeration, validates all $4{,}969$ forest nodes and all $167$ singleton sets, rechecks the $28$ models and the oddomorphism equations, and decides all thirty-one graphs by an exhaustive search over spanning partitions into eight connected parts that uses only the conditions that singleton parts have degree at least seven and that parts are pairwise adjacent. It finds exactly the three negatives. Regenerating each forest from the branching rule above reproduces the recovered forest exactly, up to the order of children; applying the same rule to two positive codes produces trees that reach a model state, which the checker rejects. Fifteen further forest and model corruptions, including omission of the singleton sets $\{e_0\}$ and $\{e_0,o_3,o_5\}$, are rejected.

\emph{Local claims.} Every partner list, index set, free-vertex set, adjacency and nonadjacency fact, path-length bound, symmetry and orbit list quoted in the proofs of \cref{lem:rooted-44348,prop:noK8-44348} was re-derived mechanically from the definition of $H(44348)$ ($233$ recorded assertions), and likewise for \cref{lem:rooted-44340,prop:noK8-44340} ($59$ assertions). These checks confirm the stated facts; the deductions connecting them are the written proofs.

\paragraph{Revision-4 audit.} A separately authored exact-cover checker freshly verifies all $167$ exact singleton sets across the three hosts, including all $61$ for $44348$. A second checker enumerates spanning connected partitions without the singleton reduction and also excludes all three hosts. The original local-claim program contains one empty-message tautological assertion; it contributes no mathematical evidence. The $233$ count refers to its nonempty-message checks.

\section{Complete source of the fifteen vertex verifier}\label{app:verifier}
The following source implements the exhaustive argument of Appendix~\ref{app:h15-computation}, which corroborates \cref{prop:noK8}. It uses no solver, heuristic search, graph database, or symmetry reduction. Its positive control adds the missing edges among the seven odd double vertices, producing an explicit $K_8$ with the singleton.
\begin{lstlisting}[language=C++,basicstyle=\ttfamily\scriptsize]
// Exhaustive ordinary K8-minor exclusion for the explicitly defined H15.
// C++17, standard library only. No solver or graph-isomorphism software.
#include <array>
#include <cstdint>
#include <iostream>
#include <vector>
#include <string>
using Mask = std::uint32_t;
using Graph = std::array<Mask,15>;
static bool connected(const Graph& g, Mask b) {
  if (!b) return false;
  Mask reach=b & (~b+1U);
  for (;;) {
    Mask next=reach;
    for(unsigned v=0;v<15;++v) if(reach & (1U<<v)) next |= g[v]&b;
    if(next==reach) return reach==b;
    reach=next;
  }
}
int main(int argc, char** argv) {
  const bool positive = argc==2 && std::string(argv[1])=="--positive-control";
  if(argc>1 && !positive) return 9;
  Graph g{};
  auto edge=[&](unsigned u,unsigned v){g[u]|=1U<<v;g[v]|=1U<<u;};
  bool theta[7][7]{};
  const unsigned es[8][2]={{0,3},{0,5},{1,4},{1,5},{2,5},{2,6},{3,6},{4,6}};
  for(const auto& e:es) theta[e[0]][e[1]]=theta[e[1]][e[0]]=true;
  // o_i=2i, e_i=2i+1, s=14.
  for(unsigned i=0;i<7;++i) {
    edge(2*i,14);
    for(unsigned j=i+1;j<7;++j) {
      if(theta[i][j]) {edge(2*i,2*j+1);edge(2*i+1,2*j);edge(2*i+1,2*j+1);}
      else edge(2*i,2*j);
    }
  }
  // Positive control: the eight vertices s,o_0,...,o_6 induce K8.
  if(positive) for(unsigned i=0;i<7;++i) for(unsigned j=i+1;j<7;++j) edge(2*i,2*j);
  unsigned edges=0;
  for(unsigned i=0;i<15;++i) {
    if(g[i]&(1U<<i)) return 2;
    for(unsigned j=i+1;j<15;++j) edges += bool(g[i]&(1U<<j));
    for(unsigned j=0;j<15;++j) if(bool(g[i]&(1U<<j))!=bool(g[j]&(1U<<i))) return 3;
  }
  if(edges!=(positive?52U:44U)) return 4;
  std::uint64_t total=0,total_models=0;
  std::cout << "{\"vertices\":15,\"edges\":" << edges << ",\"pivots\":[";
  bool first=true;
  for(unsigned pivot=0;pivot<15;++pivot) {
    std::vector<unsigned> roots,free;
    for(unsigned v=0;v<15;++v) if(v!=pivot) {
      if(g[pivot]&(1U<<v)) roots.push_back(v); else free.push_back(v);
    }
    if(roots.size()<7) continue;
    if(roots.size()!=7 || free.size()!=7) { if(positive) continue; return 5; }
    std::array<std::array<Mask,128>,7> branch{},neigh{};
    std::array<std::array<bool,128>,7> good{};
    for(unsigned r=0;r<7;++r) for(unsigned set=0;set<128;++set) {
      Mask b=1U<<roots[r];
      for(unsigned j=0;j<7;++j) if(set&(1U<<j)) b |= 1U<<free[j];
      branch[r][set]=b; good[r][set]=connected(g,b);
      Mask n=0; for(unsigned v=0;v<15;++v) if(b&(1U<<v)) n |= g[v];
      neigh[r][set]=n;
    }
    std::uint64_t connected_assignments=0,models=0;
    std::array<std::uint64_t,7> rejected_connectivity{};
    std::array<std::uint64_t,21> rejected_adjacency{};
    // Seven base-eight digits: zero means unused; 1..7 chooses a branch.
    for(unsigned a=0;a<(1U<<21);++a) {
      std::array<unsigned,7> owned{}; unsigned digits=a;
      for(unsigned j=0;j<7;++j,digits>>=3) {unsigned r=digits&7U; if(r) owned[r-1]|=1U<<j;}
      ++total; bool bad=false;
      for(unsigned r=0;r<7;++r) if(!good[r][owned[r]]) {++rejected_connectivity[r];bad=true;break;}
      if(bad) continue;
      ++connected_assignments; unsigned pair=0;
      for(unsigned r=0;r<7&&!bad;++r) for(unsigned t=r+1;t<7;++t,++pair)
        if(!(neigh[r][owned[r]]&branch[t][owned[t]])) {++rejected_adjacency[pair];bad=true;break;}
      if(!bad) ++models;
    }
    total_models+=models;
    if(!first) std::cout << ',';
    first=false;
    std::cout << "{\"pivot\":"<<pivot<<",\"assignments\":2097152,\"connected_assignments\":"<<connected_assignments<<",\"models\":"<<models<<",\"first_disconnected_branch\":[";
    for(unsigned i=0;i<7;++i) std::cout << (i?",":"") << rejected_connectivity[i];
    std::cout << "],\"first_nonadjacent_pair\":[";
    for(unsigned i=0;i<21;++i) std::cout << (i?",":"") << rejected_adjacency[i];
    std::cout << "]}";
  }
  std::cout << "],\"total_assignments\":"<<total<<",\"total_models\":"<<total_models<<"}\n";
  if(positive) return total_models>0 ? 0 : 1;
  return total==16777216 && total_models==0 ? 0 : 1;
}
\end{lstlisting}
\section{A structural augmentation construction}\label{ua:appendix}
This appendix replaces the twenty-one separate positive certificates in
Lemma~\ref{lem:hard-one-edge} by six formulas adapted to paths and a triangle.
The formulas establish the same ordinary-minor assertion; the twenty-one
original certificates remain separately checkable corroboration. The central formula
is valid for a partially specified active graph with four arbitrary pairs,
not merely for the three named hard graphs. None of the arguments below uses
Input~\ref{input:P7} or a negative search.

We use the interface definition of $K(B)$ and $H_T(B)$ from
Section~\ref{sec:attachment}. In the following constructions $o_i,e_i$
refer to the two vertices at the indicated index. An edge between two odd
vertices certifies an inactive index pair, while an edge with at least one
even end certifies an active index pair. Consequently, each displayed
crossing edge is checked directly against the stated path structure.

\subsection{A common routing formula for two ears}
\begin{lemma}[Two-ear routing]\label[lemma]{ua:twoear}
Let $B$ have distinct indices $u,v,w,a,b,c,d$. Require the pairs
\[
 uv,ua,ab,bv,uc,cd,dw,wv
\]
to be active and the pairs
\[
 ad,av,dv,bc,bd,cv,ac,du,aw
\]
to be inactive. Leave the four remaining pairs $uw,ub,wb,wc$ arbitrary.
For each $k\in\{u,c,a,d\}$, $H_{\{k\}}(B)$ has a $K_8$ model
with $\{s\}$ as a branch. More precisely, choose $z\in\{a,d\}$
with $k\in\{u,c,z\}$. The other seven branches are
\[
 D=\{o_d\},\quad V=\{o_v\},\quad A=\{o_a\},\quad
 R=\{o_b,o_c\},\quad Q=\{e_u,e_c,e_z\},\quad U,\quad W,
\]
where
\[
\begin{array}{c|cc}
z&U&W\\\hline
d&\{o_u,e_b,e_a\}&\{o_w,e_v,e_w\}\\
a&\{o_u,e_b,e_v\}&\{o_w,e_d,e_w\}.
\end{array}
\]
Thus $Q$ is the unique branch without an odd vertex; the mark $k$ supplies
exactly its missing adjacency to $\{s\}$.
\end{lemma}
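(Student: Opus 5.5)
The lemma is a direct verification: the branches are given explicitly, so the proof only has to check them. First I check that the eight sets $\{s\},D,V,A,R,Q,U,W$ are pairwise disjoint. The indices $u,v,w,a,b,c,d$ are distinct, and in each row of the table the even vertices used by $Q,U,W$ are distinct: for $z=d$ they are $e_u,e_c,e_d \mid e_b,e_a \mid e_v,e_w$, and for $z=a$ they are $e_u,e_c,e_a \mid e_b,e_v \mid e_d,e_w$. Then I check connectivity using the interface rule. An active pair $ij$ gives $o_ie_j,e_io_j,e_ie_j$, and an inactive pair gives $o_io_j$.
\begin{itemize}
\item $R=\{o_b,o_c\}$ is connected because $bc$ is inactive.
\item $Q$ is connected through $e_ue_c$ ($uc$ active) and $e_ue_a$ ($ua$ active) when $z=a$. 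When $z=d$ it uses $e_ce_d$ ($cd$ active).
\item $U$: for $z=d$, take $o_ue_a$ ($ua$ active) and $e_ae_b$ ($ab$ active). For $z=a$, take $o_ue_v$ ($uv$ active) and $e_ve_b$ ($bv$ active).
\item $W$ contains $o_we_w$? No: $o_w$ is not adjacent to $e_w$. So $W$ is connected for $z=d$ via $o_we_v$ and $e_ve_w$ ($wv$ active), and for $z=a$ via $o_we_d$ and $e_de_w$ ($dw$ active).
\end{itemize}
None of this uses the four arbitrary pairs.

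Next come the adjacencies to $\{s\}$. Every branch other than $Q$ contains an odd vertex, and $s$ is adjacent to all $o_i$. The branch $Q$ contains $e_k$, because $k\in\{u,c,z\}$, and in $H_{\{k\}}(B)$ the added edge $se_k$ joins $Q$ to $\{s\}$. This is the only place the mark is used.

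The main work is the remaining $\binom72=21$ crossings, which I would tabulate for each value of $z$, choosing every witness so that it only involves pairs whose status is prescribed.
\begin{itemize}
\item Pairs among the singletons $D,V,A$ use inactive pairs: $o_do_v$ ($dv$), $o_ao_d$ ($ad$), $o_ao_v$ ($av$).
\item $R$ meets $D$ by $o_bo_d$ ($bd$), $V$ by $o_co_v$ ($cv$), and $A$ by $o_ao_c$ ($ac$).
\item $Q$ meets $V$ by $o_ve_u$ ($uv$), $A$ by $o_ae_u$ ($ua$), $D$ by $o_de_c$ ($cd$), and $R$ by $o_ce_u$ ($uc$).
\item $U$ meets $D$ by $o_uo_d$ ($du$ inactive), $V$ by $o_ve_u$? $U$ contains $o_u$, and $o_uo_v$ is not an edge, so instead I use $e_bo_v$ ($bv$) for $z=d$, or $e_v$ is already inside $U$ when $z=a$. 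I would also check $A$ and $R$ via $e_bo_a$ and $e_ao_b$ ($ab$).
\item $W$ contains $o_w$ and meets $D$ or $V$ through $dw$ and $wv$, and $A$ by $o_ao_w$ ($aw$ inactive).
\end{itemize}

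The steps I expect to be delicate are the crossings $W$–$R$, $W$–$U$, $W$–$Q$ and $U$–$Q$. For these I would look for witnesses among $e_ve_b$, $e_ve_u$, $e_de_c$, $e_ue_a$, $e_we_d$ and similar pairs built only from the eight required active pairs. For example:
\begin{itemize}
\item $U$–$Q$ via $e_ae_u$ or $e_ve_u$ (active $ua$ or $uv$);
\item $W$–$Q$ via $e_ve_u$ for $z=d$, or $e_de_c$ for $z=a$;
\item $W$–$U$ via $e_ve_b$ for $z=d$, or $e_de_w$ combined with whatever meets $o_u$;
\item $W$–$R$ via $e_vo_b$ ($bv$) for $z=d$, or $e_do_c$ ($cd$) for $z=a$.
\end{itemize}
A pair such as $W$–$U$ for $z=a$ is the one most likely to need rechecking against the active list. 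There I would try $o_ue_d$? That requires $du$ active, but $du$ is inactive, so instead I would try $e_ve_w$ ($wv$ active), which works.

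After filling the full table for both rows, I confirm that no witness involves $uw,ub,wb,wc$. That justifies the claim that these four pairs are arbitrary and completes the model.
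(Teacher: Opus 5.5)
Your approach is the same as the paper's: a direct check of disjointness, of the internal paths, and of one explicit crossing edge for each of the $21$ pairs, using only the seventeen prescribed pairs. Most of your witnesses coincide with the paper's table. However, two of your entries fail in the row $z=a$, where $U=\{o_u,e_b,e_v\}$ and $e_a\in Q$.

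\emph{$U$--$V$.} For $z=a$ you argue that ``$e_v$ is already inside $U$''. This gives no crossing: $o_v$ and $e_v$ lie in the same fibre and are never adjacent. The witness you used for $z=d$, namely $o_ve_b$ (from $bv$ active), works in both rows, because $e_b\in U$ in both.

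\emph{$U$--$R$.} Your witness $e_ao_b$ exists only for $z=d$, since for $z=a$ the vertex $e_a$ lies in $Q$. For $z=a$ the only crossing that does not depend on the arbitrary pair $ub$ is $o_be_v$ (from $bv$ active). The other candidates fail: $o_uo_c$ because $uc$ is active, $e_bo_c$ because $bc$ is inactive, and $e_vo_c$ because $cv$ is inactive.

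You should also pin down your vague entries for $W$--$D$ and $W$--$V$ as $o_de_w$ and $o_ve_w$. These are valid in both rows because $e_w\in W$.

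With these repairs your list agrees with the paper's table entry for entry, and the argument is complete.
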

\begin{proof}
The branches partition the fourteen vertices of $K(B)$. The branch $R$
is an edge because $bc$ is inactive. The branch $Q$ contains the edge
$e_ue_c$, and its third vertex attaches to $e_u$ when $z=a$ and to $e_c$
when $z=d$. For $z=d$, the branches $U,W$ contain the paths
$o_u e_a e_b$ and $o_w e_v e_w$; for $z=a$, they contain
$o_u e_v e_b$ and $o_w e_d e_w$. All these edges use required active
pairs.

The following array gives one edge for every pair among $D,V,A,R,Q,U,W$.
An entry with two alternatives uses the first for $z=d$ and the second
for $z=a$. No listed edge depends on any of the four arbitrary pairs.
\[
\begin{array}{c|cccccc}
 &V&A&R&Q&U&W\\\hline
D&o_do_v&o_do_a&o_do_b&o_de_c&o_do_u&o_de_w\\
V&&o_vo_a&o_vo_c&o_ve_u&o_ve_b&o_ve_w\\
A&&&o_ao_c&o_ae_u&o_ae_b&o_ao_w\\
R&&&&o_ce_u&o_be_a\,/\,o_be_v&o_be_v\,/\,o_ce_d\\
Q&&&&&e_ue_a\,/\,e_ue_v&e_ue_v\,/\,e_ce_d\\
U&&&&&&e_be_v\,/\,e_ve_w
\end{array}
\]
All entries are edges by the eight required active pairs and nine required
inactive pairs. Hence these seven branches form a $K_7$ model.
The vertex $s$ sees the six branches containing odd vertices, and the
added edge $se_k$ joins it to $Q$. This proves the result simultaneously
for every choice of the four arbitrary pairs.
\end{proof}

The mechanism is a routing choice: the even vertex at $a$ or $d$ is moved
into $Q$, while the two mixed branches exchange their even paths.
This preserves all branch adjacencies and determines which additional
mark can repair the sole missing adjacency to $s$.

\begin{corollary}\label[corollary]{ua:theta134}
If $B$ is the union of the internally disjoint $u$--$v$ paths
$uv$, $uabv$, and $ucdwv$, then $H_{\{k\}}(B)$ has a $K_8$ model
with branch $\{s\}$ for every index $k$.
\end{corollary}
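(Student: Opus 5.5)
The plan is to apply \cref{ua:twoear} twice: once with the given labels, and once after the symmetry of $B$ that exchanges the two ends $u,v$ of the theta. The two applications together will reach all seven marks $k$.

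First I check that $B$ satisfies the hypotheses of \cref{ua:twoear} with the labels as given. The three paths $uv$, $u\,a\,b\,v$ and $u\,c\,d\,w\,v$ have the eight edges $uv,ua,ab,bv,uc,cd,dw,wv$. These are exactly the eight pairs the lemma requires to be active. Since these are all the edges of $B$, every other pair is inactive. In particular the nine required inactive pairs $ad,av,dv,bc,bd,cv,ac,du,aw$ are inactive. The four pairs $uw,ub,wb,wc$ the lemma leaves arbitrary are also inactive, which is allowed. So \cref{ua:twoear} gives a $K_8$ model of $H_{\{k\}}(B)$ with branch $\{s\}$ for every $k\in\{u,c,a,d\}$.

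For the remaining marks $k\in\{v,b,w\}$ I reverse the roles of the ends. Consider the relabelling
\[
 u'=v,\quad v'=u,\quad a'=b,\quad b'=a,\quad c'=w,\quad d'=d,\quad w'=c.
\]
Reading the three paths backwards shows that $B$ is also the union of the internally disjoint $u'$--$v'$ paths $u'v'$, $u'\,a'\,b'\,v'$ and $u'\,c'\,d'\,w'\,v'$. So the first step applies verbatim in the primed labels. It yields the required model for every $k\in\{u',c',a',d'\}=\{v,w,b,d\}$.

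The union of $\{u,c,a,d\}$ and $\{v,w,b,d\}$ is all seven indices, which proves the corollary. Equivalently, the relabelling is the automorphism $(u\,v)(a\,b)(c\,w)$ of $B$. By the remark in \cref{sec:attachment}, it induces isomorphisms $H_{\{k\}}(B)\cong H_{\{\varphi(k)\}}(B)$ that fix $s$, and these preserve a singleton branch $\{s\}$.

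The only real work is the bookkeeping in the first step: confirming that no required inactive pair of the lemma is an edge of this theta. This is immediate from the edge list, so I expect no genuine obstacle.
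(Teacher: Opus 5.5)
Your proposal is correct and follows the paper's proof. Both apply the two-ear routing lemma to cover $u,c,a,d$, then use the end-reversing automorphism $(u\,v)(a\,b)(c\,w)$ to cover $v,w,b$; you realise this automorphism as a relabelling to which the lemma applies verbatim, which is equivalent to the paper's transport of the models.
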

\begin{proof}
Lemma~\ref{ua:twoear} covers $u,c,a,d$. The automorphism
$(u\ v)(a\ b)(c\ w)$ fixes $d$ and reverses all three paths.
Its images of the covered positions include $v,w,b$; together these
are all seven indices. Applying the same index permutation to odd and
even vertices, and fixing $s$, transports the models.
\end{proof}

\subsection{The triangle with two length-three ears}
Suppose now that $B$ is the triangle $uvw$ together with the two
internally disjoint paths $uabv$ and $ucdw$, with no other edges.
Lemma~\ref{ua:twoear} applies, since the optional pair $uw$ is active.
The automorphism $(v\ w)(a\ c)(b\ d)$ exchanges the ears.
The lemma and its image cover the five indices $u,a,b,c,d$.
The remaining two positions have the following different construction.

\begin{lemma}[Triangle pivot]\label[lemma]{ua:triangle}
For the triangle-with-ears graph just described,
$H_{\{v\}}(B)$ has the model $\{s\},X_1,\ldots,X_7$, where
\[
\begin{array}{c|l}
1&\{o_b\}\\
2&\{e_v\}\\
3&\{o_c,e_u\}\\
4&\{o_w,e_d\}\\
5&\{o_d,e_w\}\\
6&\{o_u,e_c\}\\
7&\{o_a,e_b,e_a\}.
\end{array}
\]
The corresponding model at $w$ is obtained by exchanging the ears.
\end{lemma}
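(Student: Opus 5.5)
The proof will be a direct verification of the displayed model, in the same style as \cref{ua:twoear}. First I fix the active graph. $B$ has exactly the nine active pairs
\[
uv,\ vw,\ uw,\ ua,\ ab,\ bv,\ uc,\ cd,\ dw,
\]
and every other pair is inactive. By \cref{lem:double} an odd--odd edge $o_io_j$ therefore occurs exactly for inactive $ij$, while $o_ie_j$, $e_io_j$ and $e_ie_j$ occur exactly for active $ij$. In $H_{\{v\}}(B)$ the vertex $s$ sees every $o_i$ and, through the added edge, $e_v$.

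Next I check that the branches are legitimate.
\begin{itemize}
\item They are pairwise disjoint and use thirteen of the fourteen vertices of $K(B)$; only $o_v$ is unused. It may be left out, or absorbed afterwards into the adjacent branch $X_2$ through the edge $o_ve_a$ or $o_ve_u$.
\item Each two-vertex branch is an active edge: $o_ce_u$ (from $uc$), $o_we_d$ and $o_de_w$ (from $dw$), and $o_ue_c$ (from $uc$).
\item $X_7$ is the path $o_a\,e_b\,e_a$, using the active pair $ab$ twice.
\end{itemize}

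Adjacency to $\{s\}$ is immediate. Every branch except $X_2$ contains an odd double vertex, and $X_2=\{e_v\}$ is joined to $s$ by the mark $se_v$. This is exactly where the mark at $v$ is used.

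Then I list one witnessing edge for each of the twenty-one pairs among $X_1,\ldots,X_7$, checking each against the active/inactive list above.
\begin{itemize}
\item From $X_1$: $o_be_v$, $o_bo_c$, $o_bo_w$, $o_bo_d$, $o_bo_u$, $o_be_a$.
\item From $X_2$: $e_ve_u$, $e_vo_w$, $e_ve_w$, $e_vo_u$, $e_ve_b$.
\item From $X_3$: $o_co_w$, $e_ue_w$, $e_ue_c$, $e_ue_a$.
\item From $X_4$: $e_de_w$, $e_de_c$, $o_wo_a$.
\item From $X_5$: $o_do_u$, $o_do_a$.
\item From $X_6$ to $X_7$: $o_ue_a$.
\end{itemize}

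The main obstacle is the few pairs where the obvious odd--odd edge fails because the pair is active. Examples are $X_3$--$X_5$, $X_3$--$X_6$, $X_4$--$X_5$ and $X_4$--$X_6$. In each such case I replace the odd--odd edge by an even--even edge of an active pair, for instance $uw$, $uc$, $dw$ or $cd$. I also check that none of the chosen edges joins two vertices of the same index, since $o_ie_i$ is never an edge.

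Finally, the case $H_{\{w\}}(B)$ follows by applying the automorphism $(v\ w)(a\ c)(b\ d)$. It acts on odd and even indices simultaneously and fixes $s$, so it carries this model to one for the mark at $w$.
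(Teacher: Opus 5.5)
Your verification is correct and is essentially the paper's proof: it uses the same internal edges, the same crossing table (your witnesses $o_co_w$ for $X_3X_4$ and $o_do_u$ for $X_5X_6$ differ from the paper's $e_uo_w$ and $e_wo_u$, but they are valid because $cw$ and $du$ are inactive), and the same ear-exchanging automorphism $(v\ w)(a\ c)(b\ d)$ for the mark $w$. The only slip is in your optional aside: $o_v$ has no neighbour in $X_2=\{e_v\}$, and $o_ve_a$ is not an edge since $va$ is inactive, so $o_v$ could only be absorbed into $X_3$, $X_5$ or $X_7$; leaving it unused, as the paper does, is perfectly fine.
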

\begin{proof}
The pair branches are edges because $uc,dw,dw,uc$ are active, and
$X_7$ contains the path $o_a e_b e_a$. All sets are disjoint.
The following triangular array gives their twenty-one crossing edges:
\[
\begin{array}{c|cccccc}
 &2&3&4&5&6&7\\\hline
1&o_be_v&o_bo_c&o_bo_w&o_bo_d&o_bo_u&o_be_a\\
2&&e_ve_u&e_vo_w&e_ve_w&e_vo_u&e_ve_b\\
3&&&e_uo_w&e_ue_w&e_ue_c&e_ue_a\\
4&&&&e_de_w&e_de_c&o_wo_a\\
5&&&&&e_wo_u&o_do_a\\
6&&&&&&o_ue_a
\end{array}
\]
Each entry follows from a triangle edge, an ear edge, or an inactive
pair between distinct ears. Six branches contain odd vertices; the
remaining branch is $\{e_v\}$. The added edge $se_v$ therefore gives
all seven adjacencies of the branch $\{s\}$. Exchanging the ears
proves the assertion at $w$.
\end{proof}

\subsection{The theta with path lengths two, three and three}
Let $B$ be the union of the three internally disjoint $u$--$v$ paths
\[
 uzv,\qquad uabv,\qquad ucdv,
\]
with no other edges. Its index positions fall into the endpoints
$u,v$, the short-path interior $z$, and the four long-path interiors
$a,b,c,d$. We give one construction for each position type.

\begin{lemma}[Three theta positions]\label[lemma]{ua:theta233}
For every index $k$ of this theta graph, $H_{\{k\}}(B)$ has a
$K_8$ minor. When $k=u,v,z$, it has such a model with branch $\{s\}$.
\end{lemma}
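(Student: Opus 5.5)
The plan is to reduce to three representative indices using the automorphisms of $B$, give one explicit branch list for each representative, and check every adjacency against the active/inactive interface rule. The reduction comes first. The permutations $(u\ v)(a\ b)(c\ d)$ and $(a\ c)(b\ d)$ are automorphisms of $B$: the first reverses all three paths and the second exchanges the two long paths. Their orbits on indices are $\{u,v\}$, $\{z\}$ and $\{a,b,c,d\}$. As in the proof of \cref{ua:theta134}, an index automorphism acts on both vertices of every double and fixes $s$, so it carries a model of $H_{\{k\}}(B)$ to a model of $H_{\{\varphi(k)\}}(B)$. The singleton-$\{s\}$ property is preserved. It therefore suffices to treat $k=u$, $k=z$ and $k=a$.

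For the explicit formulas I will read off the rows of \cref{tab:hard-models} for code $42232$. I use the identification $u=0$, $v=1$, $z=6$, $a=4$, $b=3$, $c=5$, $d=2$, which matches the active edges $04,05,06,12,13,16,25,34$. This turns the rows into the following $B$-symbolic candidates:
\[
\begin{aligned}
k=u:&\quad \{s\},\{e_u\},\{o_a\},\{o_c\},\{o_z\},\{o_v,e_v,e_z\},\{o_d,e_d,e_c\},\{o_b,e_b,e_a\};\\
k=z:&\quad \{s\},\{o_d\},\{o_u,e_a\},\{o_b,o_c\},\{o_a,e_b\},\{o_z,e_u\},\{e_v,e_z\},\{o_v,e_d,e_c\};\\
k=a:&\quad \{o_v\},\{o_u\},\{o_z,s\},\{o_d,o_a\},\{o_b,o_c\},\{e_v,e_z\},\{e_b,e_a\},\{e_u,e_d,e_c\}.
\end{aligned}
\]
In the first two lists $\{s\}$ is a branch, and exactly one other branch ($\{e_u\}$, respectively $\{e_v,e_z\}$) has no odd vertex. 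For $k=u$, the mark $se_u$ supplies that branch's adjacency to $\{s\}$. For $k=z$, the mark $se_z$ supplies the adjacency to $\{e_v,e_z\}$. In the third list $s$ is merged with $o_z$. The mark $se_a$ then joins this branch to the otherwise odd-free branch $\{e_b,e_a\}$, and the edges $o_zo_j$ for inactive pairs $zj$ handle the rest.

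For each list I will check connectivity using \cref{lem:double}. An odd--odd edge certifies an inactive pair, and any edge with an even end certifies an active pair; here the active pairs are exactly $uz,zv,ua,ab,bv,uc,cd,dv$. I will then check the $28$ pairwise crossings in a triangular array, in the style of \cref{ua:twoear,ua:triangle}. Most entries are immediate. A singleton odd branch meets any branch containing $o_j$ with $j$ nonadjacent to it, and any branch containing $e_j$ with $j$ adjacent to it. A whole-double branch needs no check against odd-rooted branches, by the absorption observation.

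I expect the main obstacle to be the $k=a$ case. Two of its branches, $\{e_v,e_z\}$ and $\{e_b,e_a\}$, contain only even vertices. Their mutual crossing and their crossings to the three-even branch $\{e_u,e_d,e_c\}$ must come from even--even edges along active pairs, namely $e_ve_b$, $e_ze_u$ and $e_ae_u$. The crossing between $\{e_v,e_z\}$ and $\{e_b,e_a\}$ via $e_ve_b$ relies on $bv$ being active, so I will verify these entries first. If any entry fails, I will re-route by swapping which even vertex of a long path joins which branch, in the manner of the routing choice after \cref{ua:twoear}.
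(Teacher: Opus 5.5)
Your proposal is correct and is essentially the paper's proof. You use the same orbit reduction to $k=u,z,a$, your $k=u$ list is identical to the paper's, and your $k=z$ and $k=a$ lists are the paper's models transported by $(a\ c)(b\ d)$ and $(u\ v)(a\ b)(c\ d)$ respectively, so all $28$ crossings in each do hold. When you write out $k=a$, tidy the description: three branches are odd-free (including $\{e_u,e_d,e_c\}$), and $\{o_z,s\}$ meets $\{o_u\},\{o_v\}$ through $s$ and meets the two even branches $\{e_v,e_z\},\{e_u,e_d,e_c\}$ through the active-pair edges $o_ze_v,o_ze_u$, not through inactive edges $o_zo_j$.
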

\begin{proof}
For $k=u$, take $\{s\}$ together with
\[
 \{e_u\},\ \{o_a\},\ \{o_c\},\ \{o_z\},\
 \{o_v,e_v,e_z\},\ \{o_b,e_b,e_a\},\ \{o_d,e_d,e_c\}.
\]
The triples contain the paths $o_v e_z e_v$, $o_b e_a e_b$ and
$o_d e_c e_d$. The three odd singleton branches form a clique, and
$e_u$ sees those three branches through the active pairs $ua,uc,uz$.
It sees the triples through $e_z,e_a,e_c$, respectively. Each odd
singleton sees its corresponding triple through an even vertex and
each other triple through its odd vertex. Among the triples, crossing
edges are $e_vo_b$, $e_vo_d$, and $o_bo_d$.
Thus they form a $K_7$; $se_u$ gives its even singleton the missing
adjacency to $s$. Reversing all paths proves the case $k=v$.

For $k=z$, take $\{s\}$ and the following seven branches:
\[
\begin{array}{c|l}
1&\{o_b\}\\
2&\{o_u,e_c\}\\
3&\{o_d,o_a\}\\
4&\{o_c,e_d\}\\
5&\{o_z,e_u\}\\
6&\{e_v,e_z\}\\
7&\{o_v,e_b,e_a\}.
\end{array}
\]
Each pair is an edge, and the triple contains $o_v e_b e_a$.
The following array supplies every crossing edge:
\[
\begin{array}{c|cccccc}
 &2&3&4&5&6&7\\\hline
1&o_bo_u&o_bo_d&o_bo_c&o_bo_z&o_be_v&o_be_a\\
2&&e_co_d&e_ce_d&e_ce_u&o_ue_z&o_ue_a\\
3&&&o_ao_c&o_ae_u&o_de_v&o_ae_b\\
4&&&&o_ce_u&e_de_v&e_do_v\\
5&&&&&e_ue_z&e_ue_a\\
6&&&&&&e_ve_b
\end{array}
\]
Six branches meet the odd set and the other contains $e_z$, so
adding $\{s\}$ works.

Finally, for $k\in\{a,b\}$, take
\[
\begin{split}
 X_1&=\{o_u\},\quad X_2=\{o_v\},\quad X_3=\{s,o_z\},\\
 X_4&=\{o_b,o_c\},\quad X_5=\{o_d,o_a\},\quad
 X_6=\{e_u,e_z\},\\
 X_7&=\{e_a,e_b\},\quad X_8=\{e_v,e_d,e_c\}.
\end{split}
\]
Their internal edges are $so_z$, $o_bo_c$, $o_do_a$, $e_ue_z$,
$e_ae_b$, and the path $e_v e_d e_c$. The singletons $X_1,X_2$
are adjacent to one another. In the branch order $X_3,\ldots,X_8$,
their crossing edges can be chosen respectively as
\[
\begin{array}{c|cccccc}
X_1&o_us&o_uo_b&o_uo_d&o_ue_z&o_ue_a&o_ue_c\\
X_2&o_vs&o_vo_c&o_vo_a&o_ve_z&o_ve_b&o_ve_d.
\end{array}
\]
Among the other six branches the crossing edges are
\[
\begin{array}{c|ccccc}
 &4&5&6&7&8\\\hline
3&o_zo_b&o_zo_a&o_ze_u&se_k&o_ze_v\\
4&&o_bo_d&o_ce_u&o_be_a&o_be_v\\
5&&&o_ae_u&o_ae_b&o_de_c\\
6&&&&e_ue_a&e_ue_c\\
7&&&&&e_be_v.
\end{array}
\]
The only missing crossing before augmentation is $X_3X_7$;
the edge $se_k$ supplies it for either $k=a$ or $k=b$.
Exchanging the two length-three paths handles $c,d$.
Unlike the preceding constructions, this one absorbs the odd vertex
at the short-path interior into the apex branch. It is an ordinary
model and makes no original-rooted assertion.
\end{proof}

\subsection{Identification and scope}
\begin{theorem}[Structural hard-core augmentation]\label[theorem]{ua:hard}
For $h\in\{42232,44340,44348\}$ and every index $k$,
$H_{\{k\}}(h)$ has an ordinary $K_8$ minor. This assertion has a
structural proof independent of the twenty-one stored models.
\end{theorem}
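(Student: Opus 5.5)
The plan is to reduce \cref{ua:hard} to the three constructions already proved in this appendix, by recognising each hard active graph as one of the three shapes treated there. I will then transport the models through index isomorphisms. Recall from \cref{sec:attachment} that an isomorphism $\varphi\colon B\to B'$ of graphs on $\{0,\ldots,6\}$ induces an isomorphism $H_{\{k\}}(B)\to H_{\{\varphi(k)\}}(B')$. So it suffices, for each hard code $h$, to exhibit a labelling of its active graph in the notation of one of \cref{ua:theta134,ua:triangle,ua:theta233}, or of \cref{ua:twoear}, and check that every index $k$ is covered.

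\emph{Code $42232$.} This is $\Theta(2,3,3)$: internally disjoint $0$--$1$ paths of lengths $2,3,3$. Labelling $u=0$, $v=1$, the short interior as $z$, and the long interiors as $a,b,c,d$ puts it exactly in the form of \cref{ua:theta233}. That lemma gives an ordinary $K_8$ minor of $H_{\{k\}}$ for all seven $k$: the endpoints and $z$ directly, $a,b$ by the absorbed-apex construction, and $c,d$ by exchanging the long paths.

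\emph{Code $44340$.} This is $\Theta(1,3,4)$, with paths $ab$, $a\,x\,y\,b$ and $a\,u\,v\,w\,b$, where $(a,b,x,y,u,v,w)=(0,3,5,2,6,1,4)$. I rename these to the notation of \cref{ua:theta134}: its $u,v$ become $0,3$, its short ear $uabv$ becomes $0\,5\,2\,3$, and its long ear $ucdwv$ becomes $0\,6\,1\,4\,3$. \Cref{ua:theta134} then supplies a model with singleton branch $\{s\}$ for every $k$.

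\emph{Code $44348$.} Its active graph is the $7$-cycle $0\,5\,2\,3\,4\,1\,6\,0$ with chords $03,04$. I will identify it with the triangle-with-two-ears graph: triangle $uvw=0\,3\,4$, ear $uabv=0\,5\,2\,3$ and ear $ucdw=0\,6\,1\,4$. Its edge set is then exactly $03,04,34,05,52,23,06,61,14$, the nine edges of the code.

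To apply \cref{ua:twoear} I must check its hypotheses in this labelling. The eight required active pairs are $uv,ua,ab,bv,uc,cd,dw,wv$, which become $03,05,52,23,06,61,14,43$; all are edges. The nine required inactive pairs $ad,av,dv,bc,bd,cv,ac,du,aw$ become $51,53,13,26,21,63,56,10,54$; none is an edge. The remaining four pairs may be arbitrary. With these checks, \cref{ua:twoear} covers $k\in\{0,6,5,1\}$. Its image under the ear-exchanging automorphism $(v\,w)(a\,c)(b\,d)=(3\,4)(5\,6)(2\,1)$ adds $k=2$. \Cref{ua:triangle} handles $k=v=3$ and, after exchanging ears, $k=w=4$.

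Since none of these constructions uses Input~\ref{input:P7} or any computation, the proof is structural and independent of \cref{tab:hard-models}. I expect the main obstacle to be only this bookkeeping. The labellings must be exact, and the inactive-pair hypotheses of \cref{ua:twoear} must hold in the $44348$ case, not merely the active ones, since an unexpected active pair would invalidate odd--odd crossings there.
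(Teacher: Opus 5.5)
Your proposal is correct and follows the paper's proof. You use the same identifications, including $(u,v,w,a,b,c,d)=(0,3,4,5,2,6,1)$ for both $44340$ and $44348$, and the same appeals to \cref{ua:theta233,ua:theta134,ua:twoear,ua:triangle}; your explicit check of the nine required inactive pairs for $44348$ is a useful addition that the paper leaves implicit, and the only loose end is that for $42232$ you should fix the labelling, e.g.\ $(u,v,z,a,b,c,d)=(0,1,6,5,2,4,3)$ as in the paper, so that $uabv$ and $ucdv$ really are the two long paths.
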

\begin{proof}
For $42232$, use Lemma~\ref{ua:theta233} with
\[
 (u,v,z,a,b,c,d)=(0,1,6,5,2,4,3).
\]
For $44340$, use Corollary~\ref{ua:theta134} with
\[
 (u,v,w,a,b,c,d)=(0,3,4,5,2,6,1).
\]
The same latter identification maps $44348$ to the triangle with two
ears, so Lemmas~\ref{ua:twoear} and~\ref{ua:triangle} apply.
Indeed the active edge sets are
$\{04,05,06,12,13,16,25,34\}$ for $42232$, the images of $uz,zv,ua,ab,bv,uc,cd,dv$;
$\{03,05,06,14,16,23,25,34\}$ for $44340$, the images of $uv,ua,ab,bv,uc,cd,dw,wv$;
and these together with $04$, the image of $uw$, for $44348$.
\end{proof}

The theorem is confined to these three cores; it is not a structural
proof of Input~\ref{input:P7}. The two-ear lemma additionally gives
a precisely stated partial-edge family, with four unrestricted pairs.
The proof uses six explicit path formulas (two routing formulas, one
triangle formula, and three theta-position formulas), with all
connections and crossings displayed. It does not infer nonexistence
from any unsuccessful search. The $1{,}013$ P7 positive models, their
$1{,}044$-orbit coverage, and the twenty-eight nonhard root-apex
models remain the finite premises of the general attachment theorem.
The root-control converse for the four long-path interiors of
$42232$ is a separate assertion and is not required here.

\section{The enlarged-root boundary for the three hard cores}\label{rc:sec}
For a seven-index graph $B$, write $O=\{o_0,\ldots,o_6\}$ and
\[
 R_k=O\cup\{e_k\}\qquad(0\le k\le6).
\]
An \emph{$R_k$-meeting $K_7$ model} in $K(B)$ means seven pairwise disjoint,
nonempty, connected, pairwise adjacent branch sets, each intersecting $R_k$.
There are eight allowed roots and seven branches. No prescribed root is assigned
to any branch, and a branch may contain two allowed roots.

\begin{theorem}[Exact enlarged-root classification]\label[theorem]{rc:classification}
For $h\in\{42232,44340,44348\}$, the graph $K(h)$ has an $R_k$-meeting
$K_7$ model exactly in the following cases:
\[
\begin{array}{c|c|c}
h&\text{marks admitting a model}&\text{marks admitting no model}\\\hline
42232&0,1,6&2,3,4,5\\
44340&0,1,2,3,4,5,6&\varnothing\\
44348&0,1,2,3,4,5,6&\varnothing.
\end{array}
\]
The positive assertions follow structurally from Appendix~\ref{ua:appendix}; explicit branch certificates below corroborate them. The negative
assertions are proved structurally below, using \cref{lem:known-theta} and no computation;
the finite certificates of revision~6 are retained as corroboration.
Neither direction assumes Input~\ref{input:P7} or any negative exploratory search.
\end{theorem}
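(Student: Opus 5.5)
The positive entries come from the singleton-apex models of Appendix~\ref{ua:appendix}. Suppose $H_{\{k\}}(h)$ has a $K_8$ model with $\{s\}$ as a branch. Deleting $\{s\}$ leaves seven branches of $K(h)$. Each is adjacent to $s$, and the neighbours of $s$ in $H_{\{k\}}(h)$ are exactly the vertices of $R_k$, so each branch meets $R_k$. The required cases are then read off as follows.
\begin{itemize}
\item For $44340$, \cref{ua:theta134} with the identification in the proof of \cref{ua:hard} gives every $k$.
\item For $44348$, \cref{ua:twoear} and its image under the ear-exchanging automorphism give the five indices $u,a,b,c,d$, and \cref{ua:triangle} gives $v,w$.
\item For $42232$, \cref{ua:theta233} gives $u,v,z$, which are $0,1,6$ under $(u,v,z,a,b,c,d)=(0,1,6,5,2,4,3)$.
\end{itemize}
I would also check that the listed models of \cref{tab:hard-models} with branch $\{s\}$ agree, but only as corroboration.

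For the negative part I would move to the $\Theta$ labelling of \cref{sec:sharpness}, using the isomorphism in the proof of \cref{thm:root-apex}. The marks $2,3,4,5$ of $42232$ become the four interior vertices $0,3,1,4$ of the two length-three paths. The group $\langle a,b\rangle$ acts transitively on these four vertices, so it suffices to exclude $k=0$, with root set $R=O\cup\{e_0\}$.

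Suppose an $R$-meeting $K_7$ model exists. If every branch contains an odd vertex, then by pigeonhole each contains exactly one, which contradicts \cref{lem:known-theta}. Hence exactly one branch $Q$ contains $e_0$ and no odd vertex. The other six branches contain all seven odd vertices, so exactly one branch $P$ contains two of them, say $o_i,o_j$, and every remaining branch contains exactly one. As in the proof of \cref{lem:known-theta}, I would extend the model to a spanning partition of the fourteen vertices. This works because each unused vertex is even and has an odd neighbour.

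The key facts I would use are these. The even vertices induce a copy of $\Theta$, so $Q$ is a connected subtree of $\Theta$ containing $0$. The index set of $Q$ can meet each $N_\Theta(\ell)$ only through edges of $\Theta$. The vertex budget is $14=|Q|+\sum|C_\ell|$. Finally, $Q$ must be adjacent to every other branch, and this is possible only through an odd vertex $o_\ell$ with $\ell\in N_\Theta(Q)$ or through even--even edges.

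I would first bound $|Q|$. Adjacency of $Q$ to the six other branches forces each of them either to contain an even neighbour of $Q$ or to have its odd vertex indexed in $N_\Theta(Q)$. I would then rerun the singleton argument (a)--(c) of \cref{lem:known-theta}. A singleton $\{o_\ell\}$ forces its $\Theta$-neighbours' branches to have size at least three, unless they are rescued by $P$ or $Q$. The inequality $|N_\Theta(A)|\ge|A|+1$ then leaves only a few distributions of sizes, essentially $|Q|\in\{1,2\}$ with almost all other branches pairs. With these forced, I would repeat the permutation argument (d)--(e). Branches $\{o_\ell,e_{f(\ell)}\}$ along active edges must be adjacent only via $f$. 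The degree-three vertices $5$ and $6$ of $\Theta$ then cannot be served, except through $P$ or $Q$.

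The main obstacle is this last step. The extra branch $P$ with two odd vertices, where $ij$ is an inactive pair, and the even branch $Q$ can each repair one of the adjacencies that kill the rooted case. I would split on the position of $\{i,j\}$ relative to $0$ and on whether $Q=\{e_0\}$, $\{e_0,e_3\}$ or $\{e_0,e_5\}$. I expect each case to die either because some branch at $5$ or $6$ is unserved or because the vertex budget is exceeded. This is the same bookkeeping as in the proof of \cref{prop:noK8}, and I would use the tables of $N_\Theta$ given there to force pairs.
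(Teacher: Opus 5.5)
Your positive half is correct and is the paper's argument. Deleting the singleton branch $\{s\}$ from the models of Appendix~\ref{ua:appendix} leaves $R_k$-meeting $K_7$ models, and your assignment of lemmas to marks matches the identifications in \cref{ua:hard}. The negative half also starts the way the paper does. You reduce to one mark by automorphisms and split into two cases. In the first, every branch meets $O$, which \cref{lem:known-theta} excludes. In the second, there is an even-only branch $Q\ni e_0$ and a branch $P$ with two odd vertices. One small slip: the six branches contain all seven odd vertices only after the spanning extension. That extension should rest on connectivity of $K$, because an odd vertex may be unused beforehand.

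The gap is that the remaining case, which is the whole substance of the negative assertion, is left as a plan ending in ``I expect each case to die''. Your counting does not close on its own. $P$ can supply the required neighbours of singletons at both of its indices, so $|N_\Theta(I)|\ge|I|+1$ only gives $|N_\Theta(I)\setminus\{i,j\}|\ge|I|-1$, leaving slack one in the vertex budget. To reach a finite, checkable list you need the extra constraints of \cref{rc:caseB}:
\begin{enumerate}
\item $Q$ is never a singleton, since $e_0$ has degree four; so your case $Q=\{e_0\}$ is vacuous.
\item $ij\notin E(\Theta)$. You assume this, but it needs proof: otherwise $|P|\ge4$ by triangle-freeness, which violates the budget.
\item Each singleton $o_s$ has exactly one $\Theta$-neighbour in $\{i,j\}$ and exactly one in the index set of $Q$.
\item When $P=\{o_i,o_j\}$, a common $\Theta$-neighbour $m$ of $i,j$ forces $|C_m|\ge3$. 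So $P$ can create demand, not only rescue adjacencies.
\end{enumerate}
Only with these does \eqref{rc:budget} cut down to the seven configurations of \cref{rc:residual}. Each of those must still be refuted by explicit forcing, as in (E1)--(E3).

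Nor can you simply repeat steps (d)--(e) of \cref{lem:known-theta}. Even with no singletons, the budget forces $Q=\{e_0,e_w\}$, $P=\{o_i,o_j\}$ and five pairs $\{o_\ell,e_{f(\ell)}\}$. Then $f$ is a bijection from $\{0,\dots,6\}\setminus\{i,j\}$ onto $\{0,\dots,6\}\setminus\{0,w\}$, and these are different index sets because $ij$ is inactive while $0w$ is active. So there is no fixed-point-free permutation and no cycle-structure argument, and the contradictions must come from case-by-case local forcing. As written, the exclusion of marks $2,3,4,5$ for $42232$ is not proved.
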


\begin{lemma}[Eight-root spanning normalization]\label[lemma]{rc:normalization}
Let $G$ be a connected graph on fourteen vertices and let $R\subseteq V(G)$
have size eight. Every $R$-meeting $K_7$ model extends to a partition of $V(G)$
into seven branches still forming an $R$-meeting $K_7$ model. Such a partition
has exactly one branch containing two vertices of $R$ and six containing one.
For fixed orders on $R$ and $V(G)\setminus R$, its branches have a unique
description by one of $\binom82$ root pairs and one of $7^6$ allocations.
\end{lemma}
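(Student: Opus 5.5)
The proof has three parts: extension to a spanning model by absorption, a count of roots per branch, and the encoding.

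\emph{Extension.} Start from an $R$-meeting $K_7$ model in the connected graph $G$. While some vertex is unused, connectivity of $G$ gives an edge from an unused vertex $y$ to a used vertex in some branch $X$; replace $X$ by $X\cup\{y\}$. This is the same absorption step used in the proof of \cref{prop:noK8}. The enlarged branch stays connected because $y$ has a neighbour in $X$. Disjointness is kept because $y$ was unused. Every crossing edge survives, so the pairwise adjacencies survive. Branches only grow, so each still meets $R$. Each step uses one more vertex, so after finitely many steps the seven branches partition all fourteen vertices.

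\emph{Root count.} In a spanning model, the seven branches partition $V(G)$, so they partition the eight vertices of $R$. Each branch contains at least one vertex of $R$. Write the counts as $r_1,\dots,r_7\ge1$ with $\sum r_j=8$. Then the excesses satisfy $\sum (r_j-1)=1$, so exactly one $r_j$ equals two and the other six equal one.

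\emph{Encoding.} Let $\{x,y\}$ be the two roots that share a branch; there are $\binom82$ choices. The remaining six roots lie in six distinct branches, and these are all the other branches. Fix the order on $R$, which orders the six singleton-root branches and places the double-root branch as a seventh labelled branch. So the seven branches are canonically labelled by the root pair and the ordered remaining roots. Each of the six vertices of $V(G)\setminus R$ lies in exactly one of these seven labelled branches. Listing them in the fixed order gives an allocation in $\{1,\dots,7\}^6$. Conversely, the root pair and the allocation determine every branch as its roots plus the allocated nonroots. So each spanning model has exactly one description, which proves uniqueness.

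The only point needing care is the encoding: the labelling must be canonical so the description is unique. A spanning model cannot place two roots of the pair in different branches, since the pair is defined as the two roots sharing a branch. So its $(\text{pair},\text{allocation})$ is well defined, and injectivity follows because the branches are recovered from it.
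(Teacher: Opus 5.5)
Your proposal is correct and follows the paper's proof step for step: you absorb unused vertices using connectivity, use the seven positive root counts summing to eight to get $2,1,\dots,1$, and encode a spanning partition canonically by its doubled root pair and the allocation of the six nonroots to the seven labelled branches. The paper also notes the converse, that every pair and allocation specifies a partition meeting $R$ (not necessarily a model), which makes $\binom82 7^6$ the exact candidate domain; your injectivity argument is all the statement itself requires.
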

\begin{proof}
If vertices remain unused, connectedness gives an edge from a used vertex to
an unused vertex. Add the latter to the former's branch. This preserves
connectedness, disjointness, pairwise adjacency, and intersection with $R$.
Iteration gives a spanning partition. Its seven positive root counts sum to
eight, so they are $2,1,1,1,1,1,1$.

Label the branch containing the root pair by $0$. Label the remaining six
branches $1,\ldots,6$ by the increasing order of their unique roots. Each of
the six nonroots belongs to exactly one of these seven branches. Conversely,
each root pair and allocation specifies one partition meeting $R$, although
it need not be connected or pairwise adjacent. The description is unique.
Thus the exact candidate domain has
\[
 \binom82\,7^6=28\cdot117649=3294172
\]
elements, including every possible spanning model.
\end{proof}

\paragraph{Positive constructions.}
The table below gives models for one representative of each positive marked
index orbit. A permutation of the indices acts on both $o_i$ and $e_i$.
For $42232$, the automorphisms
\[
 g=(2\ 3)(4\ 5),\qquad h=(0\ 1)(2\ 4)(3\ 5)
\]
have index orbits $\{0,1\},\{2,3,4,5\},\{6\}$.
For $44340$, the automorphism $(0\ 3)(2\ 5)(4\ 6)$ has orbits
$\{0,3\},\{1\},\{2,5\},\{4,6\}$.
For $44348$, $(1\ 2)(3\ 4)(5\ 6)$ has orbits
$\{0\},\{1,2\},\{3,4\},\{5,6\}$.
Each displayed permutation preserves the relevant active edge set, so it
preserves $K(h)$ and carries $R_k$ to the corresponding marked root set.
Every row is checked directly for connectivity, disjointness, crossing
edges, and intersections with $R_k$. The complete seventeen positive models
are stored in \path{positive_models.json}; \path{positive_edge_witnesses.json}
names their internal spanning trees and all $17\cdot21=357$ crossing edges.
These positive models were transcribed from the supplied revision-5 rooted
exploration log and freshly verified; the log's negative entries play no role.

\begingroup\small
\begin{longtable}{r r p{0.72\textwidth}}
\caption{Enlarged-root positive representatives. Each row lists seven branches.}
\label{rc:positive-table}\\
\toprule $h$&$k$&Branches\\\midrule\endfirsthead
\toprule $h$&$k$&Branches\\\midrule\endhead
42232 & 0 & $\{o_{0},o_{1},e_{6}\}$, $\{o_{2},e_{5}\}$, $\{o_{3},e_{4}\}$, $\{o_{4},e_{3}\}$, $\{o_{5},e_{2}\}$, $\{o_{6},e_{1}\}$, $\{e_{0}\}$ \\
42232 & 6 & $\{o_{0},e_{4}\}$, $\{o_{1},e_{2},e_{5}\}$, $\{o_{2}\}$, $\{o_{3},o_{5}\}$, $\{o_{4},e_{3}\}$, $\{o_{6},e_{0}\}$, $\{e_{1},e_{6}\}$ \\
44340 & 0 & $\{o_{0}\}$, $\{o_{1}\}$, $\{o_{2}\}$, $\{o_{3},e_{2},e_{5}\}$, $\{o_{4},o_{5}\}$, $\{o_{6},e_{1},e_{6}\}$, $\{e_{0},e_{3},e_{4}\}$ \\
44340 & 1 & $\{o_{0}\}$, $\{o_{1}\}$, $\{o_{2}\}$, $\{o_{3},e_{2},e_{5}\}$, $\{o_{4},o_{5}\}$, $\{o_{6},e_{0},e_{6}\}$, $\{e_{1},e_{3},e_{4}\}$ \\
44340 & 2 & $\{o_{0}\}$, $\{o_{1}\}$, $\{o_{2}\}$, $\{o_{3},e_{0},e_{5}\}$, $\{o_{4},o_{5}\}$, $\{o_{6},e_{1},e_{6}\}$, $\{e_{2},e_{3},e_{4}\}$ \\
44340 & 4 & $\{o_{0}\}$, $\{o_{1}\}$, $\{o_{2}\}$, $\{o_{3},e_{0},e_{2},e_{5}\}$, $\{o_{4},o_{5}\}$, $\{o_{6},e_{1},e_{6}\}$, $\{e_{3},e_{4}\}$ \\
44348 & 0 & $\{o_{0},o_{1},e_{6}\}$, $\{o_{2},e_{2},e_{5}\}$, $\{o_{3}\}$, $\{o_{4},e_{3},e_{4}\}$, $\{o_{5}\}$, $\{o_{6},e_{1}\}$, $\{e_{0}\}$ \\
44348 & 1 & $\{o_{0},e_{2},e_{5}\}$, $\{o_{1}\}$, $\{o_{2},o_{6}\}$, $\{o_{3}\}$, $\{o_{4},e_{3},e_{4}\}$, $\{o_{5}\}$, $\{e_{0},e_{1},e_{6}\}$ \\
44348 & 3 & $\{o_{0},e_{4}\}$, $\{o_{1},e_{6}\}$, $\{o_{2}\}$, $\{o_{3},o_{6}\}$, $\{o_{4},e_{1}\}$, $\{o_{5},e_{2},e_{5}\}$, $\{e_{0},e_{3}\}$ \\
44348 & 5 & $\{o_{0},e_{3}\}$, $\{o_{1},e_{6}\}$, $\{o_{2},o_{6}\}$, $\{o_{3},e_{4}\}$, $\{o_{4},e_{1}\}$, $\{o_{5},e_{2}\}$, $\{e_{0},e_{5}\}$ \\
\bottomrule
\end{longtable}
\endgroup

\paragraph{Negative assertions.}
For $42232$ the automorphisms $1,g,h,hg$ carry index~$2$ to $2,3,4,5$, so the four negative marks form one index orbit, and the stabiliser of each is trivial. The structural proof below treats one representative without any further symmetry reduction.

\begin{corollary}[Exact singleton-apex boundary]\label[corollary]{rc:apex}
The augmented graph $H_{\{k\}}(h)$ has a $K_8$ model with singleton branch
$\{s\}$ exactly in the positive cases of \cref{rc:classification}.
For $h=42232$ and $k\in\{2,3,4,5\}$, every $K_8$ model that uses $s$ in a
branch uses at least two vertices in that branch. Two are sufficient, as
witnessed by the previously verified models having branch $\{s,o_6\}$.
\end{corollary}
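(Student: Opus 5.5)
The corollary is a translation of \cref{rc:classification}, together with one explicit construction. The key observation is the equivalence used in \cref{lem:rooted-ordinary}, adapted to the augmented apex. In $H_{\{k\}}(h)$ the neighbourhood of $s$ is exactly $R_k=O\cup\{e_k\}$. Hence, if $\{s\}$ is a branch of a $K_8$ model, the other seven branches lie in $K(h)$, are pairwise adjacent, and each contains a neighbour of $s$. They therefore form an $R_k$-meeting $K_7$ model of $K(h)$. Conversely, any $R_k$-meeting $K_7$ model of $K(h)$ extends by the branch $\{s\}$, since $s$ sees every vertex of $R_k$. So $H_{\{k\}}(h)$ has a $K_8$ model with singleton branch $\{s\}$ exactly when $K(h)$ has an $R_k$-meeting $K_7$ model. \Cref{rc:classification} then gives the first sentence: the admissible cases are the marks $0,1,6$ for $42232$ and every mark for $44340$ and $44348$.

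For the second sentence, fix $h=42232$ and $k\in\{2,3,4,5\}$, and take any $K_8$ model in which some branch $X$ contains $s$. By the first part, $X\neq\{s\}$, because the classification gives no $R_k$-meeting $K_7$ model for these marks. Hence $|X|\ge2$.

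For sufficiency, I would cite the rows of \cref{tab:hard-models} for $42232$ with $k=2,3,4,5$. Each of them has the branch $\{o_6,s\}$, and they were checked directly to be $K_8$ models of $H_{\{k\}}(42232)$. Alternatively, the last construction in the proof of \cref{ua:theta233} (branch $X_3=\{s,o_z\}$ with $z=6$) gives the same conclusion structurally for all four marks, after applying the identification $(u,v,z,a,b,c,d)=(0,1,6,5,2,4,3)$ and the exchange of the two long paths.

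There is no real obstacle here. The only point to check is that in the forward direction every branch other than $\{s\}$ avoids $s$ and therefore lies in $K(h)=H_{\{k\}}(h)-s$, and that adjacency to $\{s\}$ means meeting $N(s)=R_k$. All the substantive negative content is carried by the structural negative half of \cref{rc:classification}.
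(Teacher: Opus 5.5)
Your proposal is correct and follows the paper's proof: the neighbourhood of $s$ in $H_{\{k\}}(h)$ is exactly $R_k$, so $K_8$ models with singleton branch $\{s\}$ correspond exactly to $R_k$-meeting $K_7$ models of $K(h)$, and \cref{rc:classification} together with the stored models containing the branch $\{s,o_6\}$ finishes the argument. Your alternative structural witness is also valid: the last construction in the proof of \cref{ua:theta233} has $X_3=\{s,o_z\}$ with $z=6$, and the exchange $(a\,c)(b\,d)$ of the long paths fixes $z$, so it covers all four marks.
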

\begin{proof}
The neighbours of $s$ in $H_{\{k\}}(h)$ are exactly $R_k$.
Deleting the branch $\{s\}$ from a $K_8$ model leaves an $R_k$-meeting
$K_7$ model in $K(h)$. Conversely, adjoining $\{s\}$ to any such $K_7$
model gives a $K_8$ model. Apply \cref{rc:classification}; the stated
two-vertex models give the upper bound. No assertion is made that every
nontrivial apex branch must contain $o_6$.
\end{proof}

\subsection*{A structural proof of the negative assertions}
\addcontentsline{toc}{subsection}{Structural proof of the negative assertions}
Throughout this subsection let $\Theta$ be the union of the three internally disjoint $u$--$v$ paths
\[
 uzv,\qquad uabv,\qquad ucdv .
\]
With $(u,v,z,a,b,c,d)=(0,1,6,5,2,4,3)$ this is the active graph of code $42232$; the four marks $2,3,4,5$ are $b,d,c,a$. In $K=K(\Theta)$ the odd vertex $o_x$ is adjacent to $o_y$ for $y\notin N[x]$ and to $e_y$ for $y\in N(x)$, so it has degree six, and $e_x$ is adjacent to $o_y$ and $e_y$ for $y\in N(x)$, so $e_u,e_v$ have degree six and the other even vertices degree four. Here and below $N$ denotes neighbourhoods in $\Theta$; $N(I)$ is the set of vertices outside $I$ with a neighbour in $I$, and $N[x]=N(x)\cup\{x\}$.

\begin{lemma}[The theta graph]\label[lemma]{rc:theta}
The graph $\Theta$ has the following properties.
\begin{enumerate}[label=(\alph*)]
\item The cycles of $\Theta$ have lengths $5,5,6$. Hence adjacent vertices of $\Theta$ have no common neighbour, and any two vertices have at most one.
\item Every nonempty independent set $I$ of $\Theta$ satisfies $|N(I)|\ge|I|+1$. The independent sets of size three are $ubd$, $vac$, $zac$, $zad$, $zbc$, $zbd$; each has $|N(I)|=4$, and there is no independent set of size four. The independent pairs with $|N(I)|=3$ are $za$, $zb$, $zc$, $zd$, $ac$, $bd$.
\item The nonadjacent pairs at distance three are $\{a,d\}$ and $\{b,c\}$. Each other nonadjacent pair has exactly one common neighbour, namely $uv\colon z$, $ub\colon a$, $ud\colon c$, $va\colon b$, $vc\colon d$, $za\colon u$, $zb\colon v$, $zc\colon u$, $zd\colon v$, $ac\colon u$, $bd\colon v$.
\item The automorphism group of $\Theta$ is $\{1,(u\,v)(a\,b)(c\,d),(a\,c)(b\,d),(u\,v)(a\,d)(b\,c)\}$; it acts simply transitively on $\{a,b,c,d\}$.
\end{enumerate}
\end{lemma}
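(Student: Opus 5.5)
The plan is a direct finite verification, organised so that each part reuses the structure already established for $\Theta(2,3,3)$ in \cref{lem:known-theta} and in the remark counting the exceptions. First I would note that the labelling $(u,v,z,a,b,c,d)\mapsto(5,6,2,0,3,1,4)$ carries the three paths $uzv$, $uabv$, $ucdv$ onto $5,2,6$; $5,0,3,6$; $5,1,4,6$. So $\Theta$ here is the graph of \cref{sec:sharpness}, and every statement proved there transfers.

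For (a), I would argue as follows. A cycle of a theta graph is the union of two of its three paths, so the lengths are $2+3$, $2+3$ and $3+3$. Hence $\Theta$ has no triangle and no $4$-cycle. Two adjacent vertices with a common neighbour would form a triangle, and two vertices with two common neighbours would form a $4$-cycle; this gives the second sentence.

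For (b), the expansion inequality $|N(I)|\ge|I|+1$ is exactly part (c) of the proof of \cref{lem:known-theta}, transported by the labelling above. For the listed sets I would do a short enumeration. An independent set contains at most one of $u,v,z$ together with at most two of $a,b,c,d$, and not two consecutive path vertices; this yields the six triples and shows there is no independent set of size four. I would then compute $N(I)$ for each triple and each independent pair directly from the neighbour lists $N(u)=\{z,a,c\}$, $N(v)=\{z,b,d\}$, $N(z)=\{u,v\}$, $N(a)=\{u,b\}$, $N(b)=\{a,v\}$, $N(c)=\{u,d\}$, $N(d)=\{c,v\}$. The pairs with $|N(I)|=3$ are exactly those whose two neighbourhoods overlap in one vertex and have total size four, and these are the pairs listed.

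For (c), I would take each of the $21-8=13$ nonadjacent pairs in turn. I would read off a common neighbour from the lists above, where (a) gives uniqueness, and observe that only $\{a,d\}$ and $\{b,c\}$ have none. Those two pairs are at distance three, via $a\,u\,c\,d$ and $b\,a\,u\,c$.

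For (d), I would argue as in the remark counting the exceptions. The vertices $u,v$ are the only vertices of degree three, so an automorphism permutes $\{u,v\}$ and permutes the three $u$--$v$ paths while preserving their lengths. It therefore fixes $uzv$ and either fixes or swaps the two long paths, which gives at most four automorphisms. I would then check that the four displayed permutations preserve the edge set. Finally, the four displayed permutations send $a$ to $a,b,c,d$ respectively, so the action on $\{a,b,c,d\}$ is transitive, and simple transitivity follows because the group has order four.

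I expect no real obstacle here. The only care needed is the bookkeeping in (b) and (c), where I would present small tables rather than prose so that each entry can be checked against the neighbour lists.
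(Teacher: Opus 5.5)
Your outline follows the paper's proof part by part. The one difference is in (b), and it is harmless. The paper re-proves the expansion inequality directly, using the successor/predecessor argument on the $5$-cycle $Q=u\,a\,b\,v\,z$ together with the edge $cd$. That is exactly the argument you would be transporting from \cref{lem:known-theta} through your relabelling, and the transport is valid.

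There is, however, one false step in (b). It is not true that an independent set contains at most one of $u,v,z$. The vertices $u$ and $v$ are nonadjacent (you list $uv$ yourself among the nonadjacent pairs in (c)), so $\{u,v\}$ is independent. This is not cosmetic, because that claim is what gives your size bound $1+2=3$. With only the true bounds, the split permits four: two vertices from the path $u\,z\,v$, and two from $\{a,b,c,d\}$, which induces $2K_2$.

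The repair is one line. Since $N(u)\cup N(v)=\{z,a,b,c,d\}$, the set $\{u,v\}$ is a maximal independent set, and every other independent set meets $\{u,v,z\}$ in at most one vertex. Your enumeration then gives exactly $ubd$, $vac$, and $z$ with each of $ac,ad,bc,bd$, and no independent $4$-set. Alternatively, use the paper's split: at most two vertices of the $5$-cycle $Q$ and at most one of the edge $cd$.

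With this correction, the rest of your plan is sound. That includes the degree-sum criterion for the pairs with $|N(I)|=3$, the thirteen nonadjacent pairs in (c), and the order-four argument with transitivity in (d).
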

\begin{proof}
(a) Each cycle is the union of two of the three paths.

(b) The vertices $u,a,b,v,z$ form a $5$-cycle $Q$, and the remaining vertices $c,d$ are adjacent, with $N(c)=\{u,d\}$ and $N(d)=\{c,v\}$. Let $J\subseteq V(Q)$ be nonempty and independent. For a cyclic orientation of $Q$ the successor map and the predecessor map both inject $J$ into $N_Q(J)$. If $|N_Q(J)|=|J|$, both are bijections onto $N_Q(J)$, so $J$ is invariant under the rotation by two steps, which generates all rotations of a $5$-cycle; then $J=V(Q)$, which is not independent. Hence $|N_Q(J)|\ge|J|+1$. An independent set $I$ meets $\{c,d\}$ in at most one vertex. If $I\cap\{c,d\}=\varnothing$, then $|N(I)|\ge|N_Q(I)|\ge|I|+1$. If $c\in I$, then $d\in N(I)\setminus V(Q)$, so $|N(I)|\ge|N_Q(I\setminus\{c\})|+1\ge|I|+1$ when $I\ne\{c\}$, and $N(\{c\})=\{u,d\}$; the case $d\in I$ is symmetric. An independent set has at most two vertices in $Q$ and at most one in $\{c,d\}$. The independent pairs in $Q$ are $ub,uv,va,za,zb$; adding $c$ (adjacent to $u$) or $d$ (adjacent to $v$) when possible gives exactly the six triples listed, and their neighbourhoods $\{z,a,c,v\}$, $\{z,b,d,u\}$, $\{u,v,b,d\}$, $\{u,v,b,c\}$, $\{u,v,a,d\}$, $\{u,v,a,c\}$ have four vertices. For a nonadjacent pair $\{x,y\}$ we have $|N(\{x,y\})|=\deg x+\deg y-|N(x)\cap N(y)|\ge3$, with equality exactly when $x,y$ have degree two and a common neighbour. By (c) these are the six pairs listed.

(c) This is a direct check of the neighbourhoods $N(u)=\{z,a,c\}$, $N(v)=\{z,b,d\}$, $N(z)=\{u,v\}$, $N(a)=\{u,b\}$, $N(b)=\{a,v\}$, $N(c)=\{u,d\}$, $N(d)=\{c,v\}$; uniqueness is part of (a).

(d) An automorphism fixes the set $\{u,v\}$ of vertices of degree three and permutes the three $u$--$v$ paths preserving their lengths; it is determined by these two actions, and all four combinations occur.
\end{proof}

By \cref{rc:theta}(d), an automorphism $\varphi$ of $\Theta$, acting on both vertices of every double, is an automorphism of $K$ mapping $R_k$ to $R_{\varphi(k)}$. It therefore suffices to prove that $K$ has no $R_a$-meeting $K_7$ model. The stabiliser of $a$ in $\operatorname{Aut}(\Theta)$ is trivial, and no symmetry is used in the rest of the argument.

\begin{lemma}[Case B constraints]\label[lemma]{rc:caseB}
Suppose that $K$ has a $K_7$ model whose branches partition $V(K)$, each meeting $R_a$, in which one branch $Y$ contains exactly two odd vertices $o_p,o_q$, the branch $X$ containing $e_a$ contains no odd vertex, and each other branch contains exactly one odd vertex. Write $C_x$ for the branch containing $o_x$ when $x\notin\{p,q\}$, put $W=\{w:e_w\in X\}$, let $I$ be the set of $s$ with $C_s=\{o_s\}$, and let $T=N(I)\setminus\{p,q\}$. Then:
\begin{enumerate}[label=(\roman*)]
\item every singleton branch is $\{o_s\}$ with $s\in I$, $I$ is independent in $\Theta$, and each $o_s$ with $s\in I$ has exactly one neighbour in every other branch;
\item $|N(s)\cap W|=1$ for every $s\in I$;
\item $pq\notin E(\Theta)$, and every $s\in I$ has exactly one neighbour in $\{p,q\}$;
\item with $M=N(p)\cap N(q)$ if $|Y|=2$ and $M=\varnothing$ if $|Y|\ge3$, the set $M$ is disjoint from $I\cup N(I)$ and
\begin{equation}\label{rc:budget}
 (|X|-2)+(|Y|-2)+|T|+|M|\le|I|;
\end{equation}
\item every branch $C_t$ with $t\in T\cup M$ has at least three vertices.
\end{enumerate}
\end{lemma}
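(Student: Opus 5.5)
The plan is to extract everything from three ingredients: the degree bound for singleton branches, the exact vertex count of the spanning partition, and the local structure of $\Theta$ in \cref{rc:theta}. In $K$ the even vertex $e_a$ has degree four (as $a$ has $\Theta$-degree two), while a singleton branch of a $K_7$ model needs degree at least six.

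\emph{Parts (i) and (ii).} The branch $X$ is not a singleton, since $e_a$ has degree four, and $Y$ is not a singleton, since it has two vertices. Every $C_x$ is either $\{o_x\}$ or larger. So every singleton is some $\{o_s\}$. Since $o_s$ has degree exactly six, it has exactly one neighbour in each of the other six branches. Singletons are pairwise adjacent, and $o_so_t$ is an edge exactly for inactive pairs, so $I$ is independent. All vertices of $X$ are even, and $o_s$ is adjacent to $e_w$ exactly when $w\in N(s)$; saturation of $o_s$ towards $X$ then gives $|N(s)\cap W|=1$.

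\emph{The budget (iv) and part (v).} First I write the count $|X|+|Y|+\sum_x|C_x|=14$ over the five branches $C_x$. The $|I|$ singleton $C_x$ contribute $|I|$ vertices and the others at least two each, which gives
\[
 (|X|-2)+(|Y|-2)+\sum_{C_x\text{ nonsingleton}}(|C_x|-2)=|I| .
\]
Hence (iv) follows once (v) holds and $T$ and $M$ are disjoint sets of indices of nonsingleton branches $C_t$, each contributing at least one.

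For $t\in T$, pick $s\in I$ adjacent to $t$. Then $t\notin I\cup\{p,q\}$, and $o_t$ is not a neighbour of $o_s$. So $C_t$ contains some $e_w$ with $w\in N(s)$. If $C_t=\{o_t,e_w\}$, then $tw$ is an edge, and $s,t,w$ would form a triangle, which \cref{rc:theta}(a) forbids. For $t\in M$ we have $Y=\{o_p,o_q\}$, and $o_t$ is adjacent to neither $o_p$ nor $o_q$. So $C_t$ must contain $e_w$ with $w\in N(p)\cup N(q)$; a pair $\{o_t,e_w\}$ would again produce a triangle through $p$ or $q$.

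Disjointness of $M$ from $I\cup N(I)$ also reduces to triangle-freeness:
\begin{itemize}
\item if $t\in M\cap I$, then $t$ would have two neighbours in $\{p,q\}$, against (iii);
\item if $t\in M$ is adjacent to some $s\in I$, and $s$ is adjacent to $p$ (say), then $s,t,p$ would form a triangle.
\end{itemize}

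\emph{Part (iii), the expected obstacle.} Saturation of $o_s$ towards $Y$ excludes $s$ nonadjacent to both $p$ and $q$, because then $o_so_p$ and $o_so_q$ are both edges. If $pq\in E(\Theta)$, then $s$ adjacent to both is a triangle. The remaining claims are $pq\notin E(\Theta)$ and the exclusion of a common neighbour $s\in I$ of $p,q$ when $pq\notin E(\Theta)$; these are the genuine difficulty.

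My first attempt is to analyse a shortest $o_p$--$o_q$ path inside $Y$. Its interior is even, say $e_{y_1}\cdots e_{y_k}$. Its index sequence $p,y_1,\ldots,y_k,q$ is a walk in $\Theta$, because every edge with an even end is active.
\begin{itemize}
\item If $pq$ is active, the shortest such path is $o_pe_qe_po_q$, so $|Y|\ge4$.
\item If $s$ is a common neighbour of $p$ and $q$, then $o_s$ needs exactly one $e_w$ in $Y$ with $w\in N(s)$. Closing the walk through $s$ gives a closed walk, and I would use the cycle lengths $5,5,6$ together with the uniqueness of common neighbours in \cref{rc:theta}(a),(c) to show that the walk meets $N(s)$ either not at all or twice.
\end{itemize}
Should that local argument fail, I would fall back on the budget identity above. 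In both cases $Y$ is large, and the $T$-branches forced around $s$ push the left side past $|I|$, using the bound $|N(I)|\ge|I|+1$ of \cref{rc:theta}(b).
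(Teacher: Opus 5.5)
Your arguments for (i), (ii), (v), the budget identity and the disjointness part of (iv) are the paper's. The gap is in (iii), exactly where you expected it, and it has two parts.

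First, $|Y|\ge4$ is not a contradiction on its own. The budget is therefore not an optional fallback but the decisive ingredient in both sub-cases. The inequality $(|X|-2)+(|Y|-2)+|T|\le|I|$ needs only (v) for $T$, so using it here is not circular. If $I=\varnothing$ it gives $|Y|=2$. If $I\ne\varnothing$, then $|T|\ge|N(I)|-2\ge|I|-1$ by \cref{rc:theta}(b), so $(|X|-2)+(|Y|-2)\le1$ and $|Y|\le3$.

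Second, your closed-walk idea for a common neighbour $s\in I$ of $p,q$ cannot work. In that case $pq$ is inactive, so the shortest $o_p$--$o_q$ path in $Y$ is the single odd--odd edge $o_po_q$. Its index sequence is not a walk in $\Theta$, and the neighbour of $o_s$ in $Y$ need not lie on it. In fact the local statement you want is false. Take $(s,p,q)=(u,z,c)$: the connected set $Y=\{o_z,o_c,e_z,e_v\}$, with edges $o_zo_c$, $o_ze_v$, $e_ve_z$, contains exactly one neighbour of $o_u$, namely $e_z$. So no parity or cycle-length argument can exclude $s$ without a bound on $|Y|$, and your sentence ``in both cases $Y$ is large'' is unsupported in this case.

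The missing step is short.
\begin{itemize}
\item Since $sp,sq\in E(\Theta)$, the vertex $o_s$ is adjacent to neither $o_p$ nor $o_q$. Saturation from (i) therefore gives a neighbour $e_y\in Y$ with $y\in N(s)$.
\item This $e_y$ is adjacent to neither $o_p$ nor $o_q$. If $y\notin\{p,q\}$, such an edge would make $s,y,p$ (or $s,y,q$) a triangle. If $y=p$, then $e_p$ misses $o_p$ (same double) and misses $o_q$ (since $pq$ is inactive); the case $y=q$ is symmetric.
\item Hence $\{o_p,o_q,e_y\}$ is not connected, so $|Y|\ge4$, contradicting $|Y|\le3$.
\end{itemize}
Combine this with your observation that an active $pq$ forces $|Y|\ge4$. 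To complete that observation, note that no path $o_pe_wo_q$ exists, since $w$ would close a triangle on $pq$. This proves (iii), and your derivation of (iv) then stands as written.
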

\begin{proof}
(i) A singleton branch must have six neighbours. The branch $X$ is not a singleton, because its vertices are even and $e_a$ has degree four, and $Y$ has at least two vertices; so every singleton branch is $\{o_s\}$ for some $s\notin\{p,q\}$. Singleton branches are pairwise adjacent, and $o_so_{s'}$ is an edge only if $ss'\notin E(\Theta)$. Each $o_s$ has degree six and there are six other branches.

(ii) The neighbours of $o_s$ in $X$ are the vertices $e_w$ with $w\in N(s)\cap W$, and by (i) there is exactly one.

(v) for $T$. Let $s\in I$ and $t\in N(s)\setminus\{p,q\}$. The only odd vertex of $C_t$ is $o_t$, which is not adjacent to $o_s$, so $C_t$ contains some $e_y$ with $y\in N(s)$. If $C_t=\{o_t,e_y\}$, then $o_te_y$ is an edge, so $ty\in E(\Theta)$ with $y\neq t$, and $s,t,y$ would form a triangle.

The budget. The fourteen vertices form seven branches, so the excesses $|C|-2$ of the nonsingleton branches sum to $|I|$. By the previous paragraph,
\begin{equation}\label{rc:budget-first}
 (|X|-2)+(|Y|-2)+|T|\le|I|.
\end{equation}

(iii) If $I=\varnothing$, \eqref{rc:budget-first} gives $|X|=|Y|=2$. If $I\ne\varnothing$, \cref{rc:theta}(b) gives $|T|\ge|N(I)|-2\ge|I|-1$, so $(|X|-2)+(|Y|-2)\le1$. In either case $|Y|\le3$. If $pq\in E(\Theta)$, then $o_p,o_q$ are not adjacent, every inner vertex of an $o_p$--$o_q$ path inside $Y$ is even, and a path $o_pe_wo_q$ would need $w\in N(p)\cap N(q)$, a triangle; hence $|Y|\ge4$, a contradiction. Let $s\in I$. If $s$ has no neighbour in $\{p,q\}$, then $o_s$ is adjacent to both $o_p$ and $o_q$, contrary to (i). If $p,q\in N(s)$, then $o_s$ is adjacent to neither $o_p$ nor $o_q$, so its neighbour in $Y$ is some $e_y$ with $y\in N(s)$. If $y\notin\{p,q\}$, an edge from $e_y$ to $o_p$ or $o_q$ would give a triangle with $s$. If $y=p$, then $e_p$ is not adjacent to $o_p$, which lies in the same double, nor to $o_q$, because the two neighbours $p,q$ of $s$ are nonadjacent; the case $y=q$ is the same. Thus $e_y$ has no neighbour among $o_p,o_q$, and the connected set $Y$ has at least four vertices, again a contradiction.

(v) for $M$. Let $|Y|=2$, so $Y=\{o_p,o_q\}$ and $pq\notin E(\Theta)$, and let $m\in N(p)\cap N(q)$. Then $o_m$ is adjacent to neither $o_p$ nor $o_q$, so $C_m$ contains some $e_y$ with $y\in N(p)\cup N(q)$, say $y\in N(p)$. If $C_m=\{o_m,e_y\}$, then $y\in N(m)$, and $m,p,y$ would form a triangle unless $y=p$; but $e_p$ is adjacent to neither $o_p$ (same double) nor $o_q$ (as $pq\notin E(\Theta)$).

(iv) Let $m\in M$. By (iii), $m\notin I$. If $m\in N(s)$ for some $s\in I$, then $s$, $m$ and the neighbour of $s$ in $\{p,q\}$ would form a triangle. Thus $M\cap(I\cup N(I))=\varnothing$, in particular $M\cap T=\varnothing$, and the excess count with (v) gives \eqref{rc:budget}.
\end{proof}

\begin{lemma}[Residual configurations]\label[lemma]{rc:residual}
In the situation of \cref{rc:caseB}, $X=\{e_a,e_w\}$ with $w\in\{u,b\}$, equality holds in \eqref{rc:budget}, and $(I,\{p,q\},w)$ is one of the following, where in each case $|Y|=2$ except that Q1 also allows $|Y|=3$:
\[
\begin{array}{c|c|c|c}
\text{name}&I&\{p,q\}&w\\\hline
\mathrm{P1}&\varnothing&\{a,d\}&u\text{ or }b\\
\mathrm{P2}&\varnothing&\{b,c\}&u\text{ or }b\\
\mathrm{P3}&\{b\}&\{a,d\}&u\text{ or }b\\
\mathrm{P4}&\{c\}&\{a,d\}&u\\
\mathrm{P5}&\{b,c\}&\{a,d\}&u\\
\mathrm{P6}&\{a\}&\{b,c\}&u\text{ or }b\\
\mathrm{Q1}&\{a,c\}&\{b,d\}&u
\end{array}
\]
In each configuration every branch size is forced: $C_t$ is a triple for $t\in T\cup M$, $Y$ has the stated size, and every other nonsingleton branch is a pair. A pair containing $o_x$ is $\{o_x,e_y\}$ with $y\in N(x)$.
\end{lemma}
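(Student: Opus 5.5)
The plan is to combine the budget inequality \eqref{rc:budget} with the constraints (ii) and (iii) of \cref{rc:caseB}, and to enumerate the possible $I$ using \cref{rc:theta}.

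\emph{Step 1: $|X|=2$ and the possible $w$.} The branch $X$ consists of even vertices only and contains $e_a$, whose neighbours are $o_u,o_b,e_u,e_b$. Hence $X$ contains some $e_w$ with $w\in\{u,b\}$, and $|X|\ge2$. By \cref{rc:theta}(b), $|T|\ge|N(I)|-2\ge|I|-1$ whenever $I\ne\varnothing$. Substituting into \eqref{rc:budget} gives $(|X|-2)+(|Y|-2)+|M|\le1$. This bound is then sharpened case by case.

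\emph{Step 2: enumerate $\{p,q\}$ and $I$.} By \cref{rc:caseB}(iii), $\{p,q\}$ is nonadjacent, and every $s\in I$ has exactly one neighbour in $\{p,q\}$. Consider first $I=\varnothing$. Then \eqref{rc:budget} forces $|X|=|Y|=2$ and $M=\varnothing$, so $p,q$ have no common neighbour. By \cref{rc:theta}(c) this gives $\{p,q\}\in\{\{a,d\},\{b,c\}\}$, which are cases P1 and P2.

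For $I\ne\varnothing$, the requirement that each $s\in I$ meets $\{p,q\}$ exactly once restricts $I$ to a subset of $N(p)\triangle N(q)$. We run over the eleven remaining nonadjacent pairs together with $\{a,d\},\{b,c\}$. For each pair we list the independent $I$ inside that symmetric difference, compute $|T|=|N(I)\setminus\{p,q\}|$ and $|M|$, and keep only those satisfying $|T|+|M|\le|I|$. Since $|N(I)|\ge|I|+1$, we need $|N(I)\cap\{p,q\}|\ge1$, and often both $p$ and $q$ lie in $N(I)$. This tight counting should leave exactly P3--P6 and Q1, with Q1 the only case where $|T|+|M|=|I|-1$, which is what permits $|Y|=3$ there.

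\emph{Step 3: determine $w$ and the sizes.} We apply (ii) to each survivor: $|N(s)\cap W|=1$ for $s\in I$, where $W\ni a,w$. For instance, $c\in I$ has $N(c)=\{u,d\}$, so $W$ must contain $u$, forcing $w=u$. Likewise $I=\{a\}$ with $N(a)=\{u,b\}$ excludes nothing further, while $a\in W$ requires $a\notin N(s)$ for the appropriate $s$. This check yields the $w$ column of the table.

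Equality in \eqref{rc:budget} then follows from the totals. Together with (v), it forces every $C_t$ with $t\in T\cup M$ to be a triple, and every other nonsingleton branch to be a pair. A pair containing $o_x$ has exactly one odd vertex, so it is $\{o_x,e_y\}$, and connectivity gives $y\in N(x)$.

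\emph{Main obstacle.} The hard part is the exhaustive enumeration in Step 2: checking every pair against every candidate $I$ without omission. I would organise it by $|I|\in\{0,1,2,3\}$, using the lists in \cref{rc:theta}(b). Size three is excluded quickly since $|N(I)|=4$ and at most two of those neighbours lie in $\{p,q\}$. Size two leaves only the six tight pairs. Size one leaves only degree-two vertices adjacent to exactly one of $p,q$.
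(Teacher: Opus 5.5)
\textbf{Genuine gap: the enumeration is not carried out, and your shortcuts for it are wrong.} Your ingredients are the right ones. The case $I=\varnothing$ is fine, as are the forced branch sizes from equality in \eqref{rc:budget} together with (v), and the shape of the pairs. But this lemma's content is exactly the finite enumeration, and you leave it at ``should leave exactly P3--P6 and Q1''. The shortcuts you propose for carrying it out fail in three places.
\begin{enumerate}
\item \emph{Triples.} For $|I|=3$, the count $|N(I)|=4$ only gives $|T|\ge2$. That is compatible with \eqref{rc:budget}, for example with $|T|=2$, $|Y|=3$, $|X|=2$. Triples are excluded by \cref{rc:caseB}(iii), not by counting. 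For instance, $I=\{z,a,d\}$ forces $\{p,q\}$ to be $\{u,c\}$ or $\{v,b\}$, both edges of $\Theta$.
\item \emph{Pairs.} ``Size two leaves only the six tight pairs'' is false. P5 itself has $I=\{b,c\}$ with $|N(I)|=4$. It survives because $\{p,q\}=\{a,d\}$ gives $M=\varnothing$ and $|T|=|\{u,v\}|=|I|$.
\item \emph{What counting leaves.} Counting together with (iii) does not leave exactly the table. It also leaves $I=\{d\}$ and $I=\{a,d\}$ with $\{p,q\}=\{b,c\}$, and $I=\{b,d\}$ with $\{p,q\}=\{a,c\}$ (where $|T|=1$ and $M=\{u\}$). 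These are eliminated only by \cref{rc:caseB}(ii), since $N(d)=\{c,v\}$ misses $W=\{a,w\}$. Your Step~3 uses (ii) only to fill in the $w$ column, not to discard configurations.
\end{enumerate}

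\textbf{How to organise it.} Split on the value of $(|Y|-2)+|M|\in\{0,1\}$; this is how the paper proceeds.

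If the value is $1$ (that is, $|Y|=3$, or $|Y|=2$ and $p,q$ have a common neighbour), then \eqref{rc:budget} gives $|T|\le|I|-1$. On the other hand $|T|\ge|N(I)|-2\ge|I|-1$. All these inequalities are therefore equalities, so $|X|=2$, $\{p,q\}\subseteq N(I)$ and $|N(I)|=|I|+1$. By (iii) no $s\in I$ sees both $p$ and $q$, so $|I|\ne1$. Hence $I$ is one of the six tight pairs or an independent triple from \cref{rc:theta}(b). Checking (iii) and (ii) on these leaves only Q1.

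If the value is $0$, then $|Y|=2$ and $\{p,q\}\in\{\{a,d\},\{b,c\}\}$. By (iii), $I$ is an independent subset of the four-set $N(p)\cup N(q)$, which gives nine candidates for each pair. The budget and (ii) reduce these to P1--P6.

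With this split in place, your remaining steps go through.
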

\begin{proof}
By \cref{rc:theta}(c), $|M|=1$ when $|Y|=2$ and $\{p,q\}\notin\{\{a,d\},\{b,c\}\}$, and $M=\varnothing$ otherwise.

\emph{First, $|Y|=3$, or $|Y|=2$ and $|M|=1$.} Then $(|Y|-2)+|M|=1$, so \eqref{rc:budget} gives $|T|\le|I|-1$, and $I\ne\varnothing$ because $|T|\ge0$. As $|T|\ge|N(I)|-2\ge|I|-1$, all these inequalities are equalities. Thus $|X|=2$; because $X$ is connected and contains $e_a$, we can write $X=\{e_a,e_w\}$ with $w\in N(a)=\{u,b\}$, so that $W=\{a,w\}$. Moreover $\{p,q\}\subseteq N(I)$ and $|N(I)|=|I|+1$. By \cref{rc:caseB}(iii) no $s\in I$ is adjacent to both $p$ and $q$, so $|I|\ne1$; and $|I|\le3$ by \cref{rc:theta}(b). For each of the six independent triples, every nonadjacent pair in $N(I)$ has a vertex of $I$ adjacent to both of its members, or, where marked `none', to neither; the table lists one such vertex for each pair:
\[
\begin{array}{c|c|l}
I&N(I)&\text{nonadjacent pairs in }N(I)\text{ with a violating vertex of }I\\\hline
zac&uvbd&uv\colon z,\ ub\colon a,\ ud\colon c,\ bd\colon z\text{ (none)}\\
zbd&uvac&uv\colon z,\ va\colon b,\ vc\colon d,\ ac\colon z\text{ (none)}\\
zad&uvbc&uv\colon z,\ ub\colon a,\ vc\colon d,\ bc\colon z\text{ (none)}\\
zbc&uvad&uv\colon z,\ va\colon b,\ ud\colon c,\ ad\colon z\text{ (none)}\\
ubd&zacv&za\colon u,\ zc\colon u,\ ac\colon u,\ av\colon b,\ cv\colon d\\
vac&zbdu&zb\colon v,\ zd\colon v,\ bd\colon v,\ ub\colon a,\ ud\colon c
\end{array}
\]
Hence $|I|=2$ and, since $|N(I)|=3$, $I$ is one of $za,zb,zc,zd,ac,bd$ by \cref{rc:theta}(b). For $za$, $N(I)=\{u,v,b\}$ and $uv,ub$ fail at $z,a$; similarly $zb$ ($uv,va$), $zc$ ($uv,ud$) and $zd$ ($uv,vc$) leave nothing. For $bd$, $N(I)=\{a,v,c\}$, and $av,vc$ fail at $b,d$, so $\{p,q\}=\{a,c\}$; but $N(d)=\{c,v\}$ is disjoint from $W=\{a,w\}$, contrary to \cref{rc:caseB}(ii). For $ac$, $N(I)=\{u,b,d\}$, and $ub,ud$ fail at $a,c$, so $\{p,q\}=\{b,d\}$; \cref{rc:caseB}(ii) at $c$, whose neighbours are $u,d$, forces $w=u$. This is Q1; it occurs with $|Y|=3$, or with $|Y|=2$ and $M=\{v\}$.

\emph{Second, $|Y|=2$ and $\{p,q\}\in\{\{a,d\},\{b,c\}\}$.} Now $M=\varnothing$, and by \cref{rc:caseB}(iii) every $s\in I$ lies in $N(p)\cup N(q)$, which is $\{u,b,c,v\}$ for $\{a,d\}$ and $\{a,v,u,d\}$ for $\{b,c\}$. Inequality \eqref{rc:budget} reads $(|X|-2)+|N(I)\setminus\{p,q\}|\le|I|$. The independent subsets of these four-sets and the corresponding sets $N(I)\setminus\{p,q\}$ are
\[
\begin{array}{c|l}
\{p,q\}=\{a,d\}&\varnothing\colon\varnothing;\ u\colon zc;\ v\colon zb;\ b\colon v;\ c\colon u;\ ub\colon zcv;\ uv\colon zcb;\ bc\colon uv;\ cv\colon uzb\\
\{p,q\}=\{b,c\}&\varnothing\colon\varnothing;\ v\colon zd;\ u\colon za;\ a\colon u;\ d\colon v;\ av\colon uzd;\ uv\colon zad;\ ad\colon uv;\ ud\colon zav
\end{array}
\]
The inequality fails exactly for $u,v,ub,uv,cv$ and for $v,u,av,uv,ud$, respectively, and holds with $|X|=2$ in the remaining cases. So $X=\{e_a,e_w\}$ with $w\in N(a)=\{u,b\}$. Finally \cref{rc:caseB}(ii) excludes $I=\{d\}$ and $I=\{a,d\}$, since $N(d)=\{c,v\}$ misses $\{a,u,b\}$, and forces $w=u$ whenever $c\in I$, since $N(c)=\{u,d\}$. What remains is P1--P6.

In every case \eqref{rc:budget} holds with equality, which forces the sizes. The last assertion holds because $C_x$ is connected, has $o_x$ as its only odd vertex, and $o_x$ is adjacent to $e_y$ exactly when $y\in N(x)$.
\end{proof}

\begin{proof}[Proof of the negative assertions of \cref{rc:classification}]
By \cref{rc:theta}(d) it suffices to treat $k=a$. Suppose that $K$ has an $R_a$-meeting $K_7$ model. Since $K$ is connected, \cref{rc:normalization} extends it to a spanning one, in which exactly one branch $D$ contains two roots. If $e_a\in D$, then every branch contains exactly one odd vertex, so the model is rooted in $O$; this contradicts \cref{lem:known-theta}, because the relabelling $(u,v,z,a,b,c,d)\mapsto(5,6,2,0,3,1,4)$ maps $\Theta$ onto the active graph of \cref{sec:sharpness} and hence $K$ onto $H-s$, preserving odd vertices. Otherwise $D$ contains two odd vertices, and we are in the situation of \cref{rc:caseB}; one of the configurations of \cref{rc:residual} occurs. We use three observations.
\begin{enumerate}[label=(E\arabic*)]
\item If $w=b$, then the neighbourhood of $X$ in $K$ (not in $\Theta$) is $N_K(X)=\{o_u,o_a,o_b,o_v,e_u,e_v\}$; it has six vertices, so each of the six other branches contains exactly one of them.
\item If $w=u$ and $C_z$ is a pair, then $C_z=\{o_z,e_v\}$, because $N(z)=\{u,v\}$ and $e_u\in X$.
\item For $s\in I$ the six neighbours of $o_s$ lie in the six other branches, one in each (\cref{rc:caseB}(i)).
\end{enumerate}
\emph{P1} ($Y=\{o_a,o_d\}$; pairs $C_u,C_v,C_z,C_b,C_c$). If $w=u$, then $C_z=\{o_z,e_v\}$ by (E2), and the pair $C_b=\{o_b,e_y\}$ needs $y\in N(b)=\{a,v\}$, where $e_a\in X$ and $e_v\in C_z$. If $w=b$, then by (E1) the pairs $C_c$ and $C_z$, whose odd vertices are not in $N_K(X)$, contain $e_u$ or $e_v$; as $o_c$ is not adjacent to $e_v$, $C_c=\{o_c,e_u\}$ and $C_z=\{o_z,e_v\}$, and $C_b$ fails as before.

\emph{P2} ($Y=\{o_b,o_c\}$; pairs $C_u,C_v,C_z,C_a,C_d$). If $w=u$, then $C_z=\{o_z,e_v\}$ by (E2), $C_a=\{o_a,e_b\}$ as $N(a)\setminus W=\{b\}$, $C_d=\{o_d,e_c\}$ as $N(d)=\{c,v\}$ and $e_v\in C_z$, and then $C_u=\{o_u,e_z\}$ as $N(u)=\{z,a,c\}$, $e_a\in X$ and $e_c\in C_d$. But $C_u$ and $C_a$ are not adjacent: $o_uo_a$ is not an edge since $ua\in E(\Theta)$, and $ub,za,zb\notin E(\Theta)$. If $w=b$, then by (E1) the pairs $C_d$ and $C_z$ contain $e_u$ or $e_v$; as $o_d$ is not adjacent to $e_u$, $C_d=\{o_d,e_v\}$ and $C_z=\{o_z,e_u\}$, and the pair $C_a$ needs $e_y$ with $y\in N(a)\setminus W=\{u\}$, which is used.

\emph{P3} ($I=\{b\}$, $Y=\{o_a,o_d\}$, triple $C_v$). The neighbours of $o_b$ are $e_a\in X$, $o_d\in Y$, $o_u\in C_u$, $o_z\in C_z$, $o_c\in C_c$ and $e_v$, so (E3) gives $e_v\in C_v$. If $w=u$, (E2) forces $C_z=\{o_z,e_v\}$, but $e_v\in C_v$. If $w=b$, then $C_v$ contains both $o_v$ and $e_v$, contrary to (E1).

\emph{P4} ($I=\{c\}$, $Y=\{o_a,o_d\}$, $w=u$, triple $C_u$). The pair $C_b=\{o_b,e_y\}$ has $y\in N(b)\setminus W=\{v\}$, so (E2) leaves no even vertex for the pair $C_z$.

\emph{P5} ($I=\{b,c\}$, $Y=\{o_a,o_d\}$, $w=u$, triples $C_u,C_v$). The neighbours of $o_b$ are $e_a\in X$, $o_d\in Y$, $o_u\in C_u$, $o_z\in C_z$, the singleton $o_c$, and $e_v$, so (E3) gives $e_v\in C_v$, and (E2) leaves no even vertex for the pair $C_z$.

\emph{P6} ($I=\{a\}$, $Y=\{o_b,o_c\}$, triple $C_u$). The neighbours $o_c\in Y$, $o_v\in C_v$, $o_z\in C_z$, $o_d\in C_d$ of $o_a$ leave $e_u,e_b$ for the branches $X$ and $C_u$, one each, by (E3). If $w=u$, then $e_b\in C_u=\{o_u,e_b,e_y\}$; as $o_ue_b$ is not an edge, $e_y$ must be adjacent to both, so $y\in N(u)\cap N(b)=\{a\}$, but $e_a\in X$. If $w=b$, then $e_u\in C_u$, so $C_u$ contains both $o_u$ and $e_u$, contrary to (E1).

\emph{Q1} ($I=\{a,c\}$, $\{p,q\}=\{b,d\}$, $w=u$, triple $C_u$). The neighbours of $o_a$ are $e_u\in X$, the singleton $o_c$, $o_d\in Y$, $o_v\in C_v$, $o_z\in C_z$ and $e_b$; those of $o_c$ are $e_u\in X$, the singleton $o_a$, $o_b\in Y$, $o_v\in C_v$, $o_z\in C_z$ and $e_d$. By (E3), $e_b,e_d\in C_u$, so $C_u=\{o_u,e_b,e_d\}$. This set is independent, because $ub,ud,bd\notin E(\Theta)$, so it is not connected.

Every configuration is impossible. Hence $K$ has no $R_a$-meeting $K_7$ model, and by the transport above none for the marks $b,c,d$ either.
\end{proof}

\subsection*{Finite corroboration}
\addcontentsline{toc}{subsection}{Finite corroboration}
Revision~6 proved the same negative assertions by a complete finite certificate, which is retained unchanged in \path{research/root_control/} as independent corroboration. Its domain is the one described in \cref{rc:normalization}: for code $42232$ and mark $2$, with blocked numbering $o_i=i$, $e_i=7+i$, the $28$ root pairs and $7^6$ allocations of the six nonroots give $3294172$ candidates in a fixed order. The file \path{rejection_proof.bin.gz} expands to one byte per candidate, naming either a disconnected branch or an anticomplete pair of branches; the uncompressed stream has SHA-256
\begin{center}\ttfamily\small
\seqsplit{84ec2894c7a29240d425def4da1ae4c87d231fbae7b013fe213053d0124d79fe}.
\end{center}
Three separately written validators (revision~6's Python and C++ checkers, and a bitmask checker written for revision~7) accept every record and reject corrupted streams and the stream applied to the positive mark~$0$. A second, independently produced certificate refutes mark $2$ by $27$ singleton cases, $183$ nodes and $90$ leaves; its standalone verifier reconstructs every permitted branch. Revision~7 also mechanically checks every finite assertion used above (\cref{rc:theta}, both tables of \cref{rc:residual}, and the forced placements) and, as a test of the reductions, enumerates all spanning partitions compatible with \cref{rc:caseB}. None of these computations is a premise of the proof.

\paragraph{Scope.}
The normalization, the positive constructions of Appendix~\ref{ua:appendix}, the orbit transport and, since revision~7, the negative assertions are structural; \cref{rc:classification,rc:apex} have no computational premise. The negative proof uses the rooted theta obstruction \cref{lem:known-theta} of the core paper, whose counting argument is due to Kriesell and Mohr. None of this changes the premises of \cref{thm:main}, Input~\ref{input:P7}, the attachment theorem, or the conditional $(2^8,4)$ argument.

\section{Direct proofs of the all-double cases}\label{app:all-doubles}
The core paper cites the routing theorem of Kriesell and Mohr \cite[Theorem~6, Section~5, p.~12]{KriesellMohr2022} for witnesses whose core consists of three, four, five or six double fibres (in the proofs of \cref{thm:excess-five} and \cref{lem:six-doubles}). This appendix keeps the direct proofs of those cases from revision~6; they do not use that theorem. Throughout, a normalized core consisting of double fibres $\{o_i,e_i\}$ is the graph described by \cref{lem:double} for its active graph $B$, and normalization means $\delta(B)\ge2$. Every branch below contains an odd vertex.

\subsection{Three and four doubles}

For three doubles, $B=K_3$. The branches
$\{o_1,e_2\},\{o_2,e_3\},\{o_3,e_1\}$ have active internal edges and
pairwise crossing edges $e_2e_3,e_3e_1,e_1e_2$.
For four doubles, $\Delta(\overline B)\le1$, so $\overline B$ is a matching
with isolated vertices; $B$ consequently contains a spanning cycle
$1,2,3,4,1$. Put $A_i=\{o_i,e_{i+1}\}$, cyclically. The four internal
edges are active cycle edges. The four consecutive pairs are joined by
the corresponding even--even cycle edge. The two opposite pairs are
joined by $o_1e_4$ and $o_2e_1$. These six crossing edges depend only on
the fixed cycle, so additional active chords cannot remove any edge used.
Every branch has its odd $o_i$.

\subsection{Five doubles}

Suppose $|V(B)|=5$ and $\delta(B)\ge2$. If $B$ has a spanning cycle,
take $A_i=\{o_i,e_{i+1}\}$ cyclically. Consecutive pairs meet through
their even vertices. For distance-two pairs $A_i,A_{i+2}$, the edge
$e_{i+1}o_{i+2}$ crosses. Every pair of indices on a five-cycle is
consecutive or at distance two, so this checks all ten crossings.

For completeness, the classification without a five-cycle is valid. A graph of minimum degree two has a cycle. If
there is a four-cycle $1,2,3,4,1$ with fifth vertex $w$, any consecutive
neighbours of $w$ would yield a five-cycle. Thus its at least two
neighbours are exactly one opposite pair, say $1,3$. Edge $24$ would
give the five-cycle $w,1,2,4,3,w$ and is absent, while edge $13$ is
optional. This is exactly $K_{2,3}$ with optional edge between the two
degree-three hubs. If there is no four- or five-cycle, choose a triangle
$a,b,c$. Each remaining vertex has at most one triangle neighbour:
two such neighbours and the third triangle vertex create a four-cycle.
Minimum degree therefore forces the two remaining vertices to be adjacent
to each other and to have exactly one triangle neighbour each. Distinct
triangle neighbours yield a five-cycle through the third triangle vertex;
their common neighbour instead gives exactly a bowtie. No other
configuration is possible.

For a bowtie with centre $1$ and leaf pairs $23,45$, use
\[
 \{o_1\},\quad\{o_2,e_3\},\quad\{o_3,e_2\},\quad
 \{o_4,e_5\},\quad\{o_5,e_4\}.
\]
Each of four centre crossings is an odd--even edge from an active spoke;
the two within-pair crossings are $e_2e_3$ and $e_4e_5$; the four
between-pair crossings are the odd--odd edges for the inactive cross-pairs.
For $K_{2,3}$ with hubs $1,2$ and leaves $3,4,5$, use
\[
 \{o_1,e_1,e_3\},\quad\{o_2,e_2,e_4\},\quad
 \{o_3\},\quad\{o_4\},\quad\{o_5\}.
\]
The two nonsingleton branches have paths $o_1,e_3,e_1$ and $o_2,e_4,e_2$.
They meet through $e_3e_2$. Each meets every leaf root through its hub
even vertex, giving six crossings; the three leaf--leaf pairs use inactive
odd--odd edges. The optional hub edge affects none of these edges. All
branches in all three constructions are disjoint and odd-rooted.

\subsection{Six doubles}
\begin{proof}[Direct proof of \cref{lem:six-doubles}]
Its active graph $B$ has six vertices and minimum degree at least two.
We first give three constructions in $B$.

\emph{Independent-set construction.} If $S$ is independent and there
is a matching of $C=V(B)\setminus S$ into $S$, write $s(c)$ for the
mate of $c\in C$. Use
\[
 \{o_s\}\ (s\in S),\qquad
 \{o_c,e_c,e_{s(c)}\}\ (c\in C).
\]
The latter sets are paths containing whole doubles. Absorption handles
their adjacencies, and the singletons are pairwise adjacent because $S$
is independent in $B$.

\emph{Two-singleton construction.} Suppose $ab$ is a nonedge, $uv$ is
an edge disjoint from $\{a,b\}$, and $c,d$ are the remaining vertices.
Assume $a$ has the same adjacency status to $u,v$, as does $b$, and
that $ac,bd$ are edges, after exchanging $c,d$ if necessary. Use
\[
 \{o_a\},\ \{o_b\},\ \{o_c,e_c,e_a\},\ \{o_d,e_d,e_b\},\
 \{o_u,e_v\},\ \{o_v,e_u\}.
\]
The middle branches absorb $D_c,D_d$. The last two are adjacent by
$e_ue_v$. Each singleton sees both last branches, through their odd
vertices if its two statuses are zero and through their even vertices
if they are one.

\emph{Matching construction.} Suppose a perfect matching $M$ has
no pair of edges with exactly one active edge between their endpoint
sets. With $m(i)$ the mate of $i$, use $\{o_i,e_{m(i)}\}$ for all $i$.
Matched branches meet through their even vertices. For indices $i,j$
in different matching edges, an inactive $ij$ gives $o_io_j$; if $ij$
is active, another active edge among
$i m(j),m(i)j,m(i)m(j)$ gives one of the other three possible crossings.

Two special active graphs will be useful. Suppose two triangles
$\{a,x_1,x_2\}$ and $\{b,y_1,y_2\}$ have crossing set
$E\subseteq\{x_1,x_2\}\times\{y_1,y_2\}$. If $|E|=0$ or
$|E|\geq2$, take
\[
 \{o_a\},\ \{o_b\},\ \{o_{x_1},e_{x_2}\},\
 \{o_{x_2},e_{x_1}\},\ \{o_{y_1},e_{y_2}\},\
 \{o_{y_2},e_{y_1}\}.
\]
The singleton adjacencies follow from the triangles and absence of
crossings at $a,b$. Across the two swapped pairs, an inactive root
pair gives an odd--odd edge; an active root pair is accompanied by a
second member of $E$, giving one of the other three crossing edges.
If $E=\{x_1y_1\}$, the matching
$\{ax_2,x_1y_1,by_2\}$ has respectively two, two and zero crossing
edges between its pairs, so the matching construction applies. Also,
for the triangular prism, its three crossing matching edges satisfy
the same construction, with two crossings between each pair.

It remains to show that these constructions exhaust $B$. Minimum
degree gives $\alpha(B)\leq4$. If $\alpha(B)=4$, an independent
four-set is complete to its two-vertex complement, and the independent-set
construction applies. If $\alpha(B)=3$, take an independent triple
$S$ and its complement $C$. Every vertex of $S$ sees at least two
vertices of $C$, and every vertex of $C$ sees $S$ by maximality.
Hall's condition for matching $C$ into $S$ follows: two vertices of
$C$ cannot together see at most one vertex of $S$, since then the
other two vertices of $S$ would each have degree at most one.

Now let $\alpha(B)\leq2$. If $B$ is complete, use any perfect matching.
Otherwise choose a nonedge $ab$. Partition the other vertices into
$X,Y,Z$, according as they see only $a$, only $b$, or both. There
is no fourth type, since it would make an independent triple with
$a,b$. Writing their sizes as $x,y,z$, we have
\[
 x+y+z=4,\qquad x+z\geq2,\qquad y+z\geq2.
\]
The sets $X,Y$ are cliques. If $z\geq3$, choose an edge $uv$ in
$Z$; the remaining pair can be matched to $a,b$, giving the
two-singleton construction. The same construction applies if
$z=2$ and $(x,y)=(2,0)$ or $(0,2)$, using the two-element clique
$X$ or $Y$ as $\{u,v\}$; and if $z=1$, using the larger of $X,Y$.
If $z=0$, then $x=y=2$, giving the two-triangle family above.

The only unresolved case is $z=2$, $x=y=1$. Write $X=\{x\}$,
$Y=\{y\}$ and $Z=\{u,v\}$. If $uv$ is an edge, the
two-singleton construction again applies. Otherwise $x$ and $y$
each see at least one of $u,v$. Repeating the partition with nonedge
$uv$ settles the case if either sees both, or if both see the same
one. Otherwise relabel so $xu,yv$ are edges and $xv,yu$ are not.
Then $\{a,x,u\}$ and $\{b,y,v\}$ are triangles whose crossings
are $av,ub$ and possibly $xy$. Without $xy$ this is the two-triangle
family; with $xy$ it is the triangular prism. All constructions are
disjoint, connected and rooted at the six displayed odd vertices.
\end{proof}

\clearpage
\phantomsection\addcontentsline{toc}{part}{References for both parts}
\bibliographystyle{plain}
\bibliography{references}
\end{document}